\newcommand{\Blue}{\textcolor{blue}}
\newcommand{\dis}{\displaystyle}
\newtheorem{theorem}{Theorem}[section]
\newtheorem{definition}{Definition}[section]
\newtheorem{lemma}{Lemma}[section]
\newtheorem{proposition}[theorem]{Proposition}
\newtheorem{remark}{Remark}[section]
\let\e=\varepsilon
\let\d=\delta
\let\h=v
\let\p=\partial
\let\O=\Omega
\numberwithin{equation}{section}
\let\hide\iffalse
\let\unhide\fi
\newcommand{\R}{\mathbb{R}}
\newcommand{\be}{\begin{equation}}
\newcommand{\bm}{\begin{multline}}
\newcommand{\ee}{\end{equation}}
\newcommand{\dd}{\mathrm{d}}
\newcommand{\xb}{x_{\mathbf{b}}}
\newcommand{\tb}{t_{\mathbf{b}}}
\newcommand{\Bes}{\begin{eqnarray*}}
\newcommand{\Ees}{\end{eqnarray*}}
\newcommand{\Be}{\begin{equation} }
\newcommand{\Ee}{\end{equation}}
\def\p{\partial}
\def\O{\Omega}
\def\R{\mathbb{R}}
\def\d{\mathrm{d}}
\def\B{\begin{equation}}
\def\E{\end{equation}}
\def\BN{\begin{eqnarray*}}
\def\EN{\end{eqnarray*}}
\begin{document}
\title[The Boltzmann equation in an infinite layer]{The Boltzmann equation in an infinite layer: spectrum and asymptotics toward the heat equation}

\author[H.-X. Chen]{Hongxu Chen}
%\thanks{}
\address[HXC]{Department of Mathematics, The Chinese University of Hong Kong,
Shatin, Hong Kong, P.R.~China}
\email{hongxuchen.math@gmail.com}

\author[R.-J. Duan]{Renjun Duan}
%\thanks{}
\address[RJD]{Department of Mathematics, The Chinese University of Hong Kong,
Shatin, Hong Kong, P.R.~China}
\email{rjduan@math.cuhk.edu.hk}

\author[S.-Q. Liu]{Shuangqian Liu}
\address[SQL]{{School of Mathematics and Statistics and and Key Lab NAA--MOE, Central China Normal University, Wuhan 430079, China}}
\email{sqliu@ccnu.edu.cn}

%\maketitle

\begin{abstract}
In the paper, we develop spectral theory to analyze the sharp asymptotic behavior of solutions to the Boltzmann equation around global Maxwellians in a three-dimensional infinite layer $\mathbb{R}^2\times (-1,1)$. The isothermal diffuse reflection boundary condition is imposed on two parallel infinite planes at $x_3=\pm 1$. The main difficulties lie in the fact that the direct Fourier transform is not applicable to the vertical $x_3$-variable, and the linear collision operator $K$ loses its compactness on $L^2((-1,1)\times \R^3_v)$ although it is compact on $L^2(\R^3_v)$. By introducing a regularization operator $K_n$ via the finite-dimensional Fourier series truncation in $L^2(-1,1)$, we study the spectrum of the linearized initial-boundary value approximation problem, establish the resolvent estimates, and identify the leading diffusive eigenvalue. This spectral structure governs the sharp asymptotic dynamics of the original linear problem as $n\to \infty$, enabling us to construct the large-time behavior for the nonlinear problem and rigorously prove that the solution converges with a faster rate toward that of the two-dimensional heat equation in the horizontal direction. 
\end{abstract}

\date{\today}
\subjclass[2020]{35Q20, 35B35}
	%35Q20  	Boltzmann equations
	%35B35  	Stability in context of PDEs
\keywords{Boltzmann equation, infinite layer, diffuse reflection boundary, spectral analysis, regularization, convergence to heat equation}

\maketitle

\setcounter{tocdepth}{2}
\tableofcontents
\thispagestyle{empty}

\section{Introduction}

\subsection{Setting}
The Boltzmann equation provides a fundamental description of the dynamics of dilute gases, bridging microscopic particle interactions and macroscopic fluid behavior. A central theme in kinetic theory is to rigorously justify the emergence of macroscopic equations in appropriate asymptotic regimes. Among these, the diffusive limit and the characterization of large-time behavior have received particular attention, as they reveal how kinetic transport relaxes toward effective diffusion processes, cf.~\cite{Cercignani00,CIP}.

In this paper, we investigate the large-time asymptotics of solutions to the Boltzmann equation in a three-dimensional infinite layer $\Omega:=\mathbb{R}^2\times (-1,1)$: 
\begin{equation}\label{proF}
\p_t F + v\cdot\nabla_x F=Q(F,F), \ (t,x,v)\in [0,\infty) \times \O \times \mathbb{R}^3. 
\end{equation}
Here, $F=F(t,x,v)\geq 0$ stands for the velocity distribution function of gas particles with velocity $v=(v_1,v_2,v_3)\in \R^3$ at time $t\geq 0$ and position $x=(x_1,x_2,x_3)\in \Omega\subset \R^3$, and the initial and boundary conditions are to be specified later. The Boltzmann collision term is a bilinear integral operator acting only on velocity variable, and to the end, we only consider the hard sphere model, reading as 
\begin{equation*}
Q(F,G):=\int_{\mathbb{R}^3}\int_{\mathbb{S}^2}|(v-u)\cdot \omega|[F(u')G(v')-F(u)G(v)]\,\d\omega \dd u,
\end{equation*}
where the velocity pairs $(v,u)$ and $(v',u')$ satisfy 
\begin{equation*}
  v'=v+[(u-v)\cdot\omega]\omega,\quad u'=u-[(u-v)\cdot\omega]\omega,\quad \omega\in\mathbb{S}^2,
\end{equation*}
that's the $\omega$-representation in terms of the conservation of momentum and energy for elastic collisions between molecules:
\begin{equation*}
  v+u=v'+u',\quad |v|^2+|u|^2=|v'|^2+|u'|^2.
\end{equation*}

The gas particles also interact with the physical boundary $\partial\Omega$ at two parallel infinite planes $\mathbb{R}^2 \times \{x_3=\pm 1\}$. To describe the boundary condition, we split the boundary phase space $\partial\Omega\times \R^3_v$ into the outgoing set $\gamma_+$, incoming set $\gamma_-$ and grazing set $\gamma_0$ by
\begin{align*}
\gamma_{+}^\pm :=&\{(x,v)\in\mathbb{R}^2 \times \{x_3=\pm 1\}\times\mathbb{R}^{3}: v_3 \gtrless 0 \} , \ \gamma_+ := \gamma_+^+ \cup \gamma_+^-,  \\
\gamma_{-}^\pm :=&\{(x,v)\in\mathbb{R}^2 \times \{x_3=\pm 1\}\times\mathbb{R}^{3}:v_3 \lessgtr 0\}, \ \gamma_-:= \gamma_-^+ \cup \gamma_-^-, \\
\gamma_{0}^\pm :=&\{(x,v)\in\mathbb{R}^2 \times \{x_3=\pm 1\}\times\mathbb{R}^{3}:v_3=0\}, \ \gamma_0:= \gamma_0^+ \cup \gamma_0^-.
\end{align*}
We are interested in the infinite layer problem with the isothermal diffuse reflection boundary condition:
\begin{equation}\label{F.bc}
F(t,x,v)|_{\gamma_-^\pm}=c_\mu \mu(v)\int_{u_3\gtrless 0}F(t,x,u)|u_3| \,\dd u,
\end{equation}
where
\begin{equation}\label{def.mu}
\mu(v):=\frac{1}{(2\pi)^{3/2}}e^{-\frac{|v|^2}{2}}
\end{equation}
is a normalized global Maxwellian with zero bulk velocity and the constant $c_\mu=\sqrt{2\pi}$ is chosen to satisfy $\int_{v_3\gtrless 0}c_\mu\mu(v) |v_3|dv=1$ so that $c_\mu\mu(v) |v_3|$ is a probability measure on the half velocity spaces $\{v\in\R^3: v_3\gtrless 0\}$. Note that under the condition \eqref{F.bc}, the mass flux is vanishing at the boundaries, that is,
\begin{equation*}
\int_{\R^3} v_3F(t,x_1,x_2,x_3=\pm 1, v)\,\d v\equiv 0,\quad t\geq 0,x\in \p\Omega.
\end{equation*}

In the standard perturbation framework around the global Maxwellian $\mu$ in \eqref{def.mu}, we seek for the solution of the form $F = \mu + \sqrt{\mu}f$ to \eqref{proF} and \eqref{F.bc}. Then plugging this, the initial-boundary value problem on the perturbation $f$ is reformulated as
\begin{align}
\begin{cases}
     &\dis \p_t f + v\cdot \nabla_x f + \mathcal{L}f = \Gamma(f,f), \\
    &\dis  f(t,x,v)|_{\gamma_-^\pm } = c_\mu \sqrt{\mu(v)} \int_{u_3\gtrless 0} f(t,x,u)\sqrt{\mu(u)}|u_3|\,\dd u, \\   
    &\dis  f(0,x,v) = f_0(x,v) := (F(0,x,v)-\mu)/\sqrt{\mu}.  
\end{cases}\label{nonlinear_f}
\end{align}
Here $\mathcal{L}f$ and $\Gamma(f,f)$ denote the linearized collision term and nonlinear term, respectively:
\begin{align}
    &  \mathcal{L}f:=  -\mu^{-1/2}[Q(\mu,\sqrt{\mu}f) + Q(\sqrt{\mu}f,\mu)], \label{def.L}\\
    &   \Gamma(f,f) := \mu^{-1/2} Q(\sqrt{\mu}f,\sqrt{\mu}f).\label{def.Ga}
\end{align}
The linear operator $\mathcal{L}$ can be decomposed (see Lemma \ref{lemma:L_K} later on) as
\begin{align*}
    & \mathcal{L}f = \nu(v) f - Kf.
\end{align*}
We define two operators $A,B$ as
\begin{align}
    & Af = -v\cdot \nabla_x f - \nu(v)f, \label{def_A} \\
    &Bf = -v\cdot \nabla_x f - \nu(v) f + Kf. \label{def_B}
\end{align} 
The domain of $A,B$ and the corresponding semi-group $e^{At},e^{Bt}$ are to be discussed in Section \ref{sec:eAt_def}.

The global existence and time-decay of solutions to \eqref{nonlinear_f} was recently obtained in \cite{chen2025global} through the energy method; the sharp large-time behavior, in particular, proving the asymptotic toward diffusion waves along the two-dimensional horizontal direction, was left open. The goal of the current work is to develop a new method, based on spectral analysis, to determine the optimal large-time dynamics. We emphasize that the spectral theory of the Boltzmann equation in an infinite layer is a challenging topic in kinetic theory and of its own interest and importance.  

In what follows, we briefly review several main lines of research on the large-time behavior of cutoff Boltzmann equation and related fluid equations in the framework of spectral analysis.
\begin{itemize}
    \item The study of spectrum for the linearized Boltzmann collision operator $\mathcal{L}$ has a long history, which may date back to Wang Chang-Uhlenbeck \cite{chang1970propagation} in the case of Maxwell molecules; see also the monograph Cercignani \cite[Chapterm IV.6]{Cercignani} for more details on the classical works. In the spatially inhomogeneous setting, the spectral analysis of the linearized problem was developed by Ellis-Pinsky \cite{Ellis1975}. Later, through the spectral theory and bootstrap argument, the first global existence result on the nonlinear Boltzmann equation in both the whole space and the torus was pioneered by Ukai-Asano \cite{ukai1983steady,ukai1986steady}, and later refined by Ukai-Yang \cite{ukai2006boltzmann,ukai2006mathematical}. This classical approach focuses on characterizing the resolvent and spectrum of the operator $B$ defined in \eqref{def_B}. A key argument in this method is the exploitation of the compactness of the operator $K$ on $L^2(\mathbb{R}^3_v)$, which circumvents a detailed analysis of the essential spectrum. One crucial advantage of this spectral framework is the ability to reveal the precise structure of the leading order asymptotic behavior of solutions. 
    
    Moreover, this framework using the Boltzmann's spectrum method has been successfully applied to more complex systems with self-consistent forces in plasma physics by Li-Yang-Zhong, such as the Vlasov-Poisson-Boltzmann system and Vlasov-Maxwell-Boltzmann system \cite{li2016spectrum,li2021spectrum}. Another breakthrough was the factorization method developed by Gualdani–Mischler–Mouhot \cite{gualdani2010factorization} in the non-symmetric perturbation framework. We also mention the fundamental work  developed by Liu-Yu in \cite{liu2004green,liu2006green} via the method of Green's functions for the Boltzmann equation.\\

\item For the study of the large-time behavior of strong solutions in the whole space and on the torus, another powerful analysis tool is the energy method independently by Liu-Yang-Yu \cite{liu2004energy} and Guo \cite{guo2004boltzmann}. In addition, the hypocoercivity approach of Desvillettes-Villani \cite{desvillettes2005trend} provides a general theory of studying convergence to equilibrium states for large initial data. Closely related to the current work as well as our previous work \cite{chen2025global}, we also mention the $L^1_k$ method by Duan–Liu–Sakamoto–Strain \cite{duan2021global} on the torus and its counterpart in the whole space \cite{duan_SIMA} for the low-regularity solutions to the non-cutoff Boltzmann equation. By contrast, the spectral framework is able to uniquely capture the diffusion waves and hydrodynamic modes that govern the large-time dynamics, providing a detailed description of the solution beyond the mere decay rates that the usual energy approach gives.\\

    \item In the boundary value problem, high-order energy estimate is not applicable due to formation of singularity near the boundary \cite{GKTT,GKTT2,chen2024gradient,CK,kim2011formation}, and the Fourier transform technique is not available either due to the geometry and boundary condition.  Guo proposed a low-regularity framework using $L^2-L^\infty$ space in \cite{G}, and obtained the exponential decay structure of the solution under various boundary conditions. This breakthrough has led to substantial development of the boundary value problem in kinetic theory, including \cite{EGKM,EGKM2,liu2017initial,duan2019effects,CKL}.

    The $L^p-L^\infty$ framework has been successfully applied to the half space $\mathbb{T}^2\times \mathbb{R}^+$ or the exterior domain problems \cite{jung2025global,guo2024diffusive, jang2021incompressible, cao2023passage}. In such settings, the lack of Poincaré inequality presents a significant challenge, requiring new methods to address the combined difficulties of unboundedness and the presence of a boundary. A key argument in these studies involves leveraging the compactness of the boundary to establish crucial $L^6$ estimates. On the other hand, there are fewer results for domains with non-compact boundaries.

    As mentioned before, for the infinite layer domain $\mathbb{R}^2 \times (-1,1)$, a global solution with a decay rate nearly identical to the two-dimensional heat equation was constructed in our previous work \cite{chen2025global}. This argument is conducted in the $L^1_k L^\infty_{x_3,v}$ space through a Fourier transform in the horizontal directions($k$ is the horizontal Fourier variable). A related approach employing Fourier series in the horizontal directions was applied to the kinetic Couette flow in the domain $\mathbb{T}^2 \times (-1,1)$ in \cite {duan20243d}. We also mention the thermal transpiration problem in the infinite layer domain \cite{chen2007thermal}.

    Notably, the spectral analysis for boundary value problems remains largely undeveloped outside of specific cases. In the exterior domains \cite{ukai1983steady,ukai1986steady}, a perturbation argument shows that the derivation of eigenvalues and eigen-projections coincides with those of the whole space. Other than this, there are no spectral results for domains with non-compact boundaries in the Boltzmann theory, in particular for the infinite layer domain under consideration.\\

    \item However, through the Chapman–Enskog expansion, the Boltzmann equation can be approximated by the compressible Navier-Stokes equations. At the fluid level, in a series of papers \cite{kagei2007asymptotic,kagei2007resolvent,kagei2008large}, Kagei conducted a systematic study of the large-time asymptotics for the compressible Navier–Stokes system in an infinite layer, establishing resolvent estimates and semigroup asymptotics for the linearized problem and deriving diffusion profiles for the nonlinear system. These works give a comprehensive fluid-level description of diffusion phenomena in the infinite layer setting. Our present work can be regarded as a kinetic counterpart. Direct adoption of Kagei's approach seems not workable for the Boltzmann case. We expect to develop new techniques in this paper to understand the problem on the sharp asymptotic behavior of solutions toward diffusion waves in large time for the Boltzmann equation in the infinite layer. The difficulties and key ideas are to be described in Section \ref{sec:difficulty} later on for convenience of readers.

\end{itemize}

\subsection{Main result}\label{sec:result}
Before stating our main result, we introduce some notations and norms used throughout the paper. We denote $\mathbf{P}_0: L^2((-1,1)\times \mathbb{R}^3_v)\to L^2(\mathbb{R}^3_v)$ as the projection onto the average mass in $x_3$ direction:
\begin{align}
    & \mathbf{P}_0 f := \frac{1}{2}\int_{-1}^1 \int_{\mathbb{R}^3} f(x_3,u)\sqrt{\mu(u)}\, \dd u \dd x_3 \sqrt{\mu(v)},  
    \label{P0_def} 
\end{align}
for $f=f(x_3,v) \in L^2((-1,1)\times \mathbb{R}^3_v)$. We also denote $\mathbf{P}:L^2(\mathbb{R}^3_v)\to L^2(\mathbb{R}^3_v)$ as the standard projection onto the kernel space
$\ker\mathcal{L}$ $ = \text{span}\{\sqrt{\mu},v_1\sqrt{\mu},v_2\sqrt{\mu},v_3\sqrt{\mu},|v|^2\sqrt{\mu}\}$:
\begin{align}\label{abc-def}
\left\{\begin{array}{rll}
    &\mathbf{P}f: = a\sqrt{\mu} + \mathbf{b}\cdot v\sqrt{\mu} + c\frac{|v|^2-3}{2}\sqrt{\mu} ,\\[2mm]
    & a=\int_{\mathbb{R}^3}f\sqrt{\mu}\,\dd v, \ \mathbf{b} = (b_1,b_2,b_3),b_i = \int_{\mathbb{R}^3} v_if\sqrt{\mu}\,\dd v\, (i=1,2,3), \ c=\int_{\mathbb{R}^3} \frac{|v|^2-3}{2}f\sqrt{\mu}\,\dd v,
    \end{array}\right.
\end{align}
for $f=f(v)\in L^2(\mathbb{R}^3_v)$, where $a,\mathbf{b},c$ can be functions of $(t,x)$ whenever $f$ also depends on $(t,x)$.

Denote the horizontal spatial variable as
\begin{align*}
    x_{\parallel}:= (x_1,x_2)\in \R^2.
\end{align*}
For the linear estimate, we use the notation
\begin{align}\label{si-qm}
     \sigma_{q,m} := \frac{1}{q} - \frac{1}{2} + \frac{m}{2},
\end{align}
and denote the following functional space and norm:
\begin{align*}
    &    Z_q:= L^2_{x_3,v}L^q_{x_\parallel} : = L^2((-1,1)\times \mathbb{R}^3_v;L^q(\mathbb{R}^2_{x_\parallel})), \\
    &\Vert f(t)\Vert_{Z_q} : = \Vert f(t)\Vert_{L^2_{x_3,v}L^q_{x_\parallel}}.
\end{align*}
Here, $\sigma_{q,m}$ is the $L^q-L^2$ polynomial decay rate of $m$-order derivatives corresponding to the two-dimensional heat equation. 

For the nonlinear estimate, we denote the following functional spaces:
\begin{align*}
    &      L^\infty_{x_3,v}H^\ell_{x_\parallel}:= L^\infty((-1,1)\times \mathbb{R}^3_v;H^\ell(\mathbb{R}^2_{x_\parallel})), \\
    & L^\infty_{x_3,v}\dot{H}^\ell_{x_\parallel}:= L^\infty((-1,1)\times \mathbb{R}^3_v;\dot{H}^\ell(\mathbb{R}^2_{x_\parallel})), \\
    & L^\infty_{x_3,v}L^q_{x_\parallel}:= L^\infty((-1,1)\times \mathbb{R}^3_v;L^q(\mathbb{R}^2_{x_\parallel})).
\end{align*}
Here, $H^\ell(\mathbb{R}^2_{x_\parallel})$ and $\dot{H}^{\ell}(\mathbb{R}^2_{x_\parallel})$ represent the standard Sobolev space and homogeneous Sobolev space in $x_\parallel\in \mathbb{R}^2$, respectively. Correspondingly, we denote the following exponential velocity weighted norms:
\begin{align*}
&  w(v):=e^{\theta|v|^2}, \ 0<\theta<\frac{1}{4}, \\
    &\Vert f(t)\Vert_{H^{\theta,\ell}} :=  \Vert w f(t)\Vert_{L^\infty_{x_3,v}H^\ell_{x_\parallel}} = \Vert e^{\theta|v|^2} f(t)\Vert_{L^\infty_{x_3,v}H^\ell_{x_\parallel}} ,   \\
&    \Vert f(t)\Vert_{\dot{H}^{\theta,\ell}} := \Vert w f(t)\Vert_{L^\infty_{x_3,v}\dot{H}^\ell_{x_\parallel}} = \Vert e^{\theta|v|^2} f(t)\Vert_{L^\infty_{x_3,v}\dot{H}^\ell_{x_\parallel}}, \\
    &\Vert f(t)\Vert_{X^{\theta,q}} := \Vert wf(t)\Vert_{L^\infty_{x_3,v}L^q_{x_\parallel}} =  \Vert e^{\theta|v|^2}f(t)\Vert_{L^\infty_{x_3,v}L^q_{x_\parallel}}.
\end{align*}

The first main result of this paper is stated as follows.

\begin{theorem}\label{thm:asymptotic_stability}
There exist small constants $\delta>0$ and $c_0>0$ such that if the initial condition $F_0 := \mu + \sqrt{\mu}f_0\geq 0$ satisfies
\begin{align}
    \Vert f_0\Vert_{H^{\theta,\ell}} + \Vert f_0\Vert_{X^{\theta,q}} < \delta, \label{initial_condition}
\end{align}
then there exists a unique global solution $F := \mu + \sqrt{\mu}f\geq 0$ to the initial-boundary value problem \eqref{nonlinear_f} for $1\leq q < 2$ such that,
\begin{align*}
    &   \Vert f(t)\Vert_{H^{\theta,\ell}} \lesssim e^{-c_0 t}\Vert f_0\Vert_{H^{\theta,\ell}} +  (1+t)^{-\sigma_{q,0}} \Vert f_0\Vert_{X^{\theta,q}}.
\end{align*}

If we further assume $\mathbf{P}_0 f_0 \equiv 0$, then the existence and uniqueness of $F=\mu+\sqrt{\mu}f\geq 0$ hold for $1\leq q\leq 2$, and the extra decay rate holds:
\begin{align*}
    & \Vert f(t)\Vert_{H^{\theta,\ell}} \lesssim e^{-c_0 t}\Vert f_0\Vert_{H^{\theta,\ell}} + (1+t)^{-\sigma_{q,1}}\Vert f_0\Vert_{X^{\theta,q}}.
\end{align*}

\end{theorem}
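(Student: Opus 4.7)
The plan is to derive the sharp linear decay by spectral analysis of the horizontal Fourier symbol of $B$, and then close the nonlinear problem by a Duhamel bootstrap in the weighted $L^\infty$ framework. Taking the Fourier transform in $x_\parallel\in\R^2$ reduces \eqref{nonlinear_f} to a family of one-dimensional boundary value problems on $(-1,1)\times\R^3_v$ parameterized by $k\in\R^2$, with symbol
\be
B(k)=-iv_\parallel\cd k-v_3\p_{x_3}-\nu(v)+K,
\ee
under the diffuse reflection condition at $x_3=\pm 1$. Since $K$ fails to be compact on $L^2((-1,1)\times\R^3_v)$, I would first replace it by its Fourier-series truncation $K_n$ in $x_3$ (as already set up in the paper) to produce a finite-rank perturbation $B_n(k)$, analyze its spectrum by an Ellis--Pinsky-type argument, and then recover the spectral data of $B(k)$ by passing $n\to\infty$ inside the resolvent identity.

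With this regularization, I would show that $B_n(k)$ admits a simple diffusive eigenvalue branch $\lambda_n(k)=-D_n|k|^2+O(|k|^3)$ near $k=0$, with associated spectral projection $\Pi_n(k)$ reducing to $\mathbf{P}_0$ at $k=0$, and that the remainder of the spectrum lies in $\{\mathrm{Re}\,z\le -\kappa\}$ uniformly in $n$ and in $k$ in a neighborhood of $0$; outside that neighborhood, exponential decay of $e^{B_n(k)t}$ follows from the hypocoercive bounds already established in \cite{chen2025global}. Passing $n\to\infty$ yields a limit eigenvalue $\lambda(k)=-D|k|^2+\cdots$ and projection $\Pi(k)$ for $B(k)$, with $D>0$ the Chapman--Enskog diffusion coefficient of the effective two-dimensional horizontal heat equation. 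A contour integration in the resolvent representation of $e^{B(k)t}$ then yields the sharp linear decay
\be
\Vert e^{Bt}g\Vert_{Z_2}\ls (1+t)^{-\sigma_{q,m}}\Vert g\Vert_{Z_q},
\ee
with $m=0$ in general and $m=1$ under $\mathbf{P}_0 g\equiv 0$: the latter assumption forces $\Pi(0)\hat g(0)=0$, so $\Pi(k)\hat g(k)=O(|k|)$ near $k=0$, producing one extra factor of $|k|$ under the small-$k$ spectral integral and hence one additional $(1+t)^{-1/2}$.

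To close the nonlinear problem in the norms of \eqref{initial_condition}, I would iterate the Duhamel representation
\be
f(t)=e^{Bt}f_0+\int_0^t e^{B(t-s)}\Gamma(f(s),f(s))\,\dd s
\ee
by combining the $Z_q\to Z_2$ bound above with Guo's $L^2$--$L^\infty$ trajectory machinery along characteristics, together with the diffuse-reflection bounce-back estimate as in \cite{G,EGKM}, to upgrade $L^2_{x_3,v}$ information to the weighted $L^\infty_{x_3,v}$ norms $H^{\theta,\ell}$ and $X^{\theta,q}$. The bilinear structure of $\Gamma$ lets me put one factor in $X^{\theta,q}$ and the other in $H^{\theta,\ell}$; after controlling $\Gamma(f,f)$ in $Z_q$, one obtains a closed inequality for $\mathcal{N}(t):=\sup_{0\le s\le t}(1+s)^{\sigma_{q,m}}\Vert f(s)\Vert_{H^{\theta,\ell}}$ that closes for $\delta$ small. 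Nonnegativity $F\ge 0$ then follows from the standard positivity-preserving argument for the diffuse reflection problem.

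The hardest step will be the spectral one: establishing uniform-in-$n$ resolvent bounds for $B_n(k)$ on $\{\mathrm{Re}\,z>-\kappa\}\setminus\{\lambda_n(k)\}$, proving stability of the leading eigenvalue branch as $n\to\infty$, and identifying the limiting $D$ with the horizontal Chapman--Enskog coefficient. A secondary obstacle is the $L^2\to L^\infty$ upgrade: unlike for bounded domains, no Poincar\'e gain is available in the non-compact $x_\parallel$ directions, so the algebraic gain must come entirely from the $Z_q\to Z_2$ heat-kernel decay just established, which is what imposes the restriction $1\le q<2$ (respectively $1\le q\le 2$ when $\mathbf{P}_0 f_0\equiv 0$).
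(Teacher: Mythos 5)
Your overall architecture (regularize $K$ by $K_n=KP_n$, do spectral analysis of the truncated symbol $\hat B_n(k)$, show a simple diffusive branch $\lambda_n(k)=-D_n|k|^2+O(|k|^3)$ with projection tending to $\mathbf{P}_0$, establish uniform-in-$n$ resolvent bounds away from the branch, then close the nonlinear problem via Duhamel plus Guo's $L^2$--$L^\infty$ trajectory machinery) matches the paper's, and you correctly flag the uniform-in-$n$ resolvent control and the $L^2\to L^\infty$ upgrade as the critical steps, with the $q<2$ restriction tied to the borderline time integral. The nonlinear bootstrap you sketch is essentially Lemma \ref{lemma:nonlinear_aprior_est} and Lemma \ref{lemma:gamma_control}; the only small deviation is that the paper bounds both the $H^{\theta,\ell}$ and $X^{\theta,1}$ norms of $\nu^{-1}\Gamma(f,f)$ by $\Vert f\Vert_{H^{\theta,\ell}}^2$ rather than splitting the two factors between the two norms.

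The genuine gap is in the spectral step. You propose to ``recover the spectral data of $B(k)$ by passing $n\to\infty$ inside the resolvent identity'' and then perform ``a contour integration in the resolvent representation of $e^{B(k)t}$''. This step would not go through: because $K$ is not compact on $L^2((-1,1)\times\R^3_v)$, the essential spectrum of $\hat B(k)$ in the strip $\{-\sigma_0<\mathrm{Re}\,z\le 0\}$ is not under control, so one cannot justify a spectral/contour representation of $e^{\hat B(k)t}$ with an isolated eigenvalue plus remainder. There is no known characterization of $\Sigma(\hat B(k))$ that would let you close the contour, and the uniform resolvent bounds you do have for $\hat B_n(k)$ do not obviously pass to the limit as an operator norm bound for $(\lambda-\hat B(k))^{-1}$ on the whole line $\mathrm{Re}\,\lambda=-\sigma_0$. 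The paper circumvents this precisely by never building a spectral representation for $\hat B(k)$: it derives the semigroup expansion only for $e^{\hat B_n(k)t}$ (Proposition \ref{prop:inverse_laplace}, Lemma \ref{lemma:linear_decay}), and then recovers the decay of $e^{Bt}$ via the Duhamel identity
\begin{align*}
f(t) = e^{B_n t}f_0 + \int_0^t e^{B_n(t-s)}\,(\mathcal{L}_n-\mathcal{L})f(s)\,\dd s,
\end{align*}
with the source $\Vert (K_n-K)f(s)\Vert_{L^2}\to 0$ by dominated convergence as $n\to\infty$ (Proposition \ref{prop.Bdecay}). Your proposal should be repaired by deferring the $n\to\infty$ limit to this semigroup-level Duhamel formula instead of to the resolvent; the limit of $\lambda_n^*$ is still used, but only to identify the diffusion coefficient in the leading profile of Theorem \ref{thm:leading_behavior}, not to produce an eigenvalue of $\hat B(k)$ itself.
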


Several remarks on Theorem \ref{thm:asymptotic_stability} are given below.

\begin{remark}\label{rmk:1}
    In the whole space, an extra decay rate requires the vanishing of all macroscopic components of the initial data, i.e., $\mathbf{P}f_0 = 0$. This comes from the fact that the operator $\hat{B}(k)$ has five eigenvalues in the space $L^2(\mathbb{R}_v^3)$, and the leading order of the eigen-projection is exactly $\mathbf{P}$(see \cite{ukai2006mathematical}). In our problem, the condition for extra decay reduces to $\mathbf{P}_0f_0 = 0$. This difference arises from the diffusive boundary effect. To be specific, the spectrum of the $\hat{B}(k)$ in the space $L^2((-1,1)\times \mathbb{R}^3_v)$ with the diffuse boundary condition contains only one eigenvalue. And the leading order term in the eigen-projection is exactly $\mathbf{P}_0$(see Proposition \ref{prop:eigenvalue}). 
\end{remark}

\begin{remark}
The $q<2$ constraint arises from the $(1+t)^{-(\frac{1}{q}-\frac{1}{2})}$ decay rate of the 2D heat equation. At $q=2$, the linear solution does not decay. In the nonlinear estimate, this becomes a borderline case due to logarithmic divergence $\int_0^t (1+t-s)^{-1} ds$ (see Lemma \ref{lemma:nonlinear_aprior_est}). Imposing the condition $\mathbf{P}_0 f_0 = 0$ provides extra decay for the linear solution. This resolves the borderline issue, yielding the construction of a global solution for the $q=2$ case.
\end{remark}

\begin{remark}
We expect that a similar analysis can be applied to the two-dimensional infinite layer $\mathbb{R} \times (-1,1)$ problem. In this setting, the linear decay rate is consistent with the one-dimensional heat equation, which is $(1+t)^{-\frac{1}{2}(\frac{1}{q}-\frac{1}{2})}$. For $q=1$, this becomes $(1+t)^{-1/4}$. It turns out that this rate is just sufficient to control the nonlinearity:
\begin{align*}
    & \int_0^t (1+t-s)^{-\frac{1}{4}-\frac{1}{2}}(1+s)^{-\frac{1}{4}\times 2} \,\dd s \lesssim (1+t)^{-\frac{1}{4}}.
\end{align*}
This yields the construction of a global solution that decays as $(1+t)^{-1/4}$ with $L^1(\mathbb{R}^1_{x_\parallel})$ initial data. 

We expect that such a result in the two-dimensional infinite layer problem can be proved using an analogous argument presented in the paper. This serves as a significant improvement over the method employed in \cite{chen2025global}, where the $L^1_k L^\infty_{x_3,v}$ framework could not yield a global solution due to the slow decay rates in low dimensional space.

\end{remark}

The leading asymptotic profile of the solution constructed in Theorem \ref{thm:asymptotic_stability} is characterized in the following result.

\begin{theorem}\label{thm:leading_behavior}
Let $1\leq q<2$ and the initial condition $f_0$ satisfy \eqref{initial_condition} with $\mathbf{P}_0f_0\neq 0$. Also, let $f$ be the solution constructed in Theorem \ref{thm:asymptotic_stability}. Then there exists a constant $\lambda^* > 0$, to be explicitly constructed in the proof, such that
\begin{align*}
    &   \Vert f(t,x,v)- \rho(t,x_\parallel)\sqrt{\mu(v)}\Vert_{L^2_{x,v}} \lesssim (1+t)^{-2\sigma_{q,0}} \log(2+t) [\Vert f_0\Vert_{H^{\theta,\ell}} + \Vert f_0\Vert_{X^{\theta,q}}].
\end{align*}
Here $\rho(t,x_\parallel)$ is the solution to the following heat equation in $\mathbb{R}^2_{x_\parallel}$:
\begin{align*}
\begin{cases}
       &  \p_t \rho = \lambda^* \Delta_{x_\parallel} \rho, \\
    & \rho(0,x_\parallel) = \rho_0(x_\parallel) . 
\end{cases}
\end{align*}
The initial condition $\rho_0(x_\parallel)$ is defined in terms of $\mathbf{P}_0f_0 = \rho_0(x_\parallel)\sqrt{\mu}$ as in \eqref{P0_def}.

\end{theorem}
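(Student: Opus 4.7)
The plan is a Duhamel-based argument driven by the spectral decomposition of the linear semigroup $e^{Bt}$ established in the earlier sections. Starting from the mild formulation
\begin{equation*}
f(t) = e^{Bt}f_0 + \int_0^t e^{B(t-s)}\Gamma(f,f)(s)\,\dd s,
\end{equation*}
I would split the semigroup as $e^{Bt} = \mathcal{S}(t)\mathbf{P}_0 + \mathcal{R}(t)$, where $\mathcal{S}(t) := e^{\lambda^*\Delta_{x_\parallel}t}$ is the two-dimensional heat semigroup on $\mathbb{R}^2_{x_\parallel}$ associated with the leading diffusive eigenvalue identified in Proposition \ref{prop:eigenvalue}, and the remainder $\mathcal{R}(t)$ enjoys the one-order-faster bound $\|\mathcal{R}(t)g\|_{L^2_{x,v}} \lesssim (1+t)^{-\sigma_{q,1}}\|g\|_{Z_q}$. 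Since $\mathbf{P}_0 f_0 = \rho_0(x_\parallel)\sqrt{\mu(v)}$, the target profile coincides with $\mathcal{S}(t)\mathbf{P}_0 f_0 = \rho(t,x_\parallel)\sqrt{\mu(v)}$, and the theorem reduces to estimating
\begin{equation*}
f(t) - \rho(t,x_\parallel)\sqrt{\mu(v)} = \mathcal{R}(t)f_0 + \int_0^t e^{B(t-s)}\Gamma(f,f)(s)\,\dd s
\end{equation*}
in $L^2_{x,v}$.

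The crucial algebraic observation is $\mathbf{P}_0\Gamma(f,f) \equiv 0$, which follows from the mass conservation of the collision operator: $\int_{\mathbb{R}^3}\Gamma(f,f)\sqrt{\mu}\,\dd v \equiv 0$. Consequently $e^{B(t-s)}\Gamma(f,f) = \mathcal{R}(t-s)\Gamma(f,f)$, so the entire Duhamel integrand is driven by the faster-decaying component of the semigroup. For the linear part, applying the remainder bound with $g = f_0$ gives $\|\mathcal{R}(t)f_0\|_{L^2_{x,v}} \lesssim (1+t)^{-\sigma_{q,1}}\|f_0\|_{X^{\theta,q}}$, and the gain $\sigma_{q,1} - 2\sigma_{q,0} = 1 - 1/q \geq 0$ for $1\leq q<2$ shows this is dominated by $(1+t)^{-2\sigma_{q,0}}\log(2+t)$.

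For the nonlinear piece, I would estimate $\Gamma(f,f)$ in the bilinear-friendly space $Z_1 = L^2_{x_3,v}L^1_{x_\parallel}$ via Cauchy-Schwarz in $x_\parallel$ combined with the standard weighted bilinear bound for $\Gamma$ in $v$, yielding $\|\Gamma(f,f)(s)\|_{Z_1} \lesssim \|f(s)\|_{H^{\theta,\ell}}^2 \lesssim (1+s)^{-2\sigma_{q,0}}$ by Theorem \ref{thm:asymptotic_stability}. Applying the remainder bound with $q=1$, so that $\|\mathcal{R}(t-s)\cdot\|_{L^2_{x,v}} \lesssim (1+t-s)^{-\sigma_{1,1}} = (1+t-s)^{-1}$, leaves the borderline convolution
\begin{equation*}
\int_0^t (1+t-s)^{-1}(1+s)^{-2\sigma_{q,0}}\,\dd s,
\end{equation*}
which produces exactly the desired $(1+t)^{-2\sigma_{q,0}}\log(2+t)$ rate, the logarithm arising precisely from the borderline integration near $s=t$.

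The main technical obstacle will be making the splitting $e^{Bt} = \mathcal{S}(t)\mathbf{P}_0 + \mathcal{R}(t)$ rigorous with the quantitative remainder bound in the $Z_q \to L^2_{x,v}$ norm. This rests on the inverse Laplace representation of $e^{Bt}$, deforming the contour to isolate the residue at the leading eigenvalue $\lambda(k) = -\lambda^*|k|^2 + O(|k|^4)$ for small Fourier frequencies $k$ in $x_\parallel$, while bounding the remaining contour contribution uniformly in the regularization parameter $n$ before sending $n\to\infty$. Since the diffusive eigenvalue is only available for small $|k|$, the medium- and large-frequency regimes must be handled separately and absorbed into $\mathcal{R}(t)$ using the resolvent estimates and the regularization framework developed in Section \ref{sec:eAt_def}. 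Once this decomposition is in place at the level of the full semigroup, the Duhamel convolution argument above concludes the proof.
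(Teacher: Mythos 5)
Your proposal is correct and follows the same conceptual route as the paper, with an equivalent but differently packaged decomposition. Both arguments hinge on (i) the identity $\mathbf{P}_0\Gamma(f,f)\equiv 0$ from mass conservation of the collision operator, which lets the Duhamel integrand ride the faster-decaying branch of the semigroup; (ii) the spectral representation of $e^{\hat{B}_n(k)t}$ from Proposition \ref{prop:inverse_laplace}, with the heat kernel $e^{-\lambda^*|k|^2 t}$ isolated as the leading contribution and everything else carrying an extra factor of $|k|$; and (iii) the borderline convolution $\int_0^t(1+t-s)^{-1}(1+s)^{-2\sigma_{q,0}}\,\dd s$ producing the logarithm. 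Algebraically your splitting $f-\rho\sqrt{\mu}=\mathcal{R}(t)f_0+\int_0^t\mathcal{R}(t-s)\Gamma(f,f)\,\dd s$ reorganizes the paper's triangle inequality $\|f-f_{\mathbf p}\|+\|f_{\mathbf p}-\rho\sqrt{\mu}\|$: the term $\mathcal{R}(t)f_0$ is exactly $(f_{\mathbf p}-\rho\sqrt{\mu})+e^{Bt}(f_0-\mathbf{P}_0f_0)$, and the Duhamel integral is the same in both. The content of your claimed remainder bound is precisely the combination of Proposition \ref{prop.Bdecay} (for the $\mathbf{P}_0$-free part) and Proposition \ref{proposition:heat_eqn_limit} (for the $\rho_0\sqrt{\mu}$ part), including the uniform-in-$n$ contour estimates and the $n\to\infty$ limit you correctly flag as the main technical step. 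One genuine advantage of your packaging is that the full error estimate can be carried out directly in $L^2_{x,v}$, whereas the paper runs Lemma \ref{lemma:f1_leading} through the heavier $H^{\theta,\ell}$ machinery and only passes to $L^2$ at the very end; since the theorem only asserts an $L^2$ bound, your route is leaner.

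Two small corrections: the eigenvalue expansion is $\lambda_n(k)=-\lambda_n^*|k|^2+C_\lambda^n|k|^3$ with an $O(|k|^3)$ correction (Proposition \ref{prop:eigenvalue}), not $O(|k|^4)$ — both give the needed extra power of $|k|$ after absorbing $|k|^j t$ into $e^{-c|k|^2 t}$, so the argument is unaffected. And the remainder bound should read $\|\mathcal{R}(t)g\|_{L^2_{x,v}}\lesssim(1+t)^{-\sigma_{q,1}}\|g\|_{Z_q}+e^{-c_0 t}\|g\|_{L^2_{x,v}}$, since both Proposition \ref{prop.Bdecay} and Proposition \ref{proposition:heat_eqn_limit} carry an exponentially decaying $L^2$ term; this is harmless as it is integrable against the algebraic rates and does not disturb the final $(1+t)^{-2\sigma_{q,0}}\log(2+t)$ rate.
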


Several remarks on Theorem \ref{thm:leading_behavior} are given below.

\begin{remark}
The identification of the leading asymptotic profile for the Boltzmann equation is one crucial benefit of the spectral method or the Green's function method, compared to other approaches.
    
\end{remark}

\begin{remark}
We first demonstrate that the leading order behavior of the nonlinear solution is governed by the linear dynamics, with an initial condition given by the $x_3$-averaged density (see \eqref{linear_leading}). The proof of this convergence towards the linear profile \eqref{linear_leading} raises the constraint $q<2$.

Furthermore, we show that the leading asymptotic profile of this linear equation is governed by a 2D heat equation. This structure can be intuitively explained by the spectral analysis in Proposition \ref{prop:eigenvalue}. As noted in Remark \ref{rmk:1}, there is only one eigenvalue in this infinite layer problem, a stark contrast to the case in the whole space. The diffusion coefficient $\lambda^*$ in the resulting heat equation corresponds to the leading order term of the eigenvalue; its construction is to be specified in Proposition \ref{prop:eigenvalue}, see \eqref{G1_eqn_l*} and \eqref{G1_eqn}. The leading order eigen-projection corresponding to this eigenvalue is represented by the density $\rho$, which corresponds to the $x_3$-averaged mass.
\end{remark}

\subsection{Difficulty and key ideas}\label{sec:difficulty}
We now discuss the principal difficulties and the methods employed to resolve them.
%\begin{itemize}

%\item 
\medskip
\noindent$\bullet$ \textit{Loss of compactness}

%\end{itemize}

As mentioned in the earlier literature review, the spectral analysis in the whole space heavily relies on the compactness of $K$ on $L^2(\mathbb{R}^3_v)$. In our problem, the presence of a boundary in the $x_3$-direction prevents the use of a Fourier transform in that direction. Consequently, we must work in the space $L^2((-1,1)\times \mathbb{R}^3_v)$ on which $K$ is no longer compact. This loss of compactness presents a significant obstacle to characterizing the essential spectrum of $\hat{B}(k)$. 

To overcome this, we introduce a regularization method in Section \ref{sec:regulariza} designed to recover the compactness. Specifically, we define $P_n f$ as the truncation of $f$'s Fourier series in the $x_3$ direction (see \eqref{Pnf}). This truncation integrates over $x_3$, and when composed with $K$, the resulting operator $K_n  := K P_n $ becomes compact on $L^2((-1,1)\times \mathbb{R}^3_v)$.

The regularized operators $K_n$ and $\mathcal{L}_n = \nu - K_n$ enjoy several good properties. As explained in Section \ref{sec:regulariza}, $\mathcal{L}_n$ maintains a coercive estimate analogous to that of $\mathcal{L}$ and also maintains orthogonality in the $L^2((-1,1)\times \mathbb{R}^3_v)$ inner product.

%\begin{itemize}
\medskip
\noindent$\bullet$ \textit{Resolvent estimate of large imaginary frequency}
%\end{itemize}

We take the Fourier transform in the horizontal direction and carry out the spectrum analysis of the regularized operator $\hat{B}_n(k)$ in \eqref{def_hatBn}:
\begin{align*}
    & (\sigma+i\tau) \hat{f} + i(k_1v_1+k_2v_2) \hat{f} + v_3 \p_{x_3} \hat{f} + \nu \hat{f} - K_n \hat{f} = (\sigma+i\tau) \hat{f} - \hat{A}\hat{f} - K_n \hat{f} = g.
\end{align*}
However, with the presence of the transport operator in the $x_3$ direction, the imaginary part of the energy estimate does not provide any control as $\tau \to \infty$. On the other hand, the presence of $\tau$ in the macroscopic estimate in Proposition \ref{prop:macroscopic} yields a term that becomes unbounded in the limit.

The observation to overcome this lies in analyzing the resolvent $(\sigma+i\tau - \hat{B}_n(k))^{-1}$ through its connection to $(\sigma+i\tau - \hat{A}(k))^{-1}$ given by the second resolvent identity \eqref{A_B_related}. This relationship implies that $(\sigma+i\tau - \hat{B}_n(k))^{-1}$ exists and 
\begin{center}
$\lim_{|\tau|\to \infty}\Vert (\sigma+i\tau - \hat{B}_n(k))^{-1} \Vert_{L^2_{x_3,v}} \to 0$ if $\lim_{|\tau|\to \infty}\Vert (\sigma+i\tau - \hat{A}(k))^{-1} \Vert_{L^2_{x_3,v}} \to 0$. 
\end{center}
Although we cannot achieve such a convergence of $\hat{A}(k)$ from the energy estimate, we observe that $(\sigma+i\tau - \hat{A}(k))^{-1}$ admits an integral representation from the Laplace transform in \eqref{laplace_transform}. Interpreting this Laplace transform as a Fourier transform in $\tau$ allows us to establish the $L^2$ integrability with respect to $\tau$ of both the resolvent and its derivative, i.e.,
\begin{align*}
    &  \int_{-\infty}^\infty \Vert (\sigma+i\tau-\hat{A}(k))^{-1}\Vert_{L^2_{x_3,v}}^2 \dd \tau +  \int_{-\infty}^\infty \Vert \p_\tau(\sigma+i\tau-\hat{A}(k))^{-1}\Vert_{L^2_{x_3,v}}^2 \dd \tau < \infty.
\end{align*}
This integrability ensures that the resolvent norm converges to $0$ as $|\tau|\to\infty$. Therefore, the spectral analysis of $\hat{B}_n(k)$ can be relaxed to the region $|\tau| < \tau_0$, where the problematic term in the macroscopic estimate can be controlled(see Lemma \ref{lemma:large_tau_inv}).

%\begin{itemize}
\medskip
\noindent$\bullet$ \textit{Eigenvalue and eigen-projection of $\hat{B}_n(k)$ and semi-group representation of $e^{\hat{B}_n(k)t}$}
%\end{itemize}

Our analysis on the eigenvalue and eigen-projection is inspired by the base case of $k=0$ for the eigenvalue problem:
\begin{align*}
    & (\sigma+ i\tau) \hat{f} + i(k_1v_1+k_2v_2) \hat{f} + v_3 \p_{x_3} \hat{f} + \mathcal{L}_n \hat{f} = 0, \ \hat{f}|_{\gamma_-} = P_\gamma \hat{f}.
\end{align*}
An energy estimate in $L^2((-1,1)\times \mathbb{R}^3_v)$ for $|k|=\sigma=0$ shows that any solution must satisfy 
%\begin{center}
$\Vert (\mathbf{I}-\mathbf{P})\hat{f}\Vert_{L^2_{x_3,v}} = 0$ and $|(I-P_\gamma)\hat{f}|_{L^2_{\gamma_+}} = 0$. 
%\end{center}
Unlike the whole-space case, the extra boundary constraint excludes the modes $v\sqrt{\mu}$ and $|v|^2\sqrt{\mu}$. The only remaining eigenfunction is $M\sqrt{\mu}$, where $M$ is the average mass density defined by 
$M = \frac{1}{2}\int_{-1}^1 \int_{\mathbb{R}^3} \hat{f} \sqrt{\mu} dv dx_3$ (see Lemma \ref{lemma:eigenvalue_0}).

For $k \neq 0$, we expect the eigenvalue to be near $0$ and the corresponding eigenfunction to be near $M\sqrt{\mu}$. A key observation is that if $M=0$, the macroscopic estimate (Proposition \ref{prop:macroscopic}) combined with the energy estimate implies that the only solution for finite $|\tau|<\tau_0$ is the trivial one, $\hat{f} = 0$. Thus, non-trivial solutions can only exist for $M \neq 0$. To construct these solutions, we employ an asymptotic expansion in $|k|$ small enough: 
\begin{align*}
    &\hat{f} = M\sqrt{\mu} + M|k|G_1 + M |k|^2 G_2, \ \int_{-1}^1 \int_{\mathbb{R}^3} G_1 \sqrt{\mu} \,\dd v \dd x_3 = \int_{-1}^1 \int_{\mathbb{R}^3} G_2 \sqrt{\mu} \,\dd v \dd x_3 = 0,
\end{align*}
with the ansatz $\lambda = -\eta |k|^2$. The equation for $G_1$ is derived at order of $|k|$(see \eqref{G1_eqn}) and corresponds to a standard stationary problem with a source term and zero average mass condition. The equation for $G_2$ is given by the remaining terms (see \eqref{Gn2_eqn})
\begin{align*}
    &  -\eta \sqrt{\mu} -\eta|k|G_1 + i\frac{k_1v_1+k_2v_2}{|k|}G_1 + i(k_1v_1+k_2v_2) G_2 - \eta |k|^2 G_2 + v_3 \p_{x_3}G_2 + \mathcal{L}_nG_2 = 0.
\end{align*}
The zero average mass constraint for $G_2$ provides the key condition for determining the eigenvalue $\eta$. To be specific, integrating the above equation with $\sqrt{\mu}$ yields:
\begin{align*}
    &   \int_{-1}^1 \int_{\mathbb{R}^3} \Big[-\eta \mu  + i\frac{k_1v_1+k_2v_2}{|k|}G_1\sqrt{\mu} + i(k_1v_1+k_2v_2)G_2 \sqrt{\mu} \Big]\,\dd v \dd x_3 = 0.
\end{align*}
Consequently, $\eta$ consists of two parts:
\begin{itemize}
  \item[(a)] A leading order term governed explicitly by $i\iint \frac{k_1v_1+k_2v_2}{|k|}G_1 \sqrt{\mu} \,\d v \d x_3$, which is determined solely by the solution $G_1$ in \eqref{G1_eqn}. This term is proved to be real and strictly negative. 
  \item[(b)] A higher order remaining term governed implicitly by $i\iint (k_1v_1+k_2v_2)G_2 \sqrt{\mu}\, \d   v\d x_3$. To close the argument, we prove the well-posedness of $G_2$ via a contraction mapping argument, which allows us to control this higher order term and fully characterize $\eta$ (see Proposition \ref{prop:eigenvalue}).
\end{itemize}
This spectral analysis is crucial as it provides a semi-group representation of $e^{\hat{B}_n(k)t}$ through the inverse Laplace transform (Proposition \ref{prop:inverse_laplace}). The subsequent asymptotic behavior of the linear semi-group $e^{B_n t}$ is obtained via an inverse Fourier transform in $k$ (Lemma \ref{lemma:linear_decay}). Then, the long-time behavior can be characterized by the leading order eigenvalue $-\lambda^*|k|^2$ ($\lambda^*>0$) and the leading order eigen-projection onto the average density $\mathbf{P}_0 f$, as stated above. This structure is the crucial inspiration for the faster convergence towards the 2-dimensional heat equation in Theorem \ref{thm:leading_behavior}. The faster decay rate in both Theorem \ref{thm:asymptotic_stability} and Theorem \ref{thm:leading_behavior} occurs when $\mathbf{P}_0 f_0 = 0$, where the leading order term in the eigen-projection vanishes.

%\begin{itemize}
\medskip
\noindent$\bullet$ \textit{Asymptotic behavior of $e^{Bt}$ and nonlinear estimate.}
%\end{itemize}

Due to the loss of compactness of $K$ in $L^2((-1,1)\times \mathbb{R}^3_v)$, the properties of essential spectrum to $\hat{B}(k)$ remain unclear. Consequently, a semi-group representation of $e^{\hat{B}(k)t}$ based on spectral theory is unavailable. To overcome this difficulty and determine the asymptotic behavior, we reformulate the problem by expressing $e^{Bt}$ as a perturbation of $e^{B_n t}$:
\begin{align*}
    & \p_t f + v\cdot \nabla_x f + \mathcal{L}_n f = \mathcal{L}_n f - \mathcal{L}f.
\end{align*}
While $e^{B_n t}$ itself provides the desired decay rate, the source term in the Duhamel formula converges to $0$: 
$$
\Vert\mathcal{L}_n f - \mathcal{L}f\Vert_{L^2_{x,v}} = \Vert K(f-P_nf)\Vert_{L^2_{x,v}}\to 0,
$$ 
as $n\to\infty$; see Proposition \ref{prop.Bdecay}.

For the nonlinear problem \eqref{nonlinear_f}, the presence of a boundary in $x_3$ and an unbounded domain in $x_\parallel$ leads us to adapt the function space with the norm $\Vert wf\Vert_{L^\infty_{x_3,v}H^\ell_{x_\parallel}}$. We then apply an $L^2((-1,1)\times \mathbb{R}^3_v)$-$L^\infty((-1,1)\times \mathbb{R}^3_v)$ bootstrap argument with the method of characteristic, as in \cite{G}. The high-order Sobolev space $H^\ell_{x_\parallel}$ is well-suited for the method of characteristics since the backward trajectories and stochastic cycles in Definition \ref{def:sto_cycle} are independent of $x_\parallel$, hence the Minkowski inequality can be applied(see Lemma \ref{lemma:bdr}).

\subsection{Outline} 

In Section \ref{sec:prelim}, we introduce several preliminaries, including the definitions and properties of the Boltzmann operators, the operators $A,\hat{A}$ and $B,\hat{B}$. In Section \ref{sec:regulariza}, we introduce the regularized Boltzmann operators $K_n,\mathcal{L}_n$ and the corresponding operators $B_n,\hat{B}_n(k)$. Several crucial properties of these regularized operators are discussed. In Section \ref{sec:macroscopic}, we prove a key macroscopic estimate to the eigenvalue problem of $\hat{B}_n(k)$. Section \ref{sec:linear} is devoted to the analysis of the spectrum and resolvent of $B$ and $B_n$, which further leads to the decay rate for the linear Boltzmann equation. In Section \ref{sec:nonlinear}, we employ an $L^2$–$L^\infty$ bootstrap argument to derive the decay rate for the nonlinear problem, and conclude Theorem \ref{thm:asymptotic_stability} at the end of Section \ref{sec:nonliear_asym}. Finally, we prove Theorem \ref{thm:leading_behavior} in Section \ref{sec:leading}.

\subsection{Other notations}
In addition to the notations introduced in Section~\ref{sec:result}, we will also use the following ones. In the spectral analysis, for given $a\in \mathbb{R}$, we define
\begin{align*}
    & \mathbb{C}_+(a) := \{\sigma+i\tau \in \mathbb{C}| \sigma>a\}.
\end{align*}

Given an operator $A$, we denote
\begin{align*}
    & \rho(A) := \text{resolvent of }A,\quad  \Sigma(A) := \text{spectrum of }A.
\end{align*}

We use the general norm:
\begin{align*}
    &  \Vert f\Vert_{L^2_\nu} := \Vert \nu^{1/2}f(v)\Vert_{L^2_v}=\Big(\int_{\R^3}\nu(v)|f(v)|^2\,\dd v\Big)^{1/2}.
\end{align*}
Here $\nu(v)$ is defined in Lemma \ref{lemma:L_K}.

When $f(x_3,v)$ is defined on $(-1,1)\times \mathbb{R}^3$, we slightly abuse the notation and define
\begin{align*}
\gamma_{+}^\pm :=&\{(x_3,v)\in \{x_3=\pm 1\}\times\mathbb{R}^{3}: v_3 \gtrless 0 \} , \ \gamma_+ := \gamma_+^+ \cup \gamma_+^-,  \\
\gamma_{-}^\pm :=&\{(x_3,v)\in \{x_3=\pm 1\}\times\mathbb{R}^{3}:v_3 \lessgtr 0\}, \ \gamma_-:= \gamma_-^+ \cup \gamma_-^- ,\\
\gamma_{0}^\pm :=&\{(x_3,v)\in  \{x_3=\pm 1\}\times\mathbb{R}^{3}:v_3=0\}, \ \gamma_0:= \gamma_0^+ \cup \gamma_0^-.
\end{align*}

Then when $f(x,v)$ is defined on $\O\times \mathbb{R}^3$ or $f(x_3,v)$ is defined on $(-1,1)\times \mathbb{R}^3$, on the boundary, let $P_\gamma$ denote the projection onto the diffusive reflection:
\begin{align*}
    & f(t,x,v)|_{\gamma_-} = P_\gamma f, \  P_{\gamma}f: = c_\mu \sqrt{\mu(v)} \int_{u_3\gtrless 0} f(t,x_1,x_2,x_3,u)\sqrt{\mu(u)}|u_3|\, \dd u \text{ for }x_3 = \pm 1, \\
    & f(t,x_3,v)|_{\gamma_-} = P_\gamma f, \ P_{\gamma}f: = c_\mu \sqrt{\mu(v)}\int_{u_3\gtrless 0} f(t,x_3,u)\sqrt{\mu(u)}|u_3|\, \dd u \text{ for }x_3 = \pm 1.
\end{align*}

When $f(x_3,v)$ is defined on $(-1,1)\times \mathbb{R}^3$, we define $f\in L^2_{\gamma_\pm}$ if $|f|_{L^2_{\gamma_\pm}}<\infty$, where
\begin{align*}
    & | f|_{L^2_{\gamma_\pm}} := \Big(\int_{v_3\gtrless 0} |f(1,v)|^2 |v_3| \,\dd v +  \int_{v_3 \lessgtr 0} |f(-1,v)|^2 |v_3| \,\dd v\Big)^{1/2}.  
\end{align*}
When $f(x,v)$ is defined on $\O\times \mathbb{R}^3$, we define $f\in L^2_{\gamma_\pm}$ if $|f|_{L^2_{\gamma_\pm}}<\infty$, where
\begin{align*}
    & |f|_{L^2_{\gamma_\pm}} := \Big(\int_{\mathbb{R}^2} \int_{v_3\gtrless 0}|f(x_1,x_2,1,v)|^2|v_3|\,\dd v\dd x_1\dd x_2 + \int_{\mathbb{R}^2} \int_{v_3\lessgtr 0}|f(x_1,x_2,-1,v)|^2|v_3|\,\dd v \dd x_1 \dd x_2  \Big)^{1/2}.
\end{align*}

In the end, $f \lesssim g$ means that there exists $C>1$ such that $f\leq C g$. Moreover, $f\leq o(1)g$ and $f \lesssim o(1)g$ both mean that there exists $0<\delta\ll 1$ such that $f\leq \delta g$.

\section{Preliminary}\label{sec:prelim}
In this section, we introduce several preliminary concepts and estimates. In Section \ref{sec:K_Gamma}, we outline the fundamental properties of the linear and nonlinear Boltzmann operators, $\mathcal{L}$ and $\Gamma$, defined in \eqref{def.L} and \eqref{def.Ga}. In Section \ref{sec:eAt_def}, we introduce the semi-group generated by the operator $A$ given in \eqref{def_A}.

\subsection{Properties of linear operator $K$ and nonlinear operator $\Gamma$}\label{sec:K_Gamma}

\begin{lemma}[\cite{R}]\label{lemma:L_K}
It holds that $\mathcal{L}=\nu(v)-K$, where 
\begin{equation*}
\nu(v):=\int_{\mathbb{R}^3}\int_{\mathbb{S}^2}|(v-u)\cdot \omega|\mu(u)\d\omega\d u,
\end{equation*} 
and
\begin{equation*}
Kf(v)=\int_{\mathbb{R}^3}\int_{\mathbb{S}^2}|(v-u)\cdot \omega|[\sqrt{\mu(v)\mu(u)}f(u)-\sqrt{\mu(u)\mu(u')}f(v')-\sqrt{\mu(u)\mu(v')}f(u')]\d\omega\d u.
\end{equation*}
Here, there is a positive constant $\nu_0>0$ such that the collision frequency $\nu(v)$ satisfies
\begin{equation*}
\nu(v) \geq \nu_0 \sqrt{|v|^2+1} \geq \nu_0.
\end{equation*}
The integral operator $K$ can be written as
\[
Kf(v)=\int_{\mathbb{R}^3}\mathbf{k}(v,u)f(u)\,\dd u,
\]
with a symmetric integral kernel $\mathbf{k}(v,u)=\mathbf{k}(u,v)$ satisfying
\Be\notag%\label{k_varrho}
 |\mathbf{k}  (v,u)| \lesssim e^{- \varrho |v-u|^2}/ |v-u|,
\Ee
for a constant $\varrho>0$.
\end{lemma}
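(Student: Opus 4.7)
The proof follows the classical Grad decomposition of the linearized Boltzmann operator about a Maxwellian, as carried out in the reference [R]. The plan is to unfold $\mathcal{L}f = -\mu^{-1/2}[Q(\mu,\sqrt{\mu}f) + Q(\sqrt{\mu}f,\mu)]$ using the definition of $Q$ and then exploit the collisional identity $\mu(v)\mu(u) = \mu(v')\mu(u')$ to symmetrize the post-collisional weights. Specifically, the identities $\mu(u')\sqrt{\mu(v')} = \sqrt{\mu(u')}\sqrt{\mu(v)\mu(u)}$ and $\sqrt{\mu(u')}\mu(v') = \sqrt{\mu(v')}\sqrt{\mu(v)\mu(u)}$ convert the factors $\mu^{-1/2}(v)\mu(u')\sqrt{\mu(v')}$ and $\mu^{-1/2}(v)\sqrt{\mu(u')}\mu(v')$ into the clean forms $\sqrt{\mu(u)\mu(u')}$ and $\sqrt{\mu(u)\mu(v')}$. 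Collecting terms, the pre-collisional piece becomes the multiplication operator $\nu(v)f$ with the stated integral expression for $\nu$, while the three remaining contributions combine into $-Kf$ in the stated form.

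For the lower bound on $\nu(v)$, the plan is to integrate first over $\omega\in\mathbb{S}^2$; for the hard-sphere cross section this reduces $\nu(v)$ to a constant multiple of $\int_{\mathbb{R}^3}|v-u|\mu(u)\,\dd u$. This quantity is continuous and strictly positive in $v$; it is bounded below on every compact set by positivity of $\mu$, and grows linearly at infinity via $|v-u|\geq |v|-|u|$ together with finiteness of the first moment of $\mu$. These two facts combine to yield $\nu(v)\geq \nu_0\sqrt{|v|^2+1}$ for some $\nu_0>0$.

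The central step is the kernel representation for $K$, which uses Grad's (Carleman) change of variables applied to each of the $f(v')$- and $f(u')$-type integrands in the stated formula for $Kf$. For the $f(v')$-type term, one reparametrizes the collision in terms of the post-collisional velocity $v'$ and integrates out the orthogonal direction on the collision sphere; the resulting Jacobian produces a $1/|v-u|$-type prefactor, and after relabeling the integration variable the term takes the form $\int \mathbf{k}_1(v,u)f(u)\,\dd u$. The $f(u')$-type term is treated in exactly the same manner by exchange of roles. Combining these with the explicit first term then gives the claimed representation $Kf(v)=\int \mathbf{k}(v,u)f(u)\,\dd u$, and the symmetry $\mathbf{k}(v,u)=\mathbf{k}(u,v)$ is a direct reflection of the micro-reversibility of the collision.

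The main obstacle, and the most technical point, is the Gaussian pointwise bound $|\mathbf{k}(v,u)|\lesssim e^{-\varrho|v-u|^2}/|v-u|$. After the Carleman change of variables, the Maxwellian weights appear as Gaussians integrated over the hyperplane $\{w\in\mathbb{R}^3 : w\cdot(v-u)=0\}$ passing through the midpoint $(v+u)/2$. Applying the energy identity $|v|^2+|u|^2 = |v'|^2+|u'|^2$ together with the decomposition $|v|^2+|u|^2 = \tfrac12|v+u|^2 + \tfrac12|v-u|^2$ and completing the square in the hyperplane variable, one extracts a factor $e^{-\varrho|v-u|^2}$ for some $\varrho>0$, while the residual Gaussian integration over the hyperplane contributes only a harmless dimensional constant. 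Combined with the $|v-u|^{-1}$ Jacobian this yields the stated kernel estimate. The detailed computation is classical and can be taken directly from [R].
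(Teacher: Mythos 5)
Your proposal reproduces the classical Grad--Glassey argument that the paper invokes by citing \cite{R}: unfold $\mathcal{L}$ using the collision identity $\mu(v)\mu(u)=\mu(v')\mu(u')$, bound $\nu$ below by first integrating out $\omega$ and then exploiting positivity and linear growth of $\int|v-u|\mu(u)\,\dd u$, reduce the gain terms via Carleman's change of variables to a hyperplane integral producing the $|v-u|^{-1}$ Jacobian, and complete the square to extract the Gaussian decay. This is the right route and matches the cited source; the paper itself gives no proof, so there is nothing to contrast at the level of method.

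One sign point is worth flagging before you finalize. Carrying out the unfolding you describe (multiply by $-\mu^{-1/2}(v)$ and apply the identities $\mu^{-1/2}(v)\mu(u')\sqrt{\mu(v')}=\sqrt{\mu(u)\mu(u')}$ and $\mu^{-1/2}(v)\sqrt{\mu(u')}\mu(v')=\sqrt{\mu(u)\mu(v')}$) produces
\begin{equation*}
\mathcal{L}f(v)=\nu(v)f(v)+\int_{\mathbb{R}^3}\int_{\mathbb{S}^2}|(v-u)\cdot\omega|\Bigl[\sqrt{\mu(v)\mu(u)}f(u)-\sqrt{\mu(u)\mu(u')}f(v')-\sqrt{\mu(u)\mu(v')}f(u')\Bigr]\,\dd\omega\,\dd u,
\end{equation*}
so the residual after extracting $\nu f$ is $+Kf$ with $K$ exactly as printed in the lemma, not $-Kf$. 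For $\mathcal{L}=\nu-K$ to hold as written, the bracket defining $K$ should carry the opposite overall sign (which is also Glassey's convention, where $K$ is the compact gain part minus the local loss part). Your phrase ``combine into $-Kf$ in the stated form'' thus silently corrects the printed bracket rather than matching it. The slip has no effect on the pointwise bound $|\mathbf{k}(v,u)|\lesssim e^{-\varrho|v-u|^2}/|v-u|$, which is insensitive to an overall sign, but you should state the correction explicitly rather than leaving a contradiction between your derivation and the formula you cite. The rest of the argument --- the $\nu$ lower bound and the Carleman/complete-the-square treatment of the kernel --- is sound.
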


\begin{lemma}[\cite{G}]\label{lemma:k_theta}
Let $0< \theta < \frac{1}{4}$, and $\mathbf{k}_\theta(v,u) := \mathbf{k}(v,u) \frac{e^{\theta |v|^2}}{e^{\theta |u|^2}}$, then there exists $\varrho>\tilde{\varrho}>0$ such that
\begin{align*}
    &  \mathbf{k}_\theta(v,u) \lesssim e^{-\tilde{\varrho}|v-u|^2}/|v-u|,
\end{align*}
and there exists $C_\theta > 0$ such that
\begin{equation*}
\int_{\mathbb{R}^3}  \mathbf{k}(v,u) \frac{e^{\theta |v|^2}}{e^{\theta |u|^2}}  \dd u  \leq \frac{C_\theta}{1+|v|}.
\end{equation*}
Moreover, for $N\gg 1$, we have
\begin{equation*}
\mathbf{k}_\theta(v,u) \mathbf{1}_{|v-u|> \frac{1}{N}} \leq C_N,
\end{equation*}
and
\begin{equation*}
\int_{|v|>N \text{ or } |u|>N \text{ or } |v-u|\leq \frac{1}{N}} \mathbf{k}_\theta(v,u) \dd u \lesssim \frac{1}{N} \leq o(1).
\end{equation*}

\end{lemma}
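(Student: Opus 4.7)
The strategy is to refine the pointwise kernel bound of Lemma~\ref{lemma:L_K} to a Grad-type estimate of the form
\[
|\mathbf{k}(v,u)| \lesssim \frac{1}{|v-u|}\exp\!\Big(-c_1|v-u|^2 - c_2\frac{(|v|^2-|u|^2)^2}{|v-u|^2}\Big),
\]
which is standard in the Boltzmann literature and implicit in the references for Lemma~\ref{lemma:L_K}. The extra Gaussian in the ratio $(|v|^2-|u|^2)/|v-u|$ is exactly what is needed to absorb the multiplicative weight $e^{\theta(|v|^2-|u|^2)}$; once this is done, the three claimed bounds reduce to routine Gaussian computations.

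For the pointwise estimate on $\mathbf{k}_\theta$, I plan to apply Young's inequality in the form
\[
\theta\big(|v|^2-|u|^2\big) = \theta\cdot\frac{|v|^2-|u|^2}{|v-u|}\cdot|v-u| \leq \frac{c_2}{2}\cdot\frac{(|v|^2-|u|^2)^2}{|v-u|^2} + \frac{\theta^2}{2c_2}|v-u|^2.
\]
For $\theta<1/4$ sufficiently small so that $\theta^2/(2c_2)<c_1$, the first term on the right is absorbed into the refined Gaussian (keeping half its strength) and the residual coefficient of $|v-u|^2$ remains strictly negative, yielding the first assertion with any $\tilde\varrho < c_1$, in particular $\tilde\varrho := c_1 - \theta^2/(2c_2)\in(0,\varrho)$.

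For the integral bound $\int \mathbf{k}_\theta(v,u)\,du \lesssim (1+|v|)^{-1}$, I change variables $w=v-u$ and decompose $w = \alpha e + w^\perp$ with $e=v/|v|$ and $w^\perp\perp e$. Since $|v|^2-|u|^2 = 2|v|\alpha - |w|^2$, the surviving refined Gaussian localizes $\alpha$ in a window of width $\sim (1+|v|)^{-1}$; integrating first in $\alpha$ and then in $w^\perp$ extracts the claimed factor $(1+|v|)^{-1}$, matching the classical Grad-type computation. The uniform bound $\mathbf{k}_\theta(v,u)\mathbf{1}_{|v-u|>1/N}\leq C_N$ is immediate from the first estimate since $e^{-\tilde\varrho|v-u|^2}/|v-u|\leq N$ in that region.

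For the last integral I split the bad set into the three pieces $\{|v-u|\leq 1/N\}$, $\{|v|>N\}$, $\{|u|>N\}$: polar coordinates in $w=v-u$ give $\int_{|w|\leq 1/N}|w|^{-1}\,dw\lesssim N^{-2}$ on the first; the previous integral bound yields $(1+|v|)^{-1}\lesssim N^{-1}$ on the second; and on the third I further split according to whether $|v|\geq N/2$ (use the integral bound again) or $|v|<N/2$ (then $|v-u|\geq N/2$ and the Gaussian in $|v-u|$ supplies exponential smallness). Each piece is $\lesssim 1/N$. The \emph{main obstacle} is producing and justifying the refined kernel estimate used in the first step, which is not literally stated in Lemma~\ref{lemma:L_K} but is well known and follows from the explicit representation of $\mathbf{k}(v,u)$ after symmetrization and the change of variables between $(v,u)$ and $(v',u')$; once the refined kernel is in hand, every remaining step is a standard Gaussian calculation and case split, with no delicate input needed.
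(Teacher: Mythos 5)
The paper itself gives no proof of this lemma; it is stated as a quotation from \cite{G}, so there is no in-paper argument to compare against. Your route via the refined Grad kernel estimate
\[
|\mathbf{k}(v,u)|\lesssim \frac{1}{|v-u|}\exp\!\Big(-c_1|v-u|^2-c_2\frac{(|v|^2-|u|^2)^2}{|v-u|^2}\Big)
\]
is exactly the standard one used in that reference and elsewhere, and the case split for the last integral is clean and correct. However, there is one genuine flaw: your Young's-inequality absorption
\[
\theta\cdot\frac{|v|^2-|u|^2}{|v-u|}\cdot|v-u|\le \frac{c_2}{2}\,\frac{(|v|^2-|u|^2)^2}{|v-u|^2}+\frac{\theta^2}{2c_2}|v-u|^2
\]
requires $\theta^2/(2c_2)<c_1$, i.e.\ $\theta<\sqrt{2c_1c_2}$. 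With the sharp refined-Grad constants for hard spheres, $c_1=c_2=\tfrac18$, this gives $\theta<\tfrac{1}{4\sqrt 2}$, strictly smaller than the stated range $\theta<\tfrac14$. Hedging with ``for $\theta$ sufficiently small'' therefore does not prove the lemma as stated. The fix is to complete the square exactly rather than invoke Young: writing $s=|v|^2-|u|^2$ and $r=|v-u|$,
\[
-c_2\frac{s^2}{r^2}+\theta s=-\frac{c_2}{r^2}\Big(s-\frac{\theta r^2}{2c_2}\Big)^2+\frac{\theta^2}{4c_2}\,r^2,
\]
so that the residual Gaussian coefficient is $c_1-\tfrac{\theta^2}{4c_2}>0$ precisely when $\theta<2\sqrt{c_1c_2}=\tfrac14$, recovering the full advertised range; moreover you retain the entire (shifted) refined Gaussian $-\tfrac{c_2}{r^2}(s-\tfrac{\theta r^2}{2c_2})^2$, which still suffices for the $(1+|v|)^{-1}$ integral bound. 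With that one step replaced, your argument is complete.
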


\begin{lemma}[\cite{ukai2006boltzmann} and \cite{G}]\label{lemma:gamma_property}
The following properties of the nonlinear operator $\Gamma$ \eqref{def.Ga} hold:
\begin{align*}
    & \Vert \Gamma(f,g)(t,\cdot,x_3,v)\Vert_{H^\ell_{x_\parallel}} \lesssim |\Gamma(f^0,g^0)(t,x_3,v)|, \ \text{ where } f^0(t,x_3,v) := \Vert f(t,\cdot,x_3,v)\Vert_{H^\ell_{x_\parallel}}, \\
        &  \Vert \nu^{-1}w\Gamma(f,g)\Vert_{L^\infty_{x,v}} \lesssim \Vert wf\Vert_{L^\infty_{x,v}} \Vert wg\Vert_{L^\infty_{x,v}}.
\end{align*}

\end{lemma}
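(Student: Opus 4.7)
The plan is to treat the two estimates separately, since the first is a Sobolev-algebra style estimate in the horizontal variable while the second is the standard weighted pointwise bound for the bilinear Boltzmann operator.

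For the first estimate, I would exploit the fact that $\Gamma$ acts only on the velocity variable so that $x_\parallel$ enters merely as a parameter through the product $f(x_\parallel, x_3, u')g(x_\parallel, x_3, v')$ and the corresponding loss term. The plan is to apply Minkowski's integral inequality in $H^\ell_{x_\parallel}$ to pull the norm inside the $(u,\omega)$-integral:
\begin{equation*}
\bigl\|\Gamma(f,g)(\cdot,x_3,v)\bigr\|_{H^\ell_{x_\parallel}} \leq \int_{\mathbb{R}^3}\int_{\mathbb{S}^2} |(v-u)\cdot\omega| \sqrt{\mu(u)} \Bigl[\|f(\cdot,x_3,u')g(\cdot,x_3,v')\|_{H^\ell_{x_\parallel}} + \|f(\cdot,x_3,u)g(\cdot,x_3,v)\|_{H^\ell_{x_\parallel}}\Bigr] \dd\omega\dd u.
\end{equation*}
For each fixed $(u,\omega)$ I would then use the algebra property $\|FG\|_{H^\ell(\mathbb{R}^2)}\lesssim \|F\|_{H^\ell}\|G\|_{H^\ell}$, valid for $\ell>1$ in two horizontal dimensions, to obtain $\|f(\cdot,x_3,u')g(\cdot,x_3,v')\|_{H^\ell_{x_\parallel}} \lesssim f^0(x_3,u')g^0(x_3,v')$, and similarly for the loss term. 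Recognizing the resulting majorant as $|\Gamma_+(f^0,g^0)|+|\Gamma_-(f^0,g^0)|\lesssim |\Gamma(f^0,g^0)|$ (after taking absolute values, since $f^0,g^0\geq 0$) closes the first inequality.

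For the second estimate, I would use the standard decomposition $\Gamma=\Gamma_+-\Gamma_-$. For the loss term, the identity $\Gamma_-(f,g)(v)=g(v)\int\int|(v-u)\cdot\omega|\sqrt{\mu(u)}f(u)\dd\omega\dd u$ gives, after inserting $1=w(u)/w(u)$ and pulling out $\|wf\|_{L^\infty_v}$,
\begin{equation*}
w(v)|\Gamma_-(f,g)(v)| \leq \|wf\|_{L^\infty_v}\|wg\|_{L^\infty_v}\int_{\mathbb{R}^3}\int_{\mathbb{S}^2} |(v-u)\cdot\omega|\,\sqrt{\mu(u)}\,w(u)^{-1}\dd\omega\dd u \lesssim \nu(v)\|wf\|_{L^\infty_v}\|wg\|_{L^\infty_v},
\end{equation*}
where the last bound uses that $\sqrt{\mu(u)}/w(u)=e^{-(1/4+\theta)|u|^2}$ is an integrable Gaussian and produces a collision-frequency type factor bounded by $C\nu(v)$. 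For the gain term $\Gamma_+$, the key observation is the energy conservation $|v|^2+|u|^2=|v'|^2+|u'|^2$, which yields $w(v)=w(u')w(v')/w(u)$. Substituting this identity gives $w(v)|f(u')g(v')|\leq \|wf\|_{L^\infty_v}\|wg\|_{L^\infty_v}\,w(u)^{-1}$, and the same Gaussian integral estimate produces the $\nu(v)$ factor. Dividing by $\nu(v)$ and taking the supremum over $(x,v)$ concludes the second inequality.

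The only step requiring care is the algebra inequality in the first part, where $\ell$ must be chosen strictly greater than $1$ so that $H^\ell(\mathbb{R}^2)$ is a Banach algebra under pointwise multiplication; this is the one nontrivial input, and it is presumably already assumed implicitly on $\ell$ elsewhere in the paper. Everything else is a bookkeeping exercise using Minkowski, energy conservation, and the Gaussian integrability of $\sqrt{\mu}/w$.
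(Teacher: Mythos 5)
Your strategy --- Minkowski's inequality in $x_\parallel$, the two-dimensional Sobolev algebra property, and for the second estimate the energy identity $w(v)w(u)=w(u')w(v')$ combined with the Gaussian integrability of $\sqrt{\mu}/w$ --- is the right one, and your proof of the second estimate is complete. (The paper simply cites the references and gives no proof of this lemma, so there is nothing in-paper to compare against.)

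The one genuine gap is the closing step of your proof of the first estimate. After Minkowski and the algebra bound you have correctly established
\begin{equation*}
\|\Gamma(f,g)(\cdot,x_3,v)\|_{H^\ell_{x_\parallel}} \lesssim \Gamma_+(f^0,g^0)(x_3,v) + \Gamma_-(f^0,g^0)(x_3,v),
\end{equation*}
but the subsequent claim $\Gamma_+(f^0,g^0)+\Gamma_-(f^0,g^0)\lesssim|\Gamma(f^0,g^0)|$ is false: since $\Gamma=\Gamma_+-\Gamma_-$ and both $\Gamma_\pm(f^0,g^0)$ are nonnegative for nonnegative arguments, the inequality always goes the other way, $|\Gamma(f^0,g^0)|\leq\Gamma_+(f^0,g^0)+\Gamma_-(f^0,g^0)$, and the reverse fails in general --- for instance $\Gamma(\sqrt{\mu},\sqrt{\mu})\equiv 0$ while $\Gamma_\pm(\sqrt{\mu},\sqrt{\mu})>0$. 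The lemma should be read, as is common in this literature, with $|\Gamma(f^0,g^0)|$ denoting the positive majorant $\Gamma_+(f^0,g^0)+\Gamma_-(f^0,g^0)$; this is consistent with its only use in Lemma \ref{lemma:gamma_control}, where the second estimate is then applied to the majorant (and indeed each of $\Gamma_\pm(f^0,g^0)$ satisfies the $\nu$-weighted $L^\infty$ bound separately, by exactly the two computations you give for the loss and gain parts). So you should stop at the bound by $\Gamma_++\Gamma_-$ rather than try to convert it into the absolute value of the signed operator. Your observation that the algebra property requires $\ell>1$ is correct and worth recording explicitly.
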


\subsection{Semi-group $e^{At}$, $e^{Bt}$ and Fourier transform in horizontal direction}\label{sec:eAt_def}

We define the domain of the operator $A$ in \eqref{def_A} as:
\begin{align*}
    &     D(A) = \{f\in L^2(\O\times \mathbb{R}^3_v)| v\cdot \nabla_x f + \nu(v)f\in L^2(\O\times \mathbb{R}^3_v), \ f|_{\gamma_\pm} \in L^2_{\gamma_\pm}, \ f|_{\gamma_-} = P_\gamma f\}.
\end{align*}
By a similar argument as Ukai-Asano \cite{ukai1983steady} in the exterior domain, $A$ generates a strongly continuous semi-group on $L^2(\O\times \mathbb{R}^3_v)$. Here we note that the trace estimate $f|_{\gamma_\pm} \in L^2_{\gamma_\pm}$ is satisfied under the diffuse boundary condition due to the Ukai trace theorem presented in the appendix Section \ref{sec:trace}.

Since $K$ is a bounded operator on $L^2(\O\times \mathbb{R}^3_v)$, $B$ in \eqref{def_B} also generates a strongly continuous semi-group on $L^2(\O\times \mathbb{R}^3_v)$ with domain $D(B)=D(A)$. To study the asymptotic behavior of $e^{tB}$, we apply the Fourier transform in the horizontal direction. For $f\in D(B)=D(A)$, we obtain
\begin{align}
& \hat{A}(k)\hat{f}:= \mathcal{F}(Af) = -i(k_1v_1+k_2v_2) \hat{f} - v_3 \p_{x_3}\hat{f} - \nu \hat{f} , \label{def_hatA} \\
    &  \hat{B}(k)\hat{f} := \mathcal{F}(Bf) = -i(k_1v_1+k_2v_2) \hat{f} - v_3 \p_{x_3}\hat{f} - \nu \hat{f} + K \hat{f}. 
    \label{def_hatB}
\end{align}
Given $k\in \R^2$, we consider $\hat{A}(k),\hat{B}(k)$ as operators on the space $L^2((-1,1)\times \mathbb{R}^3_v)$ with the domain
\begin{align*}
    & D(\hat{A}(k)) = D(\hat{B}(k)) \\
    &:= \{\hat{f}\in L^2((-1,1)\times \mathbb{R}^3_v)| i(k_1v_1+k_2v_2) \hat{f} + v_3 \p_{x_3} \hat{f} +\nu \hat{f} \in L^2((-1,1)\times \mathbb{R}^3_v), \ \hat{f}|_{\gamma_\pm}\in L^2_{\gamma_\pm} , \ \hat{f}|_{\gamma_-} = P_\gamma \hat{f} \}.
\end{align*}
We can follow the argument by Ukai-Asano \cite{ukai1983steady}, with the observation that the extra term $i(k_1v_1+k_2v_2) \hat{f}$ does not contribute to the real part of the $L^2((-1,1)\times \mathbb{R}^3_v)$ energy estimate. Thus, similar to \eqref{def_A}, $\hat{A}(k)$ generates a strongly continuous semi-group on $L^2((-1,1)\times \mathbb{R}^3_v)$. Since $K$ is a bounded operator on $L^2((-1,1)\times \mathbb{R}^3_v)$, $\hat{B}(k)$ also generates a strongly continuous semi-group on $L^2((-1,1)\times \mathbb{R}^3_v)$. 

The resolvent properties of $\hat{A}(k)$ in \eqref{def_hatA} are given by the following lemma.

\begin{lemma}\label{lemma:A_resolvent}
It holds that
\[C_+(-\nu_0) \subset \rho(\hat{A}(k)),\]
and
\begin{align*}
    &  \Vert e^{t\hat{A}(k)}\Vert_{L^2_{x_3,v}} \lesssim e^{-\nu_0 t}.
\end{align*}

\end{lemma}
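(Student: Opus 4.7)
The plan is to prove the decay estimate $\|e^{t\hat{A}(k)}\|_{L^2_{x_3,v}}\lesssim e^{-\nu_0 t}$ directly by an $L^2((-1,1)\times\mathbb{R}^3_v)$ energy identity, and then to obtain the resolvent inclusion $\mathbb{C}_+(-\nu_0)\subset \rho(\hat{A}(k))$ either as an immediate consequence (by Hille--Yosida / Laplace transform), or by repeating the same energy argument on the stationary problem. Fix $k\in\mathbb{R}^2$, take $\hat{f}_0\in D(\hat{A}(k))$, and set $\hat{f}(t):=e^{t\hat{A}(k)}\hat{f}_0$, so that $\partial_t\hat{f}=\hat{A}(k)\hat{f}$ with $\hat{f}|_{\gamma_-}=P_\gamma\hat{f}$.

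First, multiply the evolution equation by $\overline{\hat{f}}$, integrate over $(-1,1)\times\mathbb{R}^3_v$, and take the real part. Three terms appear. The term $-i(k_1v_1+k_2v_2)|\hat{f}|^2$ is purely imaginary in the inner product and drops out. The transport term yields a pure boundary contribution,
\begin{equation*}
2\,\text{Re}\int_{-1}^{1}\!\!\int_{\mathbb{R}^3} v_3\,\partial_{x_3}\hat{f}\,\overline{\hat{f}}\,\dd v\,\dd x_3 = \int_{\mathbb{R}^3}v_3|\hat{f}(1,v)|^2\,\dd v - \int_{\mathbb{R}^3}v_3|\hat{f}(-1,v)|^2\,\dd v = |\hat{f}|^2_{L^2_{\gamma_+}}-|\hat{f}|^2_{L^2_{\gamma_-}},
\end{equation*}
while the friction term contributes $-2\int\nu|\hat{f}|^2\leq -2\nu_0\|\hat{f}\|^2_{L^2_{x_3,v}}$ by Lemma~\ref{lemma:L_K}.

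The main (and only non-routine) step is to show the diffuse-reflection contraction $|\hat{f}|_{L^2_{\gamma_-}}\leq |\hat{f}|_{L^2_{\gamma_+}}$ under $\hat{f}|_{\gamma_-}=P_\gamma\hat{f}$. This follows from Cauchy--Schwarz combined with the probability-measure normalization $\int_{v_3\gtrless 0}c_\mu\mu(v)|v_3|\,\dd v=1$. For instance, at $x_3=1$ with $v_3<0$,
\begin{equation*}
|P_\gamma\hat{f}(1,v)|^2 = c_\mu^2\mu(v)\left|\int_{u_3>0}\hat{f}(1,u)\sqrt{\mu(u)}|u_3|\,\dd u\right|^2 \leq c_\mu^2\mu(v)\Bigl(\int_{u_3>0}\mu(u)|u_3|\,\dd u\Bigr)\int_{u_3>0}|\hat{f}(1,u)|^2|u_3|\,\dd u,
\end{equation*}
and integrating against $|v_3|\,\dd v$ over $v_3<0$ and using the normalization twice gives $|P_\gamma\hat{f}|^2_{L^2_{\gamma_-^+}}\leq |\hat{f}|^2_{L^2_{\gamma_+^+}}$; the same argument at $x_3=-1$ gives the full bound. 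Consequently the boundary contribution is non-negative, and the energy identity reduces to
\begin{equation*}
\frac{\dd}{\dd t}\|\hat{f}(t)\|^2_{L^2_{x_3,v}} + \bigl(|\hat{f}|^2_{L^2_{\gamma_+}}-|\hat{f}|^2_{L^2_{\gamma_-}}\bigr) + 2\int\nu|\hat{f}|^2 = 0 \leq -2\nu_0\|\hat{f}(t)\|^2_{L^2_{x_3,v}},
\end{equation*}
so Gr\"onwall yields the semigroup bound $\|\hat{f}(t)\|_{L^2_{x_3,v}}\leq e^{-\nu_0 t}\|\hat{f}_0\|_{L^2_{x_3,v}}$.

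Finally, for the resolvent inclusion, since $\hat{A}(k)$ generates a strongly continuous semigroup (as noted in Section~\ref{sec:eAt_def}) with the bound just established, the standard Laplace representation
\begin{equation*}
(\lambda-\hat{A}(k))^{-1}g = \int_0^\infty e^{-\lambda t}e^{t\hat{A}(k)}g\,\dd t
\end{equation*}
converges absolutely in $L^2_{x_3,v}$ for every $\lambda\in\mathbb{C}_+(-\nu_0)$, and defines a bounded inverse with $\|(\lambda-\hat{A}(k))^{-1}\|_{L^2_{x_3,v}}\leq (\text{Re}\,\lambda+\nu_0)^{-1}$; hence $\mathbb{C}_+(-\nu_0)\subset\rho(\hat{A}(k))$. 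I do not expect any serious obstacle: the argument is a textbook dissipative estimate, and the only delicate point is the contractivity of $P_\gamma$ on the boundary in the $|v_3|\,\dd v$-weighted norm, which as shown above is just Cauchy--Schwarz against the Maxwellian probability measure.
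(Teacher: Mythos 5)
Your argument is correct and rests on the same dissipative energy estimate the paper uses: the real part of the $L^2_{x_3,v}$ pairing, with the key point being that the diffuse-reflection boundary term carries a favorable sign, and your Cauchy--Schwarz computation $|P_\gamma\hat f|_{L^2_{\gamma_-}}\leq|\hat f|_{L^2_{\gamma_+}}$ is exactly the inequality implicit in the paper's sharper identity $|\hat f|^2_{L^2_{\gamma_+}}-|\hat f|^2_{L^2_{\gamma_-}}=|(I-P_\gamma)\hat f|^2_{L^2_{\gamma_+}}$. The only difference is ordering: you prove the semigroup decay first (Gr\"onwall) and then obtain the resolvent bound by Laplace transform, whereas the paper establishes the resolvent a priori estimate on the stationary problem first and invokes Hille--Yosida for the semigroup bound; the two routes are equivalent.
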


\begin{proof}
For $\lambda = \sigma + i \tau$, we consider
\begin{align*}
\begin{cases}
     & \lambda \hat{f} + i(k_1v_1+k_2v_2) \hat{f} + v_3 \p_{x_3} \hat{f} + \nu \hat{f} = g, \ g\in L^2_{x_3,v}, \\
    & \hat{f}|_{\gamma_-} = P_\gamma \hat{f}.  
\end{cases}
\end{align*}

We first derive the a priori estimate. Multiplying the equation by the complex conjugate $\bar{\hat{f}}$ and taking integration in $x_3,v$, we obtain the real part of the $L^2((-1,1)\times \mathbb{R}^3_v)$ energy estimate:
\begin{align*}
    & (\nu_0+\sigma)\Vert \hat{f}\Vert_{L^2_{x_3,v}}^2 + |(I-P_\gamma)\hat{f}|_{L^2_{\gamma_+}}^2 \leq \Vert g\Vert_{L^2_{x_3,v}}\Vert \hat{f}\Vert_{L^2_{x_3,v}}, \  \Vert \hat{f}\Vert_{L^2_{x_3,v}} \lesssim \frac{1}{\nu_0+\sigma} \Vert g\Vert_{L^2_{x_3,v}}  .
\end{align*}
Combining this with the trace estimate in Lemma \ref{lemma:trace_k} in the appendix, we obtain $\hat{f}\in D(\hat{A}(k))$.

The argument to prove the existence of such a solution can be found in \cite{ukai1983steady}. This justifies that $(\lambda-\hat{A}(k))^{-1}$ exists for $\sigma>-\nu_0$. 

The second statement in the lemma follows from the Hille-Yosida theorem. We conclude the lemma.
\end{proof}

\section{Regularization of $K$ and semi-group $e^{B_n t}$}\label{sec:regulariza}

As mentioned in the earlier discussion in Section \ref{sec:difficulty}, $K$ is not a compact operator on $L^2((-1,1)\times \mathbb{R}^3_v)$. We need to define the following regularized linear Boltzmann equation 
\begin{align}
\begin{cases}
       &\p_t f + v\cdot \nabla_x f + \mathcal{L}_nf =0, \\
       & f(0,x,v) = f_0(x,v) ,\\
    & f(t,x,v)|_{\gamma_-} = P_\gamma f . 
\end{cases} \notag
\end{align}
Here $\mathcal{L}_nf := \nu(v) f- K_n f$, and $K_n$ is the regularization operator of $K$, which is defined using truncation in the Fourier series:
\begin{align}
    & K_n f(x_3,v) := KP_n f(x_3,v), \notag\\
    &   P_n f(x_3,v):= \frac{1}{2}\int_{-1}^1 f(y_3,v)\dd y_3 + \sum_{m=1}^n \Big[\cos(m \pi x_3)\int_{-1}^1 f(y_3,v) \cos(m \pi y_3) \dd y_3 \notag\\
    & \ \  \ \ \ \ \ \ \ \ \ \ \ \ \ \ \ \ \ \ \ \ \ \ \ \ \  \ \  \ \ \ \ \ \ \ \ \ \ \ \ \ \ \ \ \ \ \ \ + \sin(m\pi x_3)\int_{-1}^1 f(y_3,v)\sin(m\pi y_3) \dd y_3\Big]. \label{Pnf}
\end{align}
The original $f$ can be decomposed into
\begin{align*}
    & f = P_n f + (I-P_n)f.
\end{align*}
Note that $(I-P_n)f$ satisfies the following elementary property:
\begin{align*}
    &   \int_{-1}^1 (I-P_n)f \dd x_3 = 0.
\end{align*}

To study the operator $B$ in \eqref{def_B}, we focus on the analysis of the regularized linear operator 
\begin{align}
    B_nf = -v\cdot \nabla_x f - \nu(v) f + K_nf.  \notag
\end{align}

Since $K_n$ is a bounded operator on $L^2(\O\times \mathbb{R}^3_v)$, similar to the case of $K$, $B_n$ also generates a strongly continuous semi-group on $L^2(\O\times \mathbb{R}^3_v)$ with $D(B_n)=D(A)$ in terms of discussions of Section \ref{sec:eAt_def}. We perform the spectral analysis on the Fourier transform version of $B_n$ corresponding to \eqref{def_hatB}:
\begin{align}
    &  \hat{B}_n(k)\hat{f} := \mathcal{F}(B_nf) = -i(k_1v_1+k_2v_2) \hat{f} - v_3 \p_{x_3}\hat{f} - \nu \hat{f} + K_n \hat{f}, \label{def_hatBn} 
\end{align}
with the domain
\begin{align*}
    & D(\hat{A}(k)) = D(\hat{B}_n(k)) \\
    &:= \{\hat{f}\in L^2((-1,1)\times \mathbb{R}^3_v)| i(k_1v_1+k_2v_2) \hat{f} + v_3 \p_{x_3} \hat{f} +\nu \hat{f} \in L^2((-1,1)\times \mathbb{R}^3_v), \ \hat{f}|_{\gamma_\pm}\in L^2_{\gamma_\pm} , \ \hat{f}|_{\gamma_-} = P_\gamma \hat{f} \}.
\end{align*}

To analyze the asymptotic behavior of $e^{\hat{B}_n(k)t}$, we will study the resolvent and spectrum of $\hat{B}_n(k)$ in Section \ref{sec:linear}. 

The properties of the regularized operators $K_n$ and $\mathcal{L}_n$ are summarized as follows.

The following coercive property for $\mathcal{L}_n f$ holds.

\begin{lemma}\label{lemma:Ln}
Let $f=f(x_3,v)$ be a complex function on $ (-1,1)\times \mathbb{R}^3$, then it holds that
\begin{align*}
    &   \int_{-1}^1 \int_{\mathbb{R}^3} \mathcal{L}_n f \bar{f} \dd v \dd x_3 \gtrsim \Vert (\mathbf{I}-\mathbf{P})P_n f\Vert_{L^2_{x_3,\nu}}^2 + \Vert \nu^{1/2}(f-P_n f)\Vert_{L^2_{x_3,v}}^2.
\end{align*}
Here $\bar{f}$ stands for the complex conjugate.

\end{lemma}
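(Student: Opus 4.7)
The plan is to exploit two orthogonality observations: first, $P_n$ is a self-adjoint projection in $L^2(-1,1)$ for each fixed $v$; and second, since $K$ and $\nu$ act only in $v$, they preserve the finite-dimensional range of $P_n$ in $x_3$. Concretely, writing $P_n g(x_3, v)$ as a trigonometric polynomial of degree $\le n$ in $x_3$ with coefficients depending on $v$, the operator $K$ simply acts on these velocity coefficients, so $K P_n f(x_3,v)$ is again a trigonometric polynomial in $x_3$ of degree $\le n$, i.e.\ $P_n (K P_n f) = K P_n f$. The same is true for $\nu P_n f$ since $\nu=\nu(v)$. Therefore $\mathcal{L} P_n f = \nu P_n f - K P_n f$ lies in the range of $P_n$ for each $v$.

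Armed with this, I would decompose $f = P_n f + (I-P_n)f$ and rewrite
\begin{align*}
\int_{-1}^1\!\!\int_{\mathbb{R}^3} \mathcal{L}_n f\,\bar f\,\dd v\,\dd x_3
= \int_{-1}^1\!\!\int_{\mathbb{R}^3} \mathcal{L}(P_n f)\,\bar f\,\dd v\,\dd x_3
+ \int_{-1}^1\!\!\int_{\mathbb{R}^3} \nu\,(I-P_n)f\cdot \bar f\,\dd v\,\dd x_3,
\end{align*}
using $\mathcal{L}_n f = \mathcal{L}(P_n f) + \nu(I-P_n)f$. In the first term, since $\mathcal{L}(P_n f) \in \mathrm{Range}(P_n)$ in $x_3$, the $L^2(-1,1)$-orthogonality of $P_n$ and $I-P_n$ kills the contribution of $(I-P_n)\bar f$, leaving $\int \mathcal{L}(P_n f)\overline{P_n f}\,\dd v\,\dd x_3$. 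In the second term, the same orthogonality (applied to $\nu(I-P_n)f$, which lies in $\mathrm{Range}(I-P_n)$ because $\nu$ depends only on $v$) kills the $\overline{P_n f}$ piece, leaving $\|\nu^{1/2}(I-P_n)f\|_{L^2_{x_3,v}}^2$.

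For the remaining piece $\int \mathcal{L}(P_n f)\overline{P_n f}\,\dd v\,\dd x_3$, I would apply the classical coercivity of $\mathcal{L}$ on $L^2(\mathbb{R}^3_v)$ pointwise in $x_3$: for each fixed $x_3$, $P_n f(x_3,\cdot) \in L^2(\mathbb{R}^3_v)$ and since $\mathcal{L}$ is self-adjoint with real kernel, $\int_{\mathbb{R}^3}\mathcal{L} g\,\bar g\,\dd v$ is real and bounded below by $\delta\|(\mathbf{I}-\mathbf{P})g\|_{L^2_\nu}^2$ (treating real and imaginary parts separately). Integrating in $x_3$ yields the desired bound $\gtrsim \|(\mathbf{I}-\mathbf{P})P_n f\|_{L^2_{x_3,\nu}}^2$.

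There is no substantive obstacle here; the proof is essentially structural. The only subtlety worth pausing on is verifying the identity $P_n K P_n = K P_n$, which ensures the clean decoupling of the $P_n f$ and $(I-P_n)f$ sectors and is the precise reason why the definition $K_n := K P_n$ (rather than, say, $P_n K$ or $P_n K P_n$) is still compatible with coercivity. Once this is noted, assembling the two orthogonality cancellations with the standard velocity-space coercivity concludes the proof.
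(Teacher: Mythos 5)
Your proof is correct and follows essentially the same route as the paper's: you split $\mathcal{L}_n f = \mathcal{L}(P_n f)+\nu(I-P_n)f$, pair against $\bar f = \overline{P_n f}+\overline{(I-P_n)f}$, kill the cross terms by the $L^2(-1,1)$-orthogonality of the ranges of $P_n$ and $I-P_n$, and then invoke the standard velocity coercivity of $\mathcal{L}$. The only stylistic difference is that you encode the cancellation via the operator identity $P_n K P_n = K P_n$ (using self-adjointness of $P_n$ and that $K,\nu$ act only in $v$), whereas the paper carries it out by explicit Fourier-coefficient bookkeeping; both are the same argument.
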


\begin{proof}
To simplify the notation, we denote
\begin{align}
    &  a_0(v) = \frac{1}{2}\int_{-1}^1  f(y_3,v) \dd y_3 \notag,\\
    & a_m(v) = \int_{-1}^1 f(y_3,v)\cos(m\pi y_3)\dd y_3, \label{am_notation}\\
    & b_m(v) = \int_{-1}^1 f(y_3,v)\sin(m\pi y_3) \dd y_3. \notag
\end{align}

We compute that
    \begin{align*}
    &     \int_{-1}^1\int_{\mathbb{R}^3} \mathcal{L}_n f \bar{f}\dd v \dd x_3= \int_{-1}^1 \int_{\mathbb{R}^3} [\mathcal{L}P_n f(x_3,v) + \nu(v)(f(x_3,v)-P_n f(x_3,v))]  \\
    &\times [P_n \bar{f}(x_3,v)+ \bar{f}(x_3,v)-P_n \bar{f}(x_3,v)]  \dd v \dd x_3 \\
    & \geq \Vert (\mathbf{I}-\mathbf{P})P_n f \Vert_{L^2_{x_3,\nu}}^2  + \Vert \nu^{1/2} (f-P_n f)  \Vert_{L^2_{x_3,v}}^2.
\end{align*}
The crossing terms are canceled due to
\begin{align*}
    &  \int_{-1}^1\int_{\mathbb{R}^3} \mathcal{L}P_n f(x_3,v)(\bar{f}(x_3,v)-P_n \bar{f}(x_3,v)) \dd v \dd x_3 \\
    & = \int_{-1}^1 \int_{\mathbb{R}^3} \Big\{\bar{f}(x_3,v)\mathcal{L}(a_0(v)) + \bar{f}(x_3,v)\sum_{m=1}^n \Big[ \cos (m\pi x_3) \mathcal{L}(a_m(v))  + \sin(m\pi x_3) \mathcal{L}(b_m(v)) \dd y_3 \Big) \Big] \Big\} \dd v \dd x_3 \\
    & -  \int_{\mathbb{R}^3} \Big\{ 2\bar{a}_0(v) \mathcal{L}(a_0(v)) + \sum_{m=1}^n \Big[ \bar{a}_m(v) \mathcal{L} (a_m(v))  + \bar{b}_m(v) \mathcal{L}(b_m(v)) \Big] \Big\} \dd v = 0,
\end{align*}
and
\begin{align*}
    &    \int_{-1}^1 \int_{\mathbb{R}^3}  [\nu(v)(f(x_3,v) - P_n f(x_3,v))] P_n \bar{f}(x_3,v) \dd v\dd x_3 \\
    &  = \int_{\mathbb{R}^3}\int_{-1}^1 \Big\{\nu(v) f(x_3,v) \bar{a}_0(v) + \sum_{m=1}^n \Big[ \nu(v) f(x_3,v)\cos(m\pi x_3) \bar{a}_m(v)  +\nu(v) f(x_3,v)\sin(m\pi x_3) \bar{b}_m(v) \Big]\Big\}\dd x_3\dd v \\
    & - \int_{\mathbb{R}^3} \Big\{2\nu(v) a_0(v)\bar{a}_0(v) +  \nu(v)\sum_{m=1}^n\Big[a_m(v)\bar{a}_m(v) + b_m(v)\bar{b}_m(v)\Big] \Big\}\dd v = 0.
\end{align*}
Here we used the orthogonality to cancel the crossing terms:
\begin{align*}
    &\int_{-1}^1 \cos (n\pi x_3) \cos (m\pi x_3) \dd x_3 = \delta_{nm}, \ \int_{-1}^1 \sin (n\pi x_3) \cos(m\pi x_3) \dd x_3 = \delta_{nm}, \\
    & \int_{-1}^1 \cos(n\pi x_3) \sin(n\pi x_3) \dd x_3 = 0.
\end{align*}
We conclude the lemma.
\end{proof}

The orthogonality property holds for $\mathcal{L}_n$.

\begin{lemma}\label{lemma:int_Ln}
Let $f=f(x_3,v)$ be a function defined on $ (-1,1)\times \mathbb{R}^3$, then it holds that
\begin{align*}
    & \int_{-1}^1\int_{\mathbb{R}^3} \mathcal{L}_n f \sqrt{\mu}\dd v \dd x_3 = 0.
\end{align*}

\end{lemma}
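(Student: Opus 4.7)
The plan is to reduce the $(x_3,v)$ integral to a velocity-only integral by first integrating in $x_3$, and then to invoke the fact that $\sqrt{\mu}\in \ker \mathcal{L}$. Concretely, write $\mathcal{L}_n f = \nu(v) f - K P_n f$, and observe that $K$ and $\nu$ act only on $v$, so Fubini lets me move the $x_3$-integral inside. The goal is to show that both contributions reassemble into $\langle \mathcal{L}F,\sqrt{\mu}\rangle_{L^2_v}$ for $F(v):=\int_{-1}^{1} f(x_3,v)\,\mathrm{d}x_3$.

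The main computation is to evaluate $\int_{-1}^{1} P_n f(x_3,v)\,\mathrm{d}x_3$. Using the notation from the previous proof, $P_n f = a_0(v) + \sum_{m=1}^{n}\bigl[a_m(v)\cos(m\pi x_3)+b_m(v)\sin(m\pi x_3)\bigr]$, and since $\int_{-1}^{1}\cos(m\pi x_3)\,\mathrm{d}x_3 = \int_{-1}^{1}\sin(m\pi x_3)\,\mathrm{d}x_3 = 0$ for every $m\geq 1$, the truncated series collapses to $2a_0(v) = \int_{-1}^{1} f(x_3,v)\,\mathrm{d}x_3 = F(v)$. Hence $P_n$ preserves the $x_3$-mean, which is the key structural fact of the regularization used here.

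Applying this to both terms in $\mathcal{L}_n$,
\begin{equation*}
\int_{-1}^1\!\!\int_{\mathbb{R}^3} \mathcal{L}_n f\,\sqrt{\mu}\,\mathrm{d}v\,\mathrm{d}x_3
= \int_{\mathbb{R}^3}\!\! \Bigl[\nu(v) F(v) - K F(v)\Bigr]\sqrt{\mu(v)}\,\mathrm{d}v
= \int_{\mathbb{R}^3}\!\! \mathcal{L}F(v)\,\sqrt{\mu(v)}\,\mathrm{d}v.
\end{equation*}
Since $\sqrt{\mu}\in \ker \mathcal{L}$ (i.e.\ $\mathcal{L}\sqrt{\mu}=0$) and $\mathcal{L}$ is self-adjoint on $L^2(\mathbb{R}^3_v)$ by Lemma~\ref{lemma:L_K}, the last expression equals $\langle F,\mathcal{L}\sqrt{\mu}\rangle_{L^2_v} = 0$, which concludes the proof. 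There is no real obstacle here: the only thing to verify carefully is the integrability needed to invoke Fubini and the self-adjointness pairing, which is immediate under the standing assumption $f\in L^2((-1,1)\times \mathbb{R}^3_v)$ together with the $\nu$-boundedness built into the relevant norms.
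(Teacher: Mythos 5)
Your proof is correct and rests on the same two observations as the paper: that $P_n$ preserves the $x_3$-average (equivalently $\int_{-1}^1 (I-P_n)f\,\mathrm{d}x_3=0$), and that $\langle \mathcal{L}g,\sqrt{\mu}\rangle_{L^2_v}=0$ by self-adjointness with $\mathcal{L}\sqrt{\mu}=0$. The only difference is organizational — you integrate out $x_3$ first and then apply a single orthogonality relation in $v$, whereas the paper splits $\mathcal{L}_n f=\mathcal{L}f+K(f-P_n f)$ and treats the two pieces separately — so the substance is essentially identical.
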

\begin{proof}
It is direct to compute
\begin{align*}
    &  \int_{\mathbb{R}^3}\int_{-1}^1 \mathcal{L}_n f\sqrt{\mu(v)} \dd x_3 \dd v = \int_{\mathbb{R}^3} \int_{-1}^1 (\mathcal{L} P_n f + K(f-P_n f) ) \sqrt{\mu(v)} \dd x_3 \dd v  \\
    & = \int_{\mathbb{R}^3} \sqrt{\mu(v)}K\Big( \int_{-1}^1 [f-P_n f] \dd x_3\Big) \dd v = 0.
\end{align*}
The lemma follows.
\end{proof}

It is important to observe that the compact property holds for $K_n$.

\begin{lemma}\label{lemma:compact}
$K_n$ is a compact operator on $L^2((-1,1)\times \mathbb{R}^3_v)$.

\end{lemma}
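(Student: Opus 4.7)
The plan is to exploit the finite-rank structure of $P_n$ in the $x_3$-variable to reduce the compactness of $K_n$ on $L^2((-1,1)\times \R^3_v)$ to the well-known compactness of $K$ on $L^2(\R^3_v)$ from Lemma \ref{lemma:L_K}. Concretely, enumerate the trigonometric functions appearing in \eqref{Pnf} as $\{\phi_j(x_3)\}_{j=0}^{2n}$ (with $\phi_0\equiv 1/\sqrt{2}$ suitably normalized and $\phi_j\in\{\cos(m\pi x_3),\sin(m\pi x_3)\}$ for $1\le j\le 2n$), so that
\[
P_n f(x_3,v) \;=\; \sum_{j=0}^{2n} \phi_j(x_3)\, a_j(v), \qquad a_j(v) := \int_{-1}^1 f(y_3,v)\,\phi_j(y_3)\,\dd y_3 .
\]
Since $K$ acts only on the $v$-variable, we obtain the finite-rank-in-$x_3$ decomposition
\[
K_n f(x_3,v) \;=\; \sum_{j=0}^{2n} \phi_j(x_3)\,\bigl(K a_j\bigr)(v).
\]

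For each fixed $j$, I would write $K_n^{(j)} f := \phi_j(x_3)\,(K a_j)(v)$ as the composition $K_n^{(j)} = M_j \circ K \circ T_j$, where:
(i) $T_j : L^2((-1,1)\times\R^3_v) \to L^2(\R^3_v)$, $f \mapsto a_j$, is bounded by Cauchy--Schwarz in $y_3$ on the compact interval $(-1,1)$;
(ii) $K : L^2(\R^3_v)\to L^2(\R^3_v)$ is compact by Lemma \ref{lemma:L_K} (Hilbert--Schmidt bound on the kernel $\mathbf{k}$, plus the standard truncation/approximation argument that is by now classical, see e.g.\ Lemma \ref{lemma:k_theta}); and
(iii) $M_j : L^2(\R^3_v)\to L^2((-1,1)\times\R^3_v)$, $g\mapsto \phi_j(x_3)g(v)$, is bounded since $\phi_j\in L^\infty(-1,1)$.

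A composition of the form $\text{bounded}\circ\text{compact}\circ\text{bounded}$ is compact, hence each $K_n^{(j)}$ is compact on $L^2((-1,1)\times\R^3_v)$. Summing over the finitely many indices $j=0,1,\dots,2n$ yields $K_n = \sum_j K_n^{(j)}$, a finite sum of compact operators, which is therefore compact. This completes the proof.

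There is no serious obstacle: the whole point of the truncation in \eqref{Pnf} is precisely to render $P_n$ finite rank in $x_3$, which is what allows one to borrow the compactness of $K$ in velocity. The only minor care needed is the trivial verification that each $T_j$ is bounded, which is immediate from $\int_{-1}^1 |\phi_j|^2 \le 2$.
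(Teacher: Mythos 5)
Your proof is correct but takes a genuinely different route from the paper's. The paper re-proves compactness of $K_n$ from first principles on $L^2((-1,1)\times\R^3_v)$: it introduces a truncated kernel $\mathbf{k}_N(v,u)=\mathbf{1}_{|v-u|\geq 1/N}\mathbf{1}_{|v|<N}\mathbf{k}(v,u)$, shows via weak convergence and dominated convergence that the truncated operator $K_n^N$ maps weakly convergent sequences to norm-convergent ones, proves $\|K_n^N-K_n\|_{L^2_{x_3,v}}\to 0$ as $N\to\infty$, and concludes by approximation in operator norm. You instead decompose $K_n=\sum_{j=0}^{2n}M_j\circ K\circ T_j$, a finite sum of compositions $\text{bounded}\circ\text{compact}\circ\text{bounded}$, offloading the analytic work to the classical compactness of $K$ on $L^2(\R^3_v)$ and exploiting the finite rank of $P_n$ in the $x_3$-variable. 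Your argument is more modular and conceptually transparent, and makes visible \emph{why} the truncation $P_n$ restores compactness (it reduces the $x_3$-dependence to finitely many modes); the paper's is more self-contained in that it does not invoke compactness of $K$ on $L^2(\R^3_v)$ as an external fact (that fact is mentioned in the introduction but never formalized as a lemma). One small imprecision in your parenthetical: the full kernel $\mathbf{k}$ is not Hilbert--Schmidt on $L^2(\R^3_v)$ since $\iint e^{-2\varrho|v-u|^2}/|v-u|^2\,\dd v\,\dd u$ diverges; compactness of $K$ requires exactly the truncation argument you allude to, so you may want to drop the phrase ``Hilbert--Schmidt bound on the kernel $\mathbf{k}$'' and simply cite the classical compactness of $K$ or the truncation scheme directly. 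This does not affect the validity of your argument.
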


\begin{proof}

Let $f^i \rightharpoonup f$ weakly in $L^2((-1,1)\times \mathbb{R}^3_v)$. We will use the notation \eqref{am_notation} for $f$ and the following notation for $f^i$:
\begin{align}
    &  a_0^i(v) = \frac{1}{2}\int_{-1}^1  f^i(y_3,v) \dd y_3 \notag,\\
    & a_m^i(v) = \int_{-1}^1 f^i(y_3,v)\cos(m\pi y_3)\dd y_3, \notag\\
    & b_m^i(v) = \int_{-1}^1 f^i(y_3,v)\sin(m\pi y_3) \dd y_3. \notag
\end{align}

First we define 
\begin{align*}
    & K_n^N f:= \int_{\mathbb{R}^3} \mathbf{k}_N(v,u)P_n f(y_3,u) \dd u, \\
    &  \mathbf{k}_N(v,u) = \mathbf{1}_{|v-u|\geq \frac{1}{N}} \mathbf{1}_{|v|<N} \mathbf{k}(v,u).
\end{align*}
We first prove that $K_n^N$ is a compact operator in $L^2((-1,1)\times \mathbb{R}^3_v)$. 

We have
\begin{align*}
& K_n^N f^i =  \int_{\mathbb{R}^3} \mathbf{k}_N(v,u) \Big[ a_0^i(u) + \sum_{m=1}^n \Big(\cos(m\pi x_3)a^i_m(u) + \sin(m\pi x_3)b^i_m(u) \Big) \Big] \dd u \\
    &\lesssim_n \Big(\int_{-1}^1 \int_{\mathbb{R}^3} |\mathbf{k}_N(v,u)|^2 \dd u \dd y_3 \Big)^{1/2} \Big( \int_{-1}^1\int_{\mathbb{R}^3} |f^i|^2 \dd u \dd y_3 \Big)^{1/2} \lesssim \mathbf{1}_{|v|\leq N} \in L^1_{x_3,v}.
\end{align*}
By the dominating convergence theorem, 
\begin{align*}
& \lim_{i\to \infty} \int_{-1}^1 \int_{\mathbb{R}^3} |K_n^N f^i - K_n^N f|^2   \dd v \dd x_3 \\
    & =  \lim_{i\to\infty } \int_{-1}^1 \int_{\mathbb{R}^3} \Big|\int_{\mathbb{R}^3} \mathbf{k}_N(v,u)(a_0(u)-a^i_0(u)) \dd u +\sum_{m=1}^n \Big[\cos(m\pi x_3) \int_{\mathbb{R}^3} \mathbf{k}_N(v,u) (a_m(u)-a^i_m(u)) \dd u   \\
    &+ \sin(m\pi x_3)\int_{\mathbb{R}^3} \mathbf{k}_N(v,u) (b_m(u)-b_m^i(u))\dd u \Big]  \Big|^2 \dd v \dd x_3  \\
    & =   \int_{-1}^1 \int_{\mathbb{R}^3} \lim_{i\to\infty }\Big|\int_{\mathbb{R}^3} \mathbf{k}_N(v,u)(a_0(u)-a^i_0(u)) \dd u +\sum_{m=1}^n \Big[\cos(m\pi x_3) \int_{\mathbb{R}^3} \mathbf{k}_N(v,u) (a_m(u)-a^i_m(u)) \dd u   \\
    &+ \sin(m\pi x_3)\int_{\mathbb{R}^3} \mathbf{k}_N(v,u) (b_m(u)-b_m^i(u))\dd u \Big]  \Big|^2 \dd v \dd x_3  = 0.
\end{align*}
Here we used the weak convergence of $f^i$ in $L^2((-1,1)\times \mathbb{R}^3_v)$ such that
\begin{align*}
    & \int_{\mathbb{R}^3} \mathbf{k}_N(v,u) (a_m(u)-a^i_m(u)) \dd u = \int_{-1}^1 \int_{\mathbb{R}^3} \mathbf{k}_N(v,u) (f(y_3,u)-f^i(y_3,u)) \cos(m\pi y_3) \dd u \dd y_3 \to 0, \\
    & \int_{\mathbb{R}^3} \mathbf{k}_N(v,u) (b_m(u)-b^i_m(u)) \dd u  = \int_{-1}^1 \int_{\mathbb{R}^3} \mathbf{k}_N(v,u) (f(y_3,u)-f^i(y_3,u)) \sin(m\pi y_3) \dd u \dd y_3 \to 0.
\end{align*}

Next, we prove $\Vert K_n^N - K_n\Vert_{L^2_{x_3,v}} \to 0$. Once this is true, then we have
\begin{align*}
    &  \Vert K_n f^i - K_n f\Vert_{L^2_{x_3,v}} \leq \Vert K_n f^i - K_n^N f^i\Vert_{L^2_{x_3,v}} + \Vert K_n^N f^i - K_n^N f\Vert_{L^2_{x_3,v}} + \Vert K_n f- K_n^N f\Vert_{L^2_{x_3,v}} \to 0.
\end{align*}

We compute that 
\begin{align*}
    &   \Big(\int_{-1}^1 \int_{\mathbb{R}^3} |K_n^N f - K_n f|^2 \dd v \dd x_3 \Big)^{1/2} \\
    &= \Big\Vert    \int_{\mathbb{R}^3}  (\mathbf{k}_N(v,u) - \mathbf{k}(v,u))^{1/2} (\mathbf{k}_N(v,u) - \mathbf{k}(v,u))^{1/2} a_0(u)  \dd u \dd y_3 \\
    & + \sum_{m=1}^n \Big[ \cos(m\pi x_3) \int_{\mathbb{R}^3} (\mathbf{k}_N(v,u)-\mathbf{k}(v,u))^{1/2} (\mathbf{k}_N(v,u)-\mathbf{k}(v,u))^{1/2} a_m(u) \dd u  \\
    & + \sin(m\pi x_3) \int_{\mathbb{R}^3} (\mathbf{k}_N(v,u)-\mathbf{k}(v,u))^{1/2} (\mathbf{k}_N(v,u)-\mathbf{k}(v,u))^{1/2} b_m(u) \dd u \Big] \Big\Vert_{L^2_{x_3,v}}  \\
    &\lesssim \Big\Vert  \Big(\int_{-1}^1 \int_{\mathbb{R}^3} |\mathbf{k}_N(v,u)-\mathbf{k}(v,u)|\dd u\dd y_3 \Big)^{1/2} \Big( \int_{-1}^1 \int_{\mathbb{R}^3}  |\mathbf{k}_N(v,u)-\mathbf{k}(v,u)| |f(y_3,u)|^2\dd u \dd y_3 \Big)^{1/2} \Big\Vert_{L^2_{x_3,v}} \\
    & \lesssim \Big(\sup_v \int_{\mathbb{R}^3} |\mathbf{k}_N(v,u)-\mathbf{k}(v,u)| \dd u \Big)^{1/2} \Big[\int_{-1}^1 \int_{\mathbb{R}^3} |f(y_3,u)|^2 \dd u\dd y_3 \int_{\mathbb{R}^3} |\mathbf{k}_N(v,u) - \mathbf{k}(v,u)| \dd v \Big]^{1/2} \\
    & \lesssim \Vert f\Vert_{L^2_{x_3,v}} \Big[\int_{|v-u|\leq \frac{1}{N}} \frac{1}{|v-u|} e^{-\frac{|v-u|^2}{8}} + \frac{1}{1+|v|} \mathbf{1}_{|v|\geq N}\Big] \to 0.
\end{align*}

We conclude the lemma.
\end{proof}

\section{Macroscopic estimate to the eigenvalue problem}\label{sec:macroscopic}

In the spectral analysis of the regularized operator $\hat{B}_n(k)$ with $k\neq 0$ for \eqref{def_hatBn}, the following macroscopic estimate plays a crucial role. Recall $\mathcal{L}_n=\nu(v)-K_n$ with $K_n=KP_n$. 

\begin{proposition}\label{prop:macroscopic}
Let $g\in L^2((-1,1)\times \mathbb{R}^3_v)$ and $\lambda\in \mathbb{C}$. Suppose $\hat{f}\in D(\hat{B}_n(k))$ is a solution to the following stationary problem
\begin{align*}
\begin{cases}
        &  i(k_1 v_1 + k_2 v_2) \hat{f} + v_3 \p_{x_3} \hat{f} + \mathcal{L}_n\hat{f} = g  -\lambda \hat{f}, \\
        &  \hat{f}|_{\gamma_-} = P_\gamma \hat{f}.
\end{cases}
\end{align*}
Then, it holds that
\begin{align*}
    &  \Big\Vert \frac{|k|}{\sqrt{1+|k|^2}} \hat{a} \Big\Vert_{L^2_{x_3}} + \Vert \mathbf{\hat{\mathbf{b}}}\Vert_{L^2_{x_3}} + \Vert \hat{c}\Vert_{L^2_{x_3}} + \Vert \hat{a} - \hat{a}_M\Vert_{L^2_{x_3}} \\
    &\lesssim (1+|\lambda|)\Vert (\mathbf{I}-\mathbf{P})P_n \hat{f}\Vert_{L^2_{x_3,\nu}}  +(1+|\lambda|) \Vert \hat{f}(x_3,v) - P_n \hat{f}(x_3,v) \Vert_{L^2_{x_3,\nu}} \\
    &+ (1+|\lambda|)|(I-P_\gamma)\hat{f}|_{L^2_{\gamma_+}} + (1+|\lambda|)\Vert g\Vert_{L^2_{x_3,v}} + (1+|\lambda|)^2 \Vert(\mathbf{I}-\mathbf{P})\hat{f}\Vert_{L^2_{x_3,v}} .
\end{align*}
Here, $\hat{a}$, $\hat{\mathbf{b}}$ and $\hat{c}$ are defined as the macroscopic components $\mathbf{P}\hat{f}$ in \eqref{abc-def}, and
\begin{align*}
    & \hat{a}_M := \frac{1}{2}\int_{-1}^1 \hat{a}\dd x_3.
\end{align*}

\end{proposition}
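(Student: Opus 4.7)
My plan is to derive the estimate through the standard weak-formulation/test-function strategy, adapted here to the layer geometry and to the regularized operator $\mathcal{L}_n$. Since $\mathcal{L}_n$ does not annihilate the collision invariants pointwise in $x_3$ — Lemma \ref{lemma:int_Ln} only supplies the total integral identity $\iint \mathcal{L}_n\hat{f}\sqrt{\mu}\,\dd v\,\dd x_3=0$ — I would first test the stationary equation against $\sqrt{\mu}$, $v_j\sqrt{\mu}$ and $\tfrac{|v|^2-3}{2}\sqrt{\mu}$ in velocity alone to obtain local balance laws, and then carefully record where $K_n=KP_n$ produces non-local-in-$x_3$ defects. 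By Lemma \ref{lemma:Ln}, those defects will be absorbed into $\Vert(\mathbf{I}-\mathbf{P})P_n\hat{f}\Vert_{L^2_{x_3,\nu}}$ and $\Vert\hat{f}-P_n\hat{f}\Vert_{L^2_{x_3,\nu}}$ on the right-hand side.

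For each macroscopic component I plan to introduce a test function of the form $\Psi_{\ast}(x_3,v)=\phi_{\ast}(x_3)\,\psi_{\ast}(v)\sqrt{\mu}$, where $\psi_{\ast}(v)$ is a Burnett-type velocity polynomial chosen so that the streaming term $v_3\partial_{x_3}$ applied to $\mathbf{P}\hat{f}$ reproduces the desired macro mode at leading order, while $\phi_{\ast}(x_3)$ solves a one-dimensional elliptic problem whose source is that mode. Concretely, for $\hat{c}$ I take $\psi_c=(|v|^2-5)v_3$ with $-\partial_{x_3}^2\phi^c=\hat{c}$; for $\hat{b}_j$ with $j=1,2$ take $\psi=v_jv_3$ with $-\partial_{x_3}^2\phi^{b,j}=\hat{b}_j$; for $\hat{b}_3$ take $\psi=v_3^2-1$; and for $\partial_{x_3}\hat{a}$ take $\psi=v_3$ with $-\partial_{x_3}^2\phi^{a}=\hat{a}-\hat{a}_M$. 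A mean-zero Poincaré inequality on $(-1,1)$ then upgrades control of $\partial_{x_3}\hat{a}$ to control of $\Vert\hat{a}-\hat{a}_M\Vert_{L^2_{x_3}}$. Testing the momentum balance against $i k_j v_j\sqrt{\mu}$ after rescaling the test function by $(1+|k|^2)^{-1/2}$ produces exactly the weighted bound $\Vert\tfrac{|k|}{\sqrt{1+|k|^2}}\hat{a}\Vert_{L^2_{x_3}}$, the rescaling being there to keep coefficients uniformly bounded for both $|k|\ll 1$ and $|k|\gg 1$.

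In each instance the mechanism is the same: substitute $\hat{f}=\mathbf{P}\hat{f}+(\mathbf{I}-\mathbf{P})\hat{f}$, integrate by parts in $x_3$, and move the $\mathcal{L}_n$, $\lambda\hat{f}$ and $g$ contributions to the right. Boundary terms produced on $\gamma_\pm$ are handled via $\hat{f}|_{\gamma_-}=P_\gamma\hat{f}$: moments of $P_\gamma\hat{f}$ against odd-in-$v_3$ functions reduce to computable quantities, with the remainder absorbed into $|(I-P_\gamma)\hat{f}|_{L^2_{\gamma_+}}$. The factors of $(1+|\lambda|)$ enter through the coupling $\lambda\!\int\!\hat{f}\,\Psi_{\ast}$ combined with the one-dimensional elliptic bound $\Vert\phi_{\ast}\Vert_{H^2_{x_3}}\lesssim\Vert\text{(macro mode)}\Vert_{L^2_{x_3}}$, while the $(1+|\lambda|)^2\Vert(\mathbf{I}-\mathbf{P})\hat{f}\Vert_{L^2_{x_3,v}}$ term arises when the equation is invoked a second time to rewrite a $\partial_{x_3}$ falling on $(\mathbf{I}-\mathbf{P})\hat{f}$ in terms of $\lambda\hat{f}$, $\mathcal{L}_n\hat{f}$ and horizontal momenta.

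The main obstacle I expect is bookkeeping rather than any single hard step. On one hand, the failure of a pointwise-in-$x_3$ identity $\int\mathcal{L}_n\hat{f}\sqrt{\mu}\,\dd v=0$ breaks the clean local-conservation structure and forces the defect to be carried throughout as $\Vert\hat{f}-P_n\hat{f}\Vert_{L^2_{x_3,\nu}}$; one must repeatedly verify that every such defect can be paired with a bounded test function and absorbed. On the other hand, making the weight $\tfrac{|k|}{\sqrt{1+|k|^2}}$ sharp uniformly in $k$ requires either a two-regime split ($|k|\lesssim 1$ versus $|k|\gtrsim 1$) or a single rescaling of $\phi^{a}$; I would take the latter path. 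Both difficulties are tractable by combining the coercive lower bound from Lemma \ref{lemma:Ln} with the total-integral identity from Lemma \ref{lemma:int_Ln}, which together guarantee that every defect term lands in one of the three microscopic norms appearing on the right-hand side of the claimed estimate.
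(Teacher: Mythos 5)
Your overall strategy (an EGKM--type weak formulation, test functions built from one--dimensional elliptic problems in $x_3$ with Poincar\'e on $(-1,1)$, defect terms from $\mathcal{L}_n$ absorbed via Lemmas \ref{lemma:Ln} and \ref{lemma:int_Ln}) is the same as the paper's, and the velocity polynomials you propose for $\hat c$ and $\hat{\mathbf b}$ match those used there. However, the way you assemble the test function has a concrete gap that prevents the argument from closing.

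You take $\Psi_{*}=\phi_{*}(x_3)\,\psi_{*}(v)\sqrt\mu$ and impose $-\partial_{x_3}^2\phi_{*}=\text{(macro mode)}$. When you pair the transport term $v_3\partial_{x_3}\hat f$ against $\Psi_*$ and integrate by parts once in $x_3$, you obtain
\begin{equation*}
-\int_{-1}^{1}\!\!\int_{\mathbb{R}^3} v_3\,\hat f\,\psi_*(v)\sqrt{\mu}\;\partial_{x_3}\bar\phi_*\,\dd v\,\dd x_3 + \text{bdry},
\end{equation*}
so only the \emph{first} derivative $\partial_{x_3}\phi_*$ appears. For the elliptic relation $-\partial_{x_3}^2\phi_*=\text{(macro mode)}$ to turn this into a coercive quantity $\Vert\hat c\Vert_{L^2_{x_3}}^2$ (and similarly for $\hat{\mathbf b}$, $\hat a$), the integrand must contain $\partial_{x_3}^2\phi_*$. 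That is why the paper's test function is of the form $\tilde\psi(v)\sqrt\mu\,\big(-i(k_1v_1+k_2v_2)+v_3\partial_{x_3}\big)\phi_*(x_3)$: it already carries one derivative of $\phi_*$, so the transport pairing produces $\partial_{x_3}^2\phi_*$. As stated, your test function is off by a derivative, and the $-\partial_{x_3}^2\phi_*$ relation does not close; integrating by parts a second time only yields boundary terms or requires a priori control of $\partial_{x_3}\hat c$ etc., which you do not have.

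Second, and closely related, your elliptic problem lacks the $|k|^2$ zeroth--order term: the paper solves $(|k|^2-\partial_{x_3}^2)\phi_*=\text{(macro mode)}$, which gives uniform-in-$k$ control of $\Vert |k|^2\phi_*\Vert_{L^2_{x_3}}$, $\Vert |k|\partial_{x_3}\phi_*\Vert_{L^2_{x_3}}$, and $\Vert\partial_{x_3}^2\phi_*\Vert_{L^2_{x_3}}$ by $\Vert\text{macro mode}\Vert_{L^2_{x_3}}$. You need this to absorb the horizontal momentum pairing $\int i(k_1v_1+k_2v_2)(\mathbf{I}-\mathbf{P})\hat f\,\bar\Psi_*$, which is of size $|k|\Vert\phi_*\Vert_{L^2_{x_3}}\Vert(\mathbf{I}-\mathbf{P})\hat f\Vert_{L^2_{x_3,v}}$ and is unbounded as $|k|\to\infty$ if $\phi_*$ only satisfies a Poisson problem. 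You flag this for $\hat a$ via a rescaling $(1+|k|^2)^{-1/2}$, but the same issue is present for $\hat{\mathbf b}$ and $\hat c$, and a single rescaling of the source does not replace the $|k|^2$ Helmholtz term: the paper needs $\Vert|k|^2\phi_*\Vert$ and $\Vert|k|\partial_{x_3}\phi_*\Vert$ simultaneously, which only the operator $(|k|^2-\partial_{x_3}^2)$ delivers. Finally, for the $\hat a$ estimate you take $\psi=v_3$, which is not orthogonal to the $\hat c$-mode (the paper uses the weight $(|v|^2-10)$ precisely so that $\int v_3^2(|v|^2-10)\frac{|v|^2-3}{2}\mu\,\dd v=0$); without it, $\hat c$ re-enters the $\hat a$ balance and forces an awkward ordering of the estimates. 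Fixing these three points essentially recovers the paper's proof.
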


\begin{proof}

In order to estimate the macroscopic component of $\hat{f}$, we employ the test function method proposed by Esposito-Guo-Kim-Marra \cite{EGKM,EGKM2}. The weak formulation with a test function $\psi$ to the eigenvalue problem reads
\begin{align}
    &  \underbrace{\int_{-1}^{1}\int_{\mathbb{R}^3} i(k_1v_1+k_2v_2) \hat{f} \psi \dd v \dd x_3 }_{\eqref{weak_formulation}_1} \underbrace{-  \int_{-1}^1 \int_{\mathbb{R}^3} v_3 \hat{f} \p_{x_3}\psi \dd v \dd x_3 }_{\eqref{weak_formulation}_2}+ \underbrace{  \int_{\mathbb{R}^3} v_3 [\hat{f}(k,1)  \psi(1) - \hat{f}(k,-1) \psi(-1) ]\dd v }_{\eqref{weak_formulation}_3} \notag\\
    & + \underbrace{ \int_{-1}^1 \int_{\mathbb{R}^3} \mathcal{L}_n\hat{f} \psi \dd v \dd x_3 }_{\eqref{weak_formulation}_4}  = \underbrace{\int_{-1}^1 \int_{\mathbb{R}^3} g \psi \dd v \dd x_3 }_{\eqref{weak_formulation}_5} - \underbrace{\int_{-1}^1\int_{\mathbb{R}^3} \lambda \hat{f} \psi \dd v \dd x_3}_{\eqref{weak_formulation}_6}. \label{weak_formulation}
\end{align}

\textit{Estimate of $\hat{a}$}. 

For the estimate of $\hat{a}$, we choose a test function as
\begin{align}
    &  \psi_a =   \sqrt{\mu} (|v|^2-10) (-i(k_1v_1+k_2v_2)  +  v_3 \p_{x_3}  )\phi_a.  \label{test_a}
\end{align}
Here $\phi_a$ satisfies the elliptic equation
\begin{align}
\begin{cases}
    & \dis (|k|^2 - \p_{x_3}^2)\phi_a(k,x_3) = -\bar{\hat{a}}(k,x_3) \frac{|k|^2}{1+|k|^2} , \ x_3\in (-1,1), \\
    &\dis    \p_{x_3} \phi_a(k, \pm 1) = 0.    
\end{cases}\label{elliptic_a}
\end{align}
Here $\bar{\hat{a}}$ stands for the complex conjugate of $\hat{a}$.

Multiplying \eqref{elliptic_a} by $\bar{\phi}_a$, the complex conjugate of $\phi_a$, we obtain
\begin{align*}
    &  |k|^2 \Vert \phi_a \Vert_{L^2_{x_3}}^2 + \Vert \p_{x_3}\phi_a \Vert_{L^2_{x_3}}^2 \lesssim  \frac{|k|^2}{1+|k|^2} \Vert \hat{a}\Vert_{L^2_{x_3}}^2 + o(1)\frac{|k|^2}{1+ |k|^2} \Vert \phi_a \Vert_{L^2_{x_3}}^2, \\
    & \Vert |k| \phi_a\Vert_{L^2_{x_3}}^2 + \Vert \p_{x_3}\phi_a\Vert_{L^2_{x_3}}^2 \lesssim \frac{|k|^2}{1+|k|^2} \Vert \hat{a}\Vert_{L^2_{x_3}}^2.
\end{align*}

Multiplying \eqref{elliptic_a} by $|k|^2 \bar{\phi}_a$, we obtain
\begin{align*}
    &    |k|^4 \Vert \phi_a\Vert_{L^2_{x_3}}^2 + |k|^2 \Vert \p_{x_3}\phi_a\Vert_{L^2_{x_3}}^2  \lesssim o(1) |k|^4 \Vert \phi_a \Vert_{L^2_{x_3}}^2 + \frac{|k|^4}{(1+|k|^2)^2}\Vert \hat{a}\Vert_{L^2_{x_3}}^2, \\
    & \Vert |k|^2 \phi_a\Vert_{L^2_{x_3}}^2 + \Vert |k|\p_{x_3}\phi_a\Vert_{L^2_{x_3}}^2 \lesssim \frac{|k|^4}{1+|k|^4} \Vert \hat{a}\Vert_{L^2_{x_3}}^2 \lesssim \frac{|k|^2}{1+|k|^2}\Vert \hat{a}\Vert_{L^2_{x_3}}^2.
\end{align*}

This leads to the estimate that
\begin{align}
    &    \Vert \p_{x_3}^2 \phi_a \Vert_{L^2_{x_3}} \lesssim |k|^2 \Vert \phi_a\Vert_{L^2_{x_3}} + \frac{|k|^2}{1+|k|^2} \Vert \hat{a}\Vert_{L^2_{x_3}} \lesssim \frac{|k|}{\sqrt{1+|k|^2}} \Vert \hat{a}\Vert_{L^2_{x_3}},  \notag  \\
    & \Vert ( |k|+ |k|^2) \phi_a\Vert_{L^2_{x_3}} + \Vert (1+|k|) \p_{x_3}\phi_a\Vert_{L^2_{x_3}} + \Vert \p_{x_3}^2 \phi_a\Vert_{L^2_{x_3}} \lesssim \frac{|k|}{\sqrt{1+|k|^2}} \Vert \hat{a}\Vert_{L^2_{x_3}}.\label{H2_est_a}
\end{align}
By trace theorem, using \eqref{H2_est_a} we conclude that
\begin{align}
    &    | |k| \phi_a(k,\pm 1)|+| \p_{x_3} \phi_a(k,\pm 1)| \lesssim    \frac{|k|}{\sqrt{1+|k|^2}} \Vert \hat{a}\Vert_{L^2_{x_3}}. \label{H1_trace_a}
\end{align}

We substitute \eqref{test_a} into \eqref{weak_formulation}. We decompose $\hat{f} = \mathbf{P}\hat{f} + (\mathbf{I}-\mathbf{P})\hat{f}$. Then we have
\begin{align*}
    &     \eqref{weak_formulation}_1  =  - \int_{-1}^1 \int_{\mathbb{R}^3} i (k_1v_1 + k_2v_2) \hat{a} \mu (|v|^2-10)i (k_1v_1 + k_2v_2) \phi_a \dd v \dd x_3 \\
    & +  \int^1_{-1} \int_{\mathbb{R}^3} i(k_1v_1 + k_2v_2) (\mathbf{I}-\mathbf{P})\hat{f} \sqrt{\mu} (|v|^2-10) (-i k_1v_1 -i k_2v_2 + v_3\p_{x_3}) \phi_a \dd v \dd x_3 .
\end{align*}
In the first line, the contribution of $v_3\p_{x_3}\phi_a$ and $\hat{\mathbf{b}},\hat{c}$ vanish from the oddness and 
\begin{align*}
    &  \int_{\mathbb{R}^3} v_i^2 (|v|^2-10)(\frac{|v|^2-3}{2}) \mu \dd v = 0.
\end{align*}
Then from $\int_{\mathbb{R}^3} v_i^2 (|v|^2-10)\mu \dd v = -5$, we further have
\begin{align*}
    &     \eqref{weak_formulation}_1 = -5 \int_{-1}^1  |k|^2 \phi_a \hat{a}  \dd x_3   \\
    &+ \underbrace{\int_{-1}^1 \int_{\mathbb{R}^3} i(k_1v_1+k_2v_2)(\mathbf{I}-\mathbf{P})\hat{f} \sqrt{\mu}(|v|^2-10) (-i(k_1v_1+k_2v_2) + v_3 \p_{x_3})\phi_a \dd v \dd x_3}_{E_1} . 
\end{align*}

Next, from the oddness we have
\begin{align*}
    &  \eqref{weak_formulation}_2 = - \int_{-1}^1 \int_{\mathbb{R}^3} v_3  \hat{a}\mu (|v|^2-10) v_3 \p_{x_3}^2 \phi_a \dd v \dd x_3 \\
    & -  \int_{-1}^1 \int_{\mathbb{R}^3} v_3 (\mathbf{I}-\mathbf{P})\hat{f} \sqrt{\mu} (|v|^2-10) (-i(k_1v_1+k_2v_2) + v_3 \p_{x_3}) \p_{x_3}\phi_a \dd v \dd x_3 \\
    &  = 5 \int_{-1}^1 \p_{x_3}^2 \phi_a \hat{a} \dd x_3  \underbrace{-  \int_{-1}^1 \int_{\mathbb{R}^3} v_3 (\mathbf{I}-\mathbf{P})\hat{f} \sqrt{\mu}(|v|^2-10) (-i(k_1v_1+k_2v_2) + v_3 \p_{x_3}) \p_{x_3} \phi_a \dd v \dd x_3}_{E_2} .
\end{align*}

Then $\eqref{weak_formulation}_1$ and $\eqref{weak_formulation}_2$ combine to be
\begin{align}
    & \eqref{weak_formulation}_1 + \eqref{weak_formulation}_2 = -5 \int_{-1}^1 (|k|^2 - \p_{x_3}^2) \phi_a \hat{a} \dd x_3  + E_1 + E_2  = 5\Big\Vert \frac{|k|}{\sqrt{1+|k|^2}} \hat{a} \Big\Vert_{L^2_{x_3}}^2 + E_1 + E_2.   \notag
\end{align}
Here $E_1+E_2$ corresponds to the contribution of $(\mathbf{I}-\mathbf{P})\hat{f}$, which is bounded as
\begin{align}
    &  |E_1+E_2| \lesssim \Vert (\mathbf{I}-\mathbf{P})\hat{f}\Vert_{L^2_{x_3,v}}^2 + o(1)\Big[\Vert |k|^2 \phi_a\Vert_{L^2_{x_3}}^2 + \Vert |k| \p_{x_3}\phi_a \Vert_{L^2_{x_3}}^2 + \Vert \p_{x_3}^2 \phi_a \Vert_{L^2_{x_3}}^2  \Big] \notag \\
    &\lesssim o(1)\Big\Vert \frac{|k|}{\sqrt{1+|k|^2}} \hat{a} \Big\Vert_{L^2_{x_3}}^2  + \Vert (\mathbf{I}-\mathbf{P})\hat{f}\Vert_{L^2_{x_3,v}}^2.  \notag
\end{align}
Here we have used \eqref{H2_est_a}.

Then we compute the boundary term $\eqref{weak_formulation}_3$. Without loss of generality, we only estimate the term at $x_3=1$. For the contribution of $P_\gamma \hat{f}$, we have
\begin{align*}
    &    \int_{\mathbb{R}^3} v_3   P_\gamma\hat{f}(k,1) \psi_a(1) \dd v  =   \int_{\mathbb{R}^3} v_3 P_\gamma \hat{f}(k,1) \sqrt{\mu} (|v|^2 -10) (-i(k_1v_1+k_2v_2) + v_3 \p_{x_3}) \phi_a  \dd v  = 0.
\end{align*}
Here we have used the oddness to have
\begin{align*}
    &     \int_{\mathbb{R}^3} v_3 \mu (|v|^2 - 10) (-i (k_1v_1+k_2v_2)) \phi_a \dd v = 0,
\end{align*}
and the boundary condition $\p_{x_3} \phi_a = 0$ to have
\begin{align*}
    &   \int_{\mathbb{R}^3} v_3^2 \mu (|v|^2-10) \p_{x_3} \phi_a \dd v = 0.
\end{align*}
Thus from the trace estimate \eqref{H1_trace_a}, we derive that
\begin{align}
    &    \Big| \int_{v_3>0} (I-P_\gamma)\hat{f}(k,1) \sqrt{\mu} (|v|^2-10) (-i(k_1v_1+k_2v_2) + v_3 \p_{x_3}) \phi_a \dd v   \Big|  \notag \\
    & \lesssim o(1) [| |k| \phi_a(k,1) | +  | \p_{x_3} \phi_a(k,1)|  ]^2   +    |(I-P_\gamma)\hat{f}|_{L^2_{\gamma_+}}^2 \notag \\
    & \lesssim o(1) \Big\Vert \frac{|k|}{\sqrt{1+|k|^2}} \hat{a} \Big\Vert_{L^2_{x_3}}^2 + |(I-P_\gamma)\hat{f}|_{L^2_{\gamma_+}}^2.   \notag
\end{align}

By the same computation to term at $x_3 = -1$, we obtain
\begin{align*}
    & | \eqref{weak_formulation}_3| \lesssim o(1) \Big\Vert \frac{|k|}{\sqrt{1+|k|^2}} \hat{a} \Big\Vert_{L^2_{x_3}}^2 + |(I-P_\gamma)\hat{f}|_{L^2_{\gamma_+}}^2. 
\end{align*}

Next we compute $\eqref{weak_formulation}_4, \eqref{weak_formulation}_5$ and $\eqref{weak_formulation}_6$ as
\begin{align}
    &    |\eqref{weak_formulation}_4| \lesssim \int_{-1}^1\int_{\mathbb{R}^3} \mu^{1/4} [|k\phi_a| + |\p_{x_3}\phi_a|] \big| \nu[\hat{f} -P_n \hat{f}] + \mathcal{L}P_n \hat{f} \big| \notag\\
    &\lesssim o(1)[\Vert k \phi_a \Vert_{L^2_{x_3}}^2 + \Vert \p_{x_3} \phi_a \Vert_{L^2_{x_3}}^2 ]   +  \Vert (\mathbf{I}-\mathbf{P})P_n \hat{f} \Vert_{L^2_{x_3,v}}^2  + \Vert \hat{f} - P_n \hat{f} \Vert_{L^2_{x_3,v}}^2\notag \\
    &\lesssim o(1)\Big\Vert \frac{|k|}{\sqrt{1+|k|^2}} \hat{a}  \Big\Vert^2_{L^2_{x_3}}  + \Vert (\mathbf{I}-\mathbf{P})P_n \hat{f} \Vert_{L^2_{x_3,v}}^2  + \Vert \hat{f} - P_n \hat{f} \Vert_{L^2_{x_3,v}}^2,   \notag
\end{align}
\begin{align}
    & |\eqref{weak_formulation}_5| =  \Big|    \int_{-1}^1 \int_{\mathbb{R}^3} g\psi_a \dd v \dd x_3  \Big|    \lesssim o(1)[\Vert k \phi_a \Vert_{L^2_{x_3}}^2 + \Vert \p_{x_3} \phi_a \Vert_{L^2_{x_3}}^2 ] + \Vert g\Vert_{L^2_{x_3,v}}^2 \notag\\
    & \lesssim  o(1)\Big\Vert \frac{|k|}{\sqrt{1+|k|^2}} \hat{a}  \Big\Vert^2_{L^2_{x_3}} + \Vert g\Vert_{L^2_{x_3,v}}^2,   
    \notag
\end{align}
and
\begin{align}
    &  |\eqref{weak_formulation}_6| = \Big|\int_{-1}^1 \int_{\mathbb{R}^3} \lambda \hat{f} [\sqrt{\mu}(|v|^2-5)(-i(k_1v_1+k_2v_2) + v_3\p_{x_3})\phi_a -5 \sqrt{\mu}(-i(k_1v_1+k_2v_2) + v_3 \p_{x_3})\phi_a] \dd v \dd x_3  \Big| \notag\\
    & \lesssim o(1)[\Vert k\phi_a\Vert_{L^2_{x_3}}^2 + \Vert \p_{x_3}\phi_a\Vert_{L^2_{x_3}}^2] + [|\lambda|^2 \Vert (\mathbf{I}-\mathbf{P})\hat{f}\Vert_{L^2_{x_3,v}}^2 + |\lambda|^2 \Vert \mathbf{b}\Vert_{L^2_{x_3}}^2 ] \notag \\
    & \lesssim o(1)\Big\Vert \frac{|k|}{\sqrt{1+|k|^2}} \hat{a}  \Big\Vert^2_{L^2_{x_3}} + |\lambda|^2 \Vert (\mathbf{I}-\mathbf{P})\hat{f}\Vert_{L^2_{x_3,v}}^2 + |\lambda|^2 \Vert \hat{\mathbf{b}}\Vert_{L^2_{x_3}}^2 .
    \notag
\end{align}

We conclude the estimate of $\hat{a}$: 
\begin{align*}
 \Big\Vert \frac{|k|}{\sqrt{1+|k|^2}} \hat{a}  \Big\Vert_{L^2_{x_3}}    &     \lesssim  \Vert (\mathbf{I}-\mathbf{P})P_n \hat{f} \Vert_{L^2_{x_3,v}}  + \Vert \hat{f} - P_n \hat{f}\Vert_{L^2_{x_3,v}}   + |(I-P_\gamma)\hat{f}|_{L^2_{\gamma_+}} + \Vert g\Vert_{L^2_{x_3,v}}  \\
    & + |\lambda| \Vert (\mathbf{I}-\mathbf{P})\hat{f}\Vert_{L^2_{x_3,v}} + |\lambda| \Vert \hat{\mathbf{b}}\Vert_{L^2_{x_3}}.
\end{align*}

\medskip
\textit{Estimate of $\hat{a}-\hat{a}_M$.}
We choose the same test function \eqref{test_a}, with different $\phi_a$:
\begin{align}
    \begin{cases}
        (|k|^2-\p_{x_3}^2) \phi_a(k,x_3) = -(\bar{\hat{a}}(k,x_3) - \bar{\hat{a}}_M(k)), \\
        \p_{x_3} \phi_a(k,\pm 1) = 0.
    \end{cases} \label{phi_aM}
\end{align}
Multiplying \eqref{phi_aM} by $\bar{\phi}_a$ and taking integration in $x_3$ we obtain
\begin{align*}
    &   \Vert |k| \phi_a \Vert_{L^2_{x_3}}^2 + \Vert \p_{x_3} \phi_a \Vert_{L^2_{x_3}}^2 \lesssim o(1) \Vert \phi_a\Vert_{L^2_{x_3}}^2 + \Vert \hat{a}-\hat{a}_M\Vert_{L^2_{x_3}}^2.
\end{align*}
Since $\int_{-1}^1 (\bar{\hat{a}}(k,x_3) - \bar{\hat{a}}_M(k))  \dd x_3 = 0$, from the Poincar\'e inequality, we further have
\begin{align}
    & \Vert (1+|k|)\phi_a\Vert^2_{L^2_{x_3}}+ \Vert \p_{x_3}\phi_a\Vert^2_{L^2_{x_3}} \lesssim \Vert \hat{a}-\hat{a}_M\Vert^2_{L^2_{x_3}}.  \label{phiaM_l2_k}
\end{align}
Multiplying \eqref{phi_aM} by $|k|^2 \bar{\phi}_a$, we obtain
\begin{align*}
    &  \Vert |k|^2\phi_a \Vert_{L^2_{x_3}}^2 + \Vert |k|\p_{x_3}\phi_a\Vert^2_{L^2_{x_3}} \lesssim o(1) \Vert |k|^2 \phi_a\Vert_{L^2_{x_3}}^2 +  \Vert \hat{a}-\hat{a}_M\Vert_{L^2_{x_3}}^2.
\end{align*}
Thus we conclude
\begin{align}
    & \Vert |k|^2 \phi_a\Vert_{L^2_{x_3}}^2 + \Vert |k|\p_{x_3}\phi_a\Vert^2_{L^2_{x_3}} \lesssim \Vert \hat{a}-\hat{a}_M\Vert^2_{L^2_{x_3}},   \notag \\
    & \Vert \p_{x_3}^2 \phi_a\Vert_{L^2_{x_3}}^2 \lesssim \Vert |k|^2 \phi_a\Vert_{L^2_{x_3}}^2 + \Vert \hat{a}-\hat{a}_M\Vert_{L^2_{x_3}}^2 \lesssim \Vert \hat{a}-\hat{a}_M\Vert^2_{L^2_{x_3}}.    \notag
\end{align}

By the trace theorem, we have
\begin{align}
    & | |k|\phi_a(k,\pm 1) |^2 \lesssim \Vert \hat{a}-\hat{a}_M\Vert^2_{L^2_{x_3}}, \quad | \p_{x_3}\phi_a(k,\pm 1)|^2 \lesssim \Vert \hat{a}-\hat{a}_M\Vert_{L^2_{x_3}}^2. \label{phiaM_trace_2}
\end{align}

The computation for \eqref{weak_formulation} is the same as the estimate of $\hat{a}$, except the following term:
\begin{align*}
    & \eqref{weak_formulation}_1 + \eqref{weak_formulation}_2 = -5\int_{-1}^1 (|k|^2- \p_{x_3}^2) \phi_a \hat{a} \dd x_3 + E_1 +E_2 \\
    & = 5\int_{-1}^1 (\bar{\hat{a}} - \bar{\hat{a}}_M) \hat{a} \dd x_3 + E_1 = 5\Vert \hat{a}-\hat{a}_M\Vert_{L^2_{x_3}}^2 + 5\int_{-1}^1 (\bar{\hat{a}}-\bar{\hat{a}}_M) \hat{a}_M \dd x_3 + E_1 +E_2\\
    & = 5\Vert \hat{a}-\hat{a}_M\Vert_{L^2_{x_3}}^2 + E_1+E_2.
\end{align*}
Here we used $ \int_{-1}^1 (\hat{a} - \hat{a}_M )\dd x_3 = 0.$

Thus we obtain the following estimate for $\hat{a} - \hat{a}_M$ by replacing $\frac{|k|}{\sqrt{1+|k|^2}} \hat{a}$ with $\hat{a}-\hat{a}_M$:
\begin{align*}
  \Vert \hat{a}-\hat{a}_M\Vert_{L^2_{x_3}}  &    \lesssim \Vert (\mathbf{I}-\mathbf{P})P_n \hat{f} \Vert_{L^2_{x_3,v}} + \Vert \hat{f} - P_n \hat{f}\Vert_{L^2_{x_3,v}} + |(I-P_\gamma)\hat{f}|_{L^2_{\gamma_+}} + \Vert g\Vert_{L^2_{x_3,v}} \\
  &+|\lambda| \Vert(\mathbf{I}-\mathbf{P})\hat{f}\Vert_{L^2_{x_3,v}} + |\lambda| \Vert \hat{\mathbf{b}}\Vert_{L^2_{x_3}}.
\end{align*}

\medskip 
\textit{Estimate of $\hat{\mathbf{b}}$.}

We choose a test function as
\begin{align}
    & \psi_b =  -\frac{3}{2}\Big(|v_1|^2 - \frac{|v|^2}{3} \Big)\sqrt{\mu} ik_1 \phi_b  - v_1v_2 \sqrt{\mu} ik_2 \phi_b + v_1 v_3 \sqrt{\mu} \p_{x_3} \phi_b \perp \ker\mathcal{L}, \notag
\end{align}
and let $\phi_b$ satisfy the elliptic system
\begin{align}
\begin{cases}
    &   [2|k_1|^2 + |k_2|^2 - \p_{x_3}^2] \phi_b = \bar{\hat{b}}_1, \ \ -1<x_3<1,\\
    & \phi_b(k,\pm1) = 0.
    %\text{ at } x_3 = \pm 1.    
\end{cases} \notag
\end{align}
Since the Poincar\'e inequality holds, similar to \eqref{phiaM_l2_k}-\eqref{phiaM_trace_2}, we have the following regularity estimate and trace estimate:
\begin{align}
    & \Vert (1+|k|+|k|^2)\phi_b\Vert^2_{L^2_{x_3}}+ \Vert (1+|k|)\p_{x_3}\phi_b\Vert^2_{L^2_{x_3}} + \Vert \p_{x_3}^2 \phi_b\Vert_{L^2_{x_3}}^2 \lesssim \Vert \hat{b}_1\Vert^2_{L^2_{x_3}},  \label{phib_l2_k} \\
    &| |k|\phi_b(k,\pm 1) |^2 +  | \p_{x_3}\phi_b(k,\pm 1)|^2 \lesssim \Vert \hat{b}_1\Vert_{L^2_{x_3}}^2. \label{phib_trace_2}
\end{align}

We first compute
\begin{align*}
    & \eqref{weak_formulation}_1 =    \int_{-1}^1 \int_{\mathbb{R}^3} i (k_1v_1+k_2v_2) (\hat{\mathbf{b}}\cdot v)\sqrt{\mu} \psi_b  \dd v \dd x_3  + \underbrace{\int_{-1}^1\int_{\mathbb{R}^3} i(k_1v_1+k_2v_2) (\mathbf{I}-\mathbf{P})\hat{f} \psi_b \dd v \dd x_3}_{E_3} \\
    &  =  \int_{-1}^1 \int_{\mathbb{R}^3} i (k_1v_1 + k_2v_2)(\hat{b}_1 v_1 + \hat{b}_2 v_2 + \hat{b}_3v_3)\sqrt{\mu}\psi_b \dd v \dd x_3  + E_3 \\
    & =  \int_{-1}^1 \int_{\mathbb{R}^3} \Big[ \frac{3}{2}|k_1|^2 |v_1|^2 \hat{b}_1 \Big( |v_1|^2 - \frac{|v|^2}{3}\Big) \mu \phi_b + v_1^2 v_2^2 k_1 k_2 \hat{b}_2 \mu \phi_b + v_1^2 v_2^2 k_2^2 \hat{b}_1 \mu \phi_b \\
    & \ \ \ \ \ \ +\frac{3}{2} \Big(|v_1|^2 - \frac{|v|^2}{3} \Big) v_2^2 k_2 k_1 \hat{b}_2 \mu \phi_b \Big] \dd v \dd x_3  +   \int_{-1}^1 \int_{\mathbb{R}^3} i k_1 \hat{b}_3 v_1^2 v_3^2 \mu \p_{x_3}\phi_b \dd v \dd x_3  + E_3 \\
    &  =  \int_{-1}^1  [2 |k_1|^2 \hat{b}_1 + |k_2|^2 \hat{b}_1] \phi_b \dd x_3  + ik_1 \hat{b}_3 \phi_b  +  E_3.
\end{align*}
The contribution of $\hat{a}$ and $\hat{c}$ has vanished from the oddness.

Here, by \eqref{phib_l2_k},
\begin{align}
    &   |E_3| \lesssim o(1)[\Vert |k|^2\phi_b\Vert_{L^2_{x_3}}^2 + \Vert |k|\p_{x_3}\phi_b \Vert_{L^2_{x_3}}^2] + \Vert (\mathbf{I}-\mathbf{P})\hat{f}\Vert_{L^2_{x_3,v}}^2  \lesssim o(1)\Vert \hat{b}_1\Vert_{L^2_{x_3}}^2 + \Vert (\mathbf{I}-\mathbf{P})\hat{f}\Vert^2_{L^2_{x_3,v}}. \notag
\end{align}

Next, we compute
\begin{align*}
    &    \eqref{weak_formulation}_2 = - \int_{-1}^1 \int_{\mathbb{R}^3} v_3 (\hat{\mathbf{b}}\cdot v)\sqrt{\mu} \p_{x_3}\psi_b \dd v \dd x_3  \underbrace{-\int_{-1}^1 \int_{\mathbb{R}^3} v_3 (\mathbf{I}-\mathbf{P})\hat{f} \p_{x_3} \psi_b \dd v \dd x_3}_{E_4} \\
    & = - \int_{-1}^1 \int_{\mathbb{R}^3} v_1^2 v_3^2 \hat{b}_1  \mu \p_{x_3}^2 \phi_b  \dd v \dd x_3  +  \int_{-1}^1 \int_{\mathbb{R}^3}  \frac{3}{2}ik_1\hat{b}_3 v_3^2\Big( |v_1|^2- \frac{|v|^2}{3}\Big)\mu \dd v \dd x_3  + E_4 \\
    &= - \int_{-1}^1 \hat{b}_1 \p_{x_3}^2 \phi_b \dd x  - ik_1 \hat{b}_3 \phi_b + E_4.
\end{align*}
The contribution of $\hat{a},\hat{c}$ and $v_2$ has vanished from the oddness.

Here, by \eqref{phib_l2_k},
\begin{align}
    & |E_4| \lesssim o(1) [\Vert |k|\p_{x_3}\phi_b \Vert^2_{L^2_{x_3}} + \Vert \p_{x_3}^2 \phi_b\Vert^2_{L^2_{x_3}}] + \Vert (\mathbf{I}-\mathbf{P})\hat{f}\Vert^2_{L^2_{x_3,v}} \lesssim  o(1)\Vert \hat{b}_1\Vert_{L^2_{x_3}}^2 + \Vert (\mathbf{I}-\mathbf{P})\hat{f}\Vert^2_{L^2_{x_3,v}}.\notag
\end{align}
Then we have
\begin{align}
    &     \eqref{weak_formulation}_1 + \eqref{weak_formulation}_2 =  \int_{-1}^1 [2|k_1|^2 + |k_2|^2 - \p_{x_3}^2]\phi_b \hat{b}_1 \dd x_3  + E_3 + E_4   = \Vert \hat{b}_1\Vert_{L^2_{x_3}}^2 + E_3 + E_4. \notag
\end{align}

Then we compute the boundary term $\eqref{weak_formulation}_3$. At $x_3=1$, for the contribution of $P_\gamma \hat{f}$, we have
\begin{align*}
    &    \int_{\mathbb{R}^3} v_3   P_\gamma \hat{f}(k,1) \psi_b(1) \dd v  \\
    &=   \int_{\mathbb{R}^3} v_3 P_\gamma \hat{f}(k,1)\Big[ -\frac{3}{2}\Big(|v_1|^2 - \frac{|v|^2}{3} \Big)\sqrt{\mu} ik_1 \phi_b  - v_1v_2 \sqrt{\mu} ik_2 \phi_b + v_1 v_3 \sqrt{\mu} \p_{x_3} \phi_b \Big] \dd v  = 0.
\end{align*}
Here we have used the oddness.

For the part with $(I-P_\gamma)\hat{f}$, we derive that
\begin{align}
    & \Big| \int_{v_3>0} (I-P_\gamma)\hat{f}(k,1) \Big[ -\frac{3}{2}\Big(|v_1|^2 - \frac{|v|^2}{3} \Big)\sqrt{\mu} ik_1 \phi_b  - v_1v_2 \sqrt{\mu} ik_2 \phi_b + v_1 v_3 \sqrt{\mu} \p_{x_3} \phi_b \Big] \dd v \Big| \notag \\
    & \lesssim o(1) [| |k| \phi_b(k,1) |^2 +  | \p_{x_3} \phi_b(k,1)|^2  ]   +    |(I-P_\gamma)\hat{f}|_{L^2_{\gamma_+}}^2 \notag \\
    & \lesssim o(1) \Vert \hat{b}_1 \Vert_{L^2_{x_3}}^2 + |(I-P_\gamma)\hat{f}|_{L^2_{\gamma_+}}^2.  \notag
\end{align}
In the last line, we have used the trace estimate \eqref{phib_trace_2}.

Similarly, for $x_3=-1$ we have the same estimate. Thus we conclude that
\begin{align}
    &  |\eqref{weak_formulation}_3| \lesssim o(1)\Vert \hat{b}_1\Vert_{L^2_{x_3}}^2 + |(I-P_\gamma)\hat{f}|_{L^2_{\gamma_+}}^2.   \notag
\end{align}

Next we compute $\eqref{weak_formulation}_4$, $\eqref{weak_formulation}_5$ and $\eqref{weak_formulation}_6$ as
\begin{align}
    &    |\eqref{weak_formulation}_4 |\lesssim o(1)[\Vert k \phi_b \Vert_{L^2_{x_3}}^2 + \Vert \p_{x_3} \phi_b \Vert_{L^2_{x_3}}^2 ] +  \Vert (\mathbf{I}-\mathbf{P})P_n \hat{f} \Vert_{L^2_{x_3,v}}^2  + \Vert \hat{f} - P_n \hat{f} \Vert_{L^2_{x_3,v}}^2\notag \\
    & \lesssim o(1)\Vert \hat{b}_1\Vert_{L^2_{x_3}}^2  +  \Vert (\mathbf{I}-\mathbf{P})P_n \hat{f} \Vert_{L^2_{x_3,v}}^2  + \Vert \hat{f} - P_n \hat{f} \Vert_{L^2_{x_3,v}}^2,   \notag
\end{align}
\begin{align}
    & |\eqref{weak_formulation}_5 |=  \Big|    \int_{-1}^1 \int_{\mathbb{R}^3} g\psi_b \dd v \dd x_3  \Big| \lesssim o(1)[\Vert k \phi_b \Vert_{L^2_{x_3}}^2 + \Vert \p_{x_3} \phi_b \Vert_{L^2_{x_3}}^2 ] + \Vert g\Vert_{L^2_{x_3,v}}^2  \lesssim  o(1)\Vert \hat{b}_1\Vert_{L^2_{x_3}}^2  + \Vert g\Vert_{L^2_{x_3,v}}^2,
    \notag
\end{align}
and
\begin{align}
    &   |\eqref{weak_formulation}_6| = \Big|\int_{-1}^1 \int_{\mathbb{R}^3} \lambda \hat{f}\psi \dd x \dd v  \Big| = \Big|\int_{-1}^1 \int_{\mathbb{R}^3} \lambda (\mathbf{I}-\mathbf{P})\hat{f}\psi \dd x \dd v  \Big| \notag\\
    & \lesssim o(1)[\Vert k\phi_b\Vert^2_{L^2_{x_3}} + \Vert \p_{x_3}\phi_b\Vert_{L^2_{x_3}}^2] + |\lambda|^2 \Vert (\mathbf{I}-\mathbf{P})\hat{f}\Vert_{L^2_{x_3,v}}^2   \lesssim o(1)\Vert \hat{b}_1\Vert_{L^2_{x_3}}^2 + |\lambda|^2 \Vert (\mathbf{I}-\mathbf{P})\hat{f}\Vert_{L^2_{x_3,v}}^2 \notag.
\end{align}

We conclude the estimate of $\hat{b}_1$: 
\begin{align*}
    &     \Vert \hat{b}_1\Vert_{L^2_{x_3}} \lesssim    \Vert (\mathbf{I}-\mathbf{P})P_n \hat{f} \Vert_{L^2_{x_3,v}}  + \Vert \hat{f} - P_n \hat{f} \Vert_{L^2_{x_3,v}}  + |(I-P_\gamma)\hat{f}|_{L^2_{\gamma_+}}+ \Vert g\Vert_{L^2_{x_3,v}} + |\lambda| \Vert (\mathbf{I}-\mathbf{P})\hat{f}\Vert_{L^2_{x_3,v}}.
\end{align*}

For $\hat{b}_2$ and $\hat{b}_3$, we choose the test functions, respetively, as
\begin{align*}
\begin{cases}
      &\dis  \psi_b = -v_1v_2 \sqrt{\mu} ik_1 \phi_b - \frac{3}{2}\Big( |v_2|^2 -\frac{|v|^2}{3} \Big) \sqrt{\mu} ik_2 \phi_b + v_2 v_3 \sqrt{\mu} \p_{x_3} \phi_b, \\
    &\dis  [|k_1|^2 + 2|k_2|^2 - \p_{x_3}^2]\phi_b = \bar{\hat{b}}_2, \\
    &\dis  \phi_b = 0 \text{ when } x_3 = \pm 1,  
\end{cases}
\end{align*}
and
\begin{align*}
\begin{cases}
        &\dis \psi_b = -v_1 v_3 \sqrt{\mu} i k_1\phi_b - v_2 v_3 \sqrt{\mu} i k_2 \phi_b + \frac{3}{2} \Big( |v_3|^2 - \frac{|v|^2}{3} \Big) \sqrt{\mu} \p_{x_3} \phi_b, \\
    &\dis  [|k_1|^2 + |k_2|^2 - 2\p_{x_3}^2]\phi_b = \bar{\hat{b}}_3, \\
    &\dis  \phi_b = 0 \text{ when } x_3 = \pm 1.
\end{cases}
\end{align*}
Then, the estimates for $\hat{b}_2,\hat{b}_3$ can be done by the same computations.

In summary, we obtain the following estimate for $\hat{\mathbf{b}}$:
\begin{align}
    &     \Vert  \hat{\mathbf{b}}  \Vert_{ L^2_{x_3}} \leq    \Vert (\mathbf{I}-\mathbf{P})P_n \hat{f} \Vert_{L^2_{x_3,v}}  + \Vert \hat{f} - P_n \hat{f} \Vert_{L^2_{x_3,v}}  + |(I-P_\gamma)\hat{f}|_{L^2_{\gamma_+}}+ \Vert g\Vert_{L^2_{x_3,v}}+ |\lambda| \Vert (\mathbf{I}-\mathbf{P})\hat{f}\Vert_{L^2_{x_3,v}}. \notag
\end{align}

\medskip
\textit{Estimate of $\hat{c}$.}

We choose the test function as
\begin{align}
    &\dis \psi_c = (-ik_1v_1 \phi_c - ik_2 v_2 \phi_c + v_3 \p_{x_3}\phi_c)(|v|^2-5)\sqrt{\mu} \perp \ker \mathcal{L}, \notag
\end{align}
with $\phi_c$ satisfying
\begin{align}
\begin{cases}
    & \dis  |k|^2 \phi_c - \p_{x_3}^2 \phi_c = \bar{\hat{c}},  \\
    & \phi_c = 0 \ \text{ when } x_3 = \pm 1.   
\end{cases}\notag
\end{align}
Again due to the Poincar\'e inequality, similar to \eqref{phiaM_l2_k}-\eqref{phiaM_trace_2}, we have the following regularity estimate and trace estimate:
\begin{align}
    & \Vert (1+|k|+|k|^2)\phi_c\Vert^2_{L^2_{x_3}}+ \Vert (1+|k|)\p_{x_3}\phi_c\Vert^2_{L^2_{x_3}} + \Vert \p_{x_3}^2 \phi_c\Vert_{L^2_{x_3}}^2 \lesssim \Vert \hat{c}\Vert^2_{L^2_{x_3}},  \label{phic_l2_2} \\
    &| |k|\phi_c(k,\pm 1) |^2 +  | \p_{x_3}\phi_c(k,\pm 1)|^2 \lesssim \Vert \hat{c}\Vert_{L^2_{x_3}}^2. \label{phic_trace_2}
\end{align}

We first compute
\begin{align*}
    & \eqref{weak_formulation}_1  =  \int_{-1}^1 \int_{\mathbb{R}^3} i (k_1v_1 + k_2v_2) \Big( \hat{a}+\hat{c} \frac{|v|^2-3}{2}\Big) \sqrt{\mu} \psi_c \dd v \dd x_3  \underbrace{+ \int_{-1}^1 \int_{\mathbb{R}^3} i(k_1v_1+k_2v_2) (\mathbf{I}-\mathbf{P})\hat{f} \psi_c \dd v \dd x_3}_{E_5} \\
    &  =  \int_{-1}^1 \int_{\mathbb{R}^3}  (k_1v_1 + k_2v_2) \Big( \hat{a}+\hat{c} \frac{|v|^2-3}{2}\Big) \mu (v_1\phi_c + v_2 \phi_c)(|v|^2-5) \dd v \dd x_3  + E_5 \\
    & =  \int_{-1}^1  5|k|^2 \hat{c}\phi_c \dd x_3  + E_5   .
\end{align*}
In the first equality, the contribution of $\hat{\mathbf{b}}$ has vanished from the oddness. In the second equality, the contribution of $v_3$ in $\psi_c$ has vanished from the oddness. In the third equality, we used $\int_{\mathbb{R}^3} v_i^2 \frac{|v|^2-3}{2}(|v|^2-5)\mu \dd v = 5$, and the contribution of $\hat{a}$ has vanished by the orthogonality:
\begin{align*}
    &   \int_{\mathbb{R}^3} v_i^2 (|v|^2-5)\mu \dd v = 0, \ i=1,2,3.
\end{align*}

Note that $E_5$ corresponds to the contribution of $(\mathbf{I}-\mathbf{P})\hat{f}$. By \eqref{phic_l2_2}, it holds that
\begin{align}
    &   |E_5| \lesssim o(1)[\Vert |k|^2\phi_c\Vert_{L^2_{x_3,v}}^2 + \Vert |k|\p_{x_3}\phi_c \Vert_{L^2_{x_3}}^2] + \Vert (\mathbf{I}-\mathbf{P})\hat{f}\Vert_{L^2_{x_3,v}}^2 \lesssim o(1)\Vert \hat{c}\Vert_{L^2_{x_3}}^2 + \Vert (\mathbf{I}-\mathbf{P})\hat{f}\Vert^2_{L^2_{x_3,v}}. \notag
\end{align}

Next, we compute
\begin{align*}
    &    \eqref{weak_formulation}_2 = - \int_{-1}^1 \int_{\mathbb{R}^3} v_3 \Big(\hat{a} + \hat{c}\frac{|v|^2-3}{2} \Big)\sqrt{\mu} \p_{x_3}\psi_c \dd v \dd x_3   \underbrace{-\int_{-1}^1 \int_{\mathbb{R}^3} v_3 (\mathbf{I}-\mathbf{P})\hat{f}\p_{x_3} \psi_c \dd v \dd x_3}_{E_6} \\
    & = - \int_{-1}^1 \int_{\mathbb{R}^3} v_3^2 \hat{c}\frac{|v|^2-3}{2} (|v|^2-5) \mu \p_{x_3}^2 \phi_c  \dd v \dd x_3  + E_6 = -5 \int_{-1}^1 \hat{c} \p_{x_3}^2 \phi_c \dd x  + E_6.
\end{align*}
Here, by \eqref{phic_l2_2},
\begin{align}
    & |E_6| \lesssim o(1) [\Vert |k|\p_{x_3}\phi_c \Vert^2_{L^2_{x_3}} + \Vert \p_{x_3}^2 \phi_c \Vert^2_{L^2_{x_3}}] + \Vert (\mathbf{I}-\mathbf{P})\hat{f}\Vert^2_{L^2_{x_3,v}} \lesssim o(1) \Vert \hat{c}\Vert_{L^2_{x_3}}^2 + \Vert (\mathbf{I}-\mathbf{P})\hat{f}\Vert^2_{L^2_{x_3,v}}.\notag
\end{align}
Then we have
\begin{align}
    &     \eqref{weak_formulation}_1 + \eqref{weak_formulation}_2 = 5 \int_{-1}^1 [|k|^2  - \p_{x_3}^2]\phi_c \hat{c} \dd x_3  + E_5 + E_6  = 5\Vert \hat{c}\Vert_{L^2_{x_3}}^2 + E_5 + E_6. \notag
\end{align}

Then we compute the boundary term $\eqref{weak_formulation}_3$. At $x_3=1$, for the contribution of $P_\gamma \hat{f}$, we have
\begin{align*}
    &    \int_{\mathbb{R}^3} v_3   P_\gamma \hat{f}(k,1) \psi_c(1) \dd v  \\
    &=   \int_{\mathbb{R}^3} v_3 P_\gamma \hat{f}(k,1) (-ik_1v_1 \phi_c - ik_2 v_2 \phi_c + v_3 \p_{x_3}\phi_c)(|v|^2-5)\sqrt{\mu}  \dd v  = 0.
\end{align*}
Here we have used the oddness and $\int_{\mathbb{R}^3} v_3^2 (|v|^2-5)\mu \dd v = 0$.

For the part with $(I-P_\gamma)\hat{f}$, we derive that
\begin{align}
    & \int_{v_3>0} |(I-P_\gamma)\hat{f}(k,1) (-ik_1v_1 \phi_c - ik_2 v_2 \phi_c + v_3 \p_{x_3}\phi_c)(|v|^2-5)\sqrt{\mu}| \dd v  \notag \\
    & \lesssim o(1) [| |k| \phi_c(k,1) |^2 +  | \p_{x_3} \phi_c(k,1)|^2  ]   +    |(I-P_\gamma)\hat{f}|_{L^2_{\gamma_+}}^2 \notag \\
    & \lesssim o(1) \Vert \hat{c}\Vert_{L^2_{x_3}}^2 + |(I-P_\gamma)\hat{f}|_{L^2_{\gamma_+}}^2.  \notag
\end{align}
In the last line, we have used the trace estimate \eqref{phic_trace_2}.

Similarly, for $x_3=-1$ we have the same estimate. Thus we conclude that
\begin{align}
    &  |\eqref{weak_formulation}_3| \lesssim o(1) \Vert \hat{c}\Vert_{L^2_{x_3}}^2 + |(I-P_\gamma)\hat{f}|_{L^2_{\gamma_+}}^2.   \notag
\end{align}

Next we compute $\eqref{weak_formulation}_4$, $\eqref{weak_formulation}_5$ and $\eqref{weak_formulation}_6$ as
\begin{align}
    &    |\eqref{weak_formulation}_4| \lesssim o(1)[\Vert k \phi_c \Vert_{L^2_{x_3}}^2 + \Vert \p_{x_3} \phi_c \Vert_{L^2_{x_3}}^2 ] +  \Vert (\mathbf{I}-\mathbf{P})P_n \hat{f} \Vert_{L^2_{x_3,v}}^2  + \Vert \hat{f} - P_n \hat{f} \Vert_{L^2_{x_3,v}}^2  \notag \\
    & \lesssim o(1)\Vert \hat{c}\Vert_{L^2_{x_3}}^2  +  \Vert (\mathbf{I}-\mathbf{P})P_n \hat{f} \Vert_{L^2_{x_3,v}}^2  + \Vert \hat{f} - P_n \hat{f} \Vert_{L^2_{x_3,v}}^2,   \notag
\end{align}
\begin{align}
    & |\eqref{weak_formulation}_5| =  \Big|    \int_{-1}^1 \int_{\mathbb{R}^3} g\psi_c \dd v \dd x_3  \Big| \lesssim  o(1)[\Vert k \phi_c \Vert_{L^2_{x_3}}^2 + \Vert \p_{x_3} \phi_c \Vert_{L^2_{x_3}}^2 ] +  \Vert g\Vert_{L^2_{x_3,v}}^2   \lesssim  o(1)\Vert \hat{c}\Vert_{L^2_{x_3}}^2 + \Vert g\Vert_{L^2_{x_3,v}}^2,   \notag
\end{align}
and
\begin{align}
    & |\eqref{weak_formulation}_6| = \Big|\int_{-1}^1 \int_{\mathbb{R}^3} \lambda (\mathbf{I}-\mathbf{P})\hat{f}\psi_c \dd v \dd x_3  \Big|\lesssim  o(1)[\Vert k \phi_c \Vert_{L^2_{x_3}}^2 + \Vert \p_{x_3} \phi_c \Vert_{L^2_{x_3}}^2 ] +  |\lambda|^2\Vert (\mathbf{I}-\mathbf{P})\hat{f}\Vert_{L^2_{x_3,v}}^2   \notag\\
    & \lesssim  o(1)\Vert \hat{c}\Vert_{L^2_{x_3}}^2 +  |\lambda|^2\Vert (\mathbf{I}-\mathbf{P})\hat{f}\Vert_{L^2_{x_3,v}}^2 . \notag
\end{align}

We conclude the estimate of $\hat{c}$: 
\begin{align*}
    &    \Vert \hat{c}\Vert_{L^2_{x_3}} \lesssim   \Vert (\mathbf{I}-\mathbf{P})P_n \hat{f} \Vert_{L^2_{x_3,v}}  + \Vert \hat{f} - P_n \hat{f} \Vert_{L^2_{x_3,v}} + |(I-P_\gamma)\hat{f}|_{L^2_{\gamma_+}}+\Vert g\Vert_{L^2_{x_3,v}} + |\lambda| \Vert (\mathbf{I}-\mathbf{P})\hat{f}\Vert_{L^2_{x_3,v}}.
\end{align*}

Combining the estimates for $\hat{a},\hat{\mathbf{b}},\hat{c}$, we conclude the lemma.
\end{proof}

\section{Linear estimate}\label{sec:linear}
As mentioned in Section \ref{sec:regulariza}, the regularized operator $K_n$ is introduced to recover the compactness on $L^2((-1,1)\times \R^3_v)$. In this section, we focus on the analysis of the resolvent and spectrum of $\hat{B}_n(k)$ in Section \ref{sec:spectral_0} and Section \ref{sec:spectral_neq0}. In Section \ref{sec:linear_asymptotic}, we collect the spectral analysis of $\hat{B}_n(k)$ and derive a crucial semi-group representation of $e^{\hat{B}_n(k)t}$(see Proposition \ref{prop:inverse_laplace}). The decay rate of the semi-group $e^{B_nt}$ then follows through inverse Fourier transform(see Lemma \ref{lemma:linear_decay}). Last, we conclude the decay rate of $e^{Bt}$ in Proposition \ref{prop.Bdecay} by the decay rate of $e^{B_n t}$ with an approximation argument in $n$.

\subsection{Spectral analysis when $k=0$}\label{sec:spectral_0}
Recall \eqref{def_hatBn}. We start from the analysis when $k=0$.

\begin{lemma}\label{lemma:eigenvalue_0}
For $\hat{B}_n(k)$, there is no eigenvalue with a positive real part for any $k\in \R^2$.

Moreover, $\lambda = 0$ is the only eigenvalue on the imaginary axis in the only case of $k=0$, and its eigen-space is $\text{span}\{\sqrt{\mu}\}$.
\end{lemma}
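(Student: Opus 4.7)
The plan is to take the eigenvalue equation
\begin{equation*}
\lambda\hat{f} + i(k_1v_1+k_2v_2)\hat{f} + v_3\p_{x_3}\hat{f} + \mathcal{L}_n\hat{f} = 0,\qquad \hat{f}|_{\gamma_-}=P_\gamma\hat{f},
\end{equation*}
pair it with the complex conjugate $\bar{\hat{f}}$ in the $L^2((-1,1)\times\mathbb{R}^3_v)$ inner product, and extract the real part. The transport term contributes $\tfrac12\int_{\mathbb{R}^3}v_3\big(|\hat{f}(1,v)|^2-|\hat{f}(-1,v)|^2\big)\,\dd v$, which, after invoking $\hat{f}|_{\gamma_-}=P_\gamma\hat{f}$, the orthogonal decomposition $|\hat{f}|_{L^2_{\gamma_+}}^2=|P_\gamma\hat{f}|_{L^2_{\gamma_+}}^2+|(I-P_\gamma)\hat{f}|_{L^2_{\gamma_+}}^2$, and the mass-balance identity $|P_\gamma\hat{f}|_{L^2_{\gamma_-}}^2=|P_\gamma\hat{f}|_{L^2_{\gamma_+}}^2$, reduces to $\tfrac12|(I-P_\gamma)\hat{f}|_{L^2_{\gamma_+}}^2$. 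Combining this with the coercive bound for $\mathcal{L}_n$ from Lemma~\ref{lemma:Ln} yields
\begin{equation*}
(\mathrm{Re}\,\lambda)\|\hat{f}\|_{L^2_{x_3,v}}^2 + \tfrac12|(I-P_\gamma)\hat{f}|_{L^2_{\gamma_+}}^2 + c\|(\mathbf{I}-\mathbf{P})P_n\hat{f}\|_{L^2_{x_3,\nu}}^2 + c\|\nu^{1/2}(\hat{f}-P_n\hat{f})\|_{L^2_{x_3,v}}^2 \le 0,
\end{equation*}
which immediately rules out eigenvalues with positive real part.

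When $\mathrm{Re}\,\lambda=0$, each of the non-negative quantities above must vanish. From $\hat{f}=P_n\hat{f}$ and $(\mathbf{I}-\mathbf{P})P_n\hat{f}=0$ I deduce $\hat{f}=\mathbf{P}\hat{f}$, whence $\mathcal{L}_n\hat{f}=0$; simultaneously, $(I-P_\gamma)\hat{f}=0$ on $\gamma_+$, so $\hat{f}(\pm1,v)$ equals a multiple of $\sqrt{\mu}(v)$ on an open half-space of $v$. Writing
\begin{equation*}
\hat{f}(x_3,v)=\Big(a(x_3)+\mathbf{b}(x_3)\cdot v+c(x_3)\tfrac{|v|^2-3}{2}\Big)\sqrt{\mu}
\end{equation*}
and substituting into the eigenvalue equation yields the polynomial identity
\begin{equation*}
\big[\lambda + i(k_1v_1+k_2v_2)\big]\mathbf{P}\hat{f} + v_3\p_{x_3}\mathbf{P}\hat{f}=0,
\end{equation*}
after dividing out $\sqrt{\mu}$. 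Matching the coefficient of every monomial in $v$ produces a small over-determined linear system in $(a,\mathbf{b},c)$ and their $x_3$-derivatives.

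For $k\neq 0$, the cubic-in-$v$ coefficients $v_j^3$ and $v_iv_j^2$ force $ik_1c=ik_2c=\p_{x_3}c=0$, hence $c\equiv 0$; the quadratic, linear, and constant coefficients together with $\mathrm{Re}\,\lambda=0$ then drive $a\equiv 0$ and $\mathbf{b}\equiv 0$, so $\hat{f}\equiv 0$ and no imaginary-axis eigenvalue exists. For $k=0$ the equation reduces to $\lambda\hat{f}+v_3\p_{x_3}\hat{f}=0$; coefficient matching shows that $a,\mathbf{b},c$ are constant in $x_3$ and $\lambda a=\lambda\mathbf{b}=\lambda c=0$, so only $\lambda=0$ survives. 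The boundary trace condition $\hat{f}(\pm1,v)=C_\pm\sqrt{\mu}(v)$ on an open half-space of $v$ then becomes a polynomial identity forcing $\mathbf{b}=0$, $c=0$, and $a=C_+=C_-$ constant, so the eigenspace is $\mathrm{span}\{\sqrt{\mu}\}$.

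The main obstacle is the reduction at $k=0$: although $\ker\mathcal{L}$ is five-dimensional in velocity, the diffuse reflection condition pins the boundary trace of $\hat{f}$ to a pure Maxwellian profile on each face, thereby eliminating the momentum and energy moments and collapsing the eigenspace to a single dimension. Care is also needed in the polynomial-matching step to extract the cubic constraints that eliminate $c$ when $k\neq 0$ but do not when $k=0$.
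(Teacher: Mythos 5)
Your proof is correct and takes essentially the same route as the paper: the real part of the $L^2_{x_3,v}$ energy estimate with the coercivity of $\mathcal{L}_n$ (Lemma~\ref{lemma:Ln}) handles $\mathrm{Re}\,\lambda>0$, and on $\mathrm{Re}\,\lambda=0$ one reduces to $\hat{f}=\mathbf{P}\hat{f}$, $(I-P_\gamma)\hat{f}|_{\gamma_+}=0$ and extracts $\hat{f}\equiv C\sqrt{\mu}$ (forcing $k=0,\lambda=0$) by matching coefficients of the resulting polynomial identity in $v$. The only cosmetic difference is that you split the $k\neq 0$ and $k=0$ cases after the polynomial identity, whereas the paper uses the boundary traces $C_i(\pm1)=0$ together with $C_i'=0$ uniformly in $k$ and only invokes $k\neq 0$ at the very last step; both organizations are valid.
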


\begin{proof}
We focus on the eigenvalue problem on $\hat{B}_n(k)$:
\begin{align}
\begin{cases}
        & (\sigma+i\tau)\hat{f} + i(k_1v_1+k_2v_2) \hat{f} + v_3 \p_{x_3} \hat{f} + \mathcal{L}_n\hat{f} = 0, \\
        &  \hat{f}|_{\gamma_-} = P_{\gamma}\hat{f}.
\end{cases} \label{eigenvalue_problem}
\end{align}

Suppose $\sigma>0$. Multiplying \eqref{eigenvalue_problem} by $\bar{\hat{f}}$ and taking integration in $x_3,v$, we obtain the real part of the $L^2_{x_3,v}$ energy estimate:
\begin{align*}
    &  \sigma \Vert \hat{f}\Vert_{L^2_{x_3,v}}^2 + |(I-P_\gamma)\hat{f}|_{L^2_{\gamma_+}}^2 + \Vert (\mathbf{I}-\mathbf{P})P_n \hat{f} \Vert_{L^2_{x_3,v}}^2  + \Vert \nu^{1/2} [\hat{f} - P_n \hat{f}]  \Vert_{L^2_{x_3,v}}^2= 0.
\end{align*} 
Here we have applied the coercive estimate of $\mathcal{L}_n$ in Lemma \ref{lemma:Ln}. Since $\sigma>0$, this leads to $\Vert \hat{f}\Vert_{L^2_{x_3,v}}^2 = 0$. Hence, there is no eigenvalue with a positive real part for any $k\in \R^2$. The first part is proved.

Suppose $\sigma=0$. The equality above becomes 
\begin{align*}
    &    | (I-P_\gamma)\hat{f}|_{L^2_{\gamma_+}}^2 +  \Vert (\mathbf{I}-\mathbf{P})P_n \hat{f} \Vert_{L^2_{x_3,v}}^2  + \Vert \nu^{1/2} [\hat{f} - P_n \hat{f}]  \Vert_{L^2_{x_3,v}}^2 = 0.
\end{align*}
We conclude that $(I-P_\gamma)\hat{f} = 0$ on the boundary, and
\begin{align*}
    &(\mathbf{I}-\mathbf{P})P_n \hat{f} = 0, \ \hat{f}(x_3,v) - P_n \hat{f}(x_3,v) = 0.
\end{align*}

This also implies that $(\mathbf{I}-\mathbf{P})\hat{f} = 0$. Thus, $\hat{f}$ is restricted to the following form:
\begin{align*}
    \hat{f}(x_3,v) = C_0(x_3) \sqrt{\mu} + C_1(x_3)v_1 \sqrt{\mu} + C_2(x_3)v_2 \sqrt{\mu} + C_3(x_3) v_3 \sqrt{\mu} + C_4(x_3) \frac{|v|^2-3}{2}\sqrt{\mu}.
\end{align*}
Since $(I-P_\gamma)\hat{f} = 0$ on the boundary, we have $C_1(1) = C_2(1)=C_3(1)=C_4(1) = 0$.

The original eigenvalue problem then becomes
\begin{align*}
    &    i(\tau + k_1v_1+k_2v_2) \Big[C_0(x_3)\sqrt{\mu} + C_1(x_3)v_1 \sqrt{\mu} + C_2(x_3)v_2 \sqrt{\mu} + C_3(x_3) v_3 \sqrt{\mu} + C_4(x_3) \frac{|v|^2-3}{2}\sqrt{\mu} \Big] \\
    &+ v_3 \Big[C_0'(x_3)\sqrt{\mu} + C_1'(x_3)v_1 \sqrt{\mu} + C_2'(x_3)v_2 \sqrt{\mu} + C_3'(x_3) v_3 \sqrt{\mu} + C_4'(x_3) \frac{|v|^2-3}{2}\sqrt{\mu} \Big] = 0.
\end{align*}
Since this holds for any $v$, comparing the coefficient of $v_3 \frac{|v|^2-3}{2}\sqrt{\mu}$, we first have $C_4'(x_3) = 0$ and $C_4(x_3) = 0$. Then we compare the coefficient of $v_3^2 \sqrt{\mu}$ and have $C'_3(x_3) = 0$ and thus $C_3(x_3) = 0$. Then we compare the coefficients of $v_2v_3\sqrt{\mu}$ and $v_1v_3\sqrt{\mu}$ and conclude $C'_1(x_3) = C'_2(x_3) = C_1(x_3) = C_2(x_3) = 0.$ Last we compare the coefficient of $v_3\sqrt{\mu}$ and conclude that $C_0'(x_3) = 0$ and thus $C_0(x_3) = C$, where $C\in \mathbb{R}$ is an arbitrary constant.

Thus the problem reduces to
\begin{align*}
    &    i(\tau + k_1v_1+k_2v_2)C\sqrt{\mu} = 0.
\end{align*}
Again, since this holds for any $v$, we conclude that $k=0$ and $\tau = 0$. This indicates that if $k\neq 0$, any possible eigenvalue must have a strictly negative real part. Moreover, $\lambda=0$ is the only eigenvalue on the imaginary axis only when $k=0$, with the corresponding eigen-space $\text{span}\{\sqrt{\mu}\}$.   We conclude the lemma.
\end{proof}

\hide

\begin{lemma}\label{lemma:continuous_spectrum}
When $k=0$, there exists $\sigma_0 \ll 1$ such that
\begin{align*}
    &   \sigma(\hat{B}_n(0)) \cap \{\sigma+i\tau||\sigma|<\sigma_0\} = \{0\}. 
\end{align*}

\end{lemma}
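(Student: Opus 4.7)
The plan is to combine the compactness of $K_n$ on $L^2((-1,1)\times \mathbb{R}^3_v)$ (Lemma \ref{lemma:compact}) with the resolvent estimate for $\hat{A}(0)$ (Lemma \ref{lemma:A_resolvent}) and the eigenvalue classification of Lemma \ref{lemma:eigenvalue_0}. For any $\lambda \in \mathbb{C}_+(-\nu_0)$ I would use the factorization
\begin{align*}
\lambda - \hat{B}_n(0) = (\lambda - \hat{A}(0))\bigl[I - (\lambda - \hat{A}(0))^{-1} K_n\bigr],
\end{align*}
so the spectral analysis reduces to the invertibility of $I - (\lambda - \hat{A}(0))^{-1} K_n$. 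Since $K_n$ is compact and $(\lambda - \hat{A}(0))^{-1}$ is bounded and analytic in $\lambda$ on $\mathbb{C}_+(-\nu_0)$, the family $(\lambda - \hat{A}(0))^{-1} K_n$ is an analytic family of compact operators. The analytic Fredholm theorem then says that either $I - (\lambda - \hat{A}(0))^{-1} K_n$ fails to be invertible on all of $\mathbb{C}_+(-\nu_0)$, or else the exceptional set is discrete with no accumulation in $\mathbb{C}_+(-\nu_0)$. Sending $\sigma \to +\infty$ and using $\Vert(\lambda - \hat{A}(0))^{-1}\Vert \lesssim 1/(\sigma+\nu_0)$, a Neumann series argument rules out the first alternative. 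Thus $\Sigma(\hat{B}_n(0)) \cap \mathbb{C}_+(-\nu_0)$ is a set of isolated eigenvalues of finite algebraic multiplicity.

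Combined with Lemma \ref{lemma:eigenvalue_0}, $\lambda = 0$ is the only eigenvalue on the imaginary axis and there are no eigenvalues with strictly positive real part. The main obstacle, then, is to rule out a sequence of eigenvalues with strictly negative real part accumulating on the imaginary axis. On any compact rectangle $\{|\sigma| \leq 1,\,|\tau| \leq \tau_0\}$ the discreteness above already forces only finitely many eigenvalues, so the delicate case is accumulation as $|\tau| \to \infty$. To eliminate it, I invoke the large-$|\tau|$ resolvent estimate outlined in the introduction: combining the Laplace representation $(\sigma + i\tau - \hat{A}(0))^{-1} g = \int_0^\infty e^{-(\sigma+i\tau)t} e^{\hat{A}(0)t}g\,\dd t$ with the decay $\Vert e^{\hat{A}(0)t}\Vert \lesssim e^{-\nu_0 t}$ and Plancherel in the $\tau$-variable yields $L^2_\tau$-integrability of the resolvent and its $\tau$-derivative, which upgrades via a standard $H^1$-embedding argument in $\tau$ to $\Vert(\sigma + i\tau - \hat{A}(0))^{-1}\Vert_{L^2_{x_3,v}} \to 0$ as $|\tau| \to \infty$, uniformly for $\sigma$ in a strip such as $-\nu_0/2 \leq \sigma \leq 1$.

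Composing with the bounded $K_n$ gives $\Vert (\sigma + i\tau - \hat{A}(0))^{-1} K_n\Vert < 1/2$ whenever $|\tau| \geq \tau_0$ for some $\tau_0$ independent of $\sigma$ in the strip, so $I - (\sigma + i\tau - \hat{A}(0))^{-1} K_n$ is invertible by Neumann series and $\sigma + i\tau \in \rho(\hat{B}_n(0))$. Consequently every eigenvalue of $\hat{B}_n(0)$ in $\{|\sigma| \leq 1\}$ lies inside the compact rectangle $\{|\sigma| \leq 1,\,|\tau| \leq \tau_0\}$; by discreteness there are only finitely many. Lemma \ref{lemma:eigenvalue_0} identifies $\lambda = 0$ as the unique spectral point on $\{\sigma = 0\}$, and the remaining finitely many eigenvalues have strictly negative real parts. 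Choosing $\sigma_0$ smaller than $1$ and smaller than the minimum distance from the imaginary axis to this finite exceptional set yields $\Sigma(\hat{B}_n(0)) \cap \{|\sigma| < \sigma_0\} = \{0\}$, as required.
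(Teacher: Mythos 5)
Your proof is correct and fleshes out exactly the argument the paper compresses into the one-line justification ``This is due to $K_n$ being a compact operator on $L^2_{x_3,v}$''---namely, compactness plus the analytic Fredholm theorem (via the factorization $\lambda - \hat B_n(0) = (\lambda - \hat A(0))[I - (\lambda - \hat A(0))^{-1}K_n]$) gives discreteness in $\mathbb{C}_+(-\nu_0)$, Lemma~\ref{lemma:eigenvalue_0} fixes the spectrum on and to the right of the imaginary axis, and the uniform large-$|\tau|$ resolvent estimate of Lemma~\ref{lemma:large_tau_inv} rules out accumulation at infinity. One cosmetic point: replace the strip $\{|\sigma|\leq 1\}$ by $\{|\sigma|\leq\min(1,\nu_0/2)\}$ so that the rectangle stays inside $\mathbb{C}_+(-\nu_0)$, where both the resolvent factorization and the analytic Fredholm argument actually apply; since you shrink to $\sigma_0$ at the end anyway, nothing else in the conclusion changes.
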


\begin{proof}
This is due to $K_n$ is a compact operator on $L^2_{x_3,v}$.

\end{proof}

\unhide

\subsection{Spectral analysis when $k\neq 0$}\label{sec:spectral_neq0}

Inspired by the spectral analysis when $k=0$, we expect an analogous behavior of the eigenvalue and eigen-space for $k$ near $0$. We need the following crucial estimate when the imaginary part of $\lambda$ is large enough.

\begin{lemma}\label{lemma:large_tau_inv}

There exists $\tau_0\gg 1$ such that when $|\tau|>\tau_0$ and $-\nu_0<\sigma<0$, then
\begin{align*}
    &   (\sigma+i\tau - \hat{B}_n(k) )^{-1} \text{ exists}.
\end{align*}
Here $\tau_0$ is independent of $n$.

Moreover, for any $-\nu_0<\sigma < 0$, it holds that
\begin{align*}
    &  \lim_{|\tau|\to \infty}\Vert (\sigma+i\tau - \hat{B}_n(k))^{-1}\Vert_{L^2_{x_3,v}} =\lim_{|\tau|\to \infty}\Vert (\sigma+i\tau - \hat{A}(k))^{-1}\Vert_{L^2_{x_3,v}} =0.
\end{align*}

\end{lemma}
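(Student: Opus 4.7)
My plan is to transfer information from $\hat A(k)$, where Lemma \ref{lemma:A_resolvent} already gives the resolvent on $\mathbb{C}_+(-\nu_0)$, to $\hat B_n(k)=\hat A(k)+K_n$ through the factorization
\[
\sigma+i\tau-\hat B_n(k)=[\sigma+i\tau-\hat A(k)]\bigl[I-T_\tau K_n\bigr],\qquad T_\tau:=(\sigma+i\tau-\hat A(k))^{-1}.
\]
Since Lemma \ref{lemma:A_resolvent} provides the uniform bound $\|T_\tau\|\le(\nu_0+\sigma)^{-1}$ for $\sigma\in(-\nu_0,0)$, the existence of $(\sigma+i\tau-\hat B_n(k))^{-1}$ reduces to the invertibility of $I-T_\tau K_n$ on $L^2((-1,1)\times\mathbb{R}^3_v)$, which I will produce via a Neumann series once $\|T_\tau K_n\|_{\mathcal B(L^2_{x_3,v})}\le 1/2$ for $|\tau|$ large. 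Both conclusions of the lemma then follow from decay of $T_\tau$ in an appropriate sense.

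\noindent\textbf{Laplace--Plancherel decay of $T_\tau$ and Neumann inversion.}
Because $\hat A(k)$ generates a semigroup of norm $\le e^{-\nu_0 t}$, one has
\[
T_\tau f=\int_0^{\infty}e^{-(\sigma+i\tau)t}e^{t\hat A(k)}f\,\dd t,\qquad f\in L^2_{x_3,v},
\]
which is the Fourier transform in $\tau$ of $G(t;f):=\mathbf{1}_{t\ge 0}\,e^{-\sigma t}\,e^{t\hat A(k)}f$. Plancherel and Lemma \ref{lemma:A_resolvent} yield, for every $f$,
\[
\int_{\mathbb{R}}\|T_\tau f\|_{L^2_{x_3,v}}^2\,\dd\tau\le\frac{\pi\|f\|^2_{L^2_{x_3,v}}}{\nu_0+\sigma},\qquad \int_{\mathbb{R}}\|\p_\tau T_\tau f\|_{L^2_{x_3,v}}^2\,\dd\tau\le C_\sigma\|f\|^2_{L^2_{x_3,v}},
\]
the second estimate because $\p_\tau T_\tau f$ corresponds to $-it\,G(t;f)$ on the time side. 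Thus $\tau\mapsto T_\tau f\in H^1(\mathbb{R};L^2_{x_3,v})\hookrightarrow C_0(\mathbb{R};L^2_{x_3,v})$, so $\|T_\tau f\|_{L^2_{x_3,v}}\to 0$ as $|\tau|\to\infty$ for every $f$, giving $T_\tau\to 0$ in the strong operator topology and the claimed vanishing of $T_\tau$. Now I invoke Lemma \ref{lemma:compact}: the compactness of $K_n$ makes $K_n(B_1)$ precompact in $L^2_{x_3,v}$, and strong convergence of a uniformly bounded family is automatically uniform on precompact sets, so $\|T_\tau K_n\|_{\mathcal B(L^2_{x_3,v})}\to 0$. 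Choosing $\tau_0$ with $\|T_\tau K_n\|_{\mathrm{op}}\le 1/2$ for $|\tau|>\tau_0$, the geometric series delivers $[I-T_\tau K_n]^{-1}$ with norm at most $2$, and $(\sigma+i\tau-\hat B_n(k))^{-1}=[I-T_\tau K_n]^{-1}T_\tau$ then exists and satisfies $\|(\sigma+i\tau-\hat B_n(k))^{-1}f\|\le 2\|T_\tau f\|\to 0$, giving the second limit.

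\noindent\textbf{Main obstacle: $n$-independence of $\tau_0$.}
The delicate point is that the plain bound $\|T_\tau K_n\|\le\|T_\tau\|\,\|K\|\le\|K\|/(\nu_0+\sigma)$ is not small, and the compactness argument above produces a compact set $K_n(B_1)$ that may enlarge with $n$, so a priori $\tau_0$ depends on $n$. To remove this dependence I intend to split $K=K^{\le N}+(K-K^{\le N})$ via the velocity-kernel truncation in Lemma \ref{lemma:k_theta}: the tail obeys $\|K-K^{\le N}\|_{\mathcal B(L^2_{x_3,v})}\lesssim N^{-1/2}$ uniformly, so $\|T_\tau(K-K^{\le N})P_n\|\lesssim N^{-1/2}(\nu_0+\sigma)^{-1}$ independently of $n$ and $\tau$; meanwhile $K^{\le N}$ is Hilbert--Schmidt in $v$, the $P_n$ are orthogonal with $\|P_n\|\le 1$, and the $v$-support of $K^{\le N}P_nf$ lies in a fixed bounded set, so $K^{\le N}P_n(B_1)$ has an $n$-uniform compact closure. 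Applying the Plancherel-based strong decay of $T_\tau$ to this $n$-uniform compact set then gives $\sup_n\|T_\tau K_n\|_{\mathrm{op}}\to 0$, which is exactly the input needed to make the Neumann threshold $\tau_0$ independent of $n$. Reconciling the failure of compactness of $K$ on $L^2_{x_3,v}$ with the compactness of every $K_n=KP_n$ in an $n$-uniform manner is, in my view, the principal technical difficulty of the lemma.
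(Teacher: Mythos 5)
Your treatment of the Plancherel step diverges from the paper's: the paper applies Plancherel directly to the function $\tau\mapsto(\sigma+i\tau-\hat{A}(k))^{-1}$ at the level of the operator norm, but Plancherel is a Hilbert-space isometry and does not control the $L^2_\tau$-integral of $\|T_\tau\|_{L^2_{x_3,v}}$; it only gives $\int_{\mathbb{R}}\|T_\tau f\|^2_{L^2_{x_3,v}}\,\dd\tau\lesssim\|f\|^2$ for each fixed $f$, i.e.\ strong convergence $T_\tau\to 0$. You correctly work at that per-$f$ level and then use compactness of $K_n$ to upgrade to $\|T_\tau K_n\|\to 0$. This is the more careful route, but it forfeits the $n$-uniformity that the paper extracts at once from its operator-norm assertion, and your proposed remedy contains a genuine gap. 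The claim that $K^{\le N}P_n(B_1)$ has an $n$-uniform compact closure in $L^2_{x_3,v}$ is false: the truncation $K^{\le N}$ still acts only in the velocity variable and does nothing to compactify the $x_3$-dependence. Take $g\in L^2(\mathbb{R}^3_v)$ with $K^{\le N}g\neq 0$ and set $f_m(x_3,v)=e^{im\pi x_3}g(v)$; for $m\le n$ one has $P_nf_m=f_m$, so $K^{\le N}P_nf_m=e^{im\pi x_3}K^{\le N}g$, and these are pairwise orthogonal in $L^2_{x_3,v}$ with a constant positive norm. Consequently $\bigcup_n K^{\le N}P_n(B_1)$ contains an infinite family with no convergent subsequence, the compact set your argument produces must grow with $n$, and the resulting Neumann threshold $\tau_0$ is again $n$-dependent. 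The central difficulty --- that $K$ itself is not compact on $L^2_{x_3,v}$, while the bound $\|T_\tau K_n\|\le\|T_\tau K\|$ removes $n$ only by reintroducing the non-compact operator --- is not resolved by your splitting.

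A secondary point: your argument establishes only $\|(\sigma+i\tau-\hat{B}_n(k))^{-1}f\|_{L^2_{x_3,v}}\to 0$ for each fixed $f$, i.e.\ strong-topology decay, whereas the lemma asserts operator-norm decay of both $(\sigma+i\tau-\hat{B}_n(k))^{-1}$ and $(\sigma+i\tau-\hat{A}(k))^{-1}$, and your proof never touches the latter. It is worth pinning down which statement the downstream applications actually need; the Neumann inversion here and the contour shift in Proposition \ref{prop:inverse_laplace} require only $\sup_n\|K_n(\sigma+i\tau-\hat{A}(k))^{-1}\|_{L^2_{x_3,v}}\to 0$, and proving that directly --- for instance by exploiting the velocity regularization encoded in Lemma \ref{lemma:k_theta} rather than mere compactness of $K_n$ --- looks more promising than trying to obtain operator-norm decay of the resolvent of $\hat{A}(k)$ itself.
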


\begin{proof}

We first focus on the resolvent estimate to $(\lambda - \hat{A}(k))^{-1}$, where $\lambda = \sigma + i \tau$ and $\sigma>-\nu_0$. By Lemma \ref{lemma:A_resolvent} and the Laplace transform, we have
\begin{align}
    &  (\lambda- \hat{A}(k))^{-1} = \int_0^\infty e^{-\lambda t} e^{t\hat{A}(k)} \dd t. \label{laplace_transform}
\end{align}
This can be written as a Fourier transform
\begin{align*}
    & (\sigma + i\tau - \hat{A}(k))^{-1} = (2\pi)^{-1/2}\int^\infty_{-\infty} e^{-i\tau t} (2\pi)^{1/2}\chi(t)e^{-\sigma t}e^{t\hat{A}(k)} \dd t, \\
    & \chi(t) = 1 \text{ for }t\geq 0 \text{ and }\chi(t) = 0 \text{ for }t<0.
\end{align*}

By the Plancherel theorem, we obtain
\begin{align}
    & \int_{-\infty}^\infty  \Vert (\sigma+i\tau - \hat{A}(k))^{-1}\Vert_{L^2_{x_3,v}}^2 \dd \tau = \int_{-\infty}^\infty \Vert (2\pi)^{1/2} \chi(t)e^{-\sigma t} e^{t\hat{A}(k)}  \Vert_{L^2_{x_3,v}}^2 \dd t \notag\\
    & = 2\pi \int_0^\infty e^{-2\sigma t}\Vert e^{t\hat{A}(k)}\Vert_{L^2_{x_3,v}}^2 \dd t \leq 2\pi \int_0^\infty e^{-2(\sigma+\nu_0)t}\dd t  \lesssim 1. \label{integrable}
 \end{align}
In the second line, we have applied Lemma \ref{lemma:A_resolvent}.
 
Moreover, since $\tau$ becomes the Fourier variable of $t$, we also have
\begin{align*}
    &   \p_\tau (\sigma+i\tau - \hat{A}(k))^{-1} = -(2\pi)^{-1/2} i\int_{-\infty}^\infty e^{-i\tau t} (2\pi)^{1/2} t\chi(t)e^{-\sigma t} e^{t\hat{A}(k)} \dd t.
\end{align*}
Then both estimates lead to
\begin{align*}
    & \int_{-\infty}^\infty \Vert \p_\tau (\sigma+i\tau-\hat{A}(k))^{-1} \Vert_{L^2_{x_3,v}}^2 \dd \tau \leq 2\pi \int_0^\infty e^{-2(\sigma+\nu_0)t}t \dd t  \lesssim 1.
\end{align*}
The estimates above conclude that
\begin{align}
    &  \lim_{|\tau|\to\infty}\Vert (\sigma+i\tau-\hat{A}(k))^{-1}\Vert_{L^2_{x_3,v}} = 0. \label{A_convergence_0}
\end{align}

This implies that for $\delta \ll 1$, there exists $\tau_0\gg 1$ such that when $|\tau|>\tau_0$, for a fixed $\sigma \in (-\nu_0,0)$, it holds that
\begin{align*}
    & \Vert (\sigma+i\tau-\hat{A}(k))^{-1}\Vert_{L^2_{x_3,v}} < \delta.
\end{align*}
For other $\sigma' \in (-\nu_0,0)$, we express the solution $ \hat{f}=(\sigma' + i\tau - \hat{A}(k))^{-1}h$ as
\begin{align*}
    &  (\sigma+i\tau) \hat{f} - \hat{A}(k)\hat{f}  = (\sigma-\sigma')\hat{f} + h.
\end{align*}
With $\delta\ll 1$, this leads to
\begin{align*}
    &    \Vert \hat{f}\Vert_{L^2_{x_3,v}} < \delta (\Vert h\Vert_{L^2_{x_3,v}} + |\nu_0|\Vert \hat{f}\Vert_{L^2_{x_3,v}}), \ \ \ \  \Vert \hat{f}\Vert_{L^2_{x_3,v}} < 2\delta \Vert h\Vert_{L^2_{x_3,v}}.
\end{align*}

Thus for all $\sigma\in (-\nu_0,0)$, when $|\tau|>\tau_0$ it holds that
\begin{align*}
    &   \Vert (\sigma+i\tau -\hat{A}(k))^{-1}\Vert_{L^2_{x_3,v}} < 2\delta.
\end{align*}

Now we turn to the solvability of $(\sigma+i\tau - \hat{B}_n(k))^{-1}$. We rewrite this term using the second resolvent identity as
\begin{align}
    &    (\sigma+ i \tau -\hat{B}_n(k))^{-1}  = (\sigma+ i\tau - \hat{A}(k))^{-1} (I-G(\sigma+i\tau))^{-1} , \notag\\
    & G(\sigma+i\tau) = KP_n(\sigma+i\tau-\hat{A}(k))^{-1}. \label{A_B_related}
\end{align}
Since $K$ is a bounded operator on $L^2((-1,1)\times \R^3_v)$ and
$ \Vert P_n \hat{f}\Vert_{L^2_{x_3,v}} \leq \Vert \hat{f}\Vert_{L^2_{x_3,v}}$, for $|\tau|>\tau_0$ we have
\begin{align*}
    &     \Vert G(\sigma+i\tau) \Vert_{L^2_{x_3,v}} < 2\delta\Vert K\Vert_{L^2_{x_3,v}}.
\end{align*}
Here the upper bound does not depend on $n$.

With $\delta\ll 1$ such that $\Vert G(\sigma+i\tau)\Vert_{L^2_{x_3,v}}< \frac{1}{2}$, this implies 
\begin{align*}
    & (I-G(\sigma+i\tau))^{-1} \text{ exists}.
\end{align*}
Since $(\sigma+i\tau-\hat{A}(k))^{-1}$ exists, when $|\tau|>\tau_0$, we conclude that
\begin{align*}
    & (\sigma + i\tau -\hat{B}_n(k))^{-1} \text{ exists.}
\end{align*}

Finally, we combine \eqref{A_convergence_0} and \eqref{A_B_related} to conclude that
\begin{align}
    &  \lim_{|\tau|\to\infty}\Vert (\sigma+i\tau-\hat{B}_n(k))^{-1}\Vert_{L^2_{x_3,v}} = 0.  \notag
\end{align}
We conclude the lemma.
\end{proof}

Lemma \ref{lemma:large_tau_inv} above implies that the eigenvalue of $\hat{B}_n(k)$ only possibly exists when $|\tau|<\tau_0$. Such an observation is crucial in the investigation of the eigenvalue and eigen-projection when $|k|\neq 0$. In the following proposition as well as its proof, we point out the explicit dependence of eigenvalue, eigen-function and eigen-projection on $n$.

\begin{proposition}\label{prop:eigenvalue}
There exists $0<\sigma_0 = \sigma_0(\tau_0)\ll 1$ such that for some $\kappa>0$ with $|k|<\kappa=\kappa(\sigma_0)\ll \sigma_0$, there is only one eigenvalue to $\hat{B}_n(k)$ on $\mathbb{C}_+(-\sigma_0)$. This unique eigenvalue has the following form:
\begin{align}
    & \lambda_n(k) = -\lambda^*_n|k|^2 + C^n_\lambda|k|^3. \label{eigenvalue}
\end{align}

\begin{enumerate}
    \item  In the leading part of \eqref{eigenvalue}, $\lambda_n^* > 0$ is positive, real, and does not depend on $k$. Moreover, the limit of $\lambda_n^*$ exists, and it is also positive and real:
\begin{align*}
    &   \lim_{n\to \infty} \lambda_n^* = \lambda^* > 0.
\end{align*}

\item In the higher order term of \eqref{eigenvalue}, the coefficient satisfies $|C^n_\lambda|\leq C_\lambda$, where $C_\lambda$ does not depend on $n$.

\item The eigen-function of \eqref{eigenvalue} has the form of $\hat{f}_n = M\sqrt{\mu} + M|k|G_{n,1} + M|k|^2 G_{n,2}.$ Here 
\begin{align*}
    &M = \frac{1}{2}\int_{-1}^1 \int_{\mathbb{R}^3} \hat{f}_n\sqrt{\mu} \dd v \dd x_3, \  \Vert G_{n,1}\Vert_{L^2_{x_3,v}} + \Vert G_{n,2}\Vert_{L^2_{x_3,v}} \lesssim 1.
\end{align*}
Here the upper bound in the inequality does not depend on $n$.

\item Equivalent to (3), the eigen-projection has the form
\begin{align*}
    & \tilde{\mathbf{P}}^n(k) = \mathbf{P}_0 + |k|\mathbf{P}^n_1(k) + |k|^2\mathbf{P}^n_2(k),
\end{align*}
where $\mathbf{P}_0$ is defined in \eqref{P0_def} and
\begin{align*}
    & \Vert \mathbf{P}^n_1(k)\Vert_{L^2_{x_3,v}} + \Vert \mathbf{P}^n_2(k)\Vert_{L^2_{x_3,v}} \lesssim 1.
\end{align*}
Here the upper bound in the inequality does not depend on $n$.
\end{enumerate}

\end{proposition}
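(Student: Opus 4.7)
The plan is a singular perturbation expansion about the $k=0$ configuration analyzed in Lemma \ref{lemma:eigenvalue_0}, where $\lambda=0$ has the one-dimensional eigenspace $\text{span}\{\sqrt{\mu}\}$. By Lemma \ref{lemma:large_tau_inv}, every eigenvalue in $\mathbb{C}_+(-\sigma_0)$ lies in the bounded strip $|\tau|<\tau_0$, so Proposition \ref{prop:macroscopic} is available throughout the search region with $|\lambda|$ uniformly bounded. For any eigenfunction $\hat f$ of $\hat B_n(k)$, set $M:=\tfrac12\int_{-1}^1\!\int_{\mathbb{R}^3}\hat f\sqrt{\mu}\,dv\,dx_3$, so $M=\hat a_M$ in the notation of Proposition \ref{prop:macroscopic}. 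If $M=0$, Proposition \ref{prop:macroscopic} controls $\hat a,\hat{\mathbf b},\hat c$ purely by $(\mathbf{I}-\mathbf{P})P_n\hat f$, $\hat f-P_n\hat f$, the boundary defect, and the extra $(1+|\lambda|)^2\Vert(\mathbf{I}-\mathbf{P})\hat f\Vert_{L^2_{x_3,v}}$, all of which are controlled by the coercive part of the real-part $L^2$ energy identity via Lemma \ref{lemma:Ln}. Since $|\lambda|\lesssim\tau_0$ and $-\mathrm{Re}\,\lambda<\sigma_0$, choosing $\sigma_0=\sigma_0(\tau_0)$ small forces $\hat f\equiv 0$. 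Nontrivial eigenfunctions therefore have $M\neq 0$, and I would normalize $M=1$ and look for
\[
\hat f_n=\sqrt{\mu}+|k|G_{n,1}+|k|^2 G_{n,2},\qquad \lambda=-\eta|k|^2,
\]
with mean-zero constraints $\int_{-1}^1\!\int_{\mathbb{R}^3} G_{n,j}\sqrt{\mu}\,dv\,dx_3=0$ for $j=1,2$.

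Writing $\omega:=k/|k|$, the $O(|k|)$-balance in the eigenvalue equation yields
\[
v_3\partial_{x_3}G_{n,1}+\mathcal{L}_n G_{n,1}=-i(\omega\cdot v_\parallel)\sqrt{\mu},\qquad G_{n,1}|_{\gamma_-}=P_\gamma G_{n,1},
\]
whose solvability in the mean-zero class follows from the $L^2$ energy estimate combined with Proposition \ref{prop:macroscopic}, yielding $\Vert G_{n,1}\Vert_{L^2_{x_3,v}}\lesssim 1$ uniformly in $n$ and $\omega$; since the source is purely imaginary, $G_{n,1}=-i\tilde G_{n,1}$ with $\tilde G_{n,1}$ real. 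At order $|k|^2$, the residual equation
\[
v_3\partial_{x_3}G_{n,2}+\mathcal{L}_n G_{n,2}+i(k\cdot v_\parallel)G_{n,2}-\eta|k|^2 G_{n,2}=\eta\sqrt{\mu}-i(\omega\cdot v_\parallel)G_{n,1}+\eta|k|G_{n,1},
\]
tested against $\sqrt{\mu}$ using Lemma \ref{lemma:int_Ln}, the diffuse null-flux property $\int v_3 G_{n,2}\sqrt{\mu}\,dv|_{x_3=\pm 1}=0$ (which holds for any function satisfying $g|_{\gamma_-}=P_\gamma g$), and the mean-zero constraints, gives the solvability identity
\[
2\eta=i\!\int_{-1}^{1}\!\int_{\mathbb{R}^3}(\omega\cdot v_\parallel)G_{n,1}\sqrt{\mu}\,dv\,dx_3+i|k|\!\int_{-1}^{1}\!\int_{\mathbb{R}^3}(\omega\cdot v_\parallel)G_{n,2}\sqrt{\mu}\,dv\,dx_3.
\]
I would define the leading coefficient
\[
\lambda_n^*:=\tfrac12\!\int_{-1}^{1}\!\int_{\mathbb{R}^3}(\omega\cdot v_\parallel)\tilde G_{n,1}\sqrt{\mu}\,dv\,dx_3,
\]
which is manifestly real. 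Testing the $\tilde G_{n,1}$-equation against $\tilde G_{n,1}$ produces a non-negative diffuse-boundary flux contribution plus the non-negative coercivity of $\mathcal{L}_n$, whose sum equals $2\lambda_n^*$, so $\lambda_n^*\ge 0$. Strict positivity follows because $\lambda_n^*=0$ would force $\tilde G_{n,1}\in\ker\mathcal{L}$ at every $x_3$ with $(I-P_\gamma)\tilde G_{n,1}=0$, and then comparing coefficients in $v_3\partial_{x_3}\tilde G_{n,1}=(\omega\cdot v_\parallel)\sqrt{\mu}$ yields an immediate contradiction. Rotational invariance of $v_3\partial_{x_3}$, $\mathcal{L}_n$ and $P_\gamma$ under rotations in $v_\parallel$ shows $\lambda_n^*$ is independent of $\omega$, hence of $k$.

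For the higher-order correction I would recast the full $|k|^2$-equation together with the residual solvability condition as a fixed-point map for $(G_{n,2},\eta-\lambda_n^*)$ on the closed mean-zero subspace of $L^2((-1,1)\times\mathbb{R}^3_v)$. The base operator $v_3\partial_{x_3}+\mathcal{L}_n$ with diffuse boundary and zero mean is invertible with bound uniform in $n$ (by the same energy-plus-macroscopic argument used for $G_{n,1}$), and the perturbation $i(k\cdot v_\parallel)-\eta|k|^2$ has operator norm $\lesssim|k|$, so for $\kappa\ll\sigma_0$ a Banach contraction supplies the unique fixed point with $\Vert G_{n,2}\Vert_{L^2_{x_3,v}}\lesssim 1$ and $|C_\lambda^n|=|\eta-\lambda_n^*|/|k|\le C_\lambda$ independent of $n$. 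Uniqueness of the eigenvalue in $\mathbb{C}_+(-\sigma_0)\cap\{|k|<\kappa\}$ then follows from the $M=0$ dichotomy combined with uniqueness inside the contraction when $M\neq 0$. Statement (4) reduces to (3) by writing $\tilde{\mathbf{P}}^n(k)f=\langle f,\hat f_n^{\ast}\rangle\hat f_n/\langle\hat f_n,\hat f_n^{\ast}\rangle$ with $\hat f_n^{\ast}$ the analogously expanded left eigenfunction and reading off that the $k\to 0$ limit of the ratio is precisely $\mathbf{P}_0$. For convergence as $n\to\infty$, the difference $G_{n,1}-G_1$ (where $G_1$ solves the same equation with $\mathcal{L}$ in place of $\mathcal{L}_n$) satisfies
\[
v_3\partial_{x_3}(G_{n,1}-G_1)+\mathcal{L}_n(G_{n,1}-G_1)=K(I-P_n)G_1,
\]
and since $P_n G_1\to G_1$ in $L^2_{x_3,v}$ by Fourier-series convergence while $K$ is bounded, the uniform resolvent estimate forces $G_{n,1}\to G_1$, hence $\lambda_n^*\to\lambda^*>0$.

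The main obstacle is maintaining $n$-uniformity throughout: the invertibility of $v_3\partial_{x_3}+\mathcal{L}_n$ on the mean-zero, diffuse-boundary subspace, the smallness needed for the contraction, and the strict positivity of $\lambda_n^*$ all require that Lemma \ref{lemma:Ln}, Lemma \ref{lemma:int_Ln} and Proposition \ref{prop:macroscopic} combine with constants independent of $n$ and $\omega$. In particular, the $(1+|\lambda|)$- and $(1+|\lambda|)^2$-factors that Proposition \ref{prop:macroscopic} attaches to $(\mathbf{I}-\mathbf{P})\hat f$ must be absorbed by the coercivity of $\mathcal{L}_n$ or suppressed by powers of $|k|$ before the fixed-point iteration closes; verifying this is the technical heart of the argument.
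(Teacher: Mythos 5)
Your proposal follows essentially the same route as the paper: restrict to $|\tau|<\tau_0$ via Lemma~\ref{lemma:large_tau_inv}, use Proposition~\ref{prop:macroscopic} and the $\mathcal{L}_n$-coercivity to rule out $M=0$ eigenfunctions by choosing $\sigma_0=\sigma_0(\tau_0)$ small, then build the eigenfunction for $M\neq 0$ by the ansatz $\hat f_n=\sqrt{\mu}+|k|G_{n,1}+|k|^2 G_{n,2}$, $\lambda=-\eta|k|^2$, solving for $G_{n,1}$ at order $|k|$, determining $\eta$ from the mean-zero solvability of the $G_{n,2}$ equation (split as $\lambda_n^*+\gamma_n$), fixing $G_{n,2}$ by contraction, deducing $k$-independence of $\lambda_n^*$ from a rotation of $v_\parallel$, and passing $n\to\infty$ through the difference equation for $G_{n,1}-G_1$. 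Two of your steps are in fact more complete than the paper's own exposition. First, the paper asserts $\lambda_n^*>0$ from the identity $\lambda_n^*=\tfrac12|G_{n,1}|^2_{L^2_{\gamma_+}}+\tfrac12\int\mathcal{L}_nG_{n,1}\bar G_{n,1}$, which, by Lemma~\ref{lemma:Ln}, only gives $\lambda_n^*\ge 0$; your supplementary contradiction argument (if $\lambda_n^*=0$ then $\tilde G_{n,1}\in\ker\mathcal{L}$ with vanishing trace, and matching coefficients of $v_1\sqrt\mu$, $v_2\sqrt\mu$ in $v_3\p_{x_3}\tilde G_{n,1}=(\omega\cdot v_\parallel)\sqrt\mu$ is impossible for $|\omega|=1$) closes this gap cleanly and should be kept. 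Second, the paper merely declares item (4) "equivalent to (3)"; your sketch via the left eigenfunction $\hat f_n^{\ast}$ is the correct way to make that equivalence precise for the non-self-adjoint $\hat B_n(k)$, although note it requires running the same expansion once more on the adjoint problem to get the $n$-uniform bounds on $\mathbf{P}_1^n$ and $\mathbf{P}_2^n$. The only bookkeeping difference is that you keep $\mathcal{L}_n$ on the left of the convergence equation (source $K(I-P_n)G_1$), while the paper keeps $\mathcal{L}$ (source $K(I-P_n)G_{n,1}$); both are fine since the relevant resolvent bounds are uniform in $n$. Finally, for completeness you should record, as the paper does, that with $\kappa\ll\sigma_0$ the constructed $\lambda_n(k)$ indeed lies in $\mathbb{C}_+(-\sigma_0)$ uniformly in $n$, using the $n$-uniform bounds on $\lambda_n^*$ and $C_\lambda^n$.
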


\begin{proof}

We analyze the eigenvalue problem
\begin{align*}
\begin{cases}
        &   \lambda_n \hat{f}_n + i(k_1v_1 + k_2v_2 )\hat{f}_n + v_3 \p_{x_3} \hat{f}_n + \mathcal{L}_n\hat{f}_n = 0, \\
        & \hat{f}_n|_{\gamma_-} = P_\gamma \hat{f}_n.
\end{cases}
\end{align*}
From Lemma \ref{lemma:large_tau_inv}, the eigenvalue only possibly exists for $|Im(\lambda_n)|<\tau_0$. Below we let $|Im(\lambda_n)|<\tau_0$.

\medskip
\noindent \textit{Claim}: When $|Re(\lambda_n)|<\sigma_0$ with $\sigma_0$ to be specified later, there exists only a trivial solution when $\frac{1}{2}\int_{-1}^1 \int_{\mathbb{R}^3} \hat{f}_n\sqrt{\mu} \dd v \dd x_3 = 0$, and thus the non-trivial solution only possibly exists for $\frac{1}{2}\int_{-1}^1 \int_{\mathbb{R}^3} \hat{f}_n\sqrt{\mu} \dd v \dd x_3 \neq 0$.

\noindent \textit{Proof of claim:} When $\frac{1}{2}\int_{-1}^1 \int_{\mathbb{R}^3} \hat{f}_n\sqrt{\mu} \dd v \dd x_3 = 0$, it is clear to see $0\in D(\hat{B}_n(k))$ is a solution. Suppose $\hat{f}_n\in D(\hat{B}_n(k))$ is another solution, by Proposition \ref{prop:macroscopic}, we have $\hat{a}_M=0$, and thus the following macroscopic estimate holds:
\begin{align*}
    & \Vert \mathbf{P}\hat{f}_n\Vert_{L^2_{x_3,v}} \lesssim  (\nu_0+\tau_0) [\Vert (\mathbf{I}-\mathbf{P})P_n \hat{f}_n \Vert_{L^2_{x_3,v}}  + \Vert \hat{f}_n - P_n \hat{f}_n \Vert_{L^2_{x_3,v}} \\
    &+ |(I-P_\gamma)\hat{f}_n
    |_{L^2_{\gamma_+}}   + (\nu_0+\tau_0)\Vert (\mathbf{I}-\mathbf{P})\hat{f}_n\Vert_{L^2_{x_3,v}}] . 
\end{align*}
The real part of the $L^2_{x_3,v}$ energy estimate with Lemma \ref{lemma:Ln} leads to
\begin{align*}
    &     \Vert (\mathbf{I}-\mathbf{P})P_n \hat{f}_n \Vert_{L^2_{x_3,\nu}}  + \Vert \hat{f}_n - P_n \hat{f}_n \Vert_{L^2_{x_3,\nu}} + |(I-P_\gamma)\hat{f}_n|_{L^2_{\gamma_+}} \leq \sqrt{|Re(\lambda_n)|} \Vert \hat{f}_n\Vert_{L^2_{x_3,v}} . 
\end{align*}

Now we choose $|Re(\lambda_n)|<\sigma_0= \frac{c^2}{\nu_0^4 + \tau_0^4}$ with $c\ll 1$. Multiplying the first estimate by $\frac{\sqrt{c}}{\nu_0^2 + \tau_0^2}$ and adding it to the second estimate, we obtain that
\begin{align*}
    &     \frac{\sqrt{c}}{\nu_0^2 + \tau_0^2} \Vert \hat{f}_n\Vert_{L^2_{x_3,v}} \lesssim \frac{c}{\nu_0^2+\tau_0^2}\Vert \hat{f}_n\Vert_{L^2_{x_3,v}}.
\end{align*}
By taking $c\ll 1$, we conclude that $\hat{f}_n=0$, and there is no non-trivial solution.
\qed

\medskip
Therefore, to construct a non-trivial solution, we let $\frac{1}{2}\int_{-1}^1 \int_{\mathbb{R}^3} \hat{f}_n\sqrt{\mu} \dd v \dd x_3 = M\neq 0$. Such a solution, if exists, must be unique, since for any two solutions $\hat{f}_n,\hat{g}_n\in D(\hat{B}_n(k))$ with $\frac{1}{2}\int_{-1}^1 \int_{\mathbb{R}^3} \hat{f}_n\sqrt{\mu} \dd v \dd x_3 =\frac{1}{2} \int_{-1}^1 \int_{\mathbb{R}^3} \hat{g}_n\sqrt{\mu} \dd v \dd x_3=M$, the difference $\hat{f}_n - \hat{g}_n$ satisfies the same equation with the condition
\begin{align*}
    &    \frac{1}{2}\int_{-1}^1 \int_{\mathbb{R}^3} (\hat{f}_n-\hat{g}_n)\sqrt{\mu} \dd v \dd x_3 = 0.
\end{align*}
This implies $\hat{f}_n = \hat{g}_n$.

To find the eigenvalue in $\mathbb{C}_+(-\sigma_0)$, we let $|k|<\kappa\ll \sigma_0\ll 1$, where $\kappa$ will be specified at the end of the proof. To construct a nontrivial $\hat{f}_n\in D(\hat{B}_n(k))$ such that $\frac{1}{2}\int_{-1}^1\int_{\mathbb{R}^3} \hat{f}_n\sqrt{\mu} \dd v \dd x_3 = M,$ we seek for a solution in the form 
\begin{align*}
    \hat{f}_n = M\sqrt{\mu} + M|k|G_{n,1} + M|k|^2 G_{n,2},
\end{align*}
with
\begin{align*}
  \frac{1}{2}  \int_{-1}^1 \int_{\mathbb{R}^3} G_{n,1}\sqrt{\mu} \dd v \dd x_3 = 0, \  \frac{1}{2}\int_{-1}^1 \int_{\mathbb{R}^3} G_{n,2}\sqrt{\mu} \dd v \dd x_3 = 0 .
\end{align*}

We set the eigenvalue as $\lambda_n(k) = -\eta_n|k|^2$. Below we solve $G_{n,1}$ using the order of $|k|$, and solve $G_{n,2}$ using the remaining terms.

\begin{itemize}
    \item \textit{Solution to $G_{n,1}$}
\end{itemize}

 We let $G_{n,1}$ satisfy the equation given by the order of $|k|$ in the expansion:
\begin{align}
\begin{cases}
       &    i\frac{(k_1v_1 + k_2 v_2)}{|k|} \sqrt{\mu} + v_3 \p_{x_3} G_{n,1} + \mathcal{L}_n G_{n,1} = 0, \\
    & G_{n,1}|_{\gamma_-} = P_\gamma G_{n,1}. 
\end{cases} \label{Gn1_eqn}
\end{align}
From the oddness we conclude that $G_{n,1}(v_1,v_2) = -G_{n,1}(-v_1,-v_2)$. This implies that the boundary condition becomes $G_{n,1}|_{\gamma_-} = 0$. To get rid of the dependency on $k$ to the estimate of $G_{n,1}$, we introduce a rotation of coordinates in $(v_1,v_2)\in \mathbb{R}^2:$
\begin{align*}
  &\tilde{v}_1 = \frac{k_1v_1 + k_2v_2}{|k|}, \ \tilde{v}_2 = \frac{-k_2 v_1 + k_1 v_2}{|k|}, \\
  & v_1 = \frac{k_1\tilde{v}_1 - k_2\tilde{v}_2}{|k|}, \ v_2 = \frac{k_2\tilde{v}_1 + k_1\tilde{v}_2}{|k|}.
\end{align*}
 Define $\tilde{G}_{n,1}(\tilde{v}_1,\tilde{v}_2)=G_{n,1}\big(\frac{k_1\tilde{v}_1 - k_2\tilde{v}_2}{|k|},  \frac{k_2\tilde{v}_1 + k_1\tilde{v}_2}{|k|}\big),$ due to the rotational invariance of $\mathcal{L}_n$, $\tilde{G}_{n,1}(x_3,\tilde{v}_1,\tilde{v}_2,v_3)$ satisfies
\begin{align}
\begin{cases}
        &  i \tilde{v}_1 \sqrt{\mu(\tilde{v}_1,\tilde{v}_2,v_3)} + v_3\p_{x_3}\tilde{G}_{n,1} + \mathcal{L}_n \tilde{G}_{n,1}=0 \\
        & \tilde{G}_{n,1}|_{\gamma_-} = 0.
\end{cases}\label{G_tilde_eqn}
\end{align}
Clearly, $\tilde{G}_{n,1}$ is independent of $k$.

The existence and uniqueness for $G_{n,1}$ in \eqref{Gn1_eqn} is standard with $\int_{-1}^1 \int_{\mathbb{R}^3} G_{n,1}\sqrt{\mu} \dd v \dd x_3 = 0$(see \cite{EGKM}). From the uniqueness, we conclude that $G_{n,1}$ is pure imaginary, i.e, $Re(G_{n,1}) = 0$. Then we only compute the a priori estimate. 

The real part of the $L^2_{x_3,v}$ energy estimate leads to
\begin{align*}
    &   \Vert (\mathbf{I}-\mathbf{P})P_n G_{n,1}\Vert_{L^2_{x_3,\nu}}^2 + \Vert (I-P_n)G_{n,1}\Vert^2_{L^2_{x_3,\nu}} + |G_{n,1}|_{L^2_{\gamma_+}}^2 \lesssim o(1)\Vert G_{n,1}\Vert_{L^2_{x_3,v}}^2 + \Vert |v|\mu^{1/4}\Vert_{L^2_{x_3,v}}^2 \\
    & \lesssim o(1)\Vert G_{n,1}\Vert^2_{L^2_{x_3,v}} + 1.
\end{align*}
The macroscopic estimate from Proposition \ref{prop:macroscopic} leads to 
\begin{align*}
    &   \Vert \mathbf{P}G_{n,1}\Vert_{L^2_{x_3,v}} \lesssim \Vert (\mathbf{I}-\mathbf{P})P_nG_{n,1}\Vert_{L^2_{x_3,\nu}} + \Vert (I-P_n)G_{n,1}\Vert_{L^2_{x_3,\nu}} + |(I-P_{\gamma})G_{n,1}|_{L^2_{\gamma_+}} + 1.
\end{align*}
Here we emphasize that the coefficients in the inequality do not depend on $n$. Thus, combining the two estimates above, we conclude that
\begin{align*}
    &   \Vert G_{n,1}\Vert_{L^2_{x_3,\nu}}\lesssim 1, \ \int_{-1}^1 \int_{\mathbb{R}^3}     G_{n,1} \sqrt{\mu} \dd v \dd x_3 = 0.
\end{align*}
The upper bound does not depend on $n$.

This leads to the following estimate
\begin{align*}
    &  \int_{-1}^1 \int_{\mathbb{R}^3}  i \frac{k_1 v_1 + k_2 v_2}{|k|}\sqrt{\mu}  \bar{G}_{n,1} \dd v \dd x_3 = -|G_{n,1}|_{L^2_{\gamma_+}}^2 - \int_{-1}^1\int_{\mathbb{R}^3}\mathcal{L}_n G_{n,1}\bar{G}_{n,1}\dd v\dd x_3 .
\end{align*}
Taking the complex conjugate leads to
\begin{align}
    &  \int_{-1}^1 \int_{\mathbb{R}^3}  i \frac{k_1 v_1 + k_2 v_2}{|k|}\sqrt{\mu}  G_{n,1} \dd v \d x  = |G_{n,1}|_{L^2_{\gamma_+}}^2 + \int_{-1}^1\int_{\mathbb{R}^3}\mathcal{L}_n G_{n,1}\bar{G}_{n,1}\dd v\dd x_3  \notag \\
    &= \int_{-1}^1\int_{\mathbb{R}^3} i\tilde{v}_1 \sqrt{\mu} \tilde{G}_{n,1}(x_3,\tilde{v}_1,\tilde{v}_2,v_3) \dd \tilde{v}_1 \tilde{v}_2 \dd v_3 \dd x_3,\label{eta_G_tilde}
\end{align}
where we have applied the change of variable $(v_1,v_2)\to (\tilde{v}_1,\tilde{v}_2)$ with Jacobian $1$ in the second equality.

From \eqref{G_tilde_eqn}, we conclude that \eqref{eta_G_tilde} is independent of $k$.

\begin{itemize}
    \item \textit{Solution to $G_{n,2}$}
\end{itemize}

The equation of the $G_{n,2}$ is given by the remaining terms in the expansion:
\begin{align}
\begin{cases}
       &     -\eta_n  \sqrt{\mu} - \eta_n |k| G_{n,1} + i\frac{(k_1v_1+k_2v_2)}{|k|}G_{n,1} + i(k_1v_1+k_2v_2)G_{n,2} - \eta_n |k|^2 G_{n,2} + v_3 \p_{x_3} G_{n,2} + \mathcal{L}_n G_{n,2} = 0 ,\\
    & G_{n,2}|_{\gamma_-} = P_\gamma G_{n,2}. 
\end{cases}\label{Gn2_eqn}
\end{align}

We will apply the contraction mapping to prove the well-posedness of $G_{n,2}$. First, we denote $\eta_n = \lambda^*_n+ \gamma_n$, where $\lambda^*_n$ satisfies
\begin{align}
    & \int_{-1}^1\int_{\mathbb{R}^3}\Big[-\lambda^*_n \mu +i \frac{(k_1v_1+k_2v_2)}{|k|}G_{n,1}\sqrt{\mu} \Big] \dd v \dd x_3 =   0, \notag \\
    & \lambda_n^* = \frac{i}{2} \int_{-1}^1\int_{\mathbb{R}^3} \frac{k_1v_1+k_2v_2}{|k|}G_{n,1}\sqrt{\mu} \dd v \dd x_3. \label{lambda_n}
\end{align}
Then we denote
\begin{align*}
    & -g= -\lambda^*_n \sqrt{\mu}  + i\frac{k_1v_1+k_2v_2}{|k|}G_{n,1}, \ \int_{-1}^1\int_{\mathbb{R}^3} g\sqrt{\mu}\dd v \dd x_3 = 0.
\end{align*}
The problem \eqref{Gn2_eqn} is equivalent to
\begin{align*}
\begin{cases}
        &  -\gamma_n \sqrt{\mu} + i(k_1v_1 + k_2v_2)G_{n,2} - (\lambda^*_n+\gamma_n)|k|^2 G_{n,2} + v_3 \p_{x_3} G_{n,2} + \mathcal{L}_n G_{n,2} = g + (\lambda^*_n + \gamma_n)|k|G_{n,1}, \\
        &\int_{-1}^1 \int_{\mathbb{R}^3} g\sqrt{\mu} \dd v \dd x_3 = 0, \\
        & G_{n,2}|_{\gamma_-} = P_\gamma G_{n,2}. 
\end{cases}
\end{align*}

Since $G_{n,2}$ needs to satisfy the condition $\int_{-1}^1 \int_{\mathbb{R}^3} G_{n,2} \sqrt{\mu}\dd v  \dd x_3 =0$, we set 
\begin{align}
    & \gamma_n = \gamma_n(G_{n,2}): = \frac{1}{2}\int_{-1}^1 \int_{\mathbb{R}^3} i(k_1 v_1 + k_2 v_2) G_{n,2} \sqrt{\mu} \dd v \dd x_3. \label{gamma_n}
\end{align}

Let $\Lambda$ be a number such that
\begin{align*}
    & \max\{1,\lambda_n^*,\Vert \nu^{1/2}G_{n,1}\Vert_{L^2_{x_3,v}} \} < \Lambda.
\end{align*}

We prove the well-posedness through the Banach fixed-point theorem. For this purpose, we define the following Banach space
\begin{align}
    & \mathcal{X}:= \Big\{\Vert \nu^{1/2} G_{n,2}\Vert_{L^2_{x_3,v}} < C_1 \Lambda, \ \int_{-1}^1 \int_{\mathbb{R}^3} G_{n,2} \sqrt{\mu} \dd v \dd x_3 = 0 \Big\}, \label{banach_space}\\
    & \Vert G_{n,2}\Vert_{\mathcal{X}} : = \Vert \nu^{1/2}G_{n,2} \Vert_{L^2_{x_3,v}}. \notag
\end{align}
Here $C_1$ will be specified later.

We let $\tilde{G}_{n,2} \in \mathcal{X}$, then from \eqref{gamma_n}, it holds that 
\begin{align*}
    & \gamma_n(\tilde{G}_{n,2}) \lesssim |k|\Vert \nu^{1/2}\tilde{G}_{n,2}\Vert_{L^2_{x_3,v}} < o(1)\Lambda. 
\end{align*}

We consider the following system:
\begin{align*}
\begin{cases}
       &   v_3 \p_{x_3} G_{n,2} + \mathcal{L}_n G_{n,2} \\
       &=  g + (\lambda^*_n+\gamma_n(\tilde{G}_{n,2}))|k|G_{n,1} + \gamma_n(\tilde{G}_{n,2})\sqrt{\mu} - i(k_1v_1 + k_2v_2)\tilde{G}_{n,2} + (\lambda^*_n + \gamma_n(\tilde{G}_{n,2}))|k|^2 \tilde{G}_{n,2} ,\\
    & G_{n,2}|_{\gamma_-} = P_\gamma G_{n,2}.  
\end{cases}
\end{align*}
Denote the RHS as $h$, which satisfies
\begin{align*}
 \Vert \nu^{-1/2}h\Vert_{L^2_{x_3,v}}   &     \lesssim \lambda^*_n + \Vert \nu^{1/2}G_{n,1}\Vert_{L^2_{x_3,v}} + \Vert \nu^{1/2}\tilde{G}_{n,2}\Vert_{L^2_{x_3,v}} + \Lambda \Vert G_{n,1}\Vert_{L^2_{x_3,v}} \\
    &+ o(1)\Lambda + |k|\Vert \nu^{1/2}\tilde{G}_{n,2}\Vert_{L^2_{x_3,v}} + \Lambda|k|^2 \Vert \tilde{G}_{n,2}\Vert_{L^2_{x_3,v}} \lesssim \Lambda^2.
\end{align*}

The problem above is equivalent to a one-dimensional stationary problem in a bounded domain:
\begin{align*}
\begin{cases}
        &   v_3 \p_{x_3} G_{n,2} + \mathcal{L}_n G_{n,2} = h, \ \Vert \nu^{-1/2}h \Vert_{L^2_{x_3,v}}<\infty, \ \int_{-1}^1 \int_{\mathbb{R}^3} h \sqrt{\mu}\dd v \dd x_3 = 0, \\
    &  G_{n,2}|_{\gamma_-} = P_{\gamma}G_{n,2}.
\end{cases}
\end{align*}
The existence and uniqueness of this system is standard, and by the same argument to $G_{n,1}$, we can obtain the following estimate: $\int_{-1}^1 \int_{\mathbb{R}^3} G_{n,2} \sqrt{\mu}\dd v \dd x_3 = 0$, and
\begin{align}
    &   \Vert G_{n,2}\Vert_{L^2_{x_3,\nu}} + |(I-P_\gamma)G_{n,2}|_{L^2_{\gamma_+}} \lesssim \Vert \nu^{-1/2}h\Vert_{L^2_{x_3,v}}. \label{G2_bdd_h}
\end{align}
Here the coefficients in the upper bound do not depend on $n$.

This implies that there exists $C_0>0$ such that
\begin{align*}
    & \Vert G_{n,2}\Vert_{\mathcal{X}} < C_0 \Lambda^2.
\end{align*}
Now we choose $C_1$ in \eqref{banach_space} as $C_1:=C_0\Lambda$, and thus we verify that $G_{n,2}\in \mathcal{X}$.

To prove the contraction mapping property, we let $\tilde{H}_1,\tilde{H}_2\in \mathcal{X}$, and let $H_i,i=1,2$ satisfy the following equations:
\begin{align*}
\begin{cases}
       &   v_3 \p_{x_3} H_i + \mathcal{L}_n H_i =  g + (\lambda_n^*+\gamma_n(\tilde{H}_i))G_{n,1} + \gamma_n(\tilde{H}_i)\sqrt{\mu} - i(k_1v_1 + k_2v_2)\tilde{H}_i + (\lambda_n^* + \gamma_n(\tilde{H}_i))|k|^2 \tilde{H}_i \\
    & H_i|_{\gamma_-} = P_\gamma H_i.  
\end{cases}
\end{align*}
Denote $H= H_1-H_2$ and $\tilde{H} = \tilde{H}_1-\tilde{H}_2$, then $H$ satisfies
\begin{align*}
\begin{cases}
       &   v_3 \p_{x_3} H + \mathcal{L}_n H =  \gamma_n(\tilde{H})G_{n,1} + \gamma_n(\tilde{H})\sqrt{\mu} - i(k_1v_1 + k_2v_2)\tilde{H} + \lambda^*_n |k|^2 \tilde{H} + |k|^2 [\gamma_n(\tilde{H})\tilde{H}_1 +  \gamma_n(\tilde{H}_2)\tilde{H}] \\
    & H|_{\gamma_-} = P_\gamma H.  
\end{cases}
\end{align*}

For $\gamma_n(\tilde{H})$, from \eqref{gamma_n} we have
\begin{align*}
    & \gamma_n(\tilde{H}) \lesssim |k|\Vert \nu^{1/2}\tilde{H}\Vert_{L^2_{x_3,v}} = |k|\Vert \tilde{H}\Vert_\mathcal{X}.
\end{align*}
Applying this estimate and \eqref{G2_bdd_h}, we derive
\begin{align*}
    &   \Vert H\Vert_{\mathcal{X}} \lesssim |k|\Lambda \Vert \tilde{H}\Vert_\mathcal{X} + |k|\Vert \tilde{H}\Vert_{\mathcal{X}} + \Lambda |k|^2 \Vert \tilde{H}\Vert_\mathcal{X} + |k|^3 C_1 \Lambda\Vert \tilde{H}\Vert_{\mathcal{X}} <\frac{1}{2}\Vert \tilde{H}\Vert_\mathcal{X}.
\end{align*}
Here we have used $|k|\ll 1$ such that $|k|\Lambda + \Lambda |k|^2 + |k|^3 C_1 \Lambda \ll 1$. By fixed-point theorem, we verify a unique $G_{n,2}\in \mathcal{X}$ to \eqref{Gn2_eqn} with $\gamma_n(G_{n,2})$ given in \eqref{gamma_n}.

\begin{itemize}
    \item \textit{Choice, estimate and uniqueness of the eigenvalue $\lambda_n(k)= -\eta_n |k|^2$}
\end{itemize}

Collecting \eqref{lambda_n} and \eqref{gamma_n}, the eigenvalue $\eta_n$ is chosen to be 
\begin{align*}
 & \eta_n  := \frac{1}{2}\int_{-1}^1 \int_{\mathbb{R}^3} i \frac{(k_1v_1 +k_2v_2)}{|k|}G_{n,1} \sqrt{\mu} \dd v \dd x_3 + \frac{1}{2}\int_{-1}^1 \int_{\mathbb{R}^3} i(k_1v_1 + k_2v_2)G_{n,2} \sqrt{\mu}\dd v \dd x_3  \\  
 & =\frac{1}{2}|G_{n,1}|_{L^2_{\gamma_+}}^2 + \frac{1}{2}\int_{-1}^1 \int_{\mathbb{R}^3} \mathcal{L}_n G_{n,1}\bar{G}_{n,1} \dd v \dd x_3  +\frac{1}{2} \int_{-1}^1\int_{\mathbb{R}^3} i(k_1v_1 + k_2v_2)G_{n,2}\sqrt{\mu} \dd v \dd x_3.
\end{align*}
Such a choice guarantees the normalized property. We obtain the following form of the eigenvalue $\lambda_n(k) = -\eta_n  |k|^2$:
\begin{align*}
    &  \lambda_n(k) = -\lambda^*_n|k|^2  + O(|k|^3).
\end{align*}
Here $\lambda_n^* \in \mathbb{R}^+$ is positive, real, and does not depend on $k$ due to \eqref{eta_G_tilde}:
\begin{align*}
    &\lambda^*_n = \frac{1}{2} |G_{n,1}|^2_{L^2_{\gamma_+}} +\frac{1}{2} \int_{-1}^1 \int_{\mathbb{R}^3} \mathcal{L}_n G_{n,1} \bar{G}_{n,1} \dd v\dd x_3  > 0.
\end{align*}

On the other hand, when $\lambda_n(k) \neq -\eta_n|k|^2$, since $G_{n,1}$ is fixed, from the equation of $G_{n,2}$ in \eqref{Gn2_eqn},  $G_{n,2}$ no longer satisfies the normalized condition $\int_{-1}^1 \int_{\mathbb{R}^3} G_{n,2} \sqrt{\mu} \dd v \dd x_3 = 0$. The contraction arises from the assumption that $\frac{1}{2}\int_{-1}^1\int_{\mathbb{R}^3} \hat{f}_n \sqrt{\mu}\dd v \dd x_3 = M$. Therefore, there does not exist a nontrivial solution to the eigenvalue problem when $\lambda_n(k) \neq -\eta_n|k|^2$. This concludes the uniqueness of the eigenvalue.

Now we analyze the limit $\lim_{n\to \infty} \lambda_n^*$. Let $G_1$ satisfy
\begin{align}
\begin{cases}
       &    i\frac{(k_1v_1 + k_2 v_2)}{|k|} \sqrt{\mu} + v_3 \p_{x_3} G_1 + \mathcal{L} G_1 = 0, \\
    & G_1|_{\gamma_-} = 0. 
\end{cases} \label{G1_eqn}
\end{align}
Similar to $G_{n,1}$ in \eqref{Gn1_eqn}, the existence and uniqueness of solution to $G_1$ is standard.

We define 
\begin{align}\label{G1_eqn_l*}
    &  \lambda^* = \frac{1}{2} |G_1|^2_{L^2_{\gamma_+}} + \frac{1}{2}\int_{-1}^1 \int_{\mathbb{R}^3} \mathcal{L}G_1 \bar{G}_1 \dd v \dd x_3 > 0.
\end{align}
By introducing a similar rotation of coordinates as the argument for $\lambda^*_n$ in \eqref{eta_G_tilde}, $\lambda^*$ does not depend on $k$.

The difference $G_{n,1}-G_1$ satisfies the equation
\begin{align*}
    \begin{cases}
       &     v_3 \p_{x_3} (G_{n,1}-G_1) + \mathcal{L}(G_{n,1} - G_1) = (\mathcal{L}-\mathcal{L}_n)G_{n,1} = K(I-P_n)G_{n,1}, \\
    & [G_{n,1}-G_1]|_{\gamma_-} =0. 
\end{cases}
\end{align*}
Applying a similar argument, we have
\begin{align*}
    & |G_{n,1}-G_1|_{L^2_{\gamma_+}} + \Vert G_{n,1}-G_1\Vert_{L^2_{x_3,\nu}} \lesssim \Vert K(I-P_n)G_{n,1}\Vert_{L^2_{x_3,v}} \lesssim \Vert (I-P_n)G_{n,1}\Vert_{L^2_{x_3,v}}.
\end{align*}
Let $n\to \infty$, we conclude that
\begin{align*}
    & \lim_{n\to\infty}\Vert G_{n,1}-G_1\Vert_{L^2_{x_3,\nu}} = \lim_{n\to\infty} |G_{n,1}-G_1|_{L^2_{\gamma_+}} = 0.
\end{align*}
Thus, the difference of $\lambda_n^*$ and $\lambda^*$ can be estimated as
\begin{align*}
    & |\lambda^*_n - \lambda^*| \leq \int_{\gamma_+} |G_{n,1}-G_1| |G_{n,1}+G_1| |v_3| \dd v \\
    &+ \int_{-1}^1 \int_{\mathbb{R}^3}   \Big|\mathcal{L}_nG_{n,1} (\bar{G}_{n,1}-\bar{G}_1) + \bar{G}_1\mathcal{L}_n(G_{n,1}-G_1) + \bar{G}_1 K(P_n-I)G_1 \Big| \dd v \dd x_3 \\
    & \lesssim  \big[|G_{n,1}-G_1|_{L^2_{\gamma_+}} + \Vert G_{n,1}-G_1\Vert_{L^2_{x_3,\nu}} + \Vert (I-P_n)G_1\Vert_{L^2_{x_3,\nu}} \big] \big[\Vert G_1\Vert_{L^2_{x_3,\nu}} + \Vert G_{n,1}\Vert_{L^2_{x_3,\nu}} \big] .
\end{align*}
When $n\to \infty$, we conclude that $\lim_{n\to\infty}\lambda_n^* = \lambda^*$.

The rest terms in $\lambda_n(k)$ are of $O(|k|^3)$, and the upper bound of the coefficient of $|k|^3$ does not depend on $n$. This is due to
\begin{align*}
    &   \Big|\int_{-1}^1\int_{\mathbb{R}^3} i(k_1v_1+k_2v_2)G_{n,2}\sqrt{\mu} \dd v \dd x_3  \Big| \lesssim |k|\Vert G_{n,2}\Vert_{L^2_{x_3,v}} \lesssim C_1 \Lambda|k|.
\end{align*}
Here we have applied that $G_{n,2}\in \mathcal{X}$ in \eqref{banach_space}, and the upper bound does not depend on $n$. This concludes that the eigenvalue has the desired form in \eqref{eigenvalue}.

Finally, we choose $\kappa=\kappa(\sigma_0)\ll \sigma_0$ such that 
\begin{align*}
    & |\lambda_n(k)| < |\lambda_n^*|\kappa^2 + C_\lambda^n |\kappa|^3 < \sigma_0,
\end{align*}
where the inequality holds for all $n$ due to the uniform in $n$ control of $\lambda_n^*$ and $C_\lambda^n$. This concludes that for all $n$, $\lambda_n(k)\in \mathbb{C}_+(-\sigma_0)$. We then conclude the lemma.
\end{proof}

Proposition \ref{prop:eigenvalue} above leads to the following crucial property of the spectrum.

\begin{lemma}\label{lemma:kneq0_continuous}
Recall $\tau_0,\sigma_0,\kappa$ defined in Lemma \ref{lemma:large_tau_inv} and Proposition \ref{prop:eigenvalue}. For $0<|k|\leq \kappa\ll 1$, it holds that
\begin{align*}
    &   \Sigma(\hat{B}_n(k)) \cap \{\sigma + i\tau | |\sigma|<\sigma_0\} = \{\lambda_n(k)\}.
\end{align*}
In particular, for $\sigma = -\sigma_0$, with $|k|< \kappa \ll \sigma_0$ and $|\tau|<\tau_0$, it holds that
\begin{align}
    &   \Vert (-\sigma_0+i\tau - \hat{B}_n(k))^{-1} \Vert_{L^2_{x_3,v}} \lesssim \frac{1}{\sigma_0^{3/2}}.  \label{inverse_norm}
\end{align}
Here the upper bound does not depend on $n$.

\end{lemma}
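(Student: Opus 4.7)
The plan splits naturally into two parts: localizing the spectrum inside the strip $\{|\sigma|<\sigma_0\}$, and then quantifying the resolvent on the vertical line $\sigma=-\sigma_0$. For the first part, I would invoke a Weyl-type argument: by Lemma \ref{lemma:A_resolvent} the unperturbed operator $\hat{A}(k)$ has $\mathbb{C}_+(-\nu_0)$ in its resolvent set, while Lemma \ref{lemma:compact} shows that $K_n$ is a compact perturbation, so the essential spectrum of $\hat{B}_n(k)=\hat{A}(k)+K_n$ is disjoint from $\mathbb{C}_+(-\nu_0)$, and every spectral point there is an isolated eigenvalue of finite algebraic multiplicity. Lemma \ref{lemma:eigenvalue_0} excludes eigenvalues with non-negative real part when $k\neq 0$, and Proposition \ref{prop:eigenvalue} identifies $\lambda_n(k)$ as the unique eigenvalue inside $\mathbb{C}_+(-\sigma_0)$ in the range $0<|k|\leq\kappa$. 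Combining these three facts gives $\Sigma(\hat{B}_n(k))\cap\{|\sigma|<\sigma_0\}=\{\lambda_n(k)\}$.

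For the quantitative bound \eqref{inverse_norm}, I would write $\hat{f}=(\lambda-\hat{B}_n(k))^{-1}g$ with $\lambda=-\sigma_0+i\tau$, $|\tau|<\tau_0$, and assemble three ingredients. The real part of the $L^2_{x_3,v}$-pairing of the resolvent equation with $\bar{\hat{f}}$, combined with the coercivity of $\mathcal{L}_n$ from Lemma \ref{lemma:Ln}, produces a microscopic dissipation bound
\begin{align*}
D^2 &:= \|(\mathbf{I}-\mathbf{P})P_n\hat{f}\|_{L^2_{x_3,\nu}}^2 + \|(I-P_n)\hat{f}\|_{L^2_{x_3,\nu}}^2 + |(I-P_\gamma)\hat{f}|_{L^2_{\gamma_+}}^2 \\
&\lesssim \sigma_0\|\hat{f}\|_{L^2_{x_3,v}}^2 + \|g\|_{L^2_{x_3,v}}\|\hat{f}\|_{L^2_{x_3,v}}.
\end{align*}
Next, Proposition \ref{prop:macroscopic} applied with $|\lambda|\lesssim\tau_0$ controls $\|\hat{\mathbf{b}}\|$, $\|\hat{c}\|$ and $\|\hat{a}-\hat{a}_M\|$ by $\tau_0^2 D+\tau_0\|g\|$, after absorbing $\|(\mathbf{I}-\mathbf{P})\hat{f}\|\lesssim D$.

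The main obstacle is that the macroscopic estimate only captures $\hat{a}$ through the degenerate factor $\tfrac{|k|}{\sqrt{1+|k|^2}}$, so the $x_3$-averaged mode $\hat{a}_M$ is not directly controlled. I would recover it by testing the resolvent equation against $\sqrt{\mu}$ over $(-1,1)\times\mathbb{R}^3_v$: the collision integral vanishes by Lemma \ref{lemma:int_Ln}, and the transport integral reduces to the boundary fluxes $\int v_3\hat{f}(\pm 1,v)\sqrt{\mu}\,dv$, both of which vanish because the diffuse-reflection boundary condition enforces zero net mass flux (a short computation using the normalization $c_\mu\int_{v_3>0}v_3\mu\,dv=1$). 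What remains is the scalar identity
\begin{equation*}
2\lambda\,\hat{a}_M = \int_{-1}^1\!\!\int_{\mathbb{R}^3} g\sqrt{\mu}\,dv\,dx_3 \;-\; i\int_{-1}^1(k_1\hat{b}_1+k_2\hat{b}_2)\,dx_3,
\end{equation*}
giving $|\hat{a}_M|\lesssim(\|g\|+|k|\|\hat{\mathbf{b}}\|)/\sigma_0$. Using $|k|\leq\kappa\ll\sigma_0$ to absorb the $|k|\|\hat{\mathbf{b}}\|$ contribution, together with the choice $\sigma_0\sim 1/\tau_0^4$ from Proposition \ref{prop:eigenvalue} so that $\tau_0^2\sqrt{\sigma_0}\ll 1$, and a Young's inequality for the $\sqrt{\|g\|\|\hat{f}\|}$-piece, I can absorb every $\|\hat{f}\|$-term on the right and conclude $\|\hat{f}\|\lesssim\|g\|/\sigma_0\leq\|g\|/\sigma_0^{3/2}$. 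All constants originate in Lemma \ref{lemma:Ln}, Proposition \ref{prop:macroscopic} and the mass-flux identity, none of which depend on $n$, so the bound is uniform in $n$.
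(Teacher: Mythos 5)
Your proposal is correct and follows essentially the same route as the paper: Weyl's theorem via compactness of $K_n$ (Lemma \ref{lemma:compact}) together with Proposition \ref{prop:eigenvalue} for the spectral localization, and then, for \eqref{inverse_norm}, the same triad of (i) the real-part energy estimate with the coercivity of $\mathcal{L}_n$ from Lemma \ref{lemma:Ln}, (ii) the macroscopic estimate of Proposition \ref{prop:macroscopic} with $|\lambda|\lesssim\tau_0$, and (iii) the mass identity obtained by testing against $\sqrt{\mu}$ using Lemma \ref{lemma:int_Ln} and the vanishing boundary flux to recover $\hat a_M$ with an $o(1)\|\hat{\mathbf{b}}\|$ remainder. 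The only cosmetic difference is that you spell out the mass-flux identity explicitly and then bound your intermediate estimate $\|\hat f\|\lesssim\sigma_0^{-1}\|g\|$ from above by the stated $\sigma_0^{-3/2}$, whereas the paper arrives at $\sigma_0^{-3/2}$ directly by inserting the relation $\sigma_0\sim(\nu_0^2+\tau_0^2)^{-2}$; both land on a bound of the required form, uniformly in $n$.
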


\begin{proof}
The first statement is due to Proposition \ref{prop:eigenvalue} and the fact that $K_n $ is compact on $L^2((-1,1)\times \R^3_v)$ by Lemma \ref{lemma:compact}.

For \eqref{inverse_norm}, we focus on the resolvent problem
\begin{align*}
\begin{cases}
        &   (-\sigma_0+i\tau)\hat{f} + i(k_1v_1+k_2v_2)\hat{f} + v_3 \p_{x_3}\hat{f} + \mathcal{L}_n \hat{f} = g , \\
    & \hat{f}|_{\gamma_-} = P_\gamma \hat{f}.
\end{cases}
\end{align*}

First, we multiply the equation by $\sqrt{\mu}$ and take integration in $x_3,v$. Since the diffuse boundary condition preserves mass, with the orthogonality in Lemma \ref{lemma:int_Ln}, we obtain the average density as
\begin{align*}
    &  |\hat{a}_M| \lesssim o(1)\Vert \hat{\mathbf{b}}\Vert_{L^2_{x_3}} + \Vert g/|\sigma_0|\Vert_{L^2_{x_3,v}}, 
\end{align*}
where we have used $|k|<\kappa\ll \sigma_0$.

Next we compute the real part of the $L^2_{x_3,v}$ energy estimate
\begin{align*}
    & \Vert (\mathbf{I}-\mathbf{P})P_n \hat{f} \Vert_{L^2_{x_3,\nu}} + \Vert \hat{f} - P_n \hat{f}\Vert_{L^2_{x_3,\nu}} + |(I-P_\gamma)\hat{f}|_{L^2_{\gamma_+}} \\
    &\leq 2|\sigma_0|\Vert \hat{f}\Vert_{L^2_{x_3,v}} + \Vert g/|\sigma_0|\Vert_{L^2_{x_3,v}}.
\end{align*}
Then we apply Proposition \ref{prop:macroscopic} to derive the macroscopic estimate
\begin{align*}
    &   \Vert \mathbf{P}\hat{f}\Vert_{L^2_{x_3,v}} \lesssim \Vert \hat{\mathbf{b}}\Vert_{L^2_{x_3}} + \Vert \hat{c}\Vert_{L^2_{x_3}} + \Vert \hat{a} - \hat{a}_M\Vert_{L^2_{x_3}} + |\hat{a}_M| \\
    & \lesssim  \tau_0 \Vert (\mathbf{I}-\mathbf{P})P_n\hat{f}\Vert_{L^2_{x_3,\nu}} + \tau_0\Vert \hat{f} - P_n\hat{f}\Vert_{L^2_{x_3,\nu}}\\
    &+ \tau_0|(I-P_\gamma)\hat{f}|_{L^2_{\gamma_+}} + (\tau_0+\sigma_0^{-1})\Vert g\Vert_{L^2_{x_3,v}} + \tau_0^2 \Vert (\mathbf{I}-\mathbf{P})\hat{f}\Vert_{L^2_{x_3,v}} + o(1)\Vert \hat{\mathbf{b}}\Vert_{L^2_{x_3}}.
\end{align*}

Recall that we have chosen $|\sigma_0| = \frac{c_0^2}{\nu_0^4+\tau_0^4}$ with $c_0\ll 1$ in the proof of Proposition \ref{prop:eigenvalue}. Following the same computation, we conclude that
\begin{align*}
    & \frac{\sqrt{c_0}}{\nu_0^2 + \tau_0^2} \Vert \hat{f}\Vert_{L^2_{x_3,v}} \lesssim (\tau_0 + \sigma_0^{-1}) \Vert g\Vert_{L^2_{x_3,v}} \lesssim \sigma_0^{-1}\Vert g\Vert_{L^2_{x_3,v}}.
\end{align*}
We finally obtain that
\begin{align*}
    & \Vert \hat{f}\Vert_{L^2_{x_3,v}} \lesssim \frac{1}{\sigma_0^{3/2}} \Vert g\Vert_{L^2_{x_3,v}}.
\end{align*}
Here the upper only depends on $\sigma_0$, which is independent of $n$.

This concludes \eqref{inverse_norm}.
\end{proof}

\begin{remark}
    Such computation does not work when $|k|$ is not small. We have to deal with such a case using a direct energy estimate with an exponential time rate in the next lemma.
\end{remark}

\begin{lemma}\label{lemma:k_lower_bdd}
When $|k|>\kappa$, there exists $0<c_0< \kappa$ such that
\begin{align*}
    &   \Vert \mathbf{1}_{|k|>\kappa} e^{\hat{B}_n(k)t} \hat{f}_0\Vert_{L^2_{x_3,v}} \lesssim e^{-c_0 t} \Vert \hat{f}_0\Vert_{L^2_{x_3,v}}.  
\end{align*}
Here, the upper bound and $c_0$ do not depend on $n$.
\end{lemma}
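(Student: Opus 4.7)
The plan is to deduce uniform-in-$n$ exponential decay of the semigroup from a uniform-in-$n$ resolvent bound on the line $\mathrm{Re}(\lambda)=-c_0$, and then apply the Gearhart--Pr\"uss theorem for $C_0$-semigroups on Hilbert space, or equivalently shift the Bromwich contour in the inverse Laplace representation of $e^{\hat{B}_n(k)t}\hat{f}_0$ down to $\mathrm{Re}(\lambda)=-c_0$. The central task is to establish
\begin{align*}
\Vert(-c_0+i\tau-\hat{B}_n(k))^{-1}\Vert_{L^2_{x_3,v}}\lesssim 1\qquad\text{for all }\tau\in\mathbb{R},\ |k|>\kappa,
\end{align*}
with constant independent of $n$ (depending only on $\kappa,\tau_0,\nu_0$), and to rule out any spectrum of $\hat{B}_n(k)$ in $\mathrm{Re}(\lambda)\geq -c_0$.

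The estimate is verified in two ranges of $\tau$. For $|\tau|>\tau_0$, Lemma \ref{lemma:large_tau_inv} already supplies the bound (with constant vanishing as $|\tau|\to\infty$), together with the $L^2_\tau$ integrability that will justify the contour shift. For $|\tau|\leq\tau_0$, I would treat the resolvent equation $(-c_0+i\tau-\hat{B}_n(k))\hat{f}=g$ by coupling the energy estimate with the macroscopic estimate of Proposition \ref{prop:macroscopic}. Taking the real part of the $L^2_{x_3,v}$ inner product with $\bar{\hat{f}}$ and invoking Lemma \ref{lemma:Ln} gives
\begin{align*}
\mathcal{D}(\hat{f}):=\Vert(\mathbf{I}-\mathbf{P})P_n\hat{f}\Vert_{L^2_{x_3,\nu}}^2+\Vert\nu^{1/2}(\hat{f}-P_n\hat{f})\Vert_{L^2_{x_3,v}}^2+\tfrac{1}{2}|(I-P_\gamma)\hat{f}|_{L^2_{\gamma_+}}^2\leq c_0\Vert\hat{f}\Vert_{L^2_{x_3,v}}^2+\Vert g\Vert_{L^2_{x_3,v}}\Vert\hat{f}\Vert_{L^2_{x_3,v}}.
\end{align*}
Recasting the equation as $i(k_1v_1+k_2v_2)\hat{f}+v_3\p_{x_3}\hat{f}+\mathcal{L}_n\hat{f}=g-\lambda\hat{f}$ with $\lambda=-c_0+i\tau$ of modulus at most $c_0+\tau_0$, and using $|k|/\sqrt{1+|k|^2}\geq\kappa/\sqrt{1+\kappa^2}$ to recover $\Vert\hat{a}\Vert_{L^2_{x_3}}$ from the prefactor, Proposition \ref{prop:macroscopic} delivers
\begin{align*}
\Vert\mathbf{P}\hat{f}\Vert_{L^2_{x_3,v}}\lesssim\mathcal{D}(\hat{f})^{1/2}+\Vert g\Vert_{L^2_{x_3,v}}+\Vert(\mathbf{I}-\mathbf{P})\hat{f}\Vert_{L^2_{x_3,v}},
\end{align*}
with the implicit constant depending on $\kappa,\tau_0$ but not on $n$. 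Combining with the trivial bound $\Vert(\mathbf{I}-\mathbf{P})\hat{f}\Vert\leq\Vert(\mathbf{I}-\mathbf{P})P_n\hat{f}\Vert+\Vert(I-P_n)\hat{f}\Vert\lesssim\mathcal{D}(\hat{f})^{1/2}$ yields $\Vert\hat{f}\Vert\lesssim\mathcal{D}(\hat{f})^{1/2}+\Vert g\Vert$; substituting back into the energy inequality and choosing $c_0\ll 1$ small in terms of $\kappa,\tau_0$ absorbs the $c_0\Vert\hat{f}\Vert^2$ term and closes the estimate to $\Vert\hat{f}\Vert_{L^2_{x_3,v}}\lesssim\Vert g\Vert_{L^2_{x_3,v}}$. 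Applied with $g=0$, the same argument excludes eigenvalues of $\hat{B}_n(k)$ in $\mathrm{Re}(\lambda)\geq -c_0$; together with the fact that the essential spectrum of $\hat{B}_n(k)$ lies in $\mathrm{Re}(\lambda)\leq-\nu_0$ by the compactness of $K_n$ from Lemma \ref{lemma:compact}, the closed half-plane $\mathrm{Re}(\lambda)\geq-c_0$ contains no spectrum.

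With the uniform resolvent bound and $L^2_\tau$-integrability in place, the inverse Laplace representation of $e^{\hat{B}_n(k)t}$ can be shifted from $\mathrm{Re}(\lambda)=\sigma>0$ down to $\mathrm{Re}(\lambda)=-c_0$, producing the exponential factor $e^{-c_0 t}$ with an $n$-uniform constant. The principal obstacle is guaranteeing that every constant entering the argument is genuinely independent of $n$; fortunately this uniformity is built into Lemma \ref{lemma:Ln}, Lemma \ref{lemma:large_tau_inv}, and Proposition \ref{prop:macroscopic}, so it propagates through the macroscopic-energy coupling and one obtains the stated bound with some $0<c_0<\kappa$ and implicit constant independent of $n$.
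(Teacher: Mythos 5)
Your proof is sound but takes a genuinely different route from the paper's. The paper handles $|k|>\kappa$ directly in the time variable: it multiplies the equation for $e^{c_0 t}\hat{f}$ by the conjugate, integrates the $L^2_{x_3,v}$ energy identity in time, and pairs this with a \emph{time-dependent} macroscopic estimate (citing Lemma~5 of \cite{chen2025global} and noting that one must add the time derivative $\partial_t(e^{c_0 t}\hat{f})$ as an extra term in the weak formulation of Proposition~\ref{prop:macroscopic}). You instead work entirely on the resolvent line $\mathrm{Re}\,\lambda=-c_0$: the coupled energy--macroscopic argument for $|\tau|\leq\tau_0$ applies Proposition~\ref{prop:macroscopic} \emph{as stated} (since it is already a stationary estimate with $\lambda$ as a parameter), while $|\tau|>\tau_0$ is covered by Lemma~\ref{lemma:large_tau_inv}; you then pass from resolvent bounds to semigroup decay via Gearhart--Pr\"uss or an explicit Bromwich contour shift. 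Both routes give $n$-uniform constants — the contraction property $\Vert e^{\hat{B}_n(k)t}\Vert\leq 1$ and the $n$-uniformity of Lemma~\ref{lemma:Ln}, Lemma~\ref{lemma:large_tau_inv} and Proposition~\ref{prop:macroscopic} propagate through either argument. The tradeoff is that your approach avoids any modification of the macroscopic estimate (no time-derivative term to re-derive), while the paper's approach is more self-contained in that it never needs to invoke a Gearhart--Pr\"uss-type theorem or to justify a contour deformation; it also matches the style of the $L^2$ energy estimates used elsewhere in the paper. One caution worth spelling out if you write this up: the decay rate that quantitative Gearhart--Pr\"uss returns is typically some $\delta$ determined by the uniform resolvent bound $M$ (roughly $\delta\sim 1/M$), not necessarily the $c_0$ you chose for the vertical line, so the final $c_0$ in the lemma should be taken as the minimum of the two; this is harmless since the statement only asserts existence of some positive $c_0<\kappa$.
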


\begin{proof}
The equation of $e^{c_0t}\hat{f}$ is given by
\begin{align*}
\begin{cases}
        &    \p_t (e^{c_0t}\hat{f}) + i(k_1 v_1 + k_2 v_2) e^{c_0t}\hat{f} + v_3 \p_{x_3} e^{c_0t}\hat{f} + \mathcal{L}_ne^{c_0t}\hat{f} = c_0 e^{c_0t} \hat{f} ,\\
    &   \hat{f}(0,x,v) = \hat{f}_0(x,v), \\
    & e^{c_0t}\hat{f}|_{\gamma_-} = P_\gamma e^{c_0t}\hat{f}.
\end{cases}
\end{align*}

The real part of the $L^2_{x_3,v}$ energy estimate with Lemma \ref{lemma:Ln} leads to
\begin{align*}
    & e^{2c_0t} \Vert \hat{f}(t)\Vert_{L^2_{x_3,v}}^2 + \int_0^t | e^{c_0s} (I-P_\gamma)\hat{f}(s)|^2_{L^2_{\gamma_+}} \dd s + \int_0^t \Vert e^{c_0s}(\mathbf{I}-\mathbf{P})P_n \hat{f}(s) \Vert_{L^2_{x_3,\nu}}^2 \dd s  \\
    &+  \int_0^t \Vert e^{c_0s}[ \hat{f} - P_n \hat{f}](s)\Vert_{L^2_{x_3,\nu}}^2 \dd s \leq c_0 \int_0^t \Vert e^{c_0s}\hat{f}(s)\Vert_{L^2_{x_3,v}}^2 \dd s + \Vert \hat{f}_0\Vert_{L^2_{x_3,v}}^2.
\end{align*}

Since $|k|>\kappa$, the following macroscopic estimate holds
\begin{align*}
    &    \int_0^t \Vert \kappa e^{c_0s}\mathbf{P}\hat{f}\Vert_{L^2_{x_3,v}}^2 \dd s \lesssim \int_0^t \Vert e^{c_0s}(\mathbf{I}-\mathbf{P})P_n \hat{f} \Vert^2_{L^2_{x_3,\nu}} \dd s  + \int_0^t \Vert e^{c_0s}[ \hat{f} - P_n \hat{f} ]\Vert^2_{L^2_{x_3,\nu}} \dd s \\
    & + \int_0^t |e^{c_0s} (I-P_\gamma)\hat{f}|^2_{L^2_{\gamma_+}} \dd s+ \Vert e^{c_0t} \hat{f}(t)\Vert_{L^2_{x_3,v}}^2 + \Vert \hat{f}_0\Vert_{L^2_{x_3,v}}^2 + c_0 \int_0^t \Vert e^{c_0s}\hat{f}\Vert_{L^2_{x_3,v}}^2 \dd s.
\end{align*}
We refer to Lemma 5 in \cite{chen2025global} for details of the proof. We note that one only needs an additional time derivative estimate $\p_t (e^{c_0t}\hat{f})$ in the weak formulation presented in Proposition \ref{prop:macroscopic}. 

Combining both estimates, with some $c_0< \kappa$, we obtain that
\begin{align*}
    &    \Vert e^{c_0t}\hat{f}(t)\Vert_{L^2_{x_3,v}}^2 + \int_0^t |e^{c_0s}(I-P_\gamma)\hat{f}|^2_{L^2_{\gamma_+}} \dd s + \int_0^t \Vert \frac{\kappa}{2} e^{c_0s} \mathbf{P}\hat{f}\Vert_{L^2_{x_3,v}}^2 \dd s \lesssim \Vert \hat{f}_0\Vert_{L^2_{x_3,v}}^2.
\end{align*}
This concludes the lemma.
\end{proof}

\subsection{Asymptotic behavior of $e^{Bt}$}\label{sec:linear_asymptotic}

Collecting Proposition \ref{prop:eigenvalue} and Lemma \ref{lemma:k_lower_bdd}, we derive the following crucial representation of the semi-group $e^{\hat{B}_nt}$.

\begin{proposition}\label{prop:inverse_laplace}
It holds that
\begin{align*}
    &     e^{\hat{B}_n(k)t} = e^{\lambda_n(k) t}(\mathbf{P}_0 + |k|\mathbf{P}^n_1(k) + |k|^2\mathbf{P}^n_2(k)) \mathbf{1}_{|k|<\kappa} + \Phi(k).
\end{align*}
Here
\begin{align*}
    &  \Vert \Phi(k)\Vert_{L^2_{x_3,v}} \lesssim e^{-c_0 t}.
\end{align*}

\end{proposition}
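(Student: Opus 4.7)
\medskip
\noindent\textbf{Proof proposal.} The plan is to split the problem in $k$ and rely on a Bromwich (inverse Laplace) representation for the small frequency regime. First, for $|k|\geq \kappa$, Lemma \ref{lemma:k_lower_bdd} already gives the uniform bound $\|e^{\hat{B}_n(k)t}\|_{L^2_{x_3,v}}\lesssim e^{-c_0 t}$, so this entire regime can be absorbed into the remainder $\Phi(k)$. The substantive work is therefore to establish the decomposition for $|k|<\kappa$.

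For $0<|k|<\kappa$, I would start from the Bromwich formula
\begin{align*}
    e^{\hat{B}_n(k)t} = \frac{1}{2\pi i}\int_{\sigma_1-i\infty}^{\sigma_1+i\infty} e^{\lambda t}(\lambda-\hat{B}_n(k))^{-1}\,\dd\lambda
\end{align*}
valid for some $\sigma_1>0$ lying to the right of the spectrum of $\hat{B}_n(k)$. By Lemma \ref{lemma:large_tau_inv}, the resolvent norm tends to $0$ as $|\tau|\to\infty$ uniformly for $\sigma\in[-\sigma_0,\sigma_1]$, so the contour can be shifted to the line $\Re \lambda=-\sigma_0$. By Lemma \ref{lemma:kneq0_continuous}, the only spectral point of $\hat{B}_n(k)$ inside the strip $-\sigma_0<\Re \lambda<\sigma_1$ is the simple eigenvalue $\lambda_n(k)$ identified in Proposition \ref{prop:eigenvalue}. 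The residue theorem then yields
\begin{align*}
    e^{\hat{B}_n(k)t} = e^{\lambda_n(k)t}\tilde{\mathbf{P}}^n(k) + \frac{1}{2\pi i}\int_{-\sigma_0-i\infty}^{-\sigma_0+i\infty} e^{\lambda t}(\lambda-\hat{B}_n(k))^{-1}\,\dd\lambda,
\end{align*}
where $\tilde{\mathbf{P}}^n(k)=\mathbf{P}_0+|k|\mathbf{P}_1^n(k)+|k|^2\mathbf{P}_2^n(k)$ is the Riesz projection associated with $\lambda_n(k)$, as described in Proposition \ref{prop:eigenvalue}(4).

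It remains to estimate the shifted contour integral by $e^{-c_0 t}$ uniformly in $n$ and in $|k|<\kappa$. I would split at $|\tau|=\tau_0$. For $|\tau|<\tau_0$, Lemma \ref{lemma:kneq0_continuous} gives $\|(-\sigma_0+i\tau-\hat{B}_n(k))^{-1}\|_{L^2_{x_3,v}}\lesssim \sigma_0^{-3/2}$, and the integral over this bounded interval is controlled by $e^{-\sigma_0 t}\cdot 2\tau_0 \sigma_0^{-3/2}$. For $|\tau|\geq \tau_0$, I would use the second resolvent identity
\begin{align*}
    (\lambda-\hat{B}_n(k))^{-1} = (\lambda-\hat{A}(k))^{-1} + (\lambda-\hat{B}_n(k))^{-1}K_n(\lambda-\hat{A}(k))^{-1}.
\end{align*}
The contribution of $(\lambda-\hat{A}(k))^{-1}$ is harmless: shifting this piece back through the strip $-\sigma_0\leq \Re\lambda$, where $\hat{A}(k)$ has no spectrum, one recognizes it as $2\pi i\, e^{t\hat{A}(k)}$ up to the $|\tau|<\tau_0$ contribution, and Lemma \ref{lemma:A_resolvent} bounds it by $e^{-\nu_0 t}$. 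The cross term is handled by Cauchy--Schwarz in $\tau$, using the $L^2_\tau$ integrability
\begin{align*}
    \int_{-\infty}^\infty \|(-\sigma_0+i\tau-\hat{A}(k))^{-1}\|_{L^2_{x_3,v}}^2\,\dd\tau<\infty
\end{align*}
established in the proof of Lemma \ref{lemma:large_tau_inv}, combined with the analogous $L^2_\tau$ bound for $(\lambda-\hat{B}_n(k))^{-1}$ (derived by the same Plancherel argument applied to the $B_n$-semigroup after a harmless shift to make it bounded). The boundedness of $K_n$ uniformly in $n$ then gives a constant bound on $\int \|R_B K_n R_A\|\,\dd\tau$, and the exponential factor $|e^{\lambda t}|=e^{-\sigma_0 t}$ produces the desired decay.

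The main technical obstacle is precisely this tail bound at $|\tau|\geq\tau_0$: the resolvent $(\lambda-\hat{B}_n(k))^{-1}$ is not absolutely $L^1_\tau$ integrable, and absolute convergence must be recovered through the cross product structure in the resolvent identity together with the Plancherel-type $L^2_\tau$ control of Lemma \ref{lemma:large_tau_inv}. All constants produced in this argument are traceable to $\sigma_0,\tau_0,\nu_0$ and $\|K\|$, none of which depend on $n$, so the final bound on $\Phi(k)$ is $n$-independent as required.
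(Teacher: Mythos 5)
Your overall strategy coincides with the paper's: split at $|k|=\kappa$ and dispose of the large-frequency regime by Lemma \ref{lemma:k_lower_bdd}; for $|k|<\kappa$ use a Bromwich representation, shift the contour to $\Re\lambda=-\sigma_0$, pick up the residue at $\lambda_n(k)$ (giving the eigen-projection via Proposition \ref{prop:eigenvalue}), and split the remaining line integral at $|\tau|=\tau_0$, controlling the bounded segment by the uniform resolvent bound \eqref{inverse_norm} and the tail by the $L^2_\tau$ integrability of $(\lambda-\hat{A}(k))^{-1}$. The paper packages this slightly differently — it writes $e^{\hat{B}_n(k)t}=e^{\hat{A}(k)t}+\lim_a\frac{1}{2\pi i}\int_{-a}^a e^{(\sigma_0+i\tau)t}Z(\sigma_0+i\tau)\,\dd\tau$ with $Z(\lambda)=(\lambda-\hat{A}(k))^{-1}(I-G(\lambda))^{-1}G(\lambda)$, so that the only contour integral one ever manipulates is absolutely convergent, whereas you invert $(\lambda-\hat{B}_n(k))^{-1}$ directly and then peel off the $\hat{A}$ piece — but these are rearrangements of the same argument, and $R_BK_nR_A=R_A(I-G)^{-1}K_nR_A=Z$, so the tail integrand is literally the same operator.

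There is, however, one genuine gap in the step you flag yourself as the main obstacle. You claim an $L^2_\tau$ bound for $(\lambda-\hat{B}_n(k))^{-1}$ on $\Re\lambda=-\sigma_0$ ``derived by the same Plancherel argument applied to the $B_n$-semigroup after a harmless shift to make it bounded.'' This cannot work: Plancherel would give $\int_{\mathbb{R}}\|(-\sigma_0+i\tau-\hat{B}_n(k))^{-1}\|^2\,\dd\tau \sim \int_0^\infty e^{2\sigma_0 t}\|e^{t\hat{B}_n(k)}\|^2\,\dd t$, and the integrand on the right does not decay, since $\hat{B}_n(k)$ has the eigenvalue $\lambda_n(k)=-\lambda_n^*|k|^2+O(|k|^3)$ which, for $|k|\ll\sqrt{\sigma_0}$, lies strictly to the \emph{right} of $\Re\lambda=-\sigma_0$. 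So $e^{\sigma_0 t}e^{t\hat{B}_n(k)}$ is unbounded, and no ``harmless shift'' (neither translating the contour to $\Re\lambda>0$, which is the wrong line, nor modifying the operator) repairs this without moving the estimate to a contour you do not want. The correct mechanism, and the one implicit in the paper's estimate \eqref{tau_0_integrable}, is entirely algebraic: for $|\tau|>\tau_0$ Lemma \ref{lemma:large_tau_inv} gives $\|G(\lambda)\|=\|K_nR_A(\lambda)\|<\tfrac12$, hence $\|(I-G(\lambda))^{-1}\|<2$ and $\|R_B(\lambda)\|=\|R_A(\lambda)(I-G(\lambda))^{-1}\|\le 2\|R_A(\lambda)\|$. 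Then $\|R_BK_nR_A\|\le 2\|K_n\|\,\|R_A\|^2$, and the tail integral is bounded by $e^{-\sigma_0 t}\,2\|K_n\|\int\|R_A\|^2\,\dd\tau\lesssim e^{-\sigma_0 t}$ without any Cauchy--Schwarz — the $L^2_\tau$ control is used only for $R_A$, where it is justified. With this substitution your proof goes through and matches the paper's.
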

\begin{remark}
Recall that $c_0\ll \kappa$ in Lemma \ref{lemma:k_lower_bdd}, and $\kappa=\kappa(\sigma_0),\sigma_0 = \sigma_0(\tau_0)$ as specified in Proposition \ref{prop:eigenvalue}. The constant $\tau_0$ defined in Lemma \ref{lemma:large_tau_inv} is non-constructive. Therefore, the constant $c_0$ in the exponential decay rate is non-constructive.

\end{remark}

\begin{proof}[Proof of Proposition \ref{prop:inverse_laplace}]
We first decompose
\begin{align*}
    & e^{\hat{B}_n(k)t} = e^{\hat{B}_n(k)t}\mathbf{1}_{|k|<\kappa} + e^{\hat{B}_n(k)t} \mathbf{1}_{|k|> \kappa}.
\end{align*}
The second term is controlled by Lemma \ref{lemma:k_lower_bdd}. Then we focus on the first term, i.e, $|k|<\kappa$.

Denote
\begin{align}
    &Z(\lambda): = (\lambda - \hat{A}(k))^{-1} (I-G(\lambda))^{-1}G(\lambda), \ G(\lambda):= K_n(\lambda-\hat{A}(k))^{-1}. \label{Z}
\end{align}
For $|k|<\kappa$, we use the inverse Laplacian transform with $\sigma_0>0$ to have
\begin{align*}
    &   e^{\hat{B}_n(k)t} = e^{\hat{A}(k)t} + \lim_{a\to \infty}\frac{1}{2i\pi}U_{\sigma_0,a}, \\
    & U_{\sigma_0,a} = \int_{-a}^a e^{(\sigma_0 + i \tau)t}Z(\sigma_0+i\tau) \dd \tau.
\end{align*}

By Proposition \ref{prop:eigenvalue} and Lemma \ref{lemma:kneq0_continuous}, we apply Residue Theorem 
%\Red{Hi All, is the regularity of resolvent needed? RJ} {\color{blue} The regularity to apply Residue Theorem is guaranteed in Lemma \ref{lemma:holomorphic}.} 
to have
\begin{align*}
    &   U_{\sigma_0,a} = 2\pi i e^{\lambda_n(k) t}\tilde{\mathbf{P}}^n(k) + H_a + U_{-\sigma_0,a}.
\end{align*}
Here
\begin{align*}
    & H_a = \Big(\int_{-\sigma_0+ia}^{\sigma_0+ia} - \int_{-\sigma_0-ia}^{\sigma_0-ia} \Big) e^{\lambda t} Z(\lambda) \dd \lambda. 
\end{align*}
By Lemma \ref{lemma:large_tau_inv}, we have 
\begin{align*}
    &   \lim_{a\to \infty}\Vert H_a\Vert_{L^2_{x_3,v}} = 0. 
\end{align*}

For $U_{-\sigma_0,a}$, we compute that
\begin{align}
 \lim_{a\to \infty}\Vert U_{-\sigma_0,a}\Vert_{L^2_{x_3,v}}   &  \leq \Big\Vert \int_{-\tau_0}^{\tau_0} e^{(-\sigma_0 + i\tau) t} [e^{\hat{B}_n(k)t} - e^{\hat{A}(k)t}] \dd \tau  \Big\Vert_{L^2_{x_3,v}} \label{U_minus_1} \\
    &\quad  +  \Big(\int_{-\infty}^{-\tau_0} + \int_{\tau_0}^\infty \Big) \Big\Vert e^{(-\sigma_0 + i\tau)t}  Z(-\sigma_0 + i\tau)\Big\Vert_{L^2_{x_3,v}}\dd \tau . \label{U_minus_2}
\end{align}
Here we have applied Minkowski inequality in \eqref{U_minus_2}.

By \eqref{inverse_norm}, we derive that
\begin{align*}
    &    \eqref{U_minus_1} \lesssim 2\tau_0 e^{-\sigma_0 t}[\sup_{|\tau|<\tau_0} \Vert (-\sigma_0+i\tau-\hat{B}_n(k))^{-1}\Vert_{L^2_{x_3,v}} + \sup_{|\tau|<\tau_0} \Vert (-\sigma_0 + i\tau - \hat{A}(k))^{-1}\Vert_{L^2_{x_3,v}} ]\lesssim   \frac{\tau_0}{\sigma_0^{3/2}} e^{-\sigma_0 t} .
\end{align*}

For \eqref{U_minus_2}, given $|\tau|>\tau_0$, by Lemma \ref{lemma:large_tau_inv}, we have
\begin{align*}
    &  \Vert G(\lambda)\Vert_{L^2_{x_3,v}} \leq \Vert P_n\Vert_{L^2_{x_3,v}}\Vert (\lambda-\hat{A}(k))^{-1}\Vert_{L^2_{x_3,v}} < \frac{1}{2}, \ \Vert (I-G(\lambda))^{-1}\Vert_{L^2_{x_3,v}} < 2.
\end{align*}
This leads to
\begin{align}
    & \eqref{U_minus_2} \lesssim \Big( \int_{-\infty}^{-\tau_0} + \int_{\tau_0}^\infty \Big) e^{-\sigma_0 t} \Vert Z(-\sigma_0+i\tau) \Vert_{L^2_{x_3,v}} \dd \tau \notag\\
    & \lesssim e^{-\sigma_0 t}\Vert K_n\Vert_{L^2_{x_3,v}} \sup_{|\tau|>\tau_0}\Vert (I-G(-\sigma_0+i\tau))^{-1}\Vert_{L^2_{x_3,v}} \int_{-\infty}^\infty \Vert (-\sigma_0+i\tau- \hat{A}(k))^{-1}\Vert_{L^2_{x_3,v}}^2  \dd \tau \lesssim e^{-\sigma_0 t}. \label{tau_0_integrable}
\end{align}
Here we have applied the definition of $Z$ in \eqref{Z} and the computation in \eqref{integrable}.

We conclude that
\begin{align*}
    &  \lim_{a\to \infty} \Vert U_{-\sigma_0,a}\Vert_{L^2_{x_3,v}} \lesssim e^{-\sigma_0 t}.
\end{align*}

We define $U_{-\sigma_0,\infty} : = \int_{-\infty}^\infty  e^{(-\sigma+i\tau)t}Z(-\sigma_0+i\tau) \dd \tau$. Note that $U_{-\sigma_0,a}$ is a well defined operator for $a<\infty$. We estimate the difference of $U_{-\sigma_0,\infty}$ and $U_{-\sigma_0,a}$ as
\begin{align*}
    & \lim_{a\to \infty}\Vert   U_{-\sigma_0,\infty}-U_{-\sigma_0,a}   \Vert_{L^2_{x_3,v}} = \lim_{a\to \infty}\Big\Vert   \Big(\int_{-\infty}^{-a} + \int_{a}^\infty \Big)e^{(-\sigma_0+i\tau)t}Z(-\sigma_0+i\tau)\dd \tau \Big\Vert_{L^2_{x_3,v}} \\
    & \leq \lim_{a\to \infty} \Big(\int_{-\infty}^{-a} + \int_a^{\infty}\Big) e^{-\sigma_0 t}\Vert Z(-\sigma_0+i\tau)\Vert_{L^2_{x_3,v}} \dd \tau = 0. 
\end{align*}
The limit becomes $0$ due to the integrability in \eqref{tau_0_integrable}. Hence, $U_{-\sigma_0,\infty}$ is a well-defined operator and it satisfies
\begin{align*}
    & \Vert U_{-\sigma_0,\infty}\Vert_{L^2_{x_3,v}} \lesssim e^{-\sigma_0 t}.
\end{align*}

Finally we define 
\begin{align*}
    &\Phi(k) = e^{\hat{A}(k)t}\mathbf{1}_{|k|<\kappa} + U_{-\sigma_0,\infty}  + e^{\hat{B}_n(k)t}\mathbf{1}_{|k|>\kappa}.
\end{align*}
Applying Lemma \ref{lemma:k_lower_bdd} to the last term, we complete the proof from the fact that $c_0\ll \kappa \ll \sigma_0$.   
\end{proof}

\hide

We consider the solution of $e^{\hat{B}t}$, we have the following conclusion to
\begin{align*}
    \p_t \hat{f} + i\bar{v}\cdot k \hat{f} + v_3 \p_{x_3} \hat{f} + \mathcal{L}\hat{f} = 0.
\end{align*}
\begin{lemma}\label{lemma:B_semigroup}
\begin{align*}
    &  \Vert \hat{f}(t)\Vert_{L^2_{x_3,v}} \lesssim e^{-\sigma_0 t}\Vert \hat{f}_0\Vert_{L^2_{x_3,v}} + \mathbf{1}_{|k|<\kappa} e^{\lambda_n(k) t}\Vert \hat{f}_0\Vert_{L^2_{x_3,v}}.
\end{align*}

When $\mathbf{P}_0 \hat{f}_0 = 0$, we have
\begin{align*}
    &     \Vert \hat{f}(t)\Vert_{L^2_{x_3,v}} \lesssim e^{-\sigma_0 t}\Vert \hat{f}_0\Vert_{L^2_{x_3,v}} + \mathbf{1}_{|k|<\kappa} |k|e^{\lambda_n(k) t}\Vert \hat{f}_0\Vert_{L^2_{x_3,v}}.
\end{align*}

\end{lemma}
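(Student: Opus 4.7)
My plan is to derive both bounds as an essentially immediate consequence of the semi-group representation in Proposition \ref{prop:inverse_laplace}, combined with the uniform-in-$n$ bounds on the eigen-projections from Proposition \ref{prop:eigenvalue}(4). Writing the representation applied to $\hat{f}_0$,
\[
e^{\hat{B}_n(k)t}\hat{f}_0 = e^{\lambda_n(k)t}\bigl(\mathbf{P}_0 + |k|\mathbf{P}^n_1(k) + |k|^2 \mathbf{P}^n_2(k)\bigr)\hat{f}_0 \,\mathbf{1}_{|k|<\kappa} + \Phi(k)\hat{f}_0,
\]
with $\|\Phi(k)\hat{f}_0\|_{L^2_{x_3,v}} \lesssim e^{-c_0 t}\|\hat{f}_0\|_{L^2_{x_3,v}}$, I would take the $L^2_{x_3,v}$-norm and apply the triangle inequality to separate the low-frequency projection contribution from the exponentially decaying remainder.

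For the projection part, $\|\mathbf{P}_0\|_{L^2_{x_3,v}\to L^2_{x_3,v}}\leq 1$ is immediate from \eqref{P0_def}, while Proposition \ref{prop:eigenvalue}(4) supplies $\|\mathbf{P}^n_1(k)\|_{L^2_{x_3,v}}+\|\mathbf{P}^n_2(k)\|_{L^2_{x_3,v}}\lesssim 1$ uniformly in $n$ for $|k|<\kappa$. Since $\kappa\ll 1$ the prefactor $(1+|k|+|k|^2)$ is $\mathcal{O}(1)$, and identifying the decay rate in $\Phi$ with $\sigma_0$ yields the first estimate. When $\mathbf{P}_0 \hat{f}_0 = 0$, the leading term $e^{\lambda_n(k)t}\mathbf{P}_0\hat{f}_0$ vanishes and the projection part reduces to
\[
e^{\lambda_n(k)t}|k|\bigl(\mathbf{P}^n_1(k)\hat{f}_0 + |k|\mathbf{P}^n_2(k)\hat{f}_0\bigr)\mathbf{1}_{|k|<\kappa},
\]
so the same projection bounds with $(1+|k|)\lesssim 1$ produce exactly the extra $|k|$ factor in the second statement. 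This is the algebraic manifestation of the fact that the leading eigen-projection is $\mathbf{P}_0$: annihilating it gains one power of $|k|$ in the amplitude, which is precisely the spectral mechanism behind the improved decay highlighted in Remark \ref{rmk:1}.

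The main (non)obstacle I anticipate is bookkeeping: reconciling the generic small constant $\sigma_0$ in the lemma with the specific rate $c_0$ from Proposition \ref{prop:inverse_laplace} (recall $c_0\ll \kappa \ll \sigma_0$ in the earlier sections), and ensuring that $(1+|k|+|k|^2)\lesssim 1$ is applied only on the support of $\mathbf{1}_{|k|<\kappa}$. If the lemma is intended for the non-regularized operator $\hat{B}(k)$ rather than $\hat{B}_n(k)$, an additional Duhamel step is required, writing $\hat{B}-\hat{B}_n = K(I-P_n)$ and exploiting that $\|K(I-P_n)\hat{f}\|_{L^2_{x_3,v}}\to 0$ as $n\to\infty$ (Section \ref{sec:difficulty}); this promotes the $n$-uniform bound on $e^{\hat{B}_n(k)t}$ to one on $e^{\hat{B}(k)t}$ by absorbing an arbitrarily small perturbation, at which point the two claimed inequalities transfer without modification.
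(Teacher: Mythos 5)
Your proposal matches the paper's hidden proof in its essential structure, though the emphasis is inverted: the paper treats the Duhamel perturbation step as the main content, while you treat it as a closing remark. The lemma in fact concerns the solution of the \emph{non}-regularized problem (the paper's preamble to the lemma reads ``We consider the solution of $e^{\hat{B}t}$'' with $\mathcal{L}$, not $\mathcal{L}_n$, in the equation), so the Duhamel rewriting $\p_t\hat f + v\cdot\nabla\hat f + \mathcal L_n \hat f = (\mathcal L_n - \mathcal L)\hat f$ is not an optional addendum but the crux. The paper then applies Proposition~\ref{prop:inverse_laplace} to $e^{\hat B_n(k)t}\hat f_0$ (exactly your main-body argument, including the $\mathbf{P}_0\hat f_0 = 0$ gain of one power of $|k|$) and carries along the source term $\int_0^t\|(K_n-K)\hat f(s)\|_{L^2_{x_3,v}}\dd s$, which is sent to zero as $n\to\infty$ by dominated convergence. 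You also correctly flag the $c_0$-vs-$\sigma_0$ mismatch between the lemma's claimed exponential rate and the rate furnished by Proposition~\ref{prop:inverse_laplace}.

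The one point you should sharpen is the closing phrase ``absorbing an arbitrarily small perturbation.'' The perturbation $\mathcal L_n - \mathcal L = -K(I-P_n)$ is \emph{not} small in operator norm: $\|(I-P_n)\|_{L^2_{x_3,v}\to L^2_{x_3,v}} = 1$ for every $n$, so $\|K(I-P_n)\|$ does not tend to zero and no absorption/smallness iteration is available. What does hold is strong convergence, $\|K(I-P_n)g\|_{L^2_{x_3,v}}\to 0$ for each fixed $g$, which is exactly the fact you cite. The correct mechanism — the one the paper uses — is to fix $t$, note that the Duhamel source satisfies $\|(K_n-K)\hat f(s)\|\to 0$ pointwise in $s$ and is dominated by $2\|K\|\,\|\hat f(s)\|$ (integrable on $[0,t]$), and invoke dominated convergence so that $\int_0^t\|(K_n-K)\hat f(s)\|\dd s\to 0$. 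If you replace ``absorbing'' with ``vanishing in the limit $n\to\infty$ by dominated convergence,'' your argument coincides with the paper's proof.
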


\begin{proof}
We rewrite the equation as
\begin{align*}
    &   \p_t \hat{f} + i\bar{v}\cdot k \hat{f} + v_3 \p_{x_3}\hat{f} + \mathcal{L}_n \hat{f} = \mathcal{L}_n \hat{f} - \mathcal{L}\hat{f}.
\end{align*}
By the Duhammel principle, we further have
\begin{align*}
    &  \hat{f}(t,x_3,v) = e^{\hat{B}_n(k)t}\hat{f}_0  +  \int^t_0 e^{\hat{B}_n(t-s)} (\mathcal{L}_n - \mathcal{L})f(s) \dd s.
\end{align*}

Apply Proposition \ref{prop:inverse_laplace}, we have
\begin{align*}
    &     \Vert \hat{f}(t)\Vert_{L^2_{x_3,v}} \lesssim e^{-\sigma_0 t} \Vert \hat{f}_0\Vert_{L^2_{x_3,v}} + \mathbf{1}_{|k|<\kappa} e^{\lambda_n(k) t}\Vert \hat{f}_0\Vert_{L^2_{x_3,v}} + \int_0^t \Vert K_n \hat{f}(s)- K \hat{f}(s)\Vert_{L^2_{x_3,v}} \dd s .
\end{align*}
Since this holds for any $n$, let $n\to \infty$, by dominating convergence theorem, we conclude that
\begin{align*}
    & \Vert \hat{f}(t)\Vert_{L^2_{x_3,v}} \lesssim e^{-\sigma_0 t}\Vert \hat{f}_0\Vert_{L^2_{x_3,v}} + \mathbf{1}_{|k|<\kappa} e^{\lambda_n(k) t} \Vert \hat{f}_0\Vert_{L^2_{x_3,v}}.
\end{align*}

When $\mathbf{P}_0 \hat{f}_0 = 0$, the only difference lies in $e^{\hat{B}_n(k)t}\hat{f}_0$:
\begin{align*}
    &     \Vert \hat{f}(t)\Vert_{L^2_{x_3,v}} \lesssim e^{-\sigma_0 t} \Vert \hat{f}_0\Vert_{L^2_{x_3,v}} + \mathbf{1}_{|k|<\kappa} e^{\lambda_n(k) t} |k|\Vert \hat{f}_0\Vert_{L^2_{x_3,v}} + \int_0^t \Vert K_n \hat{f}(s)- K \hat{f}(s)\Vert_{L^2_{x_3,v}} \dd s .
\end{align*}
Again, this holds for any $n$, we send $n\to \infty$ and conclude the lemma.

\end{proof}

\unhide

From Proposition \ref{prop:inverse_laplace}, we express $e^{B_n t}$ as 
\begin{align*}
    &  e^{B_nt} = E_1(t) + E_2(t) , \\
    & E_1(t) = \mathcal{F}^{-1} \Big\{ e^{\lambda_n(k) t} (\mathbf{P}_0 + |k|\mathbf{P}^n_1(k) + |k|^2 \mathbf{P}^n_2(k))\mathbf{1}_{|k|<\kappa} \Big\} \mathcal{F}, \\
    & E_2(t) = \mathcal{F}^{-1}\Big\{ \Phi(k) \Big\} \mathcal{F} .
\end{align*}

Recall the norm $\Vert \cdot \Vert_{Z_q}$ and the quantity $\sigma_{q,m}$ defined in \eqref{si-qm} in Section \ref{sec:result}.

\begin{lemma}\label{lemma:linear_decay}
We have the following algebraic decay for $E_1(t)$:
\begin{align*}
    &   \Vert \p_{x_\parallel}^\alpha E_1(t)u\Vert_{L^2_{x,v}}  \lesssim (1+t)^{-\sigma_{q,m}} \Vert \p_{x_\parallel}^{\alpha'} u\Vert_{Z_q}, \\
    & \Vert \p_{x_\parallel}^\alpha E_1(t)(\mathbf{I}-\mathbf{P})u\Vert_{L^2_{x,v}}  \lesssim (1+t)^{-\sigma_{q,m+1}} \Vert \p_{x_\parallel}^{\alpha'} u\Vert_{Z_q}.
\end{align*}
Here $m = |\alpha| - |\alpha'|\geq 0$.

In particular, when $\mathbf{P}_0 u = 0$, we have
\begin{align*}
    & \Vert \p_{x_\parallel}^\alpha E_1(t)u\Vert_{L^2_{x,v}} \lesssim (1+t)^{-\sigma_{q,m+1}}\Vert \p_{x_\parallel}^{\alpha'} u\Vert_{Z_q}.
\end{align*}

And we have the following exponential decay for $E_2(t)$:
\begin{align*}
    &   \Vert E_2(t)u\Vert_{L^2_{x,v}} \lesssim e^{-c_0 t} \Vert u\Vert_{L^2_{x,v}}.
\end{align*}  
\end{lemma}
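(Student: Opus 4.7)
The approach is to apply Parseval's theorem in the horizontal variables $x_\parallel$ and exploit the explicit eigenvalue--eigen-projection structure from Proposition \ref{prop:inverse_laplace}, combined with the uniform-in-$n$ bounds established in Proposition \ref{prop:eigenvalue}. The exponential estimate for $E_2(t)$ is immediate: by Parseval and the pointwise bound $\|\Phi(k)\|_{L^2_{x_3,v}} \lesssim e^{-c_0 t}$,
\begin{equation*}
\|E_2(t) u\|_{L^2_{x,v}}^2 = \int_{\mathbb{R}^2} \|\Phi(k)\hat{u}(k)\|_{L^2_{x_3,v}}^2\, dk \lesssim e^{-2c_0 t}\|u\|_{L^2_{x,v}}^2.
\end{equation*}

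For $E_1(t)$, I first shrink $\kappa$ so that $\mathrm{Re}\,\lambda_n(k) \leq -\tfrac{1}{2}\lambda^* |k|^2$ on $|k| < \kappa$ (uniformly in $n$, since $\lambda_n^* \to \lambda^* > 0$ and $|C_\lambda^n|\leq C_\lambda$), and use $\|\mathbf{P}_0\| + \|\mathbf{P}_1^n(k)\| + \|\mathbf{P}_2^n(k)\| \lesssim 1$. Parseval then reduces the first claim to
\begin{equation*}
\|\partial_{x_\parallel}^\alpha E_1(t) u\|_{L^2_{x,v}}^2 \lesssim \int_{|k| < \kappa} |k|^{2|\alpha|} e^{-c|k|^2 t}\, \|\hat{u}(k)\|_{L^2_{x_3,v}}^2\, dk.
\end{equation*}
Split $|k|^{2|\alpha|} = |k|^{2|\alpha'|} |k|^{2m}$ with $m = |\alpha|-|\alpha'|$, set $g(k) := \|\widehat{\partial_{x_\parallel}^{\alpha'} u}(k)\|_{L^2_{x_3,v}}$, and use the following inequality valid for $1 \leq q \leq 2$: $\|g\|_{L^{q'}_k} \leq \|\partial_{x_\parallel}^{\alpha'} u\|_{Z_q}$. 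Indeed, at fixed $(x_3, v)$, Hausdorff--Young in $x_\parallel$ gives $\|\hat{u}(\cdot, x_3, v)\|_{L^{q'}_k} \leq \|u(\cdot, x_3, v)\|_{L^q_{x_\parallel}}$, and Minkowski's integral inequality (valid since $q' \geq 2$) interchanges $L^{q'}_k$ and $L^2_{x_3,v}$. Applying Hölder in $k$ with $1 = 2/q' + 1/r$, i.e.\ $1/r = 2/q - 1$, then gives
\begin{equation*}
\int_{|k|<\kappa} |k|^{2m} e^{-c|k|^2 t} g(k)^2\, dk \leq \Bigl(\int_{\mathbb{R}^2} |k|^{2mr} e^{-cr|k|^2 t}\, dk\Bigr)^{1/r} \|g\|_{L^{q'}_k}^2 \lesssim t^{-(m+1/r)} \|\partial_{x_\parallel}^{\alpha'} u\|_{Z_q}^2,
\end{equation*}
and the identity $m + 1/r = 2\sigma_{q,m}$ combined with the trivial short-time bound from the compact support $|k| < \kappa$ produces the desired $(1+t)^{-2\sigma_{q,m}}$.

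The two improved-decay variants reduce to the same calculation with $m$ replaced by $m + 1$, because the leading $\mathbf{P}_0$ term in the eigen-projection $\tilde{\mathbf{P}}^n(k) = \mathbf{P}_0 + |k|\mathbf{P}_1^n(k) + |k|^2\mathbf{P}_2^n(k)$ annihilates the input in each case: $\mathbf{P}_0(\mathbf{I}-\mathbf{P})u = 0$ since $(\mathbf{I}-\mathbf{P}) u$ is orthogonal to $\sqrt{\mu}$ in $v$, and the hypothesis $\mathbf{P}_0 u \equiv 0$ passes through the horizontal Fourier transform to $\mathbf{P}_0\hat{u}(k) = 0$ for every $k$. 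In both situations only $|k|\mathbf{P}_1^n(k) + |k|^2\mathbf{P}_2^n(k)$ survives, contributing an extra factor $|k|$ in the integrand, which upgrades $m$ to $m+1$ in the Hölder estimate and yields $(1+t)^{-\sigma_{q,m+1}}$.

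The main technical point that requires care is the inequality $\|g\|_{L^{q'}_k} \leq \|u\|_{Z_q}$, because it forces the constraint $q \leq 2$ through the combined use of Hausdorff--Young and Minkowski's integral inequality; once this step is in place the remainder is standard exponent arithmetic for the heat-kernel integral, with uniformity in $n$ guaranteed by the uniform-in-$n$ bounds of Proposition \ref{prop:eigenvalue}.
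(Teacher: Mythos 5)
Your proposal is correct and carries out exactly the standard Plancherel/Hausdorff--Young/H\"older argument that the paper outsources to Theorem 2.2.15 of Ukai--Yang, so the approach is the same. One small imprecision worth fixing: after Parseval the quantity you reduce to should read $\int_{|k|<\kappa} |k^\alpha|^2 e^{-c|k|^2 t}\|\hat{u}(k)\|_{L^2_{x_3,v}}^2\,\dd k$ rather than $\int |k|^{2|\alpha|}(\cdots)$, because the monomial bound goes the wrong way if you pass to $|k|^{2|\alpha|}$ first --- you then need $|k|^{2|\alpha'|}\|\hat u\|^2 \leq g(k)^2 = |k^{\alpha'}|^2\|\hat u\|^2$, which is false since $|k^{\alpha'}|\leq |k|^{|\alpha'|}$. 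The correct chain uses $|k^\alpha|^2 = |k^{\alpha'}|^2\,|k^{\alpha-\alpha'}|^2 \leq |k^{\alpha'}|^2\,|k|^{2m}$ (which implicitly requires $\alpha\geq\alpha'$ componentwise, the standard convention for such statements), giving directly $\int|k|^{2m}e^{-c|k|^2 t}g(k)^2\,\dd k$ with $g(k)=\|\widehat{\p_{x_\parallel}^{\alpha'}u}(k)\|_{L^2_{x_3,v}}$; the rest of your H\"older computation and the $m+1/r=2\sigma_{q,m}$ bookkeeping are then exactly right.
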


\begin{proof}
The proof is standard by integrating over the frequency variable $k$. The extra decay to $(\mathbf{I}-\mathbf{P})u
$ comes from $\mathbf{P}_0[(\mathbf{I}-\mathbf{P})u] = 0$. We refer to the detailed proof in Theorem 2.2.15 of \cite{ukai2006mathematical}.
\end{proof}

Now we analyze the asymptotic behavior of the semi-group $e^{Bt}$, which corresponds to the
solution of the linearized Boltzmann equation.
\begin{align}\label{linear_f}
\begin{cases}
     &\dis \p_t f + v\cdot \nabla_x f + \mathcal{L}f = 0, \\
    &\dis  f(t,x,v)|_{\gamma_-} = P_\gamma f, \\   
    &\dis  f(0,x,v) = f_0(x,v) := (F(0,x,v)-\mu)/\sqrt{\mu}. 
\end{cases}
\end{align}

\begin{proposition}\label{prop.Bdecay}
Let $m=|\alpha| - |\alpha'|\geq 0$. It holds that
\begin{align*}
    & \Vert  \p_{x_\parallel}^\alpha e^{Bt} u\Vert_{L^2_{x,v}} \lesssim (1+t)^{-\sigma_{q,m}}\Vert \p_{x_\parallel}^{\alpha'} u\Vert_{Z_q} + e^{-c_0 t} \Vert  \p_{x_\parallel}^\alpha u\Vert_{L^2_{x,v}} , \\
    & \Vert \p_{x_\parallel}^\alpha e^{Bt}(\mathbf{I}-\mathbf{P})u\Vert_{L^2_{x,v}}  \lesssim (1+t)^{-\sigma_{q,m+1}} \Vert \p_{x_\parallel}^{\alpha'} u\Vert_{Z_q} + e^{-c_0 t} \Vert  \p_{x_\parallel}^\alpha u\Vert_{L^2_{x,v}} .
\end{align*}
In particular, when $\mathbf{P}_0 u = 0$, we have
\begin{align*}
    & \Vert \p_{x_\parallel}^\alpha e^{Bt}u\Vert_{L^2_{x,v}} \lesssim (1+t)^{-\sigma_{q,m+1}}\Vert \p_{x_\parallel}^{\alpha'} u\Vert_{Z_q} + e^{-c_0 t} \Vert  \p_{x_\parallel}^\alpha u\Vert_{L^2_{x,v}} .
\end{align*}
\end{proposition}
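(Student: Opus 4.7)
The idea is to transfer the decay estimates of Lemma \ref{lemma:linear_decay} from $e^{B_nt}$ to $e^{Bt}$ by a Duhamel comparison followed by a passage to the limit $n\to\infty$. Since $B-B_n = K-K_n = K(I-P_n)$ is a bounded perturbation on $L^2(\Omega\times\mathbb{R}^3_v)$, the standard Duhamel identity
\[
e^{Bt}u = e^{B_nt}u + \int_0^t e^{B_n(t-s)}K(I-P_n)e^{Bs}u\,\dd s
\]
holds, and because none of the operators $B$, $B_n$, $K$, $P_n$, or the Fourier multipliers defining $E_1, E_2$ involves the horizontal variable $x_\parallel$, the derivative $\p_{x_\parallel}^\alpha$ commutes through the entire identity.

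I would apply $\p_{x_\parallel}^\alpha$ and take the $L^2_{x,v}$-norm of both sides. The first term on the right is dominated by the desired right-hand side uniformly in $n$ by Lemma \ref{lemma:linear_decay} through the decomposition $e^{B_nt} = E_1(t)+E_2(t)$. For the integral, the special case $q=2$, $\alpha=\alpha'=0$ of Lemma \ref{lemma:linear_decay} yields $\|e^{B_n\tau}\|_{L^2_{x,v}\to L^2_{x,v}}\leq C$ uniformly in $\tau\geq 0$ and in $n$, so the integral is bounded by
\[
C\|K\|\int_0^t \|(I-P_n)\p_{x_\parallel}^\alpha e^{Bs}u\|_{L^2_{x,v}}\,\dd s.
\]
Since $P_n g \to g$ in $L^2((-1,1)\times\mathbb{R}^3_v)$ for every $g\in L^2$ (as Fourier series truncation in $x_3$), and since $s\mapsto \|\p_{x_\parallel}^\alpha e^{Bs}u\|_{L^2_{x,v}}$ is locally bounded on $[0,t]$ (because $B$ generates a strongly continuous semigroup on $L^2$ commuting with $\p_{x_\parallel}^\alpha$), dominated convergence drives this integral to $0$ as $n\to\infty$. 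Passing to $\liminf_{n\to\infty}$ and invoking Lemma \ref{lemma:linear_decay} then yields the first inequality.

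The second inequality, for $(\mathbf{I}-\mathbf{P})u$, follows by the same scheme but exploits the algebraic identity $\mathbf{P}_0(\mathbf{I}-\mathbf{P})u = 0$ (since $\mathbf{P}_0$ reads off the $\sqrt{\mu}$-coefficient in $v$, which is pointwise orthogonal to $(\mathbf{I}-\mathbf{P})u$): this places us in the improved case of Lemma \ref{lemma:linear_decay} and produces the faster rate $(1+t)^{-\sigma_{q,m+1}}$, while the exponential remainder $e^{-c_0 t}\|\p_{x_\parallel}^\alpha(\mathbf{I}-\mathbf{P})u\|_{L^2}$ is absorbed into $e^{-c_0 t}\|\p_{x_\parallel}^\alpha u\|_{L^2}$ by $L^2$-boundedness of $\mathbf{I}-\mathbf{P}$. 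The third inequality is immediate from the same Duhamel-and-liminf argument under the hypothesis $\mathbf{P}_0u = 0$, using the third bound of Lemma \ref{lemma:linear_decay}. The main (though rather mild) obstacle lies in justifying the vanishing of the remainder integral in $n$; this rests on two ingredients: the strong convergence $P_n\to I$ on $L^2((-1,1)\times\mathbb{R}^3_v)$ and a uniform-in-$n$ operator bound on $e^{B_n\tau}$ over compact time intervals, both of which are already in hand.
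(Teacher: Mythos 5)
Your argument is correct and follows essentially the same route as the paper: both use the Duhamel identity $e^{Bt}u=e^{B_nt}u+\int_0^t e^{B_n(t-s)}K(I-P_n)e^{Bs}u\,\dd s$, apply Lemma~\ref{lemma:linear_decay} to the leading term uniformly in $n$, and kill the remainder integral by dominated convergence as $n\to\infty$ using $P_n\to I$ strongly on $L^2$. The only (immaterial) difference is that you bound $e^{B_n(t-s)}$ via the uniform $L^2$ estimate from Lemma~\ref{lemma:linear_decay} with $q=2$, whereas the paper uses the cruder bound $\|e^{B_n\tau}\|\lesssim e^{(-\nu_0+\|K\|)\tau}$; both suffice for the dominated convergence over the compact interval $[0,t]$.
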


\begin{proof}
Since $B=A-K$, where $K$ is bounded operator on $L^2 (\Omega\times \R^3_v)$, we have, for any $t<\infty$, 
\begin{align*}
    &   \Vert e^{Bt}f_0\Vert_{L^2_{x,v}} \lesssim e^{(-\nu_0 + \Vert K\Vert_{L^2_{x,v}})t} \Vert f_0\Vert_{L^2_{x,v}}<\infty.
\end{align*}

Then for $e^{Bt}f_0$, we rewrite the corresponding equation in \eqref{linear_f} as 
\begin{align*}
    &   \p_t f + v\cdot \nabla_x f + \mathcal{L}_n f = \mathcal{L}_n f- \mathcal{L}f, \ f|_{\gamma_-} = P_\gamma f, \  \ f(0,x,v) = f_0(x,v).
\end{align*}
By the Duhammel principle, we further have
\begin{align*}
    & f(t,x,v) = e^{B_n t} f_0 + \int^t_0 e^{B_n(t-s)}(\mathcal{L}_n - \mathcal{L})f(s)\dd s.
\end{align*}
Then applying Lemma \ref{lemma:linear_decay} we have
\begin{align*}
    & \Vert f(t)\Vert_{L^2_{x,v}} \lesssim (1+t)^{-\sigma_{q,0}} \Vert f_0\Vert_{Z_q} + e^{-c_0 t} \Vert f_0\Vert_{L^2_{x,v}} + \int_0^t e^{(-\nu_0 + \Vert K_n\Vert_{L^2_{x,v}})(t-s)}\Vert (\mathcal{L}_n - \mathcal{L})f(s)\Vert_{L^2_{x,v}} \dd s \\
    & \lesssim  (1+t)^{-\sigma_{q,0}} \Vert f_0\Vert_{Z_q} + e^{-c_0 t} \Vert f_0\Vert_{L^2_{x,v}} + \int_0^t e^{(-\nu_0 + \Vert K\Vert_{L^2_{x,v}})(t-s)}\Vert (\mathcal{L}_n - \mathcal{L})f(s)\Vert_{L^2_{x,v}} \dd s. 
\end{align*}
Since this holds for any $n$, letting $n\to \infty$, by the dominated convergence theorem, we have
\begin{align*}
    & \lim_{n\to \infty} \int_0^t e^{(-\nu_0 + \Vert K\Vert_{L^2_{x,v}})(t-s)}\Vert (\mathcal{L}_n - \mathcal{L})f(s)\Vert_{L^2_{x,v}} \dd s = \lim_{n\to \infty}\int_0^t e^{(-\nu_0 + \Vert K\Vert_{L^2_{x,v}})(t-s)} \Vert (K_n-K)f(s) \Vert_{L^2_{x,v}} \dd s  \\
    & =\int_0^t e^{(-\nu_0 + \Vert K\Vert_{L^2_{x,v}})(t-s)}\lim_{n\to \infty} \Vert (K_n-K)f(s)\Vert_{L^2_{x,v}} \dd s= 0. 
\end{align*}
We conclude the lemma for $\alpha = \alpha' = 0$. The proof for $\alpha-\alpha'\neq 0$ is similar.
\end{proof}

\section{Nonlinear estimate}\label{sec:nonlinear}

In this section, we combine the linear spectral analysis from Section \ref{sec:linear} with an $L^2–L^\infty$ bootstrap argument to analyze the nonlinear problem \eqref{nonlinear_f}, thereby proving both Theorem \ref{thm:asymptotic_stability} and Theorem \ref{thm:leading_behavior}. In Section \ref{sec:asymptotic_infty}, we establish the linear decay rate in the norm $\Vert \cdot \Vert_{H^{\theta,\ell}}$(see Proposition \ref{prop:linfty_decay_rate}) introduced in Section \ref{sec:result}. This linear decay rate is then applied in Section \ref{sec:nonliear_asym} to study the nonlinear problem \eqref{nonlinear_f}, leading to the proof of Theorem \ref{thm:asymptotic_stability}. Finally, in Section \ref{sec:leading}, we derive the leading asymptotic profile of the solution to the nonlinear problem and complete the proof of Theorem \ref{thm:leading_behavior}.

\subsection{Asymptotic behavior under the norm  $\Vert \cdot \Vert_{H^{\theta,\ell}}$}\label{sec:asymptotic_infty}
We recall the norms $\Vert \cdot \Vert_{H^{\theta,\ell}}$, $\Vert \cdot \Vert_{X^{\theta,q}}$,  and $\Vert \cdot \Vert_{\dot{H}^{\theta,\ell}}$ as well as the polynomial rate index $\sigma_{q,\ell}$ in Section \ref{sec:result}.

\begin{proposition}\label{prop:linfty_decay_rate}
Let the initial condition satisfy
\begin{align*}
    & \Vert f_0\Vert_{H^{\theta,\ell}} + \Vert f_0\Vert_{X^{\theta,q}} < \infty.
\end{align*}
Then there exists a unique solution to the linear problem \eqref{linear_f} such that
\begin{align*}
    &   \Vert f(t)\Vert_{\dot{H}^{\theta,\ell}} \lesssim e^{-c_0 t} \Vert f_0\Vert_{\dot{H}^{\theta,\ell}} + (1+t)^{-\sigma_{q,\ell}} \Vert f_0\Vert_{X^{\theta,q} } ,
\end{align*}
and thus
\begin{align*}
    & \Vert f(t)\Vert_{H^{\theta,\ell}} \lesssim e^{-c_0 t} \Vert f_0\Vert_{H^{\theta,\ell}} + (1+t)^{-\sigma_{q,0}}  \Vert f_0\Vert_{X^{\theta,q} }.
\end{align*}

If we further assume $\mathbf{P}_0f_0 = 0$, then the unique solution further satisfies
\begin{align*}
    &   \Vert f(t)\Vert_{\dot{H}^{\theta,\ell}} \lesssim e^{-c_0 t}\Vert f_0\Vert_{\dot{H}^{\theta,\ell}} + (1+t)^{-\sigma_{q,\ell+1}} \Vert f_0\Vert_{X^{\theta,q} },
\end{align*}
and thus
\begin{align*}
    &   \Vert f(t)\Vert_{H^{\theta,\ell}} \lesssim e^{-c_0 t}\Vert f_0\Vert_{H^{\theta,\ell}} + (1+t)^{-\sigma_{q,1}} \Vert f_0\Vert_{X^{\theta,q} }.
\end{align*}
\end{proposition}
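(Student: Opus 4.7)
The plan is to run a Guo-type $L^2$--$L^\infty$ bootstrap for the weighted unknown $h = wf$ with $w(v) = e^{\theta|v|^2}$, feeding in the $L^2$ decay of $e^{Bt}$ already established in Proposition \ref{prop.Bdecay}. Under the substitution $h = wf$, equation \eqref{linear_f} becomes
\[
\partial_t h + v\cdot\nabla_x h + \nu(v)\, h = K_w h, \qquad K_w := w\,K\,w^{-1},
\]
whose integral kernel $\mathbf{k}_\theta$ satisfies the smallness and integrability of Lemma \ref{lemma:k_theta}, together with a diffuse boundary condition of the same $P_\gamma$-type (with $\sqrt{\mu}$ replaced by $w\sqrt{\mu}$ and $w^{-1}\sqrt{\mu}$). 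Because the horizontal derivative $\partial_{x_\parallel}^\alpha$ commutes with $v\cdot\nabla_x$, with $\nu(v)$, with $K_w$, and with $P_\gamma$ (all acting only in $v$ or being translation-invariant in $x_\parallel$), the function $h_\alpha := \partial_{x_\parallel}^\alpha h$ solves the same linear system as $h$. It thus suffices to prove, for each $0\le m\le\ell$ and every $|\alpha|=m$, the estimate
\[
\sup_{x_3,v}\Vert h_\alpha(t,\cdot,x_3,v)\Vert_{L^2_{x_\parallel}} \lesssim e^{-c_0 t}\,\Vert f_0\Vert_{\dot H^{\theta,m}} + (1+t)^{-\sigma_{q,m}}\,\Vert f_0\Vert_{X^{\theta,q}},
\]
from which the two stated inequalities follow by taking $m=\ell$ and $m=0$ respectively (the second statement uses $\sigma_{q,m+1}$ after summation).

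Next I would unfold $h_\alpha$ along the backward stochastic cycles of Definition \ref{def:sto_cycle}. The key geometric fact, already exploited in \cite{chen2025global}, is that these cycles (alternating free transport in $(x_3,v)$ with diffuse reflections at $x_3 = \pm1$) depend only on $(x_3,v)$; the horizontal coordinate merely undergoes a cumulative translation $x_\parallel\mapsto x_\parallel - (\text{drift})$. After $k_0$ iterations, the residual measure of trajectories not having reached $t=0$ is $\lesssim e^{-\delta t}$ as in \cite{G,EGKM}, producing an absorbable $e^{-c_0 t}\sup_{x_3,v}\Vert h_\alpha(t)\Vert_{L^2_{x_\parallel}}$ remainder. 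The surviving contributions split into an initial-data piece of the form $e^{-\nu_0 (t - t_{k_0})} h_\alpha(0, x_\parallel-\text{shift}, x_3^0, v_{k_0})$ and a $K_w$-driven integral in $\mathbf{k}_\theta(v_j,v')\,h_\alpha(s, x_\parallel-\text{shift}, x_3', v')$. Taking the $L^2_{x_\parallel}$ norm and applying Minkowski's inequality moves the norm inside each cycle integral, and translation invariance in $x_\parallel$ shows that the initial term is dominated by $e^{-\nu_0 t/2}\Vert f_0\Vert_{\dot H^{\theta,m}}$, while the $K_w$-term reduces to bounding $\int_0^t\!\!\int \mathbf{k}_\theta(v,v')\,\Vert h_\alpha(s,\cdot,x_3',v')\Vert_{L^2_{x_\parallel}}\,dv'\,ds$ pointwise in $(x_3,v)$.

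For the remaining $K_w$-integral I would follow the standard $L^2$--$L^\infty$ device: decompose velocities into a grazing or large-$|v|$ region where Lemma \ref{lemma:k_theta} gives direct smallness $\lesssim o(1)$, and a bounded regular region where a change of variable from $v'$ to a spatial variable along the characteristic line (as in \cite{G}) converts the pointwise-in-$x_3$ norm into the full $L^2_{x,v}$ norm of $f$, hence of $\partial_{x_\parallel}^\alpha f$. After Minkowski, this bounds the $K_w$-term by
\[
C\sup_{0\le s\le t}(1+s)^{\sigma_{q,m}}\Vert \partial_{x_\parallel}^\alpha f(s)\Vert_{L^2_{x,v}}\cdot(1+t)^{-\sigma_{q,m}}.
\]
Plugging in Proposition \ref{prop.Bdecay} with derivative index $|\alpha|=m$ on the left and $|\alpha'|=0$ on the right, together with the elementary embeddings $\Vert f_0\Vert_{Z_q}\lesssim\Vert f_0\Vert_{X^{\theta,q}}$ and $\Vert \partial_{x_\parallel}^\alpha f_0\Vert_{L^2_{x,v}}\lesssim\Vert f_0\Vert_{H^{\theta,\ell}}$ (valid because $\theta<1/4$ makes $w^{-1}\in L^2_v$ and $x_3\in(-1,1)$ is bounded), the bootstrap closes at the claimed rate. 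The improved decay under $\mathbf{P}_0 f_0=0$ is automatic: since $\mathbf{P}_0$ commutes with $\partial_{x_\parallel}^\alpha$ one has $\mathbf{P}_0(\partial_{x_\parallel}^\alpha f_0)=0$, and Proposition \ref{prop.Bdecay} then delivers the improved exponent $\sigma_{q,m+1}$.

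The main technical obstacle I expect is the passage from the purely pointwise $L^\infty_{x,v}$ bootstrap of \cite{G,EGKM} to the anisotropic $L^\infty_{x_3,v}L^2_{x_\parallel}$ framework used here. This forces every Minkowski exchange inside the stochastic-cycle expansion to preserve the $L^2_{x_\parallel}$ factor, and every change of variables in the $K_w$-term to act only on $(x_3, v)$; the required check that the drifts $x_\parallel\mapsto x_\parallel - (\text{drift})$ are mere translations, hence isometries on $L^2_{x_\parallel}$, underpins the entire reduction. Existence and uniqueness of $f$ are then standard consequences of the $L^2$ energy estimate for \eqref{linear_f}, using Lemma \ref{lemma:Ln} and the trace theory reviewed in Section \ref{sec:eAt_def}.
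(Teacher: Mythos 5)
Your proposal is correct and follows essentially the same route as the paper: both reduce the $\dot H^{\theta,\ell}$ estimate to an $L^\infty_{x_3,v}L^2_{x_\parallel}$ bound on $wf$ (or its horizontal derivatives), unfold along stochastic cycles (Definition~\ref{def:sto_cycle}), use that cycles depend only on $(x_3,v)$ so Minkowski pushes the $L^2_{x_\parallel}$ norm inside, split the $K$-contribution via the double-Duhamel iteration into grazing/large-velocity regions (handled by Lemma~\ref{lemma:k_theta}) and a regular region converted by the characteristic change of variable $u_3\mapsto y$ into $\|\partial_{x_\parallel}^\alpha f\|_{L^2_{x,v}}$, and finally inject the $L^2$ decay of Proposition~\ref{prop.Bdecay}. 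The only wording to tighten is the treatment of the long-cycle remainder: Lemma~\ref{lemma:tk_2d} produces a constant factor $(1/2)^{C_2T_0^{5/4}}$ (not directly $e^{-\delta t}$), and its absorption is via the $o(1)(1+t)^{-\sigma_{q,m}}\sup_s(1+s)^{\sigma_{q,m}}\|f(s)\|$ structure followed by iteration in blocks of length $T_0$ (as in Lemma~\ref{lemma:bdr} and the end of the paper's proof), rather than by an immediate $e^{-c_0 t}$ prefactor, but this is a matter of bookkeeping, not a gap.
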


To prove the proposition above, we will apply the $L^2_{x_3,v}-L^\infty_{x_3,v}$ bootstrap argument. We use standard notations for the backward exit time and backward exit position:
\begin{equation*}%\label{xb_tb}
\begin{split}
\tb(x,v) :   &  = \sup\{s\geq 0, x-sv \in \O\} , \\
  \xb(x,v)  : & = x - \tb(x,v)v.
\end{split}
\end{equation*}

We denote $t_0=t$ to be a fixed starting time. First, we define the stochastic cycle as follows.

\begin{definition}\label{def:sto_cycle}
We define the stochastic cycles as $(x^0,v^0)= (x,v) \in \bar{\O} \times \R^3$ and inductively
\begin{align}
&x^1:= \xb(x,v), \   v^1 \in \mathcal{V}_1:=\{v^1\in \mathbb{R}^3: v^1_3\times \text{sign}(x^1_3)>0\} , \notag\\
&  v^{k}\in \mathcal{V}_k:= \{v^{k}\in \mathbb{R}^3: v^k_3 \times \text{sign}(x^k_3)>0\}, \ \ \text{for} \  k \geq 1,
\notag
\\
 &x^{k+1} := \xb(x^k, v^k) , \ \tb^{k}:= \tb(x^k,v^k) \ \ \text{for} \  v^k \in \mathcal{V}_k, \notag
\end{align}
\begin{equation*}
t^k =  t_0-  \{ \tb + \tb^1 + \cdots + \tb^{k-1}\},  \ \ \text{for} \  k \geq 1.    
\end{equation*}

\end{definition}

With the stochastic cycles defined above, we apply the method of characteristics to have
\begin{align}
 f(t,x,v) &   =  \mathbf{1}_{t^1\leq 0}  e^{- \nu(v) t} f_0(x-t v, v) \label{initial}\\
  & + \mathbf{1}_{t^1 \leq 0} \int_0^{t} e^{-\nu(v) (t-s)}   \int_{\mathbb{R}^3}   f(s,x-(t-s)v,u) \mathbf{k}(v,u) \dd u \dd s \label{K_0}\\
  & +\mathbf{1}_{t^1>0} \int^{t}_{t^1} e^{-\nu(v) (t - s)}    \int_{\mathbb{R}^3}  f(s,x-(t-s)v,u) \mathbf{k}(v,u) \dd u \dd s  \label{K_1} \\
  & + \mathbf{1}_{t^1>0} e^{-\nu(v) (t - t^1)} f(t^1,x^1,v), \label{f_bdr}
\end{align}
where the contribution of the boundary is bounded as
\begin{align}
   & |\eqref{f_bdr}|\leq   e^{-\nu(v) (t-t^1)}  \sqrt{\mu(v)}  \notag\\
   & \times \int_{\prod_{j=1}^{k-1}\mathcal{V}_j} \bigg\{ \sum_{i=1}^{k-1}\mathbf{1}_{t^{i+1}\leq 0 < t^i} e^{-\nu(v^i) t^i} w(v^i)|f(0, x^i - t^i v^i, v^i)| \dd \Sigma_i     \label{bdr_initial_2d}\\
  & + \mathbf{1}_{t^k>0}  w(v^{k-1})|f(t^k,x^k,v^{k-1})| \dd \Sigma_{k-1} \label{bdr_tk_2d}\\
  & + \sum_{i=1}^{k-1} \mathbf{1}_{t^{i+1}\leq 0 < t^i}  \int^{t^i}_0 e^{-\nu(v^i) (t^i-s)}   w(v^i)  \int_{\mathbb{R}^3}   |f(s,x^i-(t^i-s)v^i, u)| \mathbf{k}(v^i,u) \dd u \dd s \dd \Sigma_i   \label{bdr_K_0_2d} \\
  & + \sum_{i=1}^{k-1} \mathbf{1}_{t^{i+1}>0} \int_{t^{i+1}}^{t^i} e^{-\nu(v^i) (t^i-s)}  w(v^i)
  \int_{\mathbb{R}^3} \mathbf{k}(v^i,u) |f(s,x^i-(t^i-s)v^i, u)| \dd u \dd s \dd \Sigma_i   \label{bdr_K_i_2d} .
\end{align}
Here $\dd \Sigma_i$ is defined as
\begin{equation}
\dd \Sigma_i = \Big\{\prod_{j=i+1}^{k-1} \dd \sigma_j \Big\}  \times  \Big\{  \frac{1}{w(v^i)\sqrt{\mu(v^i)}}   \dd \sigma_i \Big\} \times \Big\{\prod_{j=1}^{i-1}  e^{-\nu(v^j)(t^j-t^{j+1})}   \dd \sigma_j \Big\} , \label{Sigma_2d}
\end{equation}
where $\dd \sigma_i$ is a probability measure in $\mathcal{V}_i$ given by
\begin{equation}
\dd \sigma_i =  \sqrt{2\pi}\mu(v^i) |v^i_3|\dd v^i. \label{d_sigma_i_2d}
\end{equation}

Note that \eqref{bdr_tk_2d} corresponds to the scenario that the backward trajectory interacts with the diffuse boundary portion a large number of times. This term is controlled by the following lemma.

\begin{lemma}[\cite{G}]\label{lemma:tk_2d}
For $T_0>0$ sufficiently large, there exist constants $C_1,C_2>0$ independent of $T_0$ such that for $k = C_1 T_0^{5/4}$ and $(t^0,x^0,v^0) = (t,x,v)\in [0,T_0]\times \bar{\O}\times \mathbb{R}^3$,
\begin{equation*}%\label{tk_small}
\int_{\prod_{j=1}^{k-1} \mathcal{V}_j}\mathbf{1}_{t^k>0}  \prod_{j=1}^{k-1} \dd \sigma_j \leq \Big( \frac{1}{2}\Big)^{C_2 T_0^{5/4}}.
\end{equation*}

\end{lemma}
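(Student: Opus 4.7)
The strategy is the classical small-bounce-time argument of Guo: split each $\mathcal{V}_j$ into a ``good'' set on which $\tb^j$ is bounded below and a ``bad'' set of small $\sigma_j$-measure, then show that $\{t^k>0\}$ forces many $v^j$ to lie in the bad set.

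Fix $N=T_0^{1/4}$ and set $\mathcal{G}_j:=\{v^j\in\mathcal{V}_j:|v^j|\le N\}$, $\mathcal{B}_j:=\mathcal{V}_j\setminus\mathcal{G}_j$. Since the slab $\R^2\times(-1,1)$ has width $2$ in $x_3$, a non-grazing backward trajectory from $x^j\in\p\Omega$ with velocity $v^j$ traverses the slab in time $\tb^j=2/|v_3^j|\ge 2/N$ for $v^j\in\mathcal{G}_j$, while the Gaussian tails of $\mu$ yield
\[
\sigma_j(\mathcal{B}_j)=\int_{|v^j|>N,\,v^j\in\mathcal{V}_j}\sqrt{2\pi}\,\mu(v^j)|v_3^j|\,\dd v^j\le Ce^{-N^2/4}=Ce^{-T_0^{1/2}/4},
\]
uniformly over which half-space $\mathcal{V}_j$ actually is.

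Next I argue by contraposition. Telescoping $t^k=t_0-\{\tb+\tb^1+\cdots+\tb^{k-1}\}$ shows that $t^k>0$ forces $\sum_{j=1}^{k-1}\tb^j\le t\le T_0$. Letting $m:=\#\{1\le j\le k-1:v^j\in\mathcal{G}_j\}$, restricting the sum to good indices gives $2m/N\le T_0$, hence $m\le T_0 N/2=T_0^{5/4}/2$. Choosing $C_1=1$, the number of bad indices is therefore at least
\[
(k-1)-m\ge C_1 T_0^{5/4}-1-\tfrac12 T_0^{5/4}\ge \tfrac14 T_0^{5/4}=:M
\]
once $T_0$ is large enough. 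Thus $\{t^k>0\}\subset\bigcup_{|S|=M}\bigcap_{j\in S}\{v^j\in\mathcal{B}_j\}$, and iterating Fubini over $j=1,\ldots,k-1$ -- valid because every $\sigma_j$ is a probability measure on $\mathcal{V}_j$ and the bad-measure bound above is history-independent -- produces
\[
\int_{\prod_j\mathcal{V}_j}\mathbf{1}_{t^k>0}\prod_j \dd\sigma_j\le \binom{k-1}{M}\bigl(Ce^{-T_0^{1/2}/4}\bigr)^M\le 2^{C_1T_0^{5/4}}C^M e^{-MT_0^{1/2}/4}.
\]
Since $MT_0^{1/2}\sim T_0^{7/4}$ dominates all the $T_0^{5/4}$-scale factors in the exponent, the bound becomes $\le(1/2)^{C_2T_0^{5/4}}$ for any prescribed $C_2>0$ provided $T_0$ is taken sufficiently large.

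The main subtlety is the coupling: $\mathcal{V}_j$ depends, through the spatial cycle $x^j$, on $(v^1,\dots,v^{j-1})$, so $\prod_j\dd\sigma_j$ is not a true product measure and one cannot invoke independence directly. What rescues the argument -- and is special to the slab geometry -- is that both quantitative inputs (the lower bound $\tb^j\ge 2/N$ on $\mathcal{G}_j$ and the Gaussian estimate for $\sigma_j(\mathcal{B}_j)$) depend only on $v_3^j$ and are insensitive to the plate $x^j$ currently sits on; this history-independence is precisely what allows the Fubini iteration to replace independence in the combinatorial counting.
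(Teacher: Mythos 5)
Your proof is correct and follows the same small-bounce-time combinatorial strategy as the cited reference \cite{G} (Guo's Lemma~23), which the paper invokes without reproducing: split velocities into a good set with $\tb^j$ bounded below and a $\sigma_j$-small bad set, note that $\{t^k>0\}$ forces $\gtrsim T_0^{5/4}$ bad bounces when $k=C_1T_0^{5/4}$, and close via the union bound over index sets and iterated Fubini. The one adaptation you make -- dropping the non-grazing condition $v^j\cdot n(x^j)\ge\delta$ because in the slab $\tb^j=2/|v^j_3|$ exactly, so only $|v^j|$ large threatens the lower bound -- is exactly the simplification the slab geometry affords, and the exponents $N\sim T_0^{1/4}$, $k\sim T_0^{5/4}$ match Guo's choices.
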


First, we give an estimate to the boundary term \eqref{f_bdr}.

\begin{lemma}\label{lemma:bdr}
Let $t\leq T_0$ with $T_0$ introduced in Lemma \ref{lemma:tk_2d}, then for the boundary term~\eqref{f_bdr}, it holds that
\begin{equation*}%\label{bdr_bdd}
\begin{split}
& \Vert \mathbf{1}_{t^1>0}e^{-\nu(v)(t-t^1)}  f(t^1,x^1,v)\Vert_{\dot{H}^{\theta,\ell}} \\
   & \leq 4 e^{-\frac{\nu_0}{2} t}\Vert f_0\Vert_{\dot{H}^{\theta,\ell}}+ o(1) (1+t)^{-\sigma_{q,\ell}}\sup_{s\leq t}(1+s)^{\sigma_{q,\ell}}\Vert f(s)\Vert_{\dot{H}^{\theta,\ell}}  \\
   &+ C(T_0)[e^{-c_0 t}\Vert f_0\Vert_{L^2_{x_3,v}\dot{H}^\ell_{x_\parallel}}+(1+t)^{-\sigma_{q,\ell}} \Vert f_0\Vert_{X^{\theta,q}}].
\end{split}
\end{equation*}

When $\mathbf{P}_0f_0 = 0,$ it holds that
\begin{equation*}%\label{bdr_bdd}
\begin{split}
& \Vert \mathbf{1}_{t^1>0}e^{-\nu(v)(t-t^1)} f(t^1,x^1,v)\Vert_{\dot{H}^{\theta,\ell}} \\
   & \leq 4 e^{-\frac{\nu_0}{2} t}\Vert f_0\Vert_{\dot{H}^{\theta,\ell}}+ o(1) (1+t)^{-\sigma_{q,\ell+1}}\sup_{s\leq t}(1+s)^{\sigma_{q,\ell+1}}\Vert f(s)\Vert_{\dot{H}^{\theta,\ell}} \\
   &+ C(T_0)[e^{-c_0 t}\Vert f_0\Vert_{L^2_{x_3,v}\dot{H}^\ell_{x_\parallel}} + (1+t)^{-\sigma_{q,\ell+1}} \Vert f_0\Vert_{X^{\theta,q}}].
\end{split}
\end{equation*}

\end{lemma}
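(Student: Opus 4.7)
The plan is to expand $\mathbf{1}_{t^1>0}e^{-\nu(v)(t-t^1)}f(t^1,x^1,v)$ via the stochastic-cycle representation \eqref{bdr_initial_2d}--\eqref{bdr_K_i_2d} and bound each of its four pieces in $\Vert w\cdot\Vert_{L^\infty_{x_3,v}L^2_{x_\parallel}}$, so that taking Sobolev derivatives of order $|\alpha|=\ell$ in $x_\parallel$ reproduces the $\dot{H}^{\theta,\ell}$ estimate. The key structural observation is that each stochastic position depends on $x_\parallel$ only through a translation, $x^i_\parallel=x_\parallel-\sum_{j<i}(t^j-t^{j+1})v^j_\parallel$, so $\partial_{x_\parallel}^\alpha$ commutes through the representation and simply replaces $f$ by $f^\alpha:=\partial_{x_\parallel}^\alpha f$ under every integral. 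Minkowski's inequality in $x_\parallel$ then allows us to bring the $L^2_{x_\parallel}$ norm inside the stochastic integrals, after which all estimates are pointwise in $(x_3,v)$.

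The first two pieces are dispatched directly. The initial-data term \eqref{bdr_initial_2d} carries the factor $e^{-\nu(v^i)t^i}\le e^{-\nu_0 t/2}e^{-\nu_0 t^i/2}$; combining the weight $w(v^i)$ with the factor $\frac{1}{w(v^i)\sqrt{\mu(v^i)}}$ in $d\Sigma_i$ and the outer $\sqrt{\mu(v)}$ leaves an integrable Gaussian, and summing over $i$ against the probability measures $d\sigma_j$ yields the $4e^{-\nu_0 t/2}\Vert f_0\Vert_{\dot{H}^{\theta,\ell}}$ contribution (with the constant $4$ coming from a crude sum of at most $k$ cycle contributions, each bounded by a geometric probability measure). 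The many-reflection term \eqref{bdr_tk_2d} is handled by Lemma \ref{lemma:tk_2d}: with $k=C_1 T_0^{5/4}$, the probability of $\{t^k>0\}$ is at most $(1/2)^{C_2 T_0^{5/4}}$, which we absorb into the $o(1)$ coefficient of $\sup_{s\le t}(1+s)^{\sigma_{q,\ell}}\Vert f(s)\Vert_{\dot{H}^{\theta,\ell}}$ by taking $T_0$ sufficiently large.

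The $K$-integral terms \eqref{bdr_K_0_2d}--\eqref{bdr_K_i_2d} are the main task and require the $L^2$--$L^\infty$ double-iteration scheme of Guo. Using Lemma \ref{lemma:k_theta} I decompose $\mathbf{k}_\theta = \mathbf{k}_\theta\mathbf{1}_{\{|v^i|>N\}\cup\{|u|>N\}\cup\{|v^i-u|\le 1/N\}} + \mathbf{k}_\theta\mathbf{1}_{\text{else}}$. The singular/tail piece satisfies $\int_u\le C/N$ uniformly in $v^i$, and after the $s$-integration against $e^{-\nu_0(t^i-s)}$ it contributes at most $o(1)\sup_{s\le t}\Vert f(s)\Vert_{\dot{H}^{\theta,\ell}}$; multiplying and dividing by $(1+t)^{\sigma_{q,\ell}}$ produces the second term in the statement. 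For the bounded, compactly supported piece I substitute the characteristic representation of $f(s,x^i-(t^i-s)v^i,u)$ a second time; the change of variables $u\mapsto y=x^i-(t^i-s)v^i-(s-s')u$ has Jacobian proportional to $(s-s')^3$, which is nondegenerate away from an $\varepsilon$-neighbourhood of $s'=s$ and from the grazing set $\{|v^i_3|\ll 1\}$ (both absorbed into $o(1)$), enabling the $L^\infty_u$ sup to be converted into an $L^2_{x,v}$ estimate on $f^\alpha$. Proposition \ref{prop.Bdecay} applied to $f^\alpha(0)=\partial_{x_\parallel}^\alpha f_0$ then delivers the $C(T_0)\bigl[e^{-c_0 t}\Vert f_0\Vert_{L^2_{x_3,v}\dot{H}^\ell_{x_\parallel}}+(1+t)^{-\sigma_{q,\ell}}\Vert f_0\Vert_{X^{\theta,q}}\bigr]$ bound, with $T_0$-dependence entering only through the finite cycle count $k=C_1T_0^{5/4}$.

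When $\mathbf{P}_0 f_0=0$ the scheme is structurally unchanged; only the sharper form of Proposition \ref{prop.Bdecay} is invoked, replacing $\sigma_{q,\ell}$ by $\sigma_{q,\ell+1}$ both in the algebraic decay bound and in the $o(1)$-absorbed term. The main technical obstacle is the second change-of-variables step: the Jacobian degenerates near $s'=s$ and on the grazing set $\{|v^i_3|\ll 1\}$, so both regions must be carefully isolated by small parameters that are then absorbed into $o(1)$, while the nondegenerate bulk contribution is controlled by the linear semigroup estimate and produces the $X^{\theta,q}$ initial-data norm.
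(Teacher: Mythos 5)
Your overall scaffolding matches the paper: Minkowski in $x_\parallel$ to pull the $H^\ell_{x_\parallel}$ norm inside the stochastic-cycle integrals, $\int_{\mathcal{V}_i}d\sigma_i$ a probability measure to get the $4e^{-\nu_0 t/2}\|f_0\|_{\dot H^{\theta,\ell}}$ bound on \eqref{bdr_initial_2d}, Lemma \ref{lemma:tk_2d} providing the $o(1)$ for \eqref{bdr_tk_2d}, and Lemma \ref{lemma:k_theta} splitting $\mathbf{k}_\theta$ into a small singular/tail piece and a bounded piece for the $K$-integral terms. Where you diverge, and where the gap opens, is in the bounded piece of \eqref{bdr_K_0_2d}--\eqref{bdr_K_i_2d}.

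The paper does \emph{not} substitute the characteristic representation a second time there. It changes variables directly in the cycle velocity, $v^i_3\mapsto y=x^i_3-(t^i-s)v^i_3$ with Jacobian $(t^i-s)>\delta$, exploiting the $\int_{\mathcal{V}_i}dv^i$ integral already supplied by the diffuse-reflection cycle. That is what converts the $L^\infty_{x_3}$ supremum into an $L^2_{x_3}$ norm, after which Cauchy--Schwarz in $(y,u)$ gives $\|f(s)\|_{L^2_{x_3,v}\dot H^\ell_{x_\parallel}}$ and Proposition \ref{prop.Bdecay} closes the estimate. No second Duhamel iteration is invoked. Your proposal to iterate $f(s,x^i-(t^i-s)v^i,u)$ once more and change variables in $u$ is the scheme the paper reserves for the \emph{interior} terms \eqref{K_K} in Proposition \ref{prop:linfty_decay_rate}, not for the boundary terms in this lemma.

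This is not merely a stylistic choice: your second iteration generates a new boundary term $\mathbf{1}_{s-\tb^u\geq 0}e^{-\nu(u)\tb^u}f(s-\tb^u,\xb^u,u)$ (the analogue of \eqref{K_bdr}), which your proposal never addresses. In Proposition \ref{prop:linfty_decay_rate} that term is bounded by \emph{invoking} Lemma \ref{lemma:bdr}, so attempting the same inside the proof of Lemma \ref{lemma:bdr} would be circular, and a crude bound by $\sup_{s\le t}(1+s)^{\sigma_{q,\ell}}\|f(s)\|_{\dot H^{\theta,\ell}}$ carries no $o(1)$ smallness to absorb. Until you either produce a separate argument for that term or replace the second iteration by the paper's direct $v^i_3$-substitution, the argument does not close. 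Two smaller inaccuracies worth correcting even if you keep your route: after Minkowski in $x_\parallel$, only the one-dimensional change $u_3\mapsto y_3$ is needed, with Jacobian $(s-s')$, not $(s-s')^3$; and the Jacobian does not degenerate on the grazing set $\{|v^i_3|\ll 1\}$ -- it is independent of $v^i$, so that cutoff is irrelevant (the paper's grazing-type cutoff is the small-time window $s>t^i-\delta$, whose role you do correctly identify).
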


\begin{proof}
We begin with the estimate of \eqref{bdr_initial_2d}. We observe that $\text{sign}(x^i_3),t^i,v^i$ are independent of $x_\parallel$, by Minkowski inequality, for each $1\leq i\leq k-1$, we have
\begin{align*}
    &    \Big\Vert w(v)\eqref{bdr_initial_2d}(t,\cdot,x_3,v) \Big\Vert_{\dot{H}^\ell_{x_\parallel}} \lesssim  \mathbf{1}_{t^1>0}e^{-\nu(v)(t-t^1)} \int_{\prod_{j=1}^{k-1}\mathcal{V}_j} \mathbf{1}_{t^{i+1\leq 0\leq t^i}} e^{-\nu(v^i)t^i} w(v^i)\Vert f_0(\cdot, x^i_3-t^iv^i_3,v^i)\Vert_{\dot{H}^\ell_{x_\parallel}} \dd \Sigma_i  \\
    & \lesssim \Vert f_0\Vert_{\dot{H}^{\theta,\ell}} \mathbf{1}_{t^1>0}e^{-\nu(v)(t-t^1)} \int_{\prod_{j=1}^{k-1}\mathcal{V}_j} \mathbf{1}_{t^{i+1\leq 0\leq t^i}} e^{-\nu(v^i)t^i} \dd \Sigma_i  .
\end{align*}
Further taking the norm of $L^\infty_{x_3,v}$, we have
\begin{align*}
    & \Vert \eqref{bdr_initial_2d} \Vert_{\dot{H}^{\theta,\ell}} \leq 4 e^{-\nu_0 t} \Vert f_0\Vert_{\dot{H}^{\theta,\ell}}.
\end{align*}
Here we used that $\dd \sigma_i$ in~\eqref{d_sigma_i_2d} is a probability measure. The constant $4$ comes from
\[\int_{\mathcal{V}_i}   |v^i_3| \sqrt{\mu(v^i)} w^{-1}(v^i) \dd v^i\leq 4.\]
The exponential decay factor $e^{-\nu_0 t^1}$ comes from the decay factor from \eqref{Sigma_2d}, and the computation
\begin{align*}
    & e^{-\nu_0 t^i} e^{-\nu_0 (t^{i-1}-t^i)} \leq e^{-\nu_0 t^{i-1}}, \ e^{-\nu_0 t^{i-1}} e^{-\nu_0 (t^{i-2}-t^{i-1})} \leq e^{-\nu_0 t^{i-2}} \cdots.
\end{align*}

For~\eqref{bdr_tk_2d}, with $k=C_1 T_0^{5/4}$, we apply Minkowski inequality to have
\begin{align}
&  \Vert w(v)\eqref{bdr_tk_2d}(t,\cdot,x_3,v) \Vert_{\dot{H}^\ell_{x_\parallel}}    \leq  \mathbf{1}_{t^1>0} e^{-\nu(v)(t-t^1)} \int_{\prod_{j=1}^{k-1} \mathcal{V}_j} \mathbf{1}_{t^k>0}  w(v^{k-1})\Vert f(t^k,\cdot,x^k_3,v^{k-1})\Vert_{\dot{H}^\ell_{x_\parallel}} \dd \Sigma_{k-1} \notag \\
 &\leq \sup_{s\leq t} (1+s)^{\sigma_{q,\ell}} \Vert f(s)\Vert_{\dot{H}^{\theta,\ell}} \mathbf{1}_{t^1>0} e^{-\nu(v)(t-t^1)} \int_{\prod_{j=1}^{k-1} \mathcal{V}_j} \mathbf{1}_{t^k>0}  (1+t^k)^{-\sigma_{q,\ell}} \dd \Sigma_{k-1}.\notag
\end{align}
Taking the norm of $L^\infty_{x_3,v}$, we apply Lemma \ref{lemma:tk_2d} to have
\begin{align*}
    &   \Vert \eqref{bdr_tk_2d} \Vert_{\dot{H}^{\theta,\ell}} \lesssim (1+t)^{-\sigma_{q,\ell}} \sup_{s\leq t} (1+s)^{\sigma_{q,\ell}} \Vert f(s)\Vert_{\dot{H}^{\theta,\ell}} .
\end{align*}
Here we have used
\begin{align*}
    & e^{-\frac{\nu_0}{2} (t-t^1)} e^{-\frac{\nu_0}{2}(t^1-t^2)}\times \cdots \times e^{-\frac{\nu_0}{2}(t^{k-1}-t^k)} (1+t^k)^{-\sigma_{q,\ell}} \leq e^{-\frac{\nu_0}{2}(t - t^k)} (1+t^k)^{-\sigma_{q,\ell}} \lesssim (1+t)^{-\sigma_{q,\ell}}.
\end{align*}

Then we estimate \eqref{bdr_K_0_2d} and \eqref{bdr_K_i_2d}. We only estimate \eqref{bdr_K_i_2d}; the estimate for the other term is the same. We bound it by
\begin{align}
&\Big\Vert \mathbf{1}_{t^1>0} w(v)\sqrt{\mu(v)} e^{-\nu_0(t-t^1)}\int_{\prod_{j=1}^i \mathcal{V}_j}\mathbf{1}_{t^{i+1}>0 }  \prod_{j=1}^i\dd \sigma_j \mu^{-1/2}(v^i)    \notag\\
    & \times     \int_{t^{i+1}}^{t^i} \dd s e^{-\nu_0(t^i -s)} \int_{\mathbb{R}^3} \dd u\mathbf{k}(v^1,u) |f(s,x^i-(t^i-s)v^i,u)|  \Big\Vert_{\dot{H}^\ell_{x_\parallel}} \notag\\
    & \lesssim    \mathbf{1}_{t^1>0}e^{-\nu_0(t-t^1)}\int_{\prod_{j=1}^i \mathcal{V}_j}\mathbf{1}_{t^{i+1}>0 }  \prod_{j=1}^i\dd \sigma_j \mu^{-1/2}(v^i) \notag\\
    & \times \int^{t^i}_{t^{i+1}} \dd s e^{-\nu_0(t^i-s)} 
  \int_{\mathbb{R}^3} \dd u \mathbf{k}(v^1,u) \Vert f(s,\cdot,x^i_3 - (t^i-s)v^i_3)\Vert_{\dot{H}^\ell_{x_\parallel}}  .\label{iteration_i_2d}
\end{align}

First we consider the small time integration $\max\{t^i-\delta,t^{i+1}\}  \leq s\leq t^i$, and we have
\begin{align*}
    & \Vert \eqref{iteration_i_2d}\mathbf{1}_{\max\{t^1-\delta,t^2\}  \leq s\leq t^1} \Vert_{L^\infty_{x_3,v}}\\
    &\lesssim \sup_{x_3,v} e^{-\nu_0 (t-t^1)}\int_{\prod_{j=1}^{i-1}\mathcal{V}_j} \prod_{j=1}^{i-1}e^{-\nu_0(t^j-t^{j+1})}\dd \sigma_j \int_{\text{sign}(x_3^i)\times v^i_3 >0} \dd v^i \mu^{1/4}(v^i) \int^{t^i}_{t^i-\delta} \dd s e^{-\nu(v^i)(t^1-s)}  \\
    &\times \int_{\mathbb{R}^3} \dd u \frac{\mathbf{k}(v^1,u)}{w(u)} (1+s)^{-\sigma_{q,\ell}} \times \sup_{s\leq t} (1+s)^{\sigma_{q,\ell}}\Vert f(s)\Vert_{\dot{H}^{\theta,\ell}} \\
    & \lesssim (1+t)^{-\sigma_{q,\ell}} \sup_{s\leq t} (1+s)^{\sigma_{q,\ell}}\Vert f(s)\Vert_{\dot{H}^{\theta,\ell}} \int_{\prod_{j=1}^{i-1}\mathcal{V}_j} \prod_{j=1}^{i-1}\dd \sigma_j \int_{\text{sign}(x_3^i)\times v^i_3 >0} \dd v^i \mu^{1/4}(v^i) \int^{t^i}_{t^i-\delta} \dd s  \\
    & \lesssim o(1)(1+t)^{-\sigma_{q,\ell}} \sup_{s\leq t} (1+s)^{\sigma_{q,\ell}}\Vert f(s)\Vert_{\dot{H}^{\theta,\ell}}.
\end{align*}

Next we consider the $u$ integration with $|u|>N$ or $|v^i-u|<\frac{1}{N}$. We apply Lemma \ref{lemma:k_theta} to have
\begin{align*}
    &\Vert \eqref{iteration_i_2d}\mathbf{1}_{|u|>N \text{ or }|v^1-u|<\frac{1}{N}} \Vert_{L^\infty_{x_3,v}}\\
    &\lesssim \sup_{x_3,v} e^{-\nu_0 (t-t^1)}  \int_{\prod_{j=1}^{i-1}\mathcal{V}_j} \prod_{j=1}^{i-1}e^{-\nu_0(t^j-t^{j+1})}\dd \sigma_j \int_{\text{sign}(x_3^i)\times v^i_3 >0} \dd v^i \mu^{1/4}(v^i) \int^{t^i}_{0} \dd s e^{-\nu(v^i)(t^1-s)} \\
    &\times  \int_{|u|>N \text{ or }|v^i-u|<\frac{1}{N}} \frac{\mathbf{k}(v^i,u)}{w(u)} \dd u \times  \sup_{s\leq t} (1+s)^{\sigma_{q,\ell}}\Vert f(s)\Vert_{\dot{H}^{\theta,\ell}} \\
    & \lesssim o(1)(1+t)^{-\sigma_{q,\ell}} \sup_{s\leq t} (1+s)^{\sigma_{q,\ell}}\Vert f(s)\Vert_{\dot{H}^{\theta,\ell}}.
\end{align*}

For the rest cases, we have $|v^i-u|>\frac{1}{N}$, and thus $\mathbf{k}(v^i,u)\lesssim 1$ from Lemma \ref{lemma:k_theta}. We apply the change of variable
\begin{align*}
    & v^i_3 \to y = x^i_3 - (t^i-s)v_3^i, \ \frac{\dd y}{\dd v_3^i} = -(t^i-s), \ |t^i-s|>\delta.
\end{align*}
We further compute that
\begin{align*}
    &\Vert \eqref{iteration_i_2d} \mathbf{1}_{|u|<N, \ |v^i-u|>\frac{1}{N}, \ s<t^i-\delta}\Vert_{L^\infty_{x_3,v}} \\
    &\lesssim \sup_{x_3,v} e^{-\nu_0 (t-t^1)}  \int_{\prod_{j=1}^{i-1}\mathcal{V}_j} \prod_{j=1}^{i-1}e^{-\nu_0(t^j-t^{j+1})}\dd \sigma_j \int_{\text{sign}(x_3^i)\times v^i_3 >0} \dd v^i \mu^{1/4}(v^i) \int^{t^i-\delta}_{0} \dd s e^{-\nu(v^i)(t^1-s)} \\
    &\times \int_{|u|<N} \dd u \Vert f(s,\cdot,x^i_3-(t^i-s)v^i_3,u)\Vert_{\dot{H}^\ell_{x_\parallel}}  \\
    & \lesssim \sup_{x_3,v}\delta^{-1} e^{-\nu_0 (t-t^1)} \int_{\prod_{j=1}^{i-1}\mathcal{V}_j} \prod_{j=1}^{i-1}e^{-\nu_0(t^j-t^{j+1})}\dd \sigma_j   \\
    &\times \int_{\mathbb{R}^2} \dd v^i_\parallel \mu^{1/4}(v^i_\parallel) \int^{t^i-\delta}_0 \dd s e^{-\nu_0(t^i-s)} \int_{y\in (-1,1)} \int_{|u|<N} \dd u \Vert f(s,\cdot,y,u)\Vert_{\dot{H}^\ell_{x_\parallel}} \\
    & \lesssim \sup_{x_3,v} e^{-\nu_0 (t-t^1)} \int_{\prod_{j=1}^{i-1}\mathcal{V}_j} \prod_{j=1}^{i-1}\dd \sigma_j  \int^{t^1-\delta}_0 \dd s e^{-\nu_0(t-s)} \dd u \Vert f(s)\Vert_{L^2_{x_3,v}\dot{H}^\ell_{x_\parallel}} \\
    & \lesssim  e^{-c_0 t}\Vert f_0\Vert_{L^2_{x_3,v}\dot{H}^\ell_{x_\parallel}} + (1+t)^{-\sigma_{q,\ell}}\Vert f_0\Vert_{Z_q} .
\end{align*}
In the second last line, we have applied Proposition \ref{prop.Bdecay}.

Collecting the estimates above, we conclude that
\begin{align}
&\Vert \eqref{bdr_K_i_2d}\Vert_{\dot{H}^{\theta,\ell}} + \Vert \eqref{bdr_K_0_2d} \Vert_{\dot{H}^{\theta,\ell}} \notag \\
&\lesssim o(1)(1+t)^{-\sigma_{q,\ell}}\sup_{s\leq t}(1+s)^{\sigma_{q,\ell}}\Vert f(s)\Vert_{\dot{H}^{\theta,\ell}} + e^{-c_0 t}\Vert f_0\Vert_{\dot{H}^{\theta,\ell}} + (1+t)^{-\sigma_{q,\ell}} \Vert f_0\Vert_{X^{\theta,q}}. \notag
\end{align}
We conclude the lemma.
\end{proof}

\begin{proof}[\textbf{Proof of Proposition \ref{prop:linfty_decay_rate}}]

We mainly perform the a priori estimate. We first set $t\leq T_0$.

We directly compute \eqref{initial} as
\begin{align*}
    & \Vert \eqref{initial}\Vert_{\dot{H}^{\theta,\ell}} \leq \Vert \mathbf{1}_{t^1\leq 0} e^{-\nu_0 t} w \Vert f_0(\cdot,x_3-t v_3,v)\Vert_{\dot{H}^\ell_{x_\parallel}} \Vert_{L^\infty_{x_3,v}} \leq e^{-\frac{\nu_0 t}{2}} \Vert f_0\Vert_{\dot{H}^{\theta,\ell}}.
\end{align*}

For the boundary term we apply Lemma \ref{lemma:bdr} to have
\begin{align*}
    &     \Vert \eqref{f_bdr} \Vert_{\dot{H}^{\theta,\ell}} \leq 4 e^{-\frac{\nu_0}{2} t}\Vert f_0\Vert_{\dot{H}^{\theta,\ell}}+ o(1) (1+t)^{-\sigma_{q,\ell}}\sup_{s\leq t}(1+s)^{\sigma_{q,\ell}}\Vert f(s)\Vert_{\dot{H}^{\theta,\ell}}  \\
    &+ C(T_0)[e^{-c_0 t}\Vert f_0\Vert_{L^2_{x_3,v}\dot{H}^\ell_{x_\parallel}} + (1+t)^{-\sigma_{q,\ell}} \Vert f_0\Vert_{X^{\theta,q}}].
\end{align*}

Last we compute \eqref{K_0} and \eqref{K_1}. Again, we only estimate \eqref{K_1}; the estimate for the other term is the same. We apply the method of characteristic again to $f(s,x-(t-s)v,u)$. We denote 
\begin{align*}
   \tb^u:= \tb(x-(t-s)v,u), \text{ and } \xb^u:=  x-(t-s)v - \tb^u u.  
\end{align*}
We have 
\begin{align}
    & \eqref{K_1} = \mathbf{1}_{t^1>0} \int^{t}_{t^1} \dd s e^{-\nu(v)(t-s)}  \int_{\mathbb{R}^3} \dd u \mathbf{k}(v,u) \notag\\
    &\times \Big[\mathbf{1}_{s-\tb^u\leq 0} e^{-\nu(u)s} f_0(x-(t-s)v-su,u)  \label{K_initial}\\
    & + \mathbf{1}_{s-\tb^u\geq 0} e^{-\nu(u)\tb^u} f(s-\tb^u,\xb^u,u) \label{K_bdr}\\
    & + \int^{s}_{\max\{0,t-\tb^u\}} \dd s' e^{-\nu(u)(s-s')} \int_{\mathbb{R}^3} \dd u\mathbf{k}(u,u')f(s',x-(t-s)v-(s-s')u,u') \Big]. \label{K_K}
\end{align}

We compute \eqref{K_initial} using Lemma \ref{lemma:k_theta} as
\begin{align*}
    & \Vert \eqref{K_initial}\Vert_{\dot{H}^{\theta,\ell}} \\
    &\leq \Big\Vert \mathbf{1}_{t^1>0} w(v)\int^{t}_{t^1} \dd s e^{-\nu(v)(t-s)} e^{-\nu(u)s} \int_{\mathbb{R}^3} \dd u \mathbf{k}(v,u) \mathbf{1}_{s-\tb^u\leq 0} \Vert f_0(\cdot,x_3-(t-s)v_3-su_3,u)\Vert_{\dot{H}^\ell_{x_\parallel}} 
  \Big\Vert_{L^\infty_{x_3,v}} \\
  & \leq C(\nu_0) \Vert f_0\Vert_{\dot{H}^{\theta,\ell}} e^{-\frac{\nu_0}{2}t} \sup_v \int_{\mathbb{R}^3} \dd u \frac{w(v)}{w(u)} \mathbf{k}(v,u) \leq C(\nu_0,\theta)e^{-\frac{\nu_0}{2}t}\Vert f_0\Vert_{\dot{H}^{\theta,\ell}}.
\end{align*}

We compute \eqref{K_bdr} using Lemma \ref{lemma:k_theta} and Lemma \ref{lemma:bdr} as
\begin{align*}
    &  \Vert \eqref{K_bdr}\Vert_{\dot{H}^{\theta,\ell}} \leq \Big\Vert \mathbf{1}_{t^1>0} \int^{t}_{t^1} \dd s e^{-\nu(v)(t-s)} \int_{\mathbb{R}^3} \dd u \frac{w(v)}{w(u)}\mathbf{k}(v,u) \mathbf{1}_{s-\tb^u\geq 0} e^{-\nu(u)\tb^u} \Vert f(s-\tb^u)\Vert_{\dot{H}^{\theta,\ell}} \Big\Vert_{L^\infty_{x_3,v}} \\
    & \leq \sup_{x_3,v} \mathbf{1}_{t^1>0}\int^{t}_{t^1} \dd s e^{-\nu_0(t-s)}\int_{\mathbb{R}^3} \dd u \mathbf{k}_\theta(v,u) \\
    &\times \Big[4 e^{-\frac{\nu_0}{2}s}\Vert f_0\Vert_{\dot{H}^{\theta,\ell}} + o(1)(1+s)^{-\sigma_{q,\ell}}\sup_{s\leq t}(1+s)^{\sigma_{q,\ell}}\Vert f(s)\Vert_{\dot{H}^{\theta,\ell}} \\
    &+ C(T_0)[(1+s)^{-\sigma_{q,\ell}} \Vert f_0\Vert_{X^{\theta,q}} + e^{-c_0 s}\Vert f_0\Vert_{L^2_{x_3,v}\dot{H}_{x_\parallel}^\ell}]\Big] \\
    & \leq C(\nu_0,\theta)e^{-\frac{\nu_0s}{2}} \Vert f_0\Vert_{\dot{H}^{\theta,\ell}}  + o(1)(1+t)^{-\sigma_{q,\ell}} \sup_{s\leq t} (1+s)^{\sigma_{q,\ell}}\Vert f(s)\Vert_{\dot{H}^{\theta,\ell}} \\
    & +C(T_0)[e^{-c_0 t}\Vert f_0\Vert_{L^2_{x_3,v}\dot{H}_{x_\parallel}^\ell} +(1+t)^{-\sigma_{q,\ell}} \Vert f_0\Vert_{X^{\theta,q}}  ].
\end{align*}

For \eqref{K_K}, first, we apply Minkowski inequality to have
\begin{align*}
    & \Vert \eqref{K_K}(t,\cdot,x_3,v)\Vert_{\dot{H}^\ell_{x_\parallel}} \lesssim \mathbf{1}_{t^1>0} \int^{t}_{t^1} \dd s e^{-\nu(v)(t-s)}\int_{\mathbb{R}^3} \dd u \mathbf{k}(v,u) \\
    & \times \int^s_{\max\{0,t-\tb^u\}} \dd s' e^{-\nu(u)(s-s')} \int_{\mathbb{R}^3} \dd u \mathbf{k}(u,u') \Vert f(s',\cdot,x_3-(t-s)v_3-(s-s')u_3,u')\Vert_{\dot{H}^\ell_{x_\parallel}}.
\end{align*}

In the $\dd s'$ integration, we consider $s-\delta\leq s'\leq s$, and we have
\begin{align*}
    & \Vert \eqref{K_K}\mathbf{1}_{s-\delta\leq s'\leq s}\Vert_{\dot{H}^{\theta,\ell}} \lesssim \sup_{s\leq t}(1+s)^{\sigma_{q,\ell}}\Vert f(s)\Vert_{\dot{H}^{\theta,\ell}} \sup_{x_3,v} \mathbf{1}_{t^1>0} \int^{t}_{t^1} \dd s e^{-\nu_0 (t-s)} \int_{\mathbb{R}^3} \dd u \frac{w(v)}{w(u)} \mathbf{k}(v,u) \\
    &\times \int^s_{s-\delta} \dd s' e^{-\nu_0(s-s')} \int_{\mathbb{R}^3}\dd u' \frac{w(u)}{w(u')} \mathbf{k}(u,u')  (1+s')^{-\sigma_{q,0}}\\
    & \lesssim o(1)(1+t)^{-\sigma_{q,\ell}} \sup_{s\leq t}(1+s)^{\sigma_{q,\ell}}\Vert f(s)\Vert_{\dot{H}^{\theta,\ell}}.
\end{align*}

For the $\dd u \dd u'$ integration, we consider $|v-u|<\frac{1}{N} \text{ or }|u-u'|<\frac{1}{N} \text{ or }|u'|>N \text{ or }|u|>N \text{ or }|v|>N$. By Lemma \ref{lemma:k_theta}, we have
\begin{align*}
    & \Vert \eqref{K_K}\mathbf{1}_{|v-u|<\frac{1}{N} \text{ or }|u-u'|<\frac{1}{N} \text{ or }|u'|>N \text{ or }|u|>N \text{ or }|v|>N}\Vert_{\dot{H}^{\theta,\ell}} \\
    &\lesssim \sup_{s\leq t}(1+s)^{\sigma_{q,\ell}}\Vert f(s)\Vert_{\dot{H}^{\theta,\ell}} \sup_{x_3,v} \mathbf{1}_{t^1>0} \int^{t}_{t^1} \dd s e^{-\nu_0 (t-s)} \int_{\mathbb{R}^3} \mathbf{1}_{|v-u|<\frac{1}{N} \text{ or }|u-u'|<\frac{1}{N} \text{ or }|u'|>N \text{ or }|u|>N \text{ or }|v|>N}  \\
    &\times \dd u  \mathbf{k}_\theta(v,u)\int^s_{0} \dd s' e^{-\nu_0(s-s')} \int_{\mathbb{R}^3} \dd u'\mathbf{k}_\theta(u,u')  (1+s')^{-\sigma_{q,0}}\\
    & \lesssim o(1)(1+t)^{-\sigma_{q,\ell}} \sup_{s\leq t}(1+s)^{\sigma_{q,\ell}}\Vert f(s)\Vert_{\dot{H}^{\theta,\ell}}.
\end{align*}

For the rest of the cases, due to $|v|<N,|v-u|>\frac{1}{N},|u-u'|>\frac{1}{N}$, from Lemma \ref{lemma:k_theta}, we have 
\begin{align*}
    & w(v)\mathbf{k}(v,u)\mathbf{k}(u,u')\lesssim 1,
\end{align*}
and the following change of variable holds due to $s'\leq s-\delta$:
\begin{align*}
    &   u_3 \to y=x_3-(t-s)v_3-(s-s')u_3, \ \frac{\dd y}{\dd u_3} = -(s-s'), \ |s-s'|>\delta.
\end{align*}
We compute that
\begin{align*}
    & \Vert \eqref{K_K}\mathbf{1}_{|v-u|>\frac{1}{N} \text{ and }|u-u'|>\frac{1}{N} \text{ and }|u'|<N \text{ and }|u|<N \text{ and }|v|<N \text{ and }s'\leq s-\delta}\Vert_{\dot{H}^{\theta,\ell}} \\
    &\lesssim_\delta \sup_{x_3,v} \mathbf{1}_{t^1>0} \int^{t}_{t^1} \dd s e^{-\nu_0(t-s)} \int_{|u_\parallel|<N} \dd u_\parallel \int^s_{\max\{0,t-\tb^u\}} \dd s' e^{-\nu_0(s-s')} \int_{-1}^1 \dd y\int_{|u'|<N} \dd u' \Vert f(s',\cdot,y,u')\Vert_{\dot{H}^\ell_{x_\parallel}} \\
    & \lesssim  \int^{t}_{0} \dd s e^{-\nu_0(t-s)}  \int^s_{0} \dd s' e^{-\nu_0(s-s')} \Vert f(s')\Vert_{L^2_{x_3,v}\dot{H}^\ell_{x_\parallel}} \\
    &     \lesssim  e^{-c_0 t}\Vert f_0\Vert_{L^2_{x_3,v}\dot{H}^\ell_{x_\parallel}} + (1+t)^{-\sigma_{q,\ell}}\Vert f_0\Vert_{Z_q} .
\end{align*}
In the last line, we have applied Proposition \ref{prop.Bdecay}.

Collecting the estimates above and absorbing the terms $o(1)(1+t)^{-\sigma_{q,\ell}} \sup_{s\leq t}\{(1+s)^{\sigma_{q,\ell}}\Vert f(s)\Vert_{\dot{H}^{\theta,\ell}}\}$, we conclude that for $t\leq T_0$, it holds
\begin{align*}
    &  \Vert f(t)\Vert_{\dot{H}^{\theta,\ell}} \leq C(\nu_0,\theta)e^{-\frac{\nu_0t}{2}}\Vert f_0\Vert_{\dot{H}^{\theta,\ell}} + C(T_0,\delta,N)[e^{-c_0 t}\Vert f_0\Vert_{L^2_{x_3,v}\dot{H}^\ell_{x_\parallel}} + (1+t)^{-\sigma_{q,\ell}}\Vert f_0\Vert_{Z_q}] .
\end{align*}

Let $t=T_0\gg 1$ such that $C(\nu_0,\theta)e^{-\frac{\nu_0T_0}{2}}<e^{-\frac{\nu_0T_0}{4}}$. The estimate above implies 
\begin{align*}
    &  \Vert f(T_0)\Vert_{\dot{H}^{\theta,\ell}} \leq e^{-\frac{\nu_0T_0}{4}}\Vert f_0\Vert_{\dot{H}^{\theta,\ell}} + C(T_0,\delta,N)[e^{-c_0 T_0}\Vert f_0\Vert_{L^2_{x_3,v}\dot{H}^\ell_{x_\parallel}} + (1+T_0)^{-\sigma_{q,\ell}}\Vert f_0\Vert_{Z_q}] .
\end{align*}

It is standard to apply the inductive argument to $t=mT_0,m\geq 2$ and $mT_0\leq t\leq (m+1)T_0$. Hence, the same type of estimate holds for any $t>0$:
\begin{align*}
    &  \Vert f(t)\Vert_{\dot{H}^{\theta,\ell}} \lesssim e^{-\frac{\nu_0t}{4}}\Vert f_0\Vert_{\dot{H}^{\theta,\ell}} + e^{-c_0 t}\Vert f_0\Vert_{L^2_{x_3,v}\dot{H}^\ell_{x_\parallel}} + (1+t)^{-\sigma_{q,\ell}}\Vert f_0\Vert_{Z_q} .
\end{align*}
We refer to details in \cite{EGKM}.

Last, since $x_3$ is bounded and $w^{-1}(v)\in L^2_v$, we have $\Vert f_0\Vert_{L^2_{x_3,v}\dot{H}^\ell_{x_\parallel}} \leq \Vert f_0\Vert_{\dot{H}^{\theta,\ell}}$, and hence
\begin{align*}
    &  \Vert f(t)\Vert_{\dot{H}^{\theta,\ell}} \lesssim e^{-c_0t}\Vert f_0\Vert_{\dot{H}^{\theta,\ell}}  + (1+t)^{-\sigma_{q,\ell}}\Vert f_0\Vert_{Z_q} .
\end{align*}

We complete the estimate for the general case. The estimate under the case $\mathbf{P}_0f_0=0$ is similar. 

We have completed the a priori estimate. It is standard to justify the existence and uniqueness, we refer to the sequential argument in \cite{EGKM}.
\end{proof}

\subsection{Nonlinear estimate under $\Vert \cdot \Vert_{H^{\theta,\ell}}$.}\label{sec:nonliear_asym}

We consider the nonlinear problem \eqref{nonlinear_f}. The following lemma with respect to the decay rate is necessary to obtain the decay rate of the nonlinear operator.

\begin{lemma}\label{lemma:gamma_decay}
Recall the definition of $\sigma_{q,0}$ in \eqref{si-qm}, then it holds that
\begin{align*}
    &    \int_0^t (1+t-s)^{-1} (1+s)^{-2\sigma_{q,0}} \dd s\lesssim (1+t)^{-2\sigma_{q,0}} \log(2+t),
\end{align*}
and
\begin{align*}
    &  \int_0^t  (1+t-s)^{-1} (1+s)^{-2\sigma_{q,1}} \dd s \lesssim \begin{cases}
        (1+t)^{-1},  \text{ if } q<2 \\
        (1+t)^{-1}\log(2+t), \text{ if }q=2.
    \end{cases}
\end{align*}

\end{lemma}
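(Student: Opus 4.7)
The plan is to handle both inequalities by the standard split-at-midpoint method, writing
\[
I_m(t) := \int_0^t (1+t-s)^{-1}(1+s)^{-2\sigma_{q,m}}\,\dd s = \int_0^{t/2} + \int_{t/2}^t =: I^{\mathrm{low}}_m + I^{\mathrm{high}}_m.
\]
On $[0,t/2]$ we have $1+t-s \gtrsim 1+t$, so $I^{\mathrm{low}}_m \lesssim (1+t)^{-1}\int_0^{t/2}(1+s)^{-2\sigma_{q,m}}\,\dd s$, which can be evaluated explicitly. On $[t/2,t]$ we have $1+s \gtrsim 1+t$, so $I^{\mathrm{high}}_m \lesssim (1+t)^{-2\sigma_{q,m}}\int_{t/2}^t (1+t-s)^{-1}\dd s \lesssim (1+t)^{-2\sigma_{q,m}}\log(2+t)$. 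All subsequent work is to match the two exponents correctly in the various regimes of $q$.

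For the first inequality, $2\sigma_{q,0} = \tfrac{2}{q}-1 \in [0,1]$ as $q\in[1,2]$. The explicit evaluation on the low-half gives
\[
\int_0^{t/2}(1+s)^{-2\sigma_{q,0}}\,\dd s \lesssim
\begin{cases}
(1+t)^{1-2\sigma_{q,0}}, & q \in (1,2],\\
\log(2+t), & q=1 \text{ (borderline $2\sigma_{q,0}=1$)},
\end{cases}
\]
so in either case $I^{\mathrm{low}}_0 \lesssim (1+t)^{-2\sigma_{q,0}}\log(2+t)$, matching $I^{\mathrm{high}}_0$. For the second inequality, $2\sigma_{q,1} = \tfrac{2}{q} \in [1,2]$. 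When $q<2$ we have $2\sigma_{q,1}>1$, so the low-half integral is bounded uniformly in $t$ and $I^{\mathrm{low}}_1 \lesssim (1+t)^{-1}$; the surplus $2\sigma_{q,1}-1>0$ on the high-half then absorbs the logarithm to give $I^{\mathrm{high}}_1 \lesssim (1+t)^{-2\sigma_{q,1}}\log(2+t)\lesssim (1+t)^{-1}$. When $q=2$ we are at the borderline $2\sigma_{q,1}=1$, and both halves independently contribute $(1+t)^{-1}\log(2+t)$.

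There is no real obstacle: this is the standard convolution decay lemma used throughout diffusive-scale nonlinear estimates. The only point requiring attention is tracking the two borderline exponents $2\sigma_{q,0}=1$ (when $q=1$) and $2\sigma_{q,1}=1$ (when $q=2$), which are precisely the cases that generate the logarithmic loss; the dichotomy $q<2$ vs $q=2$ in the statement of the second bound is exactly the manifestation of the latter borderline.
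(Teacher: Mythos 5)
The paper states Lemma \ref{lemma:gamma_decay} without supplying a proof, treating it as a standard convolution-decay estimate. Your split-at-midpoint argument is precisely the standard way to prove it, and your bookkeeping of the exponents $2\sigma_{q,0}=\tfrac{2}{q}-1\in[0,1]$ and $2\sigma_{q,1}=\tfrac{2}{q}\in[1,2]$, together with the identification of the two borderline cases $q=1$ (for the first bound) and $q=2$ (for the second), is correct; the proof is valid.
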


By the Duhammel principle, the solution $f$ to \eqref{nonlinear_f} can be written as
\begin{align}
    & f(t,x,v) = e^{Bt}f_0 + \int_0^t e^{B(t-s)}[\nu \nu^{-1}\Gamma(f,f)(s,x,v)] \dd s. \label{Duhammel}
\end{align}
To study the second term, we denote
\begin{align}
    &    \Psi[\nu h](t,x,v) := \int_0^t e^{B(t-s)}(\nu h)(s,x,v)\dd s.
    \label{def.Psi}
\end{align}
We estimate this term with a general $\nu h$ that satisfies $\mathbf{P}_0(\nu h)=0$ in the following lemma. Then we will set $h = \nu^{-1}\Gamma(f,f)$.

\begin{lemma}\label{lemma:nonlinear_aprior_est}
Assume $\mathbf{P}_0(\nu h) = 0$, $1\leq q<2$, and 
\begin{align*}
& \sup_{s\leq t}  (1+s)^{2\sigma_{q,0}}\Vert  h(s)\Vert_{X^{\theta,1} } + \sup_{s\leq t} (1+s)^{2\sigma_{q,0}} \Vert h(s)\Vert_{H^{\theta,\ell}}<\infty, 
\end{align*}
then it holds that
\begin{align}
    & \Big\Vert \frac{w(v)}{\nu(v)}\Psi[\nu h](t) \Big\Vert_{L^\infty_{x_3,v} H^\ell_{x_\parallel} } \lesssim  (1+t)^{-\sigma_{q,0}} \big[\sup_{s\leq t} \{ (1+s)^{2\sigma_{q,0}}\Vert h(s)\Vert_{H^{\theta,\ell}}\} + \sup_{s\leq t} \{(1+s)^{2\sigma_{q,0}}\Vert h(s)\Vert_{X^{\theta,1} }\} \big]. \label{loss_weight_est}
\end{align}

Moreover, we have the gain in $\nu$:
\begin{align}
    &  \Big\Vert \Psi[\nu h](t) \Big\Vert_{H^{\theta,\ell}} \lesssim (1+t)^{-\sigma_{q,0}} \big[\sup_{s\leq t} \{ (1+s)^{2\sigma_{q,0}}\Vert h(s)\Vert_{H^{\theta,\ell}}\} + \sup_{s\leq t} \{(1+s)^{2\sigma_{q,0}}\Vert h(s)\Vert_{X^{\theta,1} }\} \big]. \label{recover_weight_est}
\end{align}

Similarly, for $1\leq q\leq 2$, if we assume
\begin{align*}
& \sup_{s\leq t}  (1+s)^{2\sigma_{q,1}}\Vert h(s)\Vert_{X^{\theta,1} } + \sup_{s\leq t} (1+s)^{2\sigma_{q,1}} \Vert h(s)\Vert_{H^{\theta,\ell}}<\infty, 
\end{align*}
then it holds that
\begin{align}
    & \Big\Vert \frac{w(v)}{\nu(v)}\Psi[\nu h](t) \Big\Vert_{L^\infty_{x_3,v} H^\ell_{x_\parallel} } \lesssim  (1+t)^{-\sigma_{q,1}} \big[\sup_{s\leq t} \{ (1+s)^{2\sigma_{q,1}}\Vert h(s)\Vert_{H^{\theta,\ell}}\} + \sup_{s\leq t} \{(1+s)^{2\sigma_{q,1}}\Vert h(s)\Vert_{X^{\theta,1} }\} \big], \label{loss_weight_est_faster}
\end{align}
and
\begin{align}
    &  \Big\Vert \Psi[\nu h](t) \Big\Vert_{H^{\theta,\ell}} \lesssim (1+t)^{-\sigma_{q,1}} \big[\sup_{s\leq t} \{ (1+s)^{2\sigma_{q,1}}\Vert h(s)\Vert_{H^{\theta,\ell}}\} + \sup_{s\leq t} \{(1+s)^{2\sigma_{q,1}}\Vert h(s)\Vert_{X^{\theta,1} }\} \big]. \notag
\end{align}

\end{lemma}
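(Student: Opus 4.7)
The plan is to mimic the $L^2$--$L^\infty$ bootstrap proof of Proposition \ref{prop:linfty_decay_rate}, applied now to the inhomogeneous problem that $\Psi[\nu h]$ solves: $\p_t g + v\cdot \nabla_x g + \mathcal{L}g = \nu h$ with $g|_{\gamma_-} = P_\gamma g$ and $g(0,\cdot,\cdot)=0$. The overall structure is: first obtain an underlying $L^2_{x,v}$ decay for $\Psi[\nu h](t)$ using the linear semigroup decay of $e^{Bt}$ through Duhamel; then upgrade this to the weighted $L^\infty_{x_3,v}H^\ell_{x_\parallel}$ norm via the method of characteristics, exactly as in Proposition \ref{prop:linfty_decay_rate} but with the source term now given by $\nu h$ instead of vanishing.

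For the $L^2_{x,v}$ step, since $\mathbf{P}_0(\nu h)=0$ by hypothesis, Proposition \ref{prop.Bdecay} gives $\Vert e^{B(t-s)}(\nu h)(s)\Vert_{L^2_{x,v}} \lesssim (1+t-s)^{-\sigma_{q,1}}\Vert \nu h(s)\Vert_{Z_q} + e^{-c_0(t-s)}\Vert \nu h(s)\Vert_{L^2_{x,v}}$. Bounding $\Vert \nu h(s)\Vert_{Z_q}$ and $\Vert \nu h(s)\Vert_{L^2_{x,v}}$ in terms of the $X^{\theta,1}$ and $H^{\theta,\ell}$ norms of $h$ at time $s$ (absorbing the polynomial weight $\nu(v)$ into a slightly smaller exponential weight coming from $w(v)=e^{\theta|v|^2}$ with $\theta<1/4$), I would integrate in $s$ and apply Lemma \ref{lemma:gamma_decay} to control the resulting time convolution $\int_0^t (1+t-s)^{-\sigma_{q,1}}(1+s)^{-2\sigma_{q,0}}\,ds$ by $(1+t)^{-\sigma_{q,0}}$, and analogously for the faster variant with $(1+s)^{-2\sigma_{q,1}}$.

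For the pointwise step, I would apply the Duhamel--characteristics decomposition of $\Psi[\nu h](t,x,v)$, producing a source contribution $\int_0^t e^{-\nu(v)(t-s)}(\nu h)(s,x-(t-s)v,v)\,ds$ together with one-$K$, two-$K$, and diffuse-boundary iteration terms parallel to \eqref{initial}--\eqref{bdr_K_i_2d}. The direct source term is estimated pointwise, using $\int_0^t \nu(v) e^{-\nu_0(t-s)}(1+s)^{-2\sigma_{q,0}}\,ds \lesssim \nu(v)(1+t)^{-2\sigma_{q,0}}$: the factor $\nu(v)$ is exactly what is lost on the LHS of \eqref{loss_weight_est} via $\nu^{-1}$, and is kept on the LHS of \eqref{recover_weight_est} since the gain-of-weight integration in Lemma \ref{lemma:k_theta} provides the recovery. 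The $K$-iteration and boundary-iteration terms are handled as in Lemma \ref{lemma:bdr} and the proof of Proposition \ref{prop:linfty_decay_rate}: split into small-time (in $s$), grazing-velocity (large $|u|$ or small $|v-u|$), and generic pieces; apply the change of variable $u_3\to y=x_3-(t-s)v_3-(s-s')u_3$ to convert the remaining generic piece to an $L^2_{x_3,v}\dot H^\ell_{x_\parallel}$ norm of $\Psi[\nu h]$ at an earlier time, which by the first step decays at the desired rate.

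The main technical obstacle is the borderline nature of the time convolutions. With $\sigma_{q,1}=1/q$ and $2\sigma_{q,0}=2/q-1$, neither exponent is larger than $1$ across the full range $q\in[1,2)$, and at $q=1$ both equal $1$, producing exactly the logarithmic case of Lemma \ref{lemma:gamma_decay}; similarly $q=2$ is borderline in the faster variant. I expect the careful bookkeeping of these convolutions, combined with the loss/recovery of the $\nu$ weight (which requires Lemma \ref{lemma:k_theta} in its gain-of-weight form and excludes polynomial $v$-growth terms at the pointwise level), to be the delicate point; the absorption of small-constant terms of the form $o(1)(1+t)^{-\sigma_{q,0}}\sup_{s\leq t}\{(1+s)^{\sigma_{q,0}}\Vert \Psi[\nu h](s)\Vert\}$ into the left-hand side then closes the estimate and yields \eqref{loss_weight_est}, \eqref{recover_weight_est}, and \eqref{loss_weight_est_faster}.
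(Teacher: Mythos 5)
Your route is genuinely different from the paper's. The paper does not re-run the $L^2$--$L^\infty$ bootstrap for the inhomogeneous equation. For \eqref{loss_weight_est} it applies Proposition \ref{prop:linfty_decay_rate} as a black box inside the Duhamel integral: after Minkowski, the integrand $\|\frac{w}{\nu}e^{B(t-s)}(\nu h)(s)\|_{L^\infty_{x_3,v}H^\ell_{x_\parallel}}$ is treated as the linear evolution of initial data $\nu h(s)$ with the weight $w$ replaced by $w/\nu$, so the $\nu$ factors cancel exactly, and one needs only $\sigma_{1,1}=1$ decay from the $X^{\theta,1}$ norm of $h$ together with Lemma \ref{lemma:gamma_decay}. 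For \eqref{recover_weight_est} the paper performs a single characteristics step and reduces the $K$-iteration and boundary pieces to \eqref{loss_weight_est} itself; there is no separate $L^2$ step for $\Psi[\nu h]$. Your proposal re-derives the entire bootstrap (including the stochastic-cycle boundary iterations) for the inhomogeneous problem, which is far more work but would be a legitimate alternative if the bookkeeping were correct. It is worth noting what each buys: the paper's reuse of Proposition \ref{prop:linfty_decay_rate} is short and modular, whereas your route would, if carried out, produce an essentially self-contained proof not relying on the linear $L^\infty$ decay statement.

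There are, however, two concrete slips that must be fixed before your route closes. First, the time-convolution exponent: you apply Proposition \ref{prop.Bdecay} with $Z_q$ for the generic $q$ of the lemma, giving $(1+t-s)^{-\sigma_{q,1}}$, and then attempt to control $\|\nu h(s)\|_{Z_q}$ via $X^{\theta,1}$ and $H^{\theta,\ell}$. Even granting the interpolation, the resulting convolution $\int_0^t (1+t-s)^{-\sigma_{q,1}}(1+s)^{-2\sigma_{q,0}}\,\dd s$ has combined exponent $\sigma_{q,1}+2\sigma_{q,0} = 3/q-1$, which falls below $1$ once $q\geq 3/2$; the integral then grows in $t$ and cannot be bounded by $(1+t)^{-\sigma_{q,0}}$, and Lemma \ref{lemma:gamma_decay} does not apply in this form. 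The correct move, as the paper does, is to take $q=1$ in the semi-group decay (the source $\nu h$ is controlled in the $L^1_{x_\parallel}$-type norm $X^{\theta,1}$, hence the $Z_1$ norm), so the kernel is $(1+t-s)^{-\sigma_{1,1}}=(1+t-s)^{-1}$, for which Lemma \ref{lemma:gamma_decay} gives $(1+t)^{-2\sigma_{q,0}}\log(2+t)\lesssim (1+t)^{-\sigma_{q,0}}$ uniformly in $1\leq q<2$.

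Second, the direct-source absorption of $\nu(v)$: you write $\int_0^t \nu(v)e^{-\nu_0(t-s)}(1+s)^{-2\sigma_{q,0}}\,\dd s \lesssim \nu(v)(1+t)^{-2\sigma_{q,0}}$ and then say Lemma \ref{lemma:k_theta} will recover the $\nu(v)$. But the direct source term $\int_0^t e^{-\nu(v)(t-s)}\nu(v)h(s,x-(t-s)v,v)\,\dd s$ carries the same velocity $v$ in both the prefactor and the exponential rate; no $K$-kernel integration is involved, so Lemma \ref{lemma:k_theta} is irrelevant here. The cancellation must come from using the full $v$-dependent exponential: $\int_0^t e^{-\nu(v)(t-s)}\nu(v)(1+s)^{-2\sigma_{q,0}}\,\dd s\lesssim(1+t)^{-2\sigma_{q,0}}$ uniformly in $v$ because $\nu(v)\int_0^\infty e^{-\nu(v)\tau}\,\dd\tau =1$. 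Using the constant lower bound $\nu_0$ in the exponent leaves an unabsorbed factor $\nu(v)$, which is unbounded in $v$, and the $L^\infty_v$ norm then fails.
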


\begin{proof}
First we prove \eqref{loss_weight_est}. We compute that
\begin{align*}
    &\Big\Vert \frac{w(v)}{\nu(v)}\int_0^t e^{B(t-s)}(\nu h)(s,x,v) \dd s \Big\Vert_{L^\infty_{x_3,v} H^\ell_{x_\parallel} } \lesssim \int_0^t  \Big\Vert \frac{w(v)}{\nu(v)} e^{B(t-s)}(\nu h)(s,x,v) \Big\Vert_{L^\infty_{x_3,v} H^\ell_{x_\parallel} }  \dd s \\
    & \lesssim  \int_0^t    \Big[e^{-c_0(t-s)} \Big\Vert \frac{w(v)}{\nu(v)} \nu(v) h(s,x,v) \Big\Vert_{L^\infty_{x_3,v}H^\ell_{x_\parallel}}  + (1+t-s)^{-\sigma_{1,1}} \Big\Vert \frac{w(v)}{\nu(v)}\nu(v) h(s,x,v) \Big\Vert_{L^\infty_{x_3,v}L^1_{x_\parallel}}   \Big] \dd s\\
    & \lesssim \int_0^t \big[ e^{-c_0(t-s)} (1+s)^{-2\sigma_{q,0}}   + (1+t-s)^{-\sigma_{1,1}} (1+s)^{-2\sigma_{q,0}}  \big] \dd s \\
    &\times \big[\sup_{s\leq t} \{ (1+s)^{2\sigma_{q,0}}\Vert h(s)\Vert_{H^{\theta,\ell}}\} + \sup_{s\leq t} \{(1+s)^{2\sigma_{q,0}}\Vert h(s)\Vert_{X^{\theta,1} }\} \big] \\
    & \lesssim (1+t)^{-\sigma_{q,0}} \big[\sup_{s\leq t} \{ (1+s)^{2\sigma_{q,0}}\Vert h(s)\Vert_{H^{\theta,\ell}}\} + \sup_{s\leq t} \{(1+s)^{2\sigma_{q,0}}\Vert h(s)\Vert_{X^{\theta,1} }\} \big] .
\end{align*}
In the second line, we have applied Proposition \ref{prop:linfty_decay_rate}, with replacing the weight $w(v)$ by $\frac{w(v)}{\nu(v)}$. In the last line, we have applied Lemma \ref{lemma:gamma_decay} with $0<2\sigma_{q,0}\leq 1$ to have
\begin{align*}
    &    \int_{0}^t (1+t-s)^{-\sigma_{1,1}} (1+s)^{-2\sigma_{q,0}} \dd s = \int_0^t (1+t-s)^{-1} (1+s)^{-(\frac{2}{q}-1)} \dd s \\
    & \lesssim (1+t)^{-(\frac{2}{q}-1)} \log(2+t) \lesssim (1+t)^{-(\frac{1}{q}-\frac{1}{2})} = (1+t)^{-\sigma_{q,0}}.
\end{align*}

Next, we prove \eqref{recover_weight_est}. Recall $B = -v\cdot \nabla_x -\nu(v) + K$, by method of characteristic, we have
\begin{align*}
      &\int_0^t e^{B(t-s)}(\nu h)(s,x,v) \dd s \\
      &= \int_0^t [\mathbf{1}_{t^1<s}e^{-\nu(v)(t-s)} \nu(v) h(x-(t-s)v,v)  + \mathbf{1}_{t^1>s} e^{-\nu(v)\tb}e^{B(t^1-s)}(\nu h)(s,\xb,v)  ]  \dd s \\
    & + \int^t_{\max\{0,t^1\}} \dd s e^{-\nu(t-s)} \int^s_0 K e^{B(s-\tau)}(\nu h)(\tau,x-(t-s)v,v) \dd \tau.
\end{align*}
We control the first term by
\begin{align*}
    &   \Big\Vert \int_0^t \mathbf{1}_{t^1<s} e^{-\nu(v)(t-s)} \nu(v) h(s,x-(t-s)v,v) \dd s\Big\Vert_{H^{\theta,\ell}} \\
    &\lesssim \sup_{s\leq t} \{(1+s)^{2\sigma_{q,0}}\Vert h(s) \Vert_{H^{\theta,\ell}}\}\int_0^t e^{-\nu(v)(t-s)}\nu(v) (1+s)^{-2\sigma_{q,0}}  \dd s \\
    & \lesssim (1+t)^{-2\sigma_{q,0}} \sup_{s\leq t} \{(1+s)^{2\sigma_{q,0}}\Vert h(s)\Vert_{H^{\theta,\ell}}\}, 
\end{align*}
and control the third term using Lemma \ref{lemma:k_theta},
\begin{align*}
    & \Big\Vert   \int^t_{\max\{0,t^1\}} \dd s e^{-\nu(t-s)} \int^s_0 K e^{B(s-\tau)}(\nu h)(\tau,x-(t-s)v,v) \dd \tau   \Big\Vert_{H^{\theta,\ell}}  \\
    &= \Big\Vert \frac{w(v)}{\nu(v)} \nu(v) \int^t_{\max\{0,t^1\}} \dd s e^{-\nu(t-s)} \int^s_0 K e^{B(s-\tau)}(\nu h)(\tau,x-(t-s)v,v) \dd \tau      \Big\Vert_{L^\infty_{x_3,v} H^\ell_{x_\parallel}} \\
    & = \Big\Vert \int^t_{\max\{0,t^1\}} \dd s e^{-\nu(t-s)} \nu(v) \int^s_0 \frac{w(v)}{\nu(v)} \frac{\nu(u)}{w(u)} \int_{\mathbb{R}^3} \dd u  \mathbf{k}(v,u) \frac{w(u)}{\nu(u)} e^{B(s-\tau)}(\nu h)(\tau,x-(t-s)v,u) \dd \tau\Big\Vert_{L^\infty_{x_3,v} H^\ell_{x_\parallel}} \\
    & \lesssim  \Big\Vert \int^t_{\max\{0,t^1\}} \dd s e^{-\nu(t-s)} \nu(v)  \int_{\mathbb{R}^3} \dd u  \mathbf{k}_\theta(v,u) \frac{w(u)}{\nu(u)} \Big\Vert\int^s_0  e^{B(s-\tau)}(\nu h)(\tau,\cdot,x_3-(t-s)v_3,u) \dd \tau \Big\Vert_{H^\ell_{x_\parallel}}\Big\Vert_{L^\infty_{x_3,v}} \\
    & \lesssim \Big\Vert \int^t_{\max\{0,t^1\}} \dd s e^{-\nu(t-s)} \nu(v)  \int_{\mathbb{R}^3} \dd u  \mathbf{k}_\theta(v,u)   \Big\Vert \frac{w(u)}{\nu(u)}\int^s_0  e^{B(s-\tau)}(\nu h)(\tau,x_\parallel,x_3,u) \dd \tau \Big\Vert_{L^\infty_{x_3,u}H^\ell_{x_\parallel}} \Big\Vert_{L^\infty_{v}} \\
    & \lesssim  \sup_v \int^t_0 \dd s e^{-\nu (t-s)}\nu(v) (1+s)^{-\sigma_{q,0}} \times \big[\sup_{s\leq t} \{ (1+s)^{2\sigma_{q,0}}\Vert h(s)\Vert_{H^{\theta,\ell}}\} + \sup_{s\leq t} \{(1+s)^{2\sigma_{q,0}}\Vert h(s)\Vert_{X^{\theta,1} }\} \big] \\
    & \lesssim (1+t)^{-\sigma_{q,0}}\big[\sup_{s\leq t} \{ (1+s)^{2\sigma_{q,0}}\Vert h(s)\Vert_{H^{\theta,\ell}}\} + \sup_{s\leq t} \{(1+s)^{2\sigma_{q,0}}\Vert h(s)\Vert_{X^{\theta,1} }\} \big].
\end{align*}
In the second last line, we have applied \eqref{loss_weight_est}. 

Last we focus on the boundary term. We apply the boundary condition to have
\begin{align*}
    &     \Big\Vert  \int^t_{0} \mathbf{1}_{t^1>s} e^{-\nu \tb} \sqrt{\mu(v)}   \int_{\text{sign}(\xb)\times u_3>0}  \sqrt{\mu(u)}|u_3| e^{B(t^1-s)}(\nu h)(s,\xb,u)   \dd u   \dd s \Big\Vert_{H^{\theta,\ell}} \\
    & = \Big\Vert    e^{-\nu \tb} \sqrt{\mu(v)} \int_{\text{sign}(\xb)\times u_3>0} \dd u\sqrt{\mu(u)} |u_3|\frac{\nu(u)}{w(u)} \frac{w(u)}{\nu(u)} \int_0^{t^1} e^{B(t^1-s)}(\nu h)(s,\xb,u) \dd s \Big\Vert_{H^{\theta,\ell}} \\
    & \lesssim   \Big\Vert w(v)e^{-\nu \tb} \sqrt{\mu(v)} \int_{\text{sign}(\xb)\times u_3>0} \dd u w^{-1}(u) \Big\Vert \frac{w(u)}{\nu(u)} \int_0^{t^1} e^{B(t^1-s)}(\nu h)(s,\xb,u) \dd s \Big\Vert_{L^\infty_{x_3,u}H^\ell_{x_\parallel}}    \Big\Vert_{L^\infty_{x_3,v}} \\
    & \lesssim \sup_{x_3,v} e^{-\nu (t-t^1)} (1+t^1)^{-\sigma_{q,0}} \big[\sup_{s\leq t} \{ (1+s)^{2\sigma_{q,0}}\Vert h(s)\Vert_{H^{\theta,\ell}}\} + \sup_{s\leq t} \{(1+s)^{2\sigma_{q,0}}\Vert h(s)\Vert_{X^{\theta,1} }\} \big] \\
    & \lesssim (1+t)^{-\sigma_{q,0}}\big[\sup_{s\leq t} \{ (1+s)^{2\sigma_{q,0}}\Vert h(s)\Vert_{H^{\theta,\ell}}\} + \sup_{s\leq t} \{(1+s)^{2\sigma_{q,0}}\Vert h(s)\Vert_{X^{\theta,1} }\} \big].
\end{align*}
In the second last line, we have applied \eqref{loss_weight_est}. We conclude \eqref{recover_weight_est}.

The proof of \eqref{loss_weight_est_faster} is similar, the only difference lies in the time integration:
\begin{align*}
    & \int^t_0 (1+t-s)^{-\sigma_{1,1}} (1+s)^{-2\sigma_{q,1}} \dd s = \int_0^t (1+t-s)^{-1} (1+s)^{- (\frac{2}{q})} \dd s \\
    & \lesssim \begin{cases}
        \frac{1}{1+t}, \text{ if } q<2 \\
        \frac{\log(2+t)}{1+t} ,\text{ if } q=2
    \end{cases} \lesssim \begin{cases}
        \frac{1}{(1+t)^{1/q}}, \text{ if } q<2 \\
        \frac{1}{(1+t)^{1/2}} ,\text{ if } q=2
    \end{cases} = (1+t)^{-\sigma_{q,1}}.
\end{align*}
We conclude the lemma.
\end{proof}

Now we set $h=\nu^{-1}\Gamma(f,f)$. To apply Lemma \ref{lemma:nonlinear_aprior_est}, we analyze the nonlinear operator $\nu^{-1}\Gamma(f,f)$ under the norm $\Vert \cdot \Vert_{H^{\theta,\ell}}$ and $\Vert \cdot \Vert_{X^{\theta,1}}$.

\begin{lemma}\label{lemma:gamma_control}
We have the following time-weighted estimate for the nonlinear term:
\begin{align}
    &     (1+t)^{2\sigma_{q,0}}\Vert \nu^{-1} \Gamma(f,f)(t)\Vert_{H^{\theta,\ell}} + (1+t)^{2\sigma_{q,0}} \Vert \nu^{-1}\Gamma(f,f)(t)\Vert_{X^{\theta,1}} \notag\\
    & \lesssim \Big( (1+t)^{\sigma_{q,0}} \Vert f(t)\Vert_{H^{\theta,\ell}} \Big)^2.
    \label{lemma:gamma_controlp1}
\end{align}

Recall \eqref{def.Psi}. Thus for $1\leq q< 2$,
\begin{align}
    &  \Big\Vert  \Psi[\nu \nu^{-1}\Gamma(f,f)](t)  \Big\Vert_{H^{\theta,\ell}} \lesssim (1+t)^{-\sigma_{q,0}} \Big(\sup_{s\leq t} (1+s)^{\sigma_{q,0}} \Vert f(s)\Vert_{H^{\theta,\ell}} \Big)^2. \label{psi_gamma_bdd}
\end{align}

Similarly, if $\sup_{s\leq t}(1+s)^{\sigma_{q,1}}\Vert f(s)\Vert_{H^{\theta,\ell}}<\infty,$ then
\begin{align}
    &     (1+t)^{2\sigma_{q,1}}\Vert \nu^{-1} \Gamma(f,f)(t)\Vert_{H^{\theta,\ell}} + (1+t)^{2\sigma_{q,1}} \Vert \nu^{-1}\Gamma(f,f)(t)\Vert_{X^{\theta,1}} \notag\\
    & \lesssim \Big( (1+t)^{\sigma_{q,1}} \Vert f(t)\Vert_{H^{\theta,\ell}} \Big)^2,
    \label{lemma:gamma_controlp3}
\end{align}
and for $1\leq q\leq 2$, it holds that
\begin{align}
    &  \Big\Vert  \Psi[\nu \nu^{-1}\Gamma(f,f)](t)  \Big\Vert_{H^{\theta,\ell}} \lesssim (1+t)^{-\sigma_{q,1}} \Big(\sup_{s\leq t} (1+s)^{\sigma_{q,1}} \Vert f(s)\Vert_{H^{\theta,\ell}} \Big)^2. \label{psi_gamma_bdd_2}
\end{align}

\end{lemma}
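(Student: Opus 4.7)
The plan is to first establish the pointwise-in-time bilinear estimates \eqref{lemma:gamma_controlp1} and \eqref{lemma:gamma_controlp3} by direct use of Lemma \ref{lemma:gamma_property} together with a Sobolev embedding in $\mathbb{R}^2_{x_\parallel}$, and then feed these into Lemma \ref{lemma:nonlinear_aprior_est} with $h=\nu^{-1}\Gamma(f,f)$ to conclude \eqref{psi_gamma_bdd} and \eqref{psi_gamma_bdd_2}.

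For the $H^{\theta,\ell}$-bound on $\nu^{-1}\Gamma(f,f)$, I would first apply the first inequality in Lemma \ref{lemma:gamma_property}, which reduces the $H^\ell_{x_\parallel}$-norm in $x_\parallel$ to a pointwise-in-$x_\parallel$ bilinear expression $|\Gamma(f^0,f^0)|$ with $f^0(t,x_3,v):=\Vert f(t,\cdot,x_3,v)\Vert_{H^\ell_{x_\parallel}}$. Then the second inequality of Lemma \ref{lemma:gamma_property} yields $\Vert w\nu^{-1}\Gamma(f^0,f^0)\Vert_{L^\infty_{x_3,v}}\lesssim \Vert wf^0\Vert_{L^\infty_{x_3,v}}^2=\Vert f\Vert_{H^{\theta,\ell}}^2$. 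Multiplying by $(1+t)^{2\sigma_{q,0}}$ (resp.\ $(1+t)^{2\sigma_{q,1}}$) gives the first part of \eqref{lemma:gamma_controlp1} (resp.\ \eqref{lemma:gamma_controlp3}).

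For the $X^{\theta,1}$-bound, I would exploit the fact that for each fixed $(x_3,v)$, the bilinear operator $\Gamma(f,f)(\cdot,x_3,v)$ only depends on $f(\cdot,x_3,\cdot)$ pointwise in $x_\parallel$. Integrating in $x_\parallel$ and applying the Cauchy--Schwarz inequality in $x_\parallel$ inside the collision kernel yields
\[
\int_{\R^2}|\Gamma(f,f)(x_\parallel,x_3,v)|\,\dd x_\parallel \lesssim |\tilde\Gamma(f^\sharp,f^\sharp)(x_3,v)|,
\]
where $f^\sharp(x_3,v):=\Vert f(\cdot,x_3,v)\Vert_{L^2_{x_\parallel}}$ and $\tilde\Gamma$ has the same structure as $\Gamma$. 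The second inequality of Lemma \ref{lemma:gamma_property} then gives $\Vert w\nu^{-1}\Gamma(f,f)\Vert_{L^\infty_{x_3,v}L^1_{x_\parallel}}\lesssim \Vert wf^\sharp\Vert_{L^\infty_{x_3,v}}^2$, and the Sobolev embedding $H^\ell(\R^2)\hookrightarrow L^2(\R^2)$ (valid for $\ell\geq 0$) bounds $\Vert wf^\sharp\Vert_{L^\infty_{x_3,v}}\lesssim \Vert f\Vert_{H^{\theta,\ell}}$. This closes the second half of \eqref{lemma:gamma_controlp1} and \eqref{lemma:gamma_controlp3} after multiplying by the time weight.

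Finally, to obtain \eqref{psi_gamma_bdd} and \eqref{psi_gamma_bdd_2}, I apply Lemma \ref{lemma:nonlinear_aprior_est} with $h=\nu^{-1}\Gamma(f,f)$. The crucial hypothesis $\mathbf{P}_0(\nu h)=\mathbf{P}_0(\Gamma(f,f))=0$ follows from the conservation identity $\int_{\R^3}\Gamma(f,f)\sqrt{\mu}\,\dd v=0$, which is a standard consequence of the collision invariants (so its $x_3$-average against $\sqrt{\mu}$ also vanishes). Combining the $\Vert h\Vert_{H^{\theta,\ell}}$ and $\Vert h\Vert_{X^{\theta,1}}$ controls just proved with Lemma \ref{lemma:nonlinear_aprior_est} then gives the claimed $(1+t)^{-\sigma_{q,0}}$ (resp.\ $(1+t)^{-\sigma_{q,1}}$) decay for $\Psi[\nu\nu^{-1}\Gamma(f,f)]$. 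The only subtle point is the Sobolev embedding step in the $X^{\theta,1}$ estimate, where one must carefully track that the bilinear form $\Gamma$ commutes with the $L^2_{x_\parallel}$-reduction via Cauchy--Schwarz; everything else is a direct application of prior results.
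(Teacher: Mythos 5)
Your proof is correct and follows essentially the same route as the paper: reduce the $H^{\theta,\ell}$-norm of $\nu^{-1}\Gamma(f,f)$ using the first inequality of Lemma \ref{lemma:gamma_property}, reduce the $X^{\theta,1}$-norm by Cauchy--Schwarz (H\"older) in $x_\parallel$ inside the bilinear integrand, bound both by $\Vert f\Vert_{H^{\theta,\ell}}^2$ via the second inequality of Lemma \ref{lemma:gamma_property} and the trivial embedding $H^\ell_{x_\parallel}\hookrightarrow L^2_{x_\parallel}$, and then feed $h=\nu^{-1}\Gamma(f,f)$ into Lemma \ref{lemma:nonlinear_aprior_est}. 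You are slightly more explicit than the paper in checking the hypothesis $\mathbf{P}_0(\nu h)=\mathbf{P}_0\Gamma(f,f)=0$ via the collision invariants, which the paper leaves implicit; this is a welcome addition but not a different argument.
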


\begin{proof}
We compute that
\begin{align*}
    & \Vert \nu^{-1}\Gamma(f,f)\Vert_{H^{\theta,\ell}} \lesssim \Vert \nu^{-1} w(v) \Gamma(\Vert f\Vert_{H^\ell_{x_\parallel}}, \Vert f\Vert_{H^\ell_{x_\parallel}})  \Vert_{L^\infty_{x_3,v}} \lesssim  \Vert w f \Vert_{L^\infty_{x_3,v}H^\ell_{x_\parallel}}^2.
\end{align*}
Here we have applied Lemma \ref{lemma:gamma_property}.

Next we compute that
\begin{align*}
    & \Vert \nu^{-1}\Gamma(f,f)(t)\Vert_{X^{\theta,1}} \leq \Vert w\nu^{-1}\Gamma(\Vert f\Vert_{L^2_{x_\parallel}}, \Vert f\Vert_{L^2_{x_\parallel}})(t,x_3,v) \Vert_{L^\infty_{x_3,v}} \\
    & \lesssim \Vert wf(t)\Vert_{L^\infty_{x_3,v}L^2_{x_\parallel}}^2 \lesssim \Vert f(t)\Vert_{H^{\theta,\ell}}^2.
\end{align*}
In the first inequality, we have applied the H\"older inequality.

Collecting the estimates above, \eqref{lemma:gamma_controlp1} follows. Moreover, applying Lemma \ref{lemma:nonlinear_aprior_est}, we conclude \eqref{psi_gamma_bdd} as well as \eqref{lemma:gamma_controlp3}. The proof of \eqref{psi_gamma_bdd_2} is similar.  
\end{proof}

Now, coming back to the nonlinear problem \eqref{Duhammel}, we have the following a priori estimate.

\begin{proposition}\label{prop:nonlinear_apriori}
Let $f$ be a solution to the nonlinear problem \eqref{nonlinear_f}, then for $1\leq q< 2$, we have
\begin{align}
      \Vert f(t)\Vert_{H^{\theta,\ell}} & \lesssim e^{-c_0 t}\Vert f_0\Vert_{H^{\theta,\ell}} + (1+t)^{-\sigma_{q,0}} \Vert f_0\Vert_{X^{\theta,q}} \notag\\
    & \quad + (1+t)^{-\sigma_{q,0}} \big(\sup_{s\leq t} \{(1+s)^{\sigma_{q,0}}\Vert f(s)\Vert_{H^{\theta,\ell}}\}   \big)^2.
    \label{prop:nonlinear_apriori.p1}
\end{align}

If $\mathbf{P}_0 f_0 = 0$, then for $1\leq q\leq 2$, it holds that
\begin{align}
        \Vert f(t)\Vert_{H^{\theta,\ell}} &\lesssim e^{-c_0 t} \Vert f_0\Vert_{H^{\theta,\ell}} + (1+t)^{-\sigma_{q,1}}\Vert f_0\Vert_{X^{\theta,q}}\notag \\
    &\quad + (1+t)^{-\sigma_{q,1}} (\sup_{s\leq t}\{(1+s)^{\sigma_{q,1}}\Vert f(s)\Vert_{H^{\theta,\ell}}\})^2.
    \label{prop:nonlinear_apriori.p2}
\end{align}
\end{proposition}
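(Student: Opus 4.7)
The plan is to combine the linear semi-group decay rate from Proposition \ref{prop:linfty_decay_rate} with the nonlinear estimate from Lemma \ref{lemma:gamma_control} via the Duhamel representation \eqref{Duhammel}:
\[
f(t,x,v) = e^{Bt}f_0 + \Psi[\nu \cdot \nu^{-1}\Gamma(f,f)](t,x,v).
\]

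First, I would apply Proposition \ref{prop:linfty_decay_rate} to the linear part to obtain
\[
\Vert e^{Bt}f_0\Vert_{H^{\theta,\ell}} \lesssim e^{-c_0 t}\Vert f_0\Vert_{H^{\theta,\ell}} + (1+t)^{-\sigma_{q,0}}\Vert f_0\Vert_{X^{\theta,q}},
\]
which takes care of the first two terms in \eqref{prop:nonlinear_apriori.p1}. The key preliminary observation for handling the nonlinear term is that $\mathbf{P}_0 \Gamma(f,f)=0$. Indeed, from the definition \eqref{P0_def} and the mass conservation property of the Boltzmann collision operator $Q$, one has $\int_{\mathbb{R}^3}\Gamma(f,f)\sqrt{\mu}\,\dd v =0$ pointwise in $(t,x)$, so after integrating in $x_3$ the whole velocity moment vanishes and thus $\mathbf{P}_0[\nu\cdot \nu^{-1}\Gamma(f,f)]=\mathbf{P}_0\Gamma(f,f)=0$. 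This is precisely the structural hypothesis needed to invoke Lemma \ref{lemma:nonlinear_aprior_est} (and hence its consequence Lemma \ref{lemma:gamma_control}).

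Setting $h = \nu^{-1}\Gamma(f,f)$, I would then apply \eqref{psi_gamma_bdd} of Lemma \ref{lemma:gamma_control} to conclude
\[
\Vert \Psi[\nu\cdot \nu^{-1}\Gamma(f,f)](t)\Vert_{H^{\theta,\ell}} \lesssim (1+t)^{-\sigma_{q,0}}\Big(\sup_{s\leq t}(1+s)^{\sigma_{q,0}}\Vert f(s)\Vert_{H^{\theta,\ell}}\Big)^2,
\]
which provides the remaining quadratic term in \eqref{prop:nonlinear_apriori.p1}. Adding the two bounds establishes the general case.

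For the improved rate \eqref{prop:nonlinear_apriori.p2} under the extra assumption $\mathbf{P}_0 f_0=0$, the linear contribution gets the faster decay $(1+t)^{-\sigma_{q,1}}$ from the second part of Proposition \ref{prop:linfty_decay_rate}. For the nonlinear contribution, since $\mathbf{P}_0 \Gamma(f,f)=0$ holds independently of any assumption on $f_0$, I can invoke the improved estimate \eqref{psi_gamma_bdd_2} of Lemma \ref{lemma:gamma_control} in the range $1\leq q\leq 2$ to obtain the corresponding $(1+t)^{-\sigma_{q,1}}$ bound for $\Psi[\nu\cdot \nu^{-1}\Gamma(f,f)]$. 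Summing the two contributions yields \eqref{prop:nonlinear_apriori.p2}. The only subtle point, and thus the main ``obstacle'' in the argument, is the borderline time integrability at $q=2$: it is exactly the vanishing of $\mathbf{P}_0\Gamma(f,f)$ that upgrades the logarithmically divergent convolution $\int_0^t(1+t-s)^{-1}(1+s)^{-2/q}\,\dd s$ to $(1+t)^{-\sigma_{q,1}}$ via Lemma \ref{lemma:gamma_decay}, which is why the same spectral projection hypothesis on $f_0$ also produces the extra decay at the nonlinear level.
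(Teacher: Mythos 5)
Your proposal is correct and follows exactly the paper's argument: split via Duhamel, apply Proposition \ref{prop:linfty_decay_rate} to $e^{Bt}f_0$, and apply \eqref{psi_gamma_bdd} (resp.\ \eqref{psi_gamma_bdd_2}) of Lemma \ref{lemma:gamma_control} to $\Psi[\nu\,\nu^{-1}\Gamma(f,f)]$; you also usefully make explicit the observation $\mathbf{P}_0\Gamma(f,f)=0$ (from mass conservation of $Q$), which the paper leaves implicit inside the application of Lemma \ref{lemma:nonlinear_aprior_est}. One small caveat on your closing remark: the upgrade from $(1+t)^{-\sigma_{q,0}}$ to $(1+t)^{-\sigma_{q,1}}$ for the nonlinear contribution is not driven by $\mathbf{P}_0\Gamma(f,f)=0$ alone — that vanishing already supplies the $(1+t-s)^{-\sigma_{1,1}}$ kernel in both cases — but by the faster $(1+s)^{-2\sigma_{q,1}}$ decay assumed on $h$, which in the bootstrap is available precisely because $\mathbf{P}_0 f_0=0$ improves the decay of $f$ itself.
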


\begin{proof}
From \eqref{Duhammel} together with \eqref{def.Psi}, it is clear that
\begin{align*}
    & \Vert f(t)\Vert_{H^{\theta,\ell}} \leq \Vert e^{Bt}f_0\Vert_{H^{\theta,\ell}} + \Vert \Psi[\nu \nu^{-1}\Gamma(f,f)](t)\Vert_{H^{\theta,\ell}}.
\end{align*}
The first term is estimated by Proposition \ref{prop:linfty_decay_rate} as
\begin{align*}
    & \Vert e^{Bt}f_0\Vert_{H^{\theta,\ell}} \lesssim e^{-c_0 t}\Vert f_0\Vert_{H^{\theta,\ell}} + (1+t)^{-\sigma_{q,0}} \Vert f_0\Vert_{X^{\theta,q}}.
\end{align*}
The second term is estimated by \eqref{psi_gamma_bdd} in Lemma \ref{lemma:gamma_control}. Then, \eqref{prop:nonlinear_apriori.p1} is proved.

The proof of \eqref{prop:nonlinear_apriori.p2} when $\mathbf{P}_0f_0 = 0$ is similar. We conclude the proposition.
\end{proof}

\begin{proof}[\textbf{Proof of Theorem \ref{thm:asymptotic_stability}}]

Given the a priori estimate in Proposition \ref{prop:nonlinear_apriori}, we impose the smallness assumption on the initial condition \eqref{initial_condition}. It is standard to apply the sequential argument in the perturbation framework with the linear well-posedness in Proposition \ref{prop:linfty_decay_rate} to conclude the existence and uniqueness of \eqref{nonlinear_f}. Establishing the positivity of the solution \eqref{proF} in the case of hard sphere model is also standard by the usual sequential argument. For details, we refer to \cite{EGKM} and \cite{ukai2006mathematical}. 
\end{proof}

\subsection{Leading order asymptotic behavior of the solution}\label{sec:leading}

Recall \eqref{nonlinear_f} with initial data $f_0$. Let $f_{\mathbf{p}}$ be the solution to the following problem, where the initial condition is defined as the projection onto the $x_3$-average mass.
\begin{align}
\begin{cases}
       & \p_t f_{\mathbf{p}} + v\cdot \nabla_x f_{\mathbf{p}} + \mathcal{L}f_{\mathbf{p}} = 0, \\
    & f_{\mathbf{p}}(0,x,v) = \mathbf{P}_0 f_0, \\
    & f_{\mathbf{p}}|_{\gamma_-} = P_\gamma f_{\mathbf{p}}. 
\end{cases} \label{linear_leading}
\end{align}

We first prove that the leading asymptotic profile of the nonlinear problem \eqref{nonlinear_f} is given by the linearized problem \eqref{linear_leading}.

\begin{lemma}\label{lemma:f1_leading}
Let $f$ be the solution to \eqref{nonlinear_f} and $f_{\mathbf{p}}$ be the solution to \eqref{linear_leading}. When $1\leq q<2$, it holds that
\begin{align*}
    &  \Vert f-f_{\mathbf{p}}\Vert_{H^{\theta,\ell}}  \lesssim (1+t)^{-2\sigma_{q,0}}\log(2+t)\big[\Vert f_0\Vert_{H^{\theta,\ell}} + \Vert f_0\Vert_{X^{\theta,q}}   \big].
\end{align*}

\end{lemma}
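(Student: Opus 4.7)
The plan is to exploit the linear structure of the difference and the $\mathbf{P}_0 = 0$ property at two different levels. The difference $d := f - f_{\mathbf{p}}$ satisfies the linear Boltzmann equation
\begin{align*}
\p_t d + v\cdot \nabla_x d + \mathcal{L}d = \Gamma(f,f),\qquad d(0) = (I-\mathbf{P}_0)f_0,\qquad d|_{\gamma_-} = P_\gamma d,
\end{align*}
so Duhamel's formula yields
\begin{align*}
d(t) = e^{Bt}(I-\mathbf{P}_0)f_0 + \int_0^t e^{B(t-s)}\Gamma(f,f)(s)\,\dd s.
\end{align*}
The key observation is that both terms satisfy the structural vanishing condition $\mathbf{P}_0 = 0$: trivially for the initial data of the first term, and via mass conservation $\int_{\R^3}\Gamma(f,f)\sqrt{\mu}\,\dd v = 0$ for the source of the second. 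This allows us to invoke the faster linear rate stated in Proposition \ref{prop:linfty_decay_rate}.

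For the linear contribution, Proposition \ref{prop:linfty_decay_rate} in the $\mathbf{P}_0 = 0$ regime gives
\begin{align*}
\Vert e^{Bt}(I-\mathbf{P}_0)f_0\Vert_{H^{\theta,\ell}} \lesssim e^{-c_0 t}\Vert f_0\Vert_{H^{\theta,\ell}} + (1+t)^{-\sigma_{q,1}}\Vert f_0\Vert_{X^{\theta,q}}.
\end{align*}
The elementary identity $\sigma_{q,1} - 2\sigma_{q,0} = 1 - 1/q \geq 0$ for $q \geq 1$ bounds this by the target rate $(1+t)^{-2\sigma_{q,0}}[\Vert f_0\Vert_{H^{\theta,\ell}} + \Vert f_0\Vert_{X^{\theta,q}}]$.

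For the nonlinear contribution, I would sharpen the reasoning of Lemma \ref{lemma:nonlinear_aprior_est} (which as stated only records the $(1+t)^{-\sigma_{q,0}}$ rate sufficient to close the bootstrap). Setting $h := \nu^{-1}\Gamma(f,f)$, Lemma \ref{lemma:gamma_control} combined with the decay of $f$ from Theorem \ref{thm:asymptotic_stability} gives $\Vert h(s)\Vert_{H^{\theta,\ell}} + \Vert h(s)\Vert_{X^{\theta,1}} \lesssim (1+s)^{-2\sigma_{q,0}}\mathcal{D}_0^2$ with $\mathcal{D}_0 := \Vert f_0\Vert_{H^{\theta,\ell}} + \Vert f_0\Vert_{X^{\theta,q}}$. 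Applying the semigroup estimate of Proposition \ref{prop:linfty_decay_rate} to $e^{B(t-s)}\nu h(s)$ with $q=1$ in the faster regime (valid since $\mathbf{P}_0(\nu h) = 0$) yields the convolution bound
\begin{align*}
\int_0^t e^{-c_0(t-s)}\Vert h(s)\Vert_{H^{\theta,\ell}}\,\dd s + \int_0^t (1+t-s)^{-1}\Vert h(s)\Vert_{X^{\theta,1}}\,\dd s,
\end{align*}
and Lemma \ref{lemma:gamma_decay} controls the second integral by $(1+t)^{-2\sigma_{q,0}}\log(2+t)\,\mathcal{D}_0^2$. This produces the desired rate for the $\frac{w}{\nu}$-weighted norm. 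The full $\nu$-weight is then restored by representing $e^{B(t-s)}$ along backward characteristics exactly as in the second half of the proof of Lemma \ref{lemma:nonlinear_aprior_est}, decomposing into the initial, boundary, and $K$-iterated contributions, none of which degrades the decay rate.

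The principal obstacle is this last $\nu$-weight recovery step: because the collision kernel $K$ destroys pointwise $\nu$-bounds and $\Gamma$ itself contains a $\nu$-factor that cannot be absorbed into the exponential weight $w$, one must run the full characteristic plus stochastic-cycle argument of Section \ref{sec:asymptotic_infty} to pass from the $\frac{w}{\nu}$-weighted estimate back to the $w$-weighted $H^{\theta,\ell}$ norm. The exclusion $q=2$ is forced exactly at the convolution bound in Lemma \ref{lemma:gamma_decay}: the exponent $2\sigma_{q,0} = 2/q - 1$ collapses to zero at $q=2$, and the log-rate $(1+t)^{-2\sigma_{q,0}}\log(2+t)$ degenerates to pure $\log(2+t)$ growth, defeating any convergence statement.
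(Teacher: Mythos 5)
Your proof is correct and follows the same route as the paper: set $g=f-f_{\mathbf{p}}$, apply Duhamel, invoke the $\mathbf{P}_0=0$ faster linear rate for both the initial data and the nonlinear source, and control the resulting convolution via Lemma \ref{lemma:gamma_decay}. The one point you make more explicitly than the paper is that the stated conclusion of Lemma \ref{lemma:nonlinear_aprior_est} only records the weaker $(1+t)^{-\sigma_{q,0}}$ rate, so one must keep the sharper intermediate convolution bound $\int_0^t(1+t-s)^{-1}(1+s)^{-2\sigma_{q,0}}\,\dd s$ from its proof rather than the final relaxed estimate, and that the $\nu$-weight recovery by characteristics carries this sharper rate through unchanged — which is exactly what the paper does implicitly.
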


\begin{proof}
Let $g= f-f_{\mathbf{p}}$, then the equation of $g$ is given by
\begin{align*}
\begin{cases}
    &  \p_t g + v\cdot \nabla_x g + \mathcal{L}g = \Gamma(f,f) ,\\
    &g(0,x,v) = f_0 - \mathbf{P}_0 f_0, \\
    & g|_{\gamma_-} = P_\gamma g.
\end{cases}
\end{align*}

Applying Theorem \ref{thm:asymptotic_stability} and Proposition \ref{prop:nonlinear_apriori}, we have the following decay estimate for $f_{\mathbf{p}}$ and $f$:
\begin{align*}
    &\Vert f(t)\Vert_{H^{\theta,\ell}} \lesssim (1+t)^{-\sigma_{q,0}} [\Vert  f_0\Vert_{X^{\theta,q}} + \Vert f_0\Vert_{H^{\theta,\ell}}], \\
    &\Vert f_{\mathbf{p}}(t)\Vert_{H^{\theta,\ell}} \lesssim (1+t)^{-\sigma_{q,0}} [\Vert  \mathbf{P}f_0\Vert_{X^{\theta,q}} + \Vert \mathbf{P}f_0\Vert_{H^{\theta,\ell}}].
\end{align*}

Applying the Duhammel principle to $g$, since $\mathbf{P}_0(f_0-\mathbf{P}_0f_0) = 0$, we have
\begin{align*}
    &  \Vert g(t)\Vert_{H^{\theta,\ell}} \lesssim (1+t)^{-\sigma_{q,1}} [\Vert f_0\Vert_{H^{\theta,\ell}} + \Vert f_0\Vert_{X^{\theta,q}}] + \Vert \Psi[\nu \nu^{-1}\Gamma(f,f)]\Vert_{H^{\theta,\ell}}  \\
    & \lesssim (1+t)^{-\sigma_{q,1}}[\Vert f_0\Vert_{H^{\theta,\ell}} + \Vert f_0\Vert_{X^{\theta,q}}]  \\
    &+ \int_0^t (1+t-s)^{-\sigma_{1,1}} (1+s)^{-2\sigma_{q,0} } \dd s \times \big(\sup_{s\leq t}(1+s)^{\sigma_{q,0}}\Vert f(s)\Vert_{H^{\theta,\ell}}\big)^2 \\
    & \lesssim (1+t)^{-\sigma_{q,1}}[\Vert f_0\Vert_{H^{\theta,\ell}} + \Vert f_0\Vert_{X^{\theta,q}}]  + (1+t)^{-2\sigma_{q,0}}\log(2+t)\times \big(\sup_{s\leq t}(1+s)^{\sigma_{q,0}}\Vert f(s)\Vert_{H^{\theta,\ell}} \big)^2 .
\end{align*}
In the second line, we have applied Lemma \ref{lemma:gamma_control}. In the last line, we have applied Lemma \ref{lemma:gamma_decay}.

Since $q<2$, we have
\begin{align*}
&  -\sigma_{q,1} = -\frac{1}{q} \leq -2\sigma_{q,0} = 1-\frac{2}{q} , \ (1+t)^{-2\sigma_{q,0}}\log(2+t) \lesssim (1+t)^{-\sigma_{q,0}}.
\end{align*}
Applying the Theorem \ref{thm:asymptotic_stability} to $\sup_{s\leq t}(1+s)^{\sigma_{q,0}}\Vert f(s)\Vert_{H^{\theta,\ell}} $, we conclude the lemma.
\end{proof}

Next, we investigate \eqref{linear_leading}. We denote $\mathbf{P}_0 f_0 = \rho_0(x_\parallel)\sqrt{\mu}$. We prove that the leading asymptotic profile of \eqref{linear_leading} is given by the solution to a heat equation with a specific diffusion coefficient, thereby concluding the proof of Theorem \ref{thm:leading_behavior}.
\begin{proposition}\label{proposition:heat_eqn_limit}
For $1\leq q<2$, it holds that
\begin{align*}
     \Vert  f_{\mathbf{p}}(t,x,v) - \rho(t,x_\parallel)\sqrt{\mu(v)} \Vert_{L^2_{x,v}}  & \lesssim (1+t)^{-\sigma_{q,1}}\Vert \rho_0\Vert_{L^q_{x_\parallel}} + e^{-c_0 t}\Vert \rho_0\Vert_{L^2_{x_\parallel}}  \\
    &\lesssim (1+t)^{-\sigma_{q,1}}\Vert f_0\Vert_{Z_q} + e^{-c_0 t} \Vert f_0\Vert_{L^2_{x,v}}.
\end{align*}
Here $\rho(t,x_\parallel)$ is the solution to the heat equation in $\mathbb{R}^2$:
\begin{align*}
\begin{cases}
        & \p_t \rho = \lambda^* \Delta_{x_\parallel} \rho, \\
    & \rho(0,x_\parallel) = \rho_0(x_\parallel),
\end{cases}
\end{align*}
where $\lambda^*>0$ is defined in Proposition \ref{prop:eigenvalue}.

\end{proposition}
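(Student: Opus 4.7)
The strategy is to carry out a Fourier analysis in $x_\parallel$, use the spectral representation of $e^{\hat B_n(k)t}$ from Proposition \ref{prop:inverse_laplace} to identify the leading profile, and exploit the crucial identity $\mathbf{P}_0(\hat\rho_0(k)\sqrt{\mu}) = \hat\rho_0(k)\sqrt{\mu}$ (which follows since $\sqrt{\mu}$ is $x_3$-independent and $\hat\rho_0(k)$ does not depend on $v$). By Plancherel, it suffices to bound
\begin{equation*}
\hat g(t,k,x_3,v) := \hat f_{\mathbf{p}}(t,k,x_3,v) - e^{-\lambda^*|k|^2 t}\hat\rho_0(k)\sqrt{\mu(v)}
\end{equation*}
in $L^2(\R^2_k;L^2_{x_3,v})$, noting that the Fourier transform in $x_\parallel$ of the heat solution is exactly $e^{-\lambda^*|k|^2 t}\hat\rho_0(k)$.

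Following the strategy in Proposition \ref{prop.Bdecay}, I would write $e^{\hat B(k)t}(\hat\rho_0\sqrt{\mu}) = e^{\hat B_n(k)t}(\hat\rho_0\sqrt{\mu}) + R_n(t,k)$, where the Duhamel remainder $R_n(t,k) = \int_0^t e^{\hat B_n(k)(t-s)}K(I-P_n)\hat f_{\mathbf{p}}(s,k)\,ds$ vanishes as $n\to\infty$. Inserting the representation from Proposition \ref{prop:inverse_laplace} together with the identity above yields
\begin{equation*}
\hat f_{\mathbf{p}}(t,k) = e^{\lambda_n(k)t}\hat\rho_0\sqrt{\mu}\,\mathbf{1}_{|k|<\kappa} + e^{\lambda_n(k)t}\hat\rho_0\bigl(|k|\mathbf{P}_1^n(k) + |k|^2\mathbf{P}_2^n(k)\bigr)\sqrt{\mu}\,\mathbf{1}_{|k|<\kappa} + \Phi_n(k)(\hat\rho_0\sqrt{\mu}) + R_n.
\end{equation*}
Subtracting $e^{-\lambda^*|k|^2 t}\hat\rho_0\sqrt{\mu}$ and inserting $\lambda_n(k) = -\lambda_n^*|k|^2 + C_\lambda^n|k|^3$, the principal difference on $\{|k|<\kappa\}$ is controlled by $(|\lambda_n^*-\lambda^*||k|^2 + |k|^3)t\,e^{-c|k|^2 t}|\hat\rho_0|$, while on $\{|k|>\kappa\}$ only the residual $e^{-\lambda^*|k|^2 t}\hat\rho_0\sqrt{\mu}$ remains, giving exponential decay. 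For the subleading $|k|$ and $|k|^2$ pieces I use the uniform bounds $\Vert\mathbf{P}_j^n\Vert \lesssim 1$ from Proposition \ref{prop:eigenvalue}(4) and the damping $|e^{\lambda_n(k)t}|\mathbf{1}_{|k|<\kappa}\lesssim e^{-c|k|^2 t}$; the $\Phi_n$ term enjoys the uniform bound $e^{-c_0 t}$.

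The estimates are then concluded by Hausdorff-Young ($\Vert\hat\rho_0\Vert_{L^{q'}} \lesssim \Vert\rho_0\Vert_{L^q}$) combined with Hölder and the elementary computation $\Vert|k|^m e^{-c|k|^2 t}\Vert_{L^{2q/(2-q)}(\R^2)} \lesssim t^{-m/2 + 1/2 - 1/q}$. Setting $p=2q/(2-q)$, the $|k|^3 t$ principal contribution scales as $t\cdot t^{-3/2 + 1/2 - 1/q} = t^{-1/q} = t^{-\sigma_{q,1}}$, the $|k|$ projection correction yields the same rate, and the $|k|^2$ correction is faster; the $\Phi_n$ and $|k|>\kappa$ pieces together give the $e^{-c_0 t}\Vert\rho_0\Vert_{L^2}$ contribution. \emph{The main obstacle} is the $|\lambda_n^*-\lambda^*||k|^2 t$ piece, whose Fourier norm only decays as $t^{-\sigma_{q,0}}$, slower than desired; since its coefficient tends to zero as $n\to\infty$ while the left-hand side $\hat g$ does not depend on $n$, one closes the argument by taking $n\to\infty$ at the end, which simultaneously eliminates this term and the Duhamel remainder $R_n$ (the latter controlled using Proposition \ref{prop.Bdecay} together with $\lim_{n\to\infty}\Vert K(I-P_n)\hat f_{\mathbf{p}}(s)\Vert = 0$). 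A secondary subtlety is to ensure $\kappa$ is small enough that $|C_\lambda^n||k|^3 < \tfrac{1}{2}\lambda_n^*|k|^2$ on $\{|k|<\kappa\}$ uniformly in $n$, so that $|e^{\lambda_n(k)t}|\lesssim e^{-(\lambda^*/4)|k|^2 t}$ is valid for all $t\geq 0$; this may require shrinking $\kappa$ slightly beyond the choice in Proposition \ref{prop:eigenvalue}.
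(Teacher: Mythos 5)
Your proposal is correct and rests on the same key ingredients as the paper's proof: the spectral representation of $e^{\hat B_n(k)t}$ from Proposition \ref{prop:inverse_laplace}, the identity $\mathbf{P}_0(\hat\rho_0\sqrt{\mu})=\hat\rho_0\sqrt{\mu}$, the Hausdorff--Young/heat-kernel computations, and a limit $n\to\infty$ to kill the $n$-dependent pieces. The only real difference is organizational. You compare $\hat f_{\mathbf p}$ directly with $e^{-\lambda^*|k|^2 t}\hat\rho_0\sqrt{\mu}$, which forces you to confront the $|\lambda_n^*-\lambda^*|\,|k|^2 t\,e^{-c|k|^2 t}$ piece with the slower $t^{-\sigma_{q,0}}$ rate; you resolve this correctly by noting its coefficient vanishes as $n\to\infty$ while all the other bounds (on $C_\lambda^n$, on $\Vert\mathbf{P}_j^n\Vert$, on $\Phi$) are uniform in $n$ and the left-hand side is $n$-independent. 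The paper sidesteps this ``obstacle'' entirely by introducing the auxiliary $\rho^n$ solving the heat equation with diffusion $\lambda_n^*$ and using the three-term split $f_{\mathbf p}-\rho\sqrt\mu = (f_{\mathbf p}-f_{\mathbf p}^n) + (f_{\mathbf p}^n-\rho^n\sqrt\mu) + (\rho^n\sqrt\mu-\rho\sqrt\mu)$: the middle term is bounded uniformly in $n$ at the clean rate $t^{-\sigma_{q,1}}$ with no $|\lambda_n^*-\lambda^*|$ remainder, and the two flanking terms vanish in the $n$-limit. The content is identical --- your $R_n$ is their $f_{\mathbf p}-f_{\mathbf p}^n$ in Fourier, and your $|\lambda_n^*-\lambda^*|$ piece is their $\rho^n-\rho$ --- but the paper's split is slightly cleaner because the $n\to\infty$ step then only needs to dispose of terms that go to zero, rather than a term with a degraded polynomial rate and a vanishing prefactor. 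Your secondary remark about shrinking $\kappa$ so that $|C_\lambda^n||k|<\tfrac{1}{2}\lambda_n^*$ is also a point the paper uses (implicitly, in the bound $|e^{\lambda_n(k)t}|\le e^{-\lambda_n^*|k|^2t/2}$ for $|k|<\kappa$).
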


\begin{proof}
We define $f_{\mathbf{p}}^n$ to be the solution to the regularized linear problem 
\begin{align*}
\begin{cases}
    & \p_t f_{\mathbf{p}}^n + v\cdot \nabla_x f_{\mathbf{p}}^n + \mathcal{L}_n f_{\mathbf{p}}^n = 0, \\
    & f_{\mathbf{p}}^n(0,x,v) = \mathbf{P}_0 f_0, \\
    & f_{\mathbf{p}}^n|_{\gamma_-} = P_{\gamma} f_{\mathbf{p}}^n,
\end{cases}
\end{align*}
and define $\rho^n$ to be the solution to the 2D heat equation
\begin{align*}
\begin{cases}
    & \p_t \rho^n = \lambda^*_n \Delta_{x_\parallel} \rho^n, \\
    & \rho^n(0,x_\parallel) = \rho_0(x_\parallel).
\end{cases}
\end{align*}
Here $\lambda_n^*$ is defined in Proposition \ref{prop:eigenvalue}.

We rewrite
\begin{align*}
    &    f_{\mathbf{p}} - \rho(t,x_\parallel)\sqrt{\mu(v)} = f_{\mathbf{p}} - f_{\mathbf{p}}^n + f_{\mathbf{p}}^n - \rho^n(t,x_\parallel)\sqrt{\mu(v)} + \rho^n(t,x_\parallel)\sqrt{\mu(v)} - \rho(t,x_\parallel)\sqrt{\mu(v)}.
\end{align*}

Similar to the proof in Proposition \ref{prop.Bdecay}, the first subtraction converges to $0$:
\begin{align*}
    &   \lim_{n\to \infty} \Vert  f_{\mathbf{p}} - f_{\mathbf{p}}^n\Vert_{L^2_{x,v}} = 0.
\end{align*}
From Proposition \ref{prop:eigenvalue}, $\lambda_n^* \to \lambda^*$, thus the third subtraction also converges to $0$:
\begin{align*}
    &  \lim_{n\to\infty}\Vert \rho^n \sqrt{\mu} - \rho \sqrt{\mu}\Vert_{L^2_{x,v}} = 0.
\end{align*}

For the second subtraction, we apply Proposition \ref{prop:inverse_laplace} to have
\begin{align*}
  \hat{f}_{\mathbf{p}}^n(t,k,x_3,v)  &   = e^{-\lambda^*_n |k|^2 t}  \hat{\rho}_0 \sqrt{\mu} + e^{\lambda_n(k) t} (|k| \mathbf{P}_1(k) + |k|^2 \mathbf{P}_2(k) )[ \hat{\rho}_0 \sqrt{\mu}]\mathbf{1}_{|k|<\kappa} \\
    &  + \Phi(k)[\hat{\rho}_0 \sqrt{\mu}] - e^{-\lambda^*_n|k|^2 t} \hat{\rho}_0 \sqrt{\mu} \mathbf{1}_{|k|\geq \kappa} + [e^{-\lambda_n(k) t} - e^{-\lambda^*_n |k|^2 t}] \hat{\rho}_0\sqrt{\mu} \mathbf{1}_{|k|<\kappa}.
\end{align*}

Recall $\lambda_n(k) = -\lambda^*_n |k|^2 +  C^n_\lambda |k|^3$, for $|k|<\kappa$, we have
\begin{align*}
    &  |e^{\lambda_n(k) t}| \leq e^{-\frac{\lambda^*_n}{2} |k|^2 t}, \\
    & |e^{-\lambda_n(k) t} - e^{-\lambda^*_n |k|^2 t}| \leq e^{-\lambda^*_n |k|^2 t} |1- e^{C|k|^3t }| \leq e^{-\lambda^*_n |k|^2 t} | C|k|^3 t| e^{\eta} \lesssim e^{-\frac{\lambda^*_n}{4} |k|^2 t} |k|  .
\end{align*}
Here $0<|\eta| < C|k|^3 t<\frac{\lambda^*_n}{4} |k|^2 t$ for small $|k|<\kappa$. And we have used 
\begin{align*}
    & e^{-\lambda^*_n |k|^2 t} |C|k|^3 t| e^{C|k|^3 t} \lesssim e^{-\frac{\lambda^*_n}{2}|k|^2} |k| e^{\frac{\lambda^*_n}{4}|k|^2 t} \leq  e^{-\frac{\lambda^*_n}{4}|k|^2} |k|.
\end{align*}

Thus, we conclude that
\begin{align*}
    & |\hat{f}_{\mathbf{p}}^n(t,k,x_3,v) - e^{-\lambda^*_n |k|^2 t}\hat{\rho}_0\sqrt{\mu}| \leq \big| e^{\lambda_n(k) t} (|k| \mathbf{P}_1(k) + |k|^2 \mathbf{P}_2(k) )[ \hat{\rho}_0 \sqrt{\mu}]\mathbf{1}_{|k|<\kappa} \big| \\
    &  + |\Phi(k)[\hat{\rho}_0\sqrt{\mu}]| + |e^{-\lambda^*_n \kappa^2 t} \hat{\rho}_0\sqrt{\mu}| + |e^{-\frac{\lambda^*_n}{4}|k|^2 t} |k|  \hat{\rho}_0 \sqrt{\mu}  \mathbf{1}_{|k|<\kappa}|.
\end{align*}

With the extra $|k|$, following the same proof of Lemma \ref{lemma:linear_decay}, we conclude that
\begin{align*}
    & \Vert f_{\mathbf{p}}^n - \rho^n \sqrt{\mu(v)} \Vert_{L^2_{x,v}} \lesssim (1+t)^{-\sigma_{q,1}}\Vert \rho_0\Vert_{L^q_{x_\parallel}} + e^{-c_0 t}\Vert \rho_0\Vert_{L^2_{x_\parallel}}.
\end{align*}

We complete the proof.
\end{proof}

Now we are ready to prove Theorem \ref{thm:leading_behavior} for convergence to the heat equation.

\begin{proof}[\textbf{Proof of Theorem \ref{thm:leading_behavior}}]

Combining Lemma \ref{lemma:f1_leading} and Proposition \ref{proposition:heat_eqn_limit}, we have
\begin{align*}
    & \Vert f-\rho \sqrt{\mu}\Vert_{L^2_{x,v}} \lesssim \Vert f-f_{\mathbf{p}}\Vert_{L^2_{x,v}} + \Vert f_{\mathbf{p}} - \rho \sqrt{\mu}\Vert_{L^2_{x,v}} \lesssim \Vert f-f_{\mathbf{p}}\Vert_{H^{\theta,\ell}} + \Vert f_{\mathbf{p}}-\rho\sqrt{\mu}\Vert_{L^2_{x,v}} \\
    & \lesssim (1+t)^{-2\sigma_{q,0}} \log(2+t)[\Vert f_0\Vert_{H^{\theta,\ell}}+\Vert f_0\Vert_{X^{\theta,q}}] + (1+t)^{-\sigma_{q,1}} [\Vert f_0\Vert_{Z,q} + \Vert f_0\Vert_{L^2_{x,v}} ] \\
    & \lesssim (1+t)^{-2\sigma_{q,0}}\log(2+t)[\Vert f_0\Vert_{H^{\theta,\ell}}+\Vert f_0\Vert_{X^{\theta,q}}].
\end{align*}
In the last inequality, we have used $1\leq q<2$ to have
\begin{align*}
    &   (1+t)^{-\sigma_{q,1}} \lesssim (1+t)^{-2\sigma_{q,0}}\log(2+t).
\end{align*}
We complete the proof.
\end{proof}

\appendix

\section{Trace estimate in the domain $D(A)$}\label{sec:trace}

Denote
\begin{align*}
    & \dd \gamma:= |v_3| \dd v \dd x_1 \dd x_2.
\end{align*}
\begin{lemma}\label{lemma:Ukai_trace}
Define the near-grazing set of $\gamma_+$ and $\gamma_-$ as
\begin{align*}
    & \gamma^\e_\pm := \{(x,v)\in \gamma_{\pm}: |v_3|\leq \e, \text{ or }|v|\geq \frac{1}{\e} \}.
\end{align*}
Then 
\begin{align*}
    &   \int_{\gamma_+\backslash \gamma_+^\e} |f| \dd \gamma \lesssim \Vert f\Vert_{L^1_{x,v}} + \Vert v\cdot \nabla_x  f\Vert_{L^1_{x,v}}.
\end{align*}
    
\end{lemma}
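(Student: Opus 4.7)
The plan is to establish this classical Ukai trace bound via the fundamental theorem of calculus along straight-line characteristics. Without loss of generality I would treat the portion $(x,v)\in \gamma_+^+\setminus\gamma_+^{\e,+}$ (so $x_3=1$, $v_3>\e$, and $|v|<1/\e$); the case $\gamma_+^-\setminus\gamma_+^{\e,-}$ is symmetric. For such $(x,v)$ and $t>0$ small enough that $x-tv\in \Omega$, integration along the free transport line gives
\begin{equation*}
f(x,v) = f(x-tv,v) + \int_0^t (v\cdot\nabla_x f)(x-sv,v)\,\dd s.
\end{equation*}
Averaging this identity in $t\in [0,T]$ for a $T$ to be chosen yields
\begin{equation*}
T\,f(x,v) = \int_0^T f(x-tv,v)\,\dd t + \int_0^T\int_0^t (v\cdot\nabla_x f)(x-sv,v)\,\dd s\,\dd t.
\end{equation*}

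The key step is the choice $T=\e$. On $\gamma_+^+\setminus \gamma_+^{\e,+}$ the bound $|v|<1/\e$ forces $|v_3|<1/\e$, so $T|v_3|<1<2$, which guarantees that the backward ray $\{x-tv: 0\leq t\leq T\}$ lies entirely inside the slab $\Omega=\mathbb{R}^2\times(-1,1)$ (it never hits the opposite plane $x_3=-1$). Taking absolute values and integrating in $(x_\parallel,v)$ against the boundary measure $|v_3|\,\dd v\,\dd x_\parallel$, one obtains
\begin{equation*}
\e \int_{\gamma_+^+\setminus \gamma_+^{\e,+}} |f|\,\dd\gamma \leq I_1 + \e\, I_2,
\end{equation*}
with
\begin{align*}
I_1 &:= \int_{\{v_3>\e,\,|v|<1/\e\}} \int_{\mathbb{R}^2}\int_0^{\e} |f(x_\parallel-tv_\parallel,1-tv_3,v)|\,\dd t\,\dd x_\parallel\,|v_3|\,\dd v,\\
I_2 &:= \int_{\{v_3>\e,\,|v|<1/\e\}} \int_{\mathbb{R}^2}\int_0^{\e} |(v\cdot\nabla_x f)(x_\parallel-sv_\parallel,1-sv_3,v)|\,\dd s\,\dd x_\parallel\,|v_3|\,\dd v.
\end{align*}

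For each fixed $v$ I would then apply the change of variables $(x_1,x_2,t)\mapsto (y_1,y_2,y_3)=(x_1-tv_1,x_2-tv_2,1-tv_3)$, whose Jacobian is exactly $|v_3|$; the target $y_3$ ranges over $(1-\e v_3,1)\subset(-1,1)$. This converts $|v_3|\,\dd x_\parallel\,\dd t$ into $\dd y$ on a subset of $\Omega$, yielding $I_1\leq \|f\|_{L^1_{x,v}}$ and $I_2\leq \|v\cdot\nabla_x f\|_{L^1_{x,v}}$. Dividing by $\e$ and summing the two boundary components produces the claimed inequality (with an implicit constant depending on $\e$, which is permitted by the $\lesssim$ notation since $\e$ is fixed).

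The only genuine technical point is the uniform choice of $T$ so that the backward characteristic has definite length inside $\Omega$; this is resolved precisely by the twin cutoffs $|v_3|>\e$ and $|v|<1/\e$ defining the complement of the near-grazing set, which together allow taking $T=\e$. The rest of the argument reduces to a Fubini–Jacobian computation.
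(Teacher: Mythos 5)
Your proof is correct and complete. Note that the paper states this lemma in the appendix without providing a proof — it is the classical Ukai trace theorem, and your argument is the standard one (integrate the fundamental theorem of calculus along the backward characteristic in $t\in[0,T]$, then change variables $(x_\parallel,t)\mapsto y$ with Jacobian $|v_3|$). The twin cutoffs $v_3>\e$ and $|v|<1/\e$ are used exactly as you say: the first to make the Jacobian factor $|v_3|$ comparable to the boundary measure rather than degenerate, and the second to guarantee that the characteristic of length $T=\e$ does not exit through the opposite plate $x_3=-1$ (since $T|v_3|<1$). The resulting bound carries an $\e^{-1}$ constant in front of $\|f\|_{L^1_{x,v}}$, which is permitted by $\lesssim$ as $\e$ is fixed in the statement. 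The only cosmetic slip is that you call the step "averaging" but write the identity in integrated (not averaged) form; the mathematics is unaffected. If you wish to be fully rigorous about regularity, you would establish the estimate for smooth $f$ and conclude by density, but this is the standard convention for this lemma.
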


A consequence of the trace theorem above is the following control over $|f|_{L^2_{\gamma_\pm}}$.

\begin{lemma}
Let $f|_{\gamma_-} = P_\gamma f$, then it holds that
\begin{align*}
    &    |f|_{L^2_{\gamma_\pm}} \lesssim  \Vert f\Vert_{L^2_{x,v}} + \Vert v\cdot \nabla_x  f\Vert_{L^2_{x,v}} + |(I-P_\gamma)f|_{L^2_{\gamma_+}}.
\end{align*}
\end{lemma}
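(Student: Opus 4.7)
\smallskip

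\textbf{Proposal.} The plan is to combine the Ukai trace theorem (Lemma~A.1) with a decomposition of the boundary trace into a diffuse part and its complement, using the smoothness of $\sqrt{\mu(v)}|v_3|$ in velocity to control the near-grazing contribution.

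First I would apply Lemma~A.1 to $f^2$ rather than $f$. Since $v\cdot\nabla_x(f^2)=2f\,(v\cdot\nabla_x f)$, Cauchy--Schwarz gives
\begin{equation*}
\int_{\gamma_+\setminus\gamma_+^\e}f^2\,\dd\gamma
\;\lesssim\;\Vert f\Vert_{L^1_{x,v}}^{\phantom2}\cdot\Vert f\Vert_{L^\infty}+\cdots\quad\text{is not what I want;}
\end{equation*}
instead, the natural bound reads
\begin{equation*}
\int_{\gamma_+\setminus\gamma_+^\e}f^2\,\dd\gamma\;\lesssim\;\Vert f^2\Vert_{L^1_{x,v}}+\Vert v\cdot\nabla_x(f^2)\Vert_{L^1_{x,v}}\;\lesssim\;\Vert f\Vert_{L^2_{x,v}}^2+\Vert f\Vert_{L^2_{x,v}}\Vert v\cdot\nabla_x f\Vert_{L^2_{x,v}}.
\end{equation*}
This controls the non-grazing part of $\gamma_+$ with no boundary term on the right.

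Next I would treat the near-grazing set $\gamma_+^\e$ via the splitting $f=(I-P_\gamma)f+P_\gamma f$. The first piece trivially satisfies $|\mathbf{1}_{\gamma_+^\e}(I-P_\gamma)f|_{L^2_{\gamma_+}}\le|(I-P_\gamma)f|_{L^2_{\gamma_+}}$. For the second piece, on each component of $\partial\Omega$ one has $P_\gamma f(x,v)=c_\mu\sqrt{\mu(v)}\,\rho(x_\parallel)$ with $\rho(x_\parallel)=\int_{n\cdot u>0}f(x,u)\sqrt{\mu(u)}|u_3|\,\dd u$, and by Cauchy--Schwarz $|\rho(x_\parallel)|^2\lesssim\int_{n\cdot u>0}|f|^2|u_3|\,\dd u$. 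Hence
\begin{equation*}
|P_\gamma f|_{L^2_{\gamma_+^\e}}^2\;\lesssim\;\Big(\int_{|v_3|\le\e\text{ or }|v|\ge1/\e}c_\mu^2\mu(v)|v_3|\,\dd v\Big)\cdot|f|_{L^2_{\gamma_+}}^2\;=\;o_\e(1)\,|f|_{L^2_{\gamma_+}}^2,
\end{equation*}
since $\mu(v)|v_3|$ is integrable and smooth. Combining the two steps, writing $|f|_{L^2_{\gamma_+}}^2=|f|_{L^2_{\gamma_+\setminus\gamma_+^\e}}^2+|f|_{L^2_{\gamma_+^\e}}^2$, and choosing $\e$ small so as to absorb $o_\e(1)|f|_{L^2_{\gamma_+}}^2$ into the left-hand side, yields
\begin{equation*}
|f|_{L^2_{\gamma_+}}^2\;\lesssim\;\Vert f\Vert_{L^2_{x,v}}^2+\Vert f\Vert_{L^2_{x,v}}\Vert v\cdot\nabla_x f\Vert_{L^2_{x,v}}+|(I-P_\gamma)f|_{L^2_{\gamma_+}}^2,
\end{equation*}
and Young's inequality on the middle term gives the stated bound on $|f|_{L^2_{\gamma_+}}$.

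Finally, for the incoming trace I would invoke the boundary condition $f|_{\gamma_-}=P_\gamma f$, so that $f|_{\gamma_-}$ is itself of the form $c_\mu\sqrt{\mu(v)}\rho(x_\parallel)$ with $\rho$ built from outgoing data. The same Cauchy--Schwarz computation as above now gives $|f|_{L^2_{\gamma_-}}=|P_\gamma f|_{L^2_{\gamma_-}}\lesssim|f|_{L^2_{\gamma_+}}$, and combining with the previous step closes the estimate. The main obstacle is the grazing part of $\gamma_+$: Ukai's theorem explicitly excludes $\gamma_+^\e$, and there is no direct boundary condition on the outgoing side. The observation that saves the day is that on the diffuse component $P_\gamma f$, the velocity profile $\sqrt{\mu(v)}$ is universal, so the grazing measure $\mu(v)|v_3|\,\mathbf{1}_{\gamma_+^\e}\,\dd v$ goes to zero uniformly in $f$ as $\e\to0$, enabling the absorption; the $(I-P_\gamma)f$ part is paid for by the term $|(I-P_\gamma)f|_{L^2_{\gamma_+}}$ that appears on the right.
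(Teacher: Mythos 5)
Your proposal is correct and follows essentially the same route as the paper: bound $|f|_{L^2_{\gamma_-}}$ by $|f|_{L^2_{\gamma_+}}$ via the diffuse boundary condition and Cauchy--Schwarz, split $\gamma_+$ into near-grazing and non-grazing parts, split $f = P_\gamma f + (I-P_\gamma)f$ on the near-grazing set, absorb the $O(\e)|f|_{L^2_{\gamma_+}}^2$ contribution from $P_\gamma f$, and close with Ukai's trace theorem applied to $f^2$. You make explicit the Cauchy--Schwarz step $\Vert v\cdot\nabla_x(f^2)\Vert_{L^1}\lesssim\Vert f\Vert_{L^2}\Vert v\cdot\nabla_x f\Vert_{L^2}$ that the paper leaves implicit, but otherwise the two arguments are the same.
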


\begin{proof}
Without loss of generality, we compute the estimate at $x_3=1$:
    \begin{align*}
    & |f|_{L^2_{\gamma_-}} =  \int_{\gamma_-} \mu(v)\Big| \int_{u_3 > 0 } f \sqrt{\mu(u)} u_3 \dd u \Big|^2 \dd \gamma \\
    &\lesssim  \int_{\mathbb{R}^2} \int_{u_3>0} |f|^2 u_3\dd u \dd x_1 \dd x_2 = |f|_{L^2_{\gamma_+}} =  \int_{\mathbb{R}^2}  \Big[ \int_{|u_3|<\e \text{ or }|u|>\e^{-1}} + \int_{|u_3|>\e , |u|<\e^{-1}}\Big] |f|^2  u_3 \dd u  \dd x_1 \dd x_2 \\
    &  = \int_{\mathbb{R}^2} \int_{|u_3|<\e \text{ or }|u|>\e^{-1}} [|(I-P_\gamma) f|^2 + |P_\gamma f|^2 ]  u_3 \dd u \dd x_1 \dd x_2+  \int_{\gamma_+\backslash \gamma_+^\e} |f|^2 \dd \gamma \\
    & \lesssim  \int_{\gamma_+} |(I-P_\gamma)f|^2 \dd \gamma + \int_{\gamma_+\backslash \gamma_+^\e} |f|^2 \dd \gamma \\
    &+  \int_{\mathbb{R}^2} \int_{|v_3|<\e \text{ or }|v|>\e^{-1}}  \mu(v)v_3 \Big|\int_{u_3>0} f \sqrt{\mu(u)}u_3 \dd u\Big|^2 \dd v \dd x_1 \dd x_2 \\
    &\lesssim  \int_{\gamma_+} |(I-P_\gamma)f|^2 \dd \gamma +  \int_{\gamma_+\backslash \gamma_+^\e} |f|^2 \dd \gamma + \e \int_{\mathbb{R}^2} \int_{u_3>0} |f|^2 u_3 \dd u  \dd x_1 \dd x_2.
\end{align*}
Absorbing the last term by the second line, we obtain
\begin{align*}
    & |f|_{L^2_{\gamma_+}} = \int_{\mathbb{R}^2} \int_{u_3>0} |f|^2 u_3 \dd u \dd x_1 \dd x_2 \lesssim \int_{\gamma_+} |(I-P_\gamma)f|^2 \dd \gamma + \int_{\gamma_+\backslash \gamma_+^\e} |f|^2\dd \gamma.
\end{align*}
Applying Lemma \ref{lemma:Ukai_trace}, we conclude the lemma.
\end{proof}

After taking the Fourier transform in the horizontal direction, we can have a similar trace control.

\begin{lemma}\label{lemma:trace_k}
Let $\hat{f}|_{\gamma_-} = P_\gamma \hat{f}$, then it holds that
\begin{align*}
    &  |\hat{f}|_{L^2_{\gamma_\pm}} \lesssim \Vert \hat{f}\Vert_{L^2_{x_3,v}} + \Vert \{i(k_1v_1+k_2v_2) + v_3 \p_{x_3} \}\hat{f} \Vert_{L^2_{x_3,v}} + |(I-P_\gamma)\hat{f}|_{L^2_{\gamma_+}}.
\end{align*}
\end{lemma}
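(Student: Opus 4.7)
The proof follows the template of the previous (non-Fourier) lemma. The plan is to first bound $|\hat{f}|_{L^2_{\gamma_-}}$ by $|\hat{f}|_{L^2_{\gamma_+}}$ via the diffuse boundary condition, then control $|\hat{f}|_{L^2_{\gamma_+}}$ by splitting off the near-grazing set $\gamma_+^\e=\{|v_3|\le\e\text{ or }|v|\ge\e^{-1}\}$ (with the understanding that it now lives on $\{x_3=\pm 1\}\times\R^3_v$). For the near-grazing portion, we decompose $\hat{f}=(I-P_\gamma)\hat{f}+P_\gamma\hat{f}$ on $\gamma_-$; because $P_\gamma\hat{f}$ has Gaussian weight $\sqrt{\mu(v)}$, the integral of $|P_\gamma\hat{f}|^2|v_3|\,\d v$ on $\gamma_+^\e$ gains a small factor $o(1)$ as $\e\to 0$, which can be absorbed into the left-hand side, exactly as in the preceding lemma. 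This reduces matters to estimating the away-from-grazing contribution
\[
\int_{\gamma_+\setminus\gamma_+^\e}|\hat{f}|^2\,|v_3|\,\d v,
\]
which is the Fourier-analogue of Ukai's trace theorem.

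The main new ingredient—and the only non-cosmetic step—is this Fourier-side trace estimate. On $\{|v_3|\ge \e,\ |v|\le \e^{-1}\}$ the transport direction in $x_3$ is non-degenerate, so we may rewrite
\[
\p_{x_3}\hat{f}=\frac{1}{v_3}\bigl(g-i(k_1v_1+k_2v_2)\hat{f}\bigr),\qquad g:=\bigl\{i(k_1v_1+k_2v_2)+v_3\p_{x_3}\bigr\}\hat{f},
\]
and apply the fundamental theorem of calculus in $x_3$ from any interior point to $\pm 1$. Squaring, using Cauchy–Schwarz in $x_3\in(-1,1)$, and then averaging the identity in $x_3$ (so that the interior value is integrated away), we obtain the pointwise-in-$v$ bound
\[
|\hat{f}(\pm 1,v)|^2 \lesssim \int_{-1}^1 |\hat{f}(x_3,v)|^2\,\d x_3+\frac{1}{|v_3|^2}\int_{-1}^1|g(x_3,v)|^2\,\d x_3+\frac{|k|^2|v|^2}{|v_3|^2}\int_{-1}^1|\hat{f}(x_3,v)|^2\,\d x_3.
\]
Multiplying by $|v_3|$ and integrating over $\{|v_3|\ge\e,|v|\le\e^{-1}\}$, where the weights $|v_3|,\ |v_3|^{-1},$ and $|k|^2|v|^2/|v_3|$ are all bounded by constants depending on $\e$ and $k$, yields
\[
\int_{\gamma_+\setminus\gamma_+^\e}|\hat{f}|^2\,|v_3|\,\d v\lesssim_{\e,k}\Vert\hat{f}\Vert_{L^2_{x_3,v}}^2+\Vert g\Vert_{L^2_{x_3,v}}^2.
\]

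Combining the two ingredients—the absorption of the $\gamma_+^\e$ piece via the small factor from $\sqrt{\mu(v)}$ and the diffuse boundary condition, and the Fourier Ukai trace on $\gamma_+\setminus\gamma_+^\e$—fixes $\e$ small enough to close the absorption and produces the stated bound
\[
|\hat{f}|_{L^2_{\gamma_\pm}}\lesssim \Vert\hat{f}\Vert_{L^2_{x_3,v}}+\Vert g\Vert_{L^2_{x_3,v}}+|(I-P_\gamma)\hat{f}|_{L^2_{\gamma_+}}.
\]
The only delicate point to monitor is that, after the application of the fundamental theorem of calculus, the coefficient $|k|^2|v|^2/|v_3|^2$ produced by replacing $\p_{x_3}\hat{f}$ by $v_3^{-1}(g-i(k_1v_1+k_2v_2)\hat{f})$ remains harmless because it is tamed by the cutoffs $|v|\le\e^{-1},\ |v_3|\ge\e$ and absorbed into the implicit constant; no further structural observation is required.
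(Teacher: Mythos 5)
Your overall strategy is sound: bounding $|\hat{f}|_{L^2_{\gamma_-}}$ by $|\hat{f}|_{L^2_{\gamma_+}}$ via the diffuse boundary condition, splitting off the near-grazing set, absorbing the $P_\gamma\hat{f}$ contribution on $\gamma_+^\e$ using the Gaussian factor $\sqrt{\mu}$, and then proving a direct trace bound on $\gamma_+\setminus\gamma_+^\e$ is exactly what the paper's silent ``similar trace control'' amounts to; your use of the fundamental theorem of calculus in $x_3$ is a perfectly good substitute for quoting a Fourier analogue of Lemma \ref{lemma:Ukai_trace} in the slab geometry. The one place where you lose precision is in handling the zero-order term $i(k_1v_1+k_2v_2)\hat{f}$: bounding it crudely by $|k|\,|v|\,|\hat{f}|$ yields the factor $|k|^2|v|^2/|v_3|^2$ and hence an implicit constant of size $\sim|k|^2\e^{-3}$, so your estimate is $\lesssim_{\e,k}$ rather than a genuinely universal $\lesssim$. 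This is harmless for the only use of the lemma in the paper (establishing $\hat f\in D(\hat A(k))$ at fixed $k$ in the proof of Lemma \ref{lemma:A_resolvent}), but the $k$-dependence can be removed entirely by the observation the paper's convention implicitly relies on: along the characteristic, $h(s):=\hat f(x_3-sv_3,v)$ solves $h'(s)=i(k_1v_1+k_2v_2)h(s)-g(x_3-sv_3,v)$, and multiplying by the modulus-one integrating factor $e^{-i(k_1v_1+k_2v_2)s}$ gives $|\hat f(\pm1,v)|\le|\hat f(x_3,v)|+|v_3|^{-1}\int_{-1}^1|g|\,\dd y_3$ with no $|k|\,|v|$ factor at all. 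So your final remark — that ``no further structural observation is required'' — is not quite right if one wants the constant to be uniform in $k$ as the $\lesssim$ notation suggests; the integrating-factor cancellation is precisely that structural observation.
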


\noindent {\bf Acknowledgment:}\,
HXC thanks the host from the Department of Mathematics, The Chinese University of Hong Kong since 2023. The research of RJD was partially supported by the General Research Fund (Project No.~14303523) from RGC of Hong Kong and also by the grant from the National Natural Science Foundation of China (Project No.~12425109). The research of SQL was partially supported by the grant from the National Natural Science Foundation of China (Project No.~12325107). % Hongxu Chen thanks Professor Yan Guo and Professor Chanwoo Kim for their interest.

\medskip
\noindent\textbf{Data Availability Statement:}
Data sharing is not applicable to this article as no datasets were generated or analysed during the current study.

\noindent\textbf{Conflict of Interest:}
The authors declare that they have no conflict of interest.

\hide

It remains to prove that the following problem only has a trivial solution under the condition that $|\tau|> 1, |\sigma|<c_0\ll 1$, and  $\int_{-1}^1\int_{\mathbb{R}^3}\hat{f}\sqrt{\mu}\dd v\dd x_3 = 0$:
\begin{align*}
    &   (\sigma+i\tau) \hat{f} + i(k_1v_1+k_2v_2) \hat{f} + v_3 \p_{x_3}\hat{f} + \mathcal{L}\hat{f} = 0, \\
    & \hat{f}|_{\gamma_-} = P_{\gamma}\hat{f}.
\end{align*}

\Blue{When $k=0$, we have known that $\lambda = 0$ is the only eigenvalue on the imaginary axis. For $\sigma<0$, is it possible to apply perturbation theory to conclude that there exists at most finite eigenvalue? If this is true, when $k\to 0$, is it possible to apply perturbation theory to conclude that the eigenvalue must converge to the eigenvalue of $k=0$? If this true, then we can conclude that except the eigenvalue we find, the other eigenvalue, if exists, must admit an upper bound for the real part, since their limit corresponds to the eigenvalue of $k=0$, which is away from the imaginary axis(except $\lambda=0$). }

\unhide

\hide

We focus on the regularity estimate to the following problem
\begin{align*}
\begin{cases}
      &   (\sigma+ i\tau) \hat{f} + i(k_1v_1+k_2v_2)\hat{f} + v_3 \p_{x_3}\hat{f} + \mathcal{L}\hat{f} = 0 ,\\
    & \hat{f}_{\gamma_-} = P_{\gamma}\hat{f}.   
\end{cases}
\end{align*}
We denote a weight as 
\begin{align*}
    & \alpha(v) = \max\{|v_3|^{0.1},|v_3|\}.
\end{align*}

Denote
\begin{align*}
    & \tb(x_3,v) = \begin{cases}
        & \frac{x_3+1}{v_3} \text{ if }v_3 > 0 \\
        & \frac{x_3-1}{v_3} \text{ if }v_3<0
    \end{cases}, \ \xb(x_3,v) = \begin{cases}
         & 1 \text{ if }v_3 < 0 \\
         & -1 \text{ if }v_3 >0
    \end{cases}
\end{align*}
and
\begin{align*}
    & \tilde{\nu}(v) = \nu(v) + \sigma + i(\tau + k_1v_1 + k_2v_2) .
\end{align*}
It holds that
\begin{align*}
    &\nu(v)\geq |\tilde{\nu}(v)|\geq \frac{\nu(v)}{2}.
\end{align*}

By method of characteristic, we have
\begin{align*}
    &    f(x_3,v) = e^{-\tilde{\nu}(v)\tb(x_3,v)}f(\xb(x_3,v),v) \\
    & + \int^t_{t-\tb(x_3,v)} \dd s  e^{-\tilde{\nu}(v)(t-s)}\int_{\mathbb{R}^3} \mathbf{k}(v,u)f(x_3-(t-s)v_3,u) \dd u.
\end{align*}
Taking derivative in $x_3$, we obtain
\begin{align*}
    &    \p_{x_3} f(x_3,v) =  -\tilde{\nu}(v)\p_{x_3}\tb(x_3,v) e^{-\tilde{\nu}(v)\tb(x_3,v)} f(\xb(x_3,v),v) \\
    & + \p_{x_3}\tb(x_3,v) e^{-\tilde{\nu}(v)\tb(x_3,v)}\int_{\mathbb{R}^3} \mathbf{k}(v,u) f(\xb(x_3,v),u) \dd u \\
    & + \int^t_{t-\tb(x_3,v)} \dd s e^{-\nu(v)(t-s)}\int_{\mathbb{R}^3} \mathbf{k}(v,u) \p_{x_3} f(x_3-(t-s)v_3,u) \dd u.
\end{align*}
Here we applied the fact that $\xb(x_3,v)$ does not depend on $x_3$.

Applying the formulation to $f(x_3-(t-s)v_3,u)$, denoting $y_3 = x_3-(t-s)v_3$, we obtain
\begin{align}
&\alpha(v)\p_{x_3}f(x_3,v) =- \alpha(v)\tilde{\nu}(v)\p_{x_3}\tb(x_3,v) e^{-\tilde{\nu}(v)\tb(x_3,v)} f(\xb(x_3,v),v) \label{regu:1}\\
& + \alpha(v)\p_{x_3}\tb(x_3,v) e^{-\tilde{\nu}(v)\tb(x_3,v)}\int_{\mathbb{R}^3} \mathbf{k}(v,u) f(\xb(x_3,v),u) \dd u  \label{regu:2}\\
& - \alpha(v)\int^t_{t-\tb(x_3,v)} \dd s e^{-\tilde{\nu}(v)(t-s)}\int_{\mathbb{R}^3} \dd u \mathbf{k}(v,u)  \p_{x_3} \tb(y_3,u) \tilde{\nu}(u) e^{-\tilde{\nu}(u)\tb(y_3,u)} f(\xb(y_3,u),u) \label{regu:3} \\
& + \alpha(v)\int^t_{t-\tb(x_3,v)} \dd s e^{-\tilde{\nu}(v)(t-s)} \int_{\mathbb{R}^3} \dd u \mathbf{k}(v,u) \p_{x_3}\tb(y_3,u) e^{-\tilde{\nu}(u)\tb(y_3,u)} \int_{\mathbb{R}^3}\mathbf{k}(u,u') f(\xb(y_3,u),u') \dd u' \label{regu:4} \\
& + \alpha(v)\int^t_{t-\tb(x_3,v)} \dd s e^{-\tilde{\nu}(v)(t-s)} \int_{\mathbb{R}^3} \dd u \mathbf{k}(v,u) \int^s_{s-\tb(y_3,u)} \dd s' e^{-\tilde{\nu}(u)(s-s')} \int_{\mathbb{R}^3} \dd u'\mathbf{k}(u,u') \p_{x_3} f(y_3-(s-s')u_3,u'). \label{regu:5} 
\end{align}

We compute
\begin{align*}
    & \Vert \eqref{regu:1} \Vert_{L^2_{x_3,v}} \lesssim \Big\Vert \frac{\alpha(v)}{|v_3|} \tilde{\nu}(v)e^{-\tilde{\nu}(v)\tb(x_3,v)}   \sqrt{\mu(v)} \int_{n(x_3)\cdot u >0} \sqrt{\mu(u)} f(x_3,u)(n(x_3)\cdot u) \dd u    \Big\Vert_{L^2_{x_3,v}}\\
    & \lesssim \Vert wf\Vert_{L^\infty_{x_3,v}} \Vert \frac{1}{|v_3|^{0.9}} e^{-\nu(v)}
\end{align*}

\unhide

\hide

The semi-group $e^{At}$ has the following property:
\begin{lemma}\label{lemma:At_ell_beta_decay}

\begin{align*}
    &   \Vert e^{At} u\Vert_{\beta,\ell} \lesssim e^{-\frac{\nu_0}{2}t}\Vert u\Vert_{\beta,\ell}, \\
    & \Vert G_j u\Vert_{\beta,\ell} \lesssim e^{-\frac{\nu_0}{2}t}\Vert u\Vert_{\beta,\ell}.
\end{align*}
    
\end{lemma}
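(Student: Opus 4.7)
The plan is to derive both estimates by the method of characteristics combined with the diffuse-reflection stochastic cycle of Definition \ref{def:sto_cycle}, carried out entirely in the weighted $L^\infty_{x_3,v}H^\ell_{x_\parallel}$ framework that has already been developed for the proof of Lemma \ref{lemma:bdr} and Proposition \ref{prop:linfty_decay_rate}. Because $Af = -v\cdot\nabla_x f - \nu(v)f$ contains no scattering kernel $K$, every step of that machinery simplifies: the interior Duhamel terms \eqref{K_0}--\eqref{K_1} disappear and only the analogues of \eqref{initial} and \eqref{f_bdr} remain.

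First I would write $(e^{At}u)(t,x,v)$ as $\mathbf{1}_{t^1\leq 0}e^{-\nu(v)t}u(x-tv,v) + \mathbf{1}_{t^1>0}e^{-\nu(v)(t-t^1)}(e^{At}u)(t^1,x^1,v)$ and iterate the boundary term through the stochastic cycle exactly as in \eqref{bdr_initial_2d}--\eqref{Sigma_2d}. Since $t^j$, $x^j_3$, $v^j$ are all independent of $x_\parallel$, Minkowski's inequality in $H^\ell_{x_\parallel}$ passes through each $v^j$-integration, reducing everything to the scalar pointwise estimate in $(x_3,v)$. The damping $\nu(v)\geq\nu_0$ supplies a factor $e^{-\nu_0(t^j-t^{j+1})}$ per cycle, which telescopes to $e^{-\nu_0 t}$ along any finite chain, while the diffuse reflection at each bounce contributes $\sqrt{\mu(v)}\int_{v^j_3\gtrless 0}w(v^j)w^{-1}(v^j)\sqrt{\mu(v^j)}|v^j_3|\,\dd v^j$, bounded by an absolute constant; this is the only place where the velocity weight $w$ (or $\beta$) enters, and the bound is uniform in the weight provided it is compatible with $\sqrt{\mu}$, as in Lemma \ref{lemma:k_theta}.

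Truncating the cycle at $k = C_1 T_0^{5/4}$ and invoking Lemma \ref{lemma:tk_2d} to control the tail $\mathbf{1}_{t^k>0}$ by $(1/2)^{C_2 T_0^{5/4}}$, I would establish on any window $[0,T_0]$ the bound $\|e^{At}u\|_{\beta,\ell}\leq C(\nu_0)e^{-\frac{\nu_0}{2}t}\|u\|_{\beta,\ell}$ — the loss from $\nu_0$ to $\nu_0/2$ is exactly the budget spent absorbing the constant $C(\nu_0)$ coming from the boundary trace factors and the tail. Propagating by the usual inductive argument on $[mT_0,(m+1)T_0]$ (cf.\ the end of the proof of Proposition \ref{prop:linfty_decay_rate}) then gives the claimed global decay in the first inequality. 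For the second inequality, the operators $G_j$ are the successive terms arising in the Duhamel expansion of $e^{Bt}$ around $e^{At}$ (schematically, iterated convolutions of $e^{A(\cdot)}$ with $K$), so once the first estimate is in place, a direct induction using the bound $\int\mathbf{k}_\theta(v,u)\,\dd u\lesssim(1+|v|)^{-1}$ from Lemma \ref{lemma:k_theta} to absorb each insertion of $K$ without deteriorating the exponential rate yields the corresponding $e^{-\nu_0 t/2}$ bound.

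The main technical obstacle is precisely the stochastic-cycle tail: one must check that the weighted trace factor $w(v^{k-1})$ arising in \eqref{bdr_tk_2d} does not overwhelm the smallness $(1/2)^{C_2 T_0^{5/4}}$ of the measure $\int\mathbf{1}_{t^k>0}\prod\dd\sigma_j$, and the only way to close the argument cleanly is to split the exponential as $e^{-\nu_0(t-t^1)}\cdots e^{-\nu_0(t^{k-1}-t^k)} = e^{-\frac{\nu_0}{2}(t-t^k)}\cdot e^{-\frac{\nu_0}{2}(t-t^k)}$ and use the second half to absorb both the polynomial velocity weights produced by $w(v^j)/\sqrt{\mu(v^j)}$ and the constant from Lemma \ref{lemma:tk_2d}. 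This is precisely why the exponent in the statement is $\nu_0/2$ rather than the full $\nu_0$ one would get from the interior damping alone.
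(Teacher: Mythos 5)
The statement you were asked to prove sits inside a suppressed block in the paper's source (delimited by the authors' private \texttt{hide}/\texttt{unhide} macros), so it does not appear in the compiled document, its proof body is left empty, and neither the norm $\Vert\cdot\Vert_{\beta,\ell}$ nor the operators $G_j$ are defined anywhere in the visible text — they are only introduced in the same hidden block, as $G_0(t)=e^{At}$ and $G_j(t) = (e^{A\cdot}K)\ast G_{j-1}$. There is therefore no paper proof to compare against; I can only evaluate your proposal on its own merits. Your reading of the framework is the right one, and your stochastic-cycle argument for the first bound is consistent with Lemma \ref{lemma:bdr} and the proof of Proposition \ref{prop:linfty_decay_rate}: Minkowski's inequality passes through the $v^j$-integrations because the cycle data are $x_\parallel$-independent, the $\dd\sigma_j$ are probability measures, the per-bounce damping telescopes, and the tail is controlled by Lemma \ref{lemma:tk_2d}. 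The first estimate goes through.

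The gap is in the second estimate. You claim each insertion of $K$ can be absorbed ``without deteriorating the exponential rate,'' but a single time-convolution already produces a secular factor: if $\Vert e^{At}\Vert\lesssim e^{-\nu_0 t/2}$ and $\Vert G_{j-1}(s)\Vert\lesssim e^{-\nu_0 s/2}$, then
\begin{align*}
\Vert G_j(t)\Vert \ \lesssim\ \int_0^t e^{-\frac{\nu_0}{2}(t-s)}\,\Vert K\Vert\, e^{-\frac{\nu_0}{2}s}\,\dd s \ \lesssim\ t\, e^{-\frac{\nu_0}{2}t},
\end{align*}
which is not $\lesssim e^{-\nu_0 t/2}$. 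The fix is cheap but must be made explicit. Your cycle argument in fact delivers $\Vert e^{At}u\Vert_{\beta,\ell}\lesssim e^{-a t}\Vert u\Vert_{\beta,\ell}$ for any $a<\nu_0$: the per-bounce damping is $e^{-\nu_0(t^j-t^{j+1})}$ against a probability measure, the chain $e^{-\nu_0(t-t^1)}\cdots e^{-\nu_0(t^{k-1}-t^k)}\,e^{-a t^k}$ telescopes to $e^{-at}\,e^{-(\nu_0-a)(t-t^k)}\leq e^{-at}$ for $a<\nu_0$, and the tail measure from Lemma \ref{lemma:tk_2d} is superexponentially small in $T_0$, so the only constraint is $a<\nu_0$. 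Taking $a$ strictly between $\nu_0/2$ and $\nu_0$ then gives $\Vert G_j(t)\Vert\lesssim t^j e^{-at}\lesssim_j e^{-\nu_0 t/2}$ for each fixed $j$, which is what the lemma asserts. As written, your argument cannot close, because you deliberately exhausted the decay budget at $\nu_0/2$ in the first step and left no room to absorb the polynomial growth produced by the $j$-fold convolution with $K$.
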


\begin{proof}

\end{proof}

Next we investigate the property of the semi-group $e^{Bt}$ on $H_{\beta,\ell}$. We express $e^{Bt}$ as
\begin{align*}
    &  e^{Bt} = e^{At} + (e^{At}K)*e^{tB}, \\
    &  e^{Bt} = \sum_{k=0}^N G_j(t) + (G_N(t)K)*e^{Bt}, \\
    & G_0(t) = e^{At}, \ G_j(t) = (e^{At}K)*G_{j-1}.
\end{align*}

Applying the decomposition that $e^{Bt} = E_1(t) + E_2(t)$, we further have
\begin{align*}
    & e^{Bt} = D_1(t) + D_2(t) , \\ 
    &D_1(t) = (G_N(t)K)*E_1(t) , \  D_2(t) = \sum_{j=0}^N G_j(t) + (G_N(t)K)*E_2(t).
\end{align*}

\begin{lemma}\label{lemma:Hbl_semi_group}
\begin{align*}
    &     \Vert \p_{x_\parallel}^\alpha D_1(t)u\Vert_{\beta,\ell} \lesssim (1+t)^{-\sigma_{q,m}} [\Vert \p_x^{\alpha'} u\Vert_{Z_q} + \Vert \p_{x_\parallel}^\alpha u\Vert_{\beta,\ell}   ], \\
    & \Vert \p_{x_\parallel}^\alpha D_1(t)(\mathbf{I}-\mathbf{P})u\Vert_{\beta,\ell} \lesssim (1+t)^{-\sigma_{q,m+1}} [\Vert \p_x^{\alpha'} u\Vert_{Z_q} + \Vert \p_{x_\parallel}^\alpha u\Vert_{\beta,\ell}   ], 
\end{align*}

\begin{align*}
    &   \Vert \p_{x_\parallel}^\alpha D_2(t)u\Vert_{\beta,\ell} \lesssim e^{-c_0 t} [\Vert \p_{x_\parallel}^{\alpha} u\Vert_{\beta,\ell} + \Vert \p_{x_\parallel}^\alpha u\Vert_{H^\ell_{x_\parallel} L^2_{x_3,v}} ]. 
\end{align*}

\end{lemma}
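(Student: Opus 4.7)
The plan is to exploit the iterated Duhamel expansion $e^{Bt}=\sum_{j=0}^{N}G_{j}(t)+(G_{N}(t)K)*e^{Bt}$ together with the splitting $e^{Bt}=E_{1}(t)+E_{2}(t)$ coming from Proposition \ref{prop:inverse_laplace} and Lemma \ref{lemma:linear_decay}. After regrouping, $D_{1}(t)$ is a single convolution $(G_{N}(t)K)*E_{1}(t)$ in which the fast (polynomial) piece $E_{1}$ sits in the $L^{2}_{x,v}$ setting where Lemma \ref{lemma:linear_decay} applies, while $D_{2}(t)$ collects the $N+1$ short-time iterates together with $(G_{N}(t)K)*E_{2}(t)$, both of which inherit exponential decay. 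The role of $N$ is to iterate the map $e^{At}K$ sufficiently many times so that the resulting operator gains the velocity regularity needed to convert $L^{2}_{x,v}$ bounds into bounds in the weighted $\|\cdot\|_{\beta,\ell}$ norm; this is the classical Ukai $L^{2}$--$L^{\infty}$ bootstrap in a weighted and horizontally Sobolev setting.

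First I would dispose of $D_{2}(t)$. The individual iterates $G_{j}(t)$ are controlled directly by Lemma \ref{lemma:At_ell_beta_decay}: a finite induction on $j\le N$, combined with boundedness of $K$ in $\|\cdot\|_{\beta,\ell}$ (via Lemma \ref{lemma:k_theta}) and integrability of $\int_{0}^{t}e^{-\tfrac{\nu_{0}}{2}s}\,ds$, yields $\|\p_{x_\parallel}^{\alpha}G_{j}(t)u\|_{\beta,\ell}\lesssim e^{-c_{0}t}\|\p_{x_\parallel}^{\alpha}u\|_{\beta,\ell}$. For the remainder $(G_{N}(t)K)*E_{2}(t)$, I would split the time integral at $t/2$; on each half one of the two factors enjoys exponential decay, and on the intermediate region the $N$-fold smoothing of $G_{N}(t-s)K$ upgrades the bound $\|E_{2}(s)u\|_{L^{2}_{x,v}}\lesssim e^{-c_{0}s}\|u\|_{L^{2}_{x,v}}$ into the desired weighted estimate. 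Horizontal derivatives $\p_{x_\parallel}^{\alpha}$ commute with $e^{At}$ and with $K$ (they act trivially in $v$), so they thread through the convolution without loss, which explains the appearance of $\|\p_{x_\parallel}^{\alpha}u\|_{H^{\ell}_{x_\parallel}L^{2}_{x_{3},v}}$ on the right-hand side.

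For $D_{1}(t)=\int_{0}^{t}G_{N}(t-s)K E_{1}(s)u\,ds$, the plan is to split the integral at $t/2$. On $[t/2,t]$ the polynomial decay of $E_{1}$ from Lemma \ref{lemma:linear_decay} gives $\|\p_{x_\parallel}^{\alpha}E_{1}(s)u\|_{L^{2}_{x,v}}\lesssim (1+s)^{-\sigma_{q,m}}\|\p_{x}^{\alpha'}u\|_{Z_{q}}$, which combined with the smoothing bound $\|G_{N}(\tau)Kg\|_{\beta,\ell}\lesssim e^{-c_{0}\tau}\|g\|_{L^{2}_{x,v}}$ yields the polynomial rate after time integration. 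On $[0,t/2]$, the factor $e^{-c_{0}(t-s)}\le e^{-c_{0}t/2}$ dominates, giving exponential decay that is subsumed into the polynomial rate. The second inequality of the lemma follows identically, since applying $E_{1}(t)$ to $(\mathbf{I}-\mathbf{P})u$ gains one extra power $(1+t)^{-1/2}$ in Lemma \ref{lemma:linear_decay}, which upgrades $\sigma_{q,m}$ to $\sigma_{q,m+1}$ throughout the argument.

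The main obstacle is establishing the smoothing estimate $\|G_{N}(\tau)Kg\|_{\beta,\ell}\lesssim e^{-c_{0}\tau}\|g\|_{L^{2}_{x,v}}$ for $N$ sufficiently large. This requires tracking the gain per iteration of the kernel $\mathbf{k}_{\theta}(v,u)$ from Lemma \ref{lemma:k_theta}: each application of $K$ contributes an $L^{2}_{v}\to L^{\infty}_{v}$ gain through $\int\mathbf{k}_{\theta}(v,u)\,du\lesssim (1+|v|)^{-1}$, and each convolution with $e^{At}$ along characteristics absorbs one unit of spatial regularity via a change of variables $v_{3}\mapsto x_{3}-(t-s)v_{3}$ analogous to that in the proof of Lemma \ref{lemma:bdr}. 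Choosing $N$ large enough (depending on $\beta$ and $\ell$) ensures that $N$ iterations suffice to pass from $L^{2}_{x,v}$ into $L^{\infty}_{x_{3},v}H^{\ell}_{x_\parallel}$ with an exponential time factor, at which point the Duhamel machinery closes the estimate.
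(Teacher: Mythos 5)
You should first be aware that the paper contains no proof of this lemma: the statement, together with the definitions of $\Vert\cdot\Vert_{\beta,\ell}$, $G_j$, $D_1$, $D_2$ and the preparatory decay lemma for $e^{At}$, sits in a commented-out block and its proof environment is empty. In the argument the authors actually run, the weighted $L^\infty_{x_3,v}H^\ell_{x_\parallel}$ decay is obtained in Proposition \ref{prop:linfty_decay_rate} by a solution-level $L^2$--$L^\infty$ bootstrap on the linear initial-boundary value problem (double Duhamel along characteristics, stochastic cycles, Lemma \ref{lemma:bdr}), with the $L^2$ input supplied by Proposition \ref{prop.Bdecay}. Your plan --- the operator-level Ukai iteration $e^{Bt}=\sum_{j\le N}G_j(t)+(G_N(t)K)*e^{Bt}$ combined with the splitting $E_1+E_2$ from Proposition \ref{prop:inverse_laplace} and Lemma \ref{lemma:linear_decay} --- is a genuinely different organization of the same ingredients, and in the whole space or on the torus it is the standard route.

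The genuine gap is your central smoothing claim $\Vert G_N(\tau)Kg\Vert_{\beta,\ell}\lesssim e^{-c_0\tau}\Vert g\Vert_{L^2_{x,v}}$, which you acknowledge as ``the main obstacle'' but do not resolve. In this geometry $e^{At}$ is not free transport with absorption: the diffuse reflection is built into $D(A)$, so each application of $e^{At}$ already carries the full stochastic-cycle expansion \eqref{initial}--\eqref{f_bdr}, and iterating $e^{At}K$ does not yield a clean gain of one velocity average per step. Concretely: (i) the change of variables $v_3\mapsto x_3-(t-s)v_3$ degenerates both for short flight times $|t-s|<\delta$ and for near-grazing or large velocities, and the corresponding cutoff errors from Lemma \ref{lemma:k_theta} are only $o(1)$ multiples of $\sup_{s\le t}(1+s)^{\sigma_{q,\ell}}\Vert f(s)\Vert_{\dot H^{\theta,\ell}}$, which must be absorbed into the left-hand side --- i.e.\ one is forced back into exactly the bootstrap your operator decomposition was meant to bypass; (ii) trajectories hitting the wall many times are controlled only via Lemma \ref{lemma:tk_2d} on a finite window $[0,T_0]$ followed by induction over windows, again a statement about the solution rather than a pointwise-in-$\tau$ operator bound with a uniform $e^{-c_0\tau}$ prefactor; (iii) $G_N(\tau)K$ is not bounded from $L^2_{x,v}$ into $L^\infty_{x_3,v}H^\ell_{x_\parallel}$ uniformly near $\tau=0$, so the convolution $\int_0^t G_N(t-s)KE_1(s)u\,\dd s$ requires the time-integrated, not pointwise, form of the smoothing estimate near $s=t$. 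None of these is fatal, but repairing them means importing essentially all of Lemma \ref{lemma:bdr} and the proof of Proposition \ref{prop:linfty_decay_rate}, at which point the decomposition into $D_1,D_2$ buys nothing over the direct argument --- which is presumably why the authors abandoned this formulation. If you wish to complete your route, state the smoothing estimate in time-integrated form with the $o(1)$ absorption term explicit, and justify commuting $\p_{x_\parallel}^\alpha$ through the cycles (this part is fine, since $\tb$, $x^i_3$ and $\mathcal{V}_i$ are independent of $x_\parallel$, as in the Minkowski step of Lemma \ref{lemma:bdr}).
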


\begin{proof}

\end{proof}

\unhide

\bibliographystyle{siam}

%\bibliography{citation}

\end{document}